\documentclass[11pt]{article}

\usepackage{amsmath, amsfonts,amssymb, amsthm}
\usepackage{graphicx}
\usepackage{caption}
\usepackage{subcaption}
\usepackage{color}
\usepackage{epsf}
\usepackage{tikz}
\usepackage[left=1.1in, right=1.1in, top=1in, bottom=1.2in]{geometry}
\usepackage{sidecap}
\usepackage[colorlinks=true, linkcolor=blue, filecolor=magenta, urlcolor=cyan, citecolor=gray, unicode]{hyperref}
\usepackage[capitalize]{cleveref}
\usepackage{accents}
\usepackage{mathrsfs}
\usepackage[shortlabels]{enumitem}
\setlist[enumerate,1]{label=(\roman*),font=\itshape}
\setlist{itemsep=0pt, topsep=1pt}

\usepackage{algorithm}
\usepackage[noend]{algpseudocode}


\newcommand{\rioflat}{{(12+\sqrt{8})}/{17}}

\setcounter{MaxMatrixCols}{40}

\newcommand{\set}[2]{\left\{#1\mathrel{}\middle|\mathrel{}#2\right\}}

\DeclareMathOperator{\Rd}{\overline{Rd}}
\DeclareMathOperator{\ud}{\overline{d}}
\DeclareMathOperator{\ld}{\underline{d}}
\DeclareMathOperator{\ran}{ran}
\DeclareMathOperator{\dom}{dom}

\DeclareMathOperator{\fin}{fin}

\DeclareMathOperator{\nwd}{nwd}

\DeclareMathOperator{\rul}{rul}
\DeclareMathOperator{\degen}{degen}

\newcommand{\mathscripty}{\mathscr}


\newcommand{\sm}{\setminus}

\renewcommand{\subset}{\subseteq}

\newcommand{\NN}{\mathbb{N}}
\newcommand{\N}{\mathbb{N}}

\newcommand{\QQ}{\mathbb{Q}}


\newcommand{\SU}{\mathscripty{U}}

\newcommand{\CF}{\mathcal{F}}
\newcommand{\CT}{\mathcal{T}}
\newcommand{\CU}{\mathcal{U}}
\newcommand{\CR}{\mathcal{R}}
\newcommand{\CI}{\mathcal{I}}




\theoremstyle{plain}
\newtheorem{theorem}{Theorem}[section]
\newtheorem{corollary}[theorem]{Corollary}
\newtheorem{lemma}[theorem]{Lemma}

\newtheorem{proposition}[theorem]{Proposition}

\newtheorem{fact}[theorem]{Fact}
\newtheorem{example}[theorem]{Example}
\newtheorem{problem}[theorem]{Problem}

\theoremstyle{definition}
\newtheorem{definition}[theorem]{Definition}

\newtheorem{conjecture}[theorem]{Conjecture}
\newtheorem{remark}[theorem]{Remark}

\newcommand{\ep}{\epsilon}

\newcommand{\ceiling}[1]{\lceil#1\rceil}

\newcommand{\tbf}[1]{\textbf{#1}}

\newcommand{\Ncap}{N^{\cap}}
\newcommand{\lrang}[1]{\langle #1 \rangle}

\title{Density of monochromatic infinite subgraphs II}
\author{
  Jan Corsten\thanks{Department of Mathematics, London School of Economics and Political Science, London, UK \texttt{jan.corsten92@gmail.com}} \and
  Louis DeBiasio\thanks{Department of Mathematics, Miami University, Oxford, OH, United States \texttt{debiasld@miamioh.edu}. Research supported in part by Simons Foundation Collaboration Grant \# 283194 and NSF grant DMS-1954170.} \and
  Paul McKenney\thanks{Department of Mathematics, Miami University, Oxford, OH, United States \texttt{pmckenney@gmail.com}}
}

\date{\today}

\begin{document}
\maketitle

\begin{abstract}
In 1967, Gerencs\'er and Gy\'arf\'as \cite{GG} proved a result which is considered the starting point of graph-Ramsey theory: In every 2-coloring of $K_n$ there is a monochromatic path on $\lceil(2n+1)/3\rceil$ vertices, and this is best possible. 
There have since been hundreds of papers on graph-Ramsey theory with some of the most important results being motivated by a series of conjectures of Burr and Erd\H os \cite{BE1, BE2} regarding the Ramsey numbers of trees (settled in \cite{Z}), graphs with bounded maximum degree (settled in \cite{CRST}), and graphs with bounded degeneracy (settled in \cite{L}).

In 1993, Erd\H os and Galvin \cite{EG} began the investigation of a countably infinite analogue of the Gerencs\'er and Gy\'arf\'as result: What is the largest $d$ such that in every $2$-coloring of $K_\mathbb{N}$ there is a monochromatic infinite path with upper density at least $d$. Erd\H os and Galvin showed that $2/3\leq d\leq 8/9$, and after a series of recent improvements, this problem was finally solved in \cite{CDLL} where it was shown that $d={(12+\sqrt{8})}/{17}$.  

This paper begins a systematic study of quantitative countably infinite graph-Ramsey theory, focusing on infinite analogues of the Burr-Erd\H{o}s conjectures.  We obtain some results which are analogous to what is known in finite case, and other (unexpected) results which have no analogue in the finite case.  
\end{abstract}

\section{Introduction}\label{sec:intro}

It was proven by Ramsey \cite{Ramsey1929} that for every graph $G$ and every positive integer $r$, there exists a positive integer $N$ such that in every $r$-coloring\footnote{throughout the paper, an $r$-coloring of a graph $K$ will always mean an $r$-coloring of the edges of $K$} of $K_N$ there is a monochromatic copy of $G$. The smallest possible choice for $N$ is called the \emph{$r$-color Ramsey number} and is denoted by $R_r(G)$. Determining Ramsey numbers of different (families of) graphs is one of the central topics in combinatorics. In this paper, we are interested in similar problems for countably infinite graphs. (We will not consider uncountably infinite graphs and thus always mean ``countably infinite'' when we write ``infinite'' from now on.)

Let $K_{\NN}$ be the graph on vertex set $\N$ with edge set edge set $\binom{\N}{2}$ (typically $\N$ denotes the set of positive integers and we typically begin counting at 1; however, there are certain situations where it is convenient to let $\N$ denote the non-negative integers or to start counting at 0, but this distinction will never have an impact on the results). Ramsey \cite{Ramsey1929} also proved that in every $r$-coloring of $K_\N$, there is a monochromatic copy of $K_\N$.  Thus in order to make the problem quantitative, we will thus consider the density of the monochromatic graphs we are looking for.
The \emph{upper density} of a graph $G$ with $V(G) \subset \N$ is defined as
\[\ud(G) = \limsup_{t\rightarrow\infty} \frac{|V(G) \cap \{0,1,2,\ldots,t\}|}{t}.\]
The \emph{lower density}, denoted $\ld(G)$, is defined analogously in terms of the $\liminf$ and we speak of the \emph{density}, whenever lower and upper densities coincide.

Erd\H{o}s and Galvin~\cite{EG} described a $2$-coloring of $K_{\N}$ in which every graph having finitely many isolated vertices and bounded maximum degree has lower density $0$, thus we typically restrict our attention to upper densities.  However, this does raise the question of whether there is any graph $G$ (with finitely many isolated vertices) having the property that in every 2-coloring of $K_{\NN}$ there is a monochromatic copy of $G$ with positive lower density.  We will return to this question later and prove that, surprisingly, such graphs exist in a strong sense.

Given a graph $G$ and an $r$-coloring of $\phi$ of $K_{\NN}$, the \emph{Ramsey upper density of $G$ with respect to $\varphi$}, denoted $\Rd_{\varphi}(G)$, is the supremum of $\ud(G)$ over all monochromatic copies of $G$ in the coloring $\varphi$ of $K_{\NN}$.  \emph{The $r$-color Ramsey upper density of $G$}, denoted $\Rd_r(G)$, is the infimum of $\Rd_{\varphi}(G)$ over all $r$-colorings $\varphi$ of $K_{\NN}$.  If $r=2$, we drop the subscript.

Possibly the first such (implicitly) quantitative result is due to Rado~\cite{Rado1978} who proved that every $r$-coloring of $K_{\N}$ contains $r$ vertex-disjoint monochromatic infinite paths which together cover all of $\N$.  In particular, one of the paths must have upper density at least $1/r$ and hence $\Rd_r(P_\infty) \geq 1/r$, where $P_\infty$ is the (one-way) infinite path.
For two colors, this was improved by Erd\H{o}s and Galvin~\cite{EG} who proved that $2/3 \leq \Rd(P_\infty) \leq 8/9$.
More recently, DeBiasio and McKenney~\cite{DM} improved the lower bound to $3/4$ and conjectured the correct value to be $8/9$.
Progress towards this conjecture was made by Lo, Sanhueza-Matamala and Wang~\cite{LSW}, who raised the lower bound to $(9+\sqrt{17})/16\approx 0.82019$.  Corsten, DeBiasio, Lamaison, and Lang~\cite{CDLL} finally proved that $\Rd(P_\infty) = \rioflat\approx 0.87226$, thereby settling the problem for two colors.
In this paper, we initiate a systematic study of Ramsey densities for other infinite graphs.  An independent systematic study was undertaken by Lamaison \cite{Lamaison2020}, who fortunately focused on a different aspect of the general problem (locally-finite graphs) and thus the two papers have very little overlap.

\subsection{Graphs with positive Ramsey upper density}
The problem of estimating the Ramsey numbers of sparse finite graphs has received a lot of attention. The problem was motivated by a series of conjectures proposed by Burr and Erd\H os \cite{BE1, BE2}, starting with graphs of bounded maximum degree.

\begin{conjecture}[Burr--Erd\H os \cite{BE1}]\label{conj:linRam1}
	For all $\Delta\in \N$, there exists some $c = c(\Delta)>0$ such that every $2$-colored $K_n$ contains a monochromatic copy of every graph $G$ with at most $cn$ vertices and $\Delta(G)\leq \Delta$.
\end{conjecture}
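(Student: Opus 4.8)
This is the Chv\'atal--R\"odl--Szemer\'edi--Trotter theorem that bounded-degree graphs have linear Ramsey numbers, and the natural route is Szemer\'edi's regularity lemma followed by a greedy embedding. Put $t:=\Delta+1$ and fix a constant $\e=\e(\Delta)>0$, small enough for all of the finitely many estimates below (concretely $\e<R(t)^{-2}$ with room to spare, where $R(t)$ is the ordinary two-colour Ramsey number of $K_t$, will do). Apply the regularity lemma to the red graph inside the given $2$-colouring of $K_n$: this yields an equitable $\e$-regular partition $V(K_n)=V_0\cup V_1\cup\dots\cup V_k$ with $|V_0|\le\e n$, all $|V_i|$ equal and of size $\ge(1-\e)n/k$, and $k\le M$ for a constant $M=M(\Delta)$. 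Because the red and blue densities between any $V_i$ and $V_j$ sum to $1$, a pair is $\e$-regular in red iff it is $\e$-regular in blue.

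Next I would pass to a reduced graph on $\{1,\dots,k\}$ whose non-edges are the pairs $(i,j)$ for which $(V_i,V_j)$ is not $\e$-regular; there are at most $\e\binom{k}{2}$ of these. A short greedy argument --- first discard the at most $\sqrt{\e}\,k$ indices lying in more than $\sqrt{\e}\,k$ non-edges, then repeatedly pick an index and delete the $\le\sqrt{\e}\,k$ indices joined to it by a non-edge --- produces a set $S$ of at least $1/(2\sqrt{\e})\ge R(t)$ indices that is a clique in the reduced graph, i.e.\ whose clusters are pairwise $\e$-regular. Colour each pair inside $S$ by a colour in which the density between the two clusters is at least $1/2$ (one always exists, as the densities sum to $1$). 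Since $|S|\ge R(t)$, Ramsey's theorem gives $t$ of these clusters, which after relabelling I call $W_1,\dots,W_t$, such that every pair among them is $\e$-regular, receives the same majority colour --- say red --- and has red density at least $1/2-\e$.

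Finally I would embed $G$ into $W_1\cup\dots\cup W_t$ by the standard embedding lemma for sparse graphs. Since $\Delta(G)\le\Delta$, the graph $G$ has a proper colouring $\chi\colon V(G)\to\{1,\dots,t\}$; fix one, and an enumeration $v_1,\dots,v_m$ of the $m\le cn$ vertices of $G$. Maintain for each not-yet-embedded vertex $u$ a candidate set $A(u)\subseteq W_{\chi(u)}$, initially $A(u)=W_{\chi(u)}$. To place $v_i$, choose for its image an unused vertex of $A(v_i)$ that is red-typical with respect to $A(u)$ for every still-unembedded neighbour $u$ of $v_i$ (red-joined to a $\ge(1/2-2\e)$-fraction of $A(u)$), and then shrink $A(u)$ to $A(u)\cap N_{\mathrm{red}}(v_i)$ for those $u$. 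By the defect form of $\e$-regularity one checks inductively that $|A(u)|\ge(1/2-2\e)^{\Delta}|W_{\chi(u)}|$ at all times (valid because $(1/2-2\e)^{\Delta}>\e$, so the sets never drop below the threshold at which regularity is useful), and that at most $\Delta\e\,|W_{\chi(v_i)}|$ vertices of $A(v_i)$ fail the typicality requirement. As at most $m\le cn$ vertices have been used so far, the step goes through once $c=c(\Delta)$ is taken small enough that $cn<\bigl((1/2-2\e)^{\Delta}-\Delta\e\bigr)(1-\e)n/k$ for all $k\le M$, e.g.\ $c(\Delta):=(1/4)^{\Delta}/(2M(\Delta))$. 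The finished embedding is the desired monochromatic copy of $G$.

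The step I expect to be the real obstacle is the embedding, and precisely the familiar fact that $\e$-regularity degrades under passing to subsets: once many vertices have been embedded the live portion $A(u)$ of a cluster has shrunk, and the defect form of regularity only controls densities between subsets that are still an $\ge\e$-fraction of the original clusters. This is exactly what forces $c(\Delta)$ to be so tiny --- one must guarantee that the used vertices never exhaust more than a fixed small fraction of any cluster --- and why $c(\Delta)$ is only of tower type in $\Delta$ (inherited from $M(\Delta)$); but it is a positive constant depending on $\Delta$ alone, which is all that is claimed. The only other point needing care is the clean-up on the reduced graph (discarding the irregular pairs and the small exceptional class $V_0$, and keeping the clusters of equal size), which is routine. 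I would not invoke the Blow-up Lemma of Koml\'os--S\'ark\"ozy--Szemer\'edi: since $G$ has only $cn$ vertices it occupies a vanishing fraction of the $t$ clusters, so the naive greedy argument already suffices --- the Blow-up Lemma is needed only when one wants $G$ to be spanning.
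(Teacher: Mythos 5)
The paper does not actually prove this statement---it is recorded as a (now-resolved) conjecture of Burr and Erd\H{o}s, with the result attributed to Chv\'atal, R\"odl, Szemer\'edi and Trotter---so there is no internal proof to compare against; your sketch is precisely the standard regularity-lemma argument of that cited work (regularize one colour class, clean up the reduced graph, apply finite Ramsey to obtain $R(\Delta+1)$ pairwise $\e$-regular clusters with a common majority colour, then greedily embed $G$ along a proper $(\Delta+1)$-colouring using candidate sets), and it is correct in outline. The only point left implicit is that the regularity lemma must be invoked with a lower bound $k\ge k_0(\Delta)$ on the number of clusters, so that the clique extracted from the reduced graph genuinely has at least $R(\Delta+1)$ vertices; this is standard and harmless.
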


\cref{conj:linRam1} was solved by Chvat\'al, R\"odl, Szemer\'edi, Trotter \cite{CRST} in an early application of the regularity lemma. Since then, there has been many improvements to the constant $c(\Delta)$ (see \cite{CFS} for a more detailed history). Allen, Brightwell and Skokan~\cite{ABS} proved that this constant can be significantly improved to $ c = 1/(2\chi(G)+4)\geq 1/(2\Delta+6)$ for graphs of small bandwith (see \cite{ABS} for the precise statement of their result), where $\chi(G)$ denotes the chromatic number of $G$.

Our first theorem proves an analogue of this for infinite graphs. It turns out that much weaker conditions on the degrees suffice.
Given $k\geq 2$, we say that a graph $G$ is one-way $k$-locally finite if there exists a partition of $V(G)$ into $k$ independent sets $V_1, \dots, V_k$ with $|V_1|\geq \ldots \geq |V_k|$ such that for all $1\leq i<j\leq k$ and all $v\in V_j$, $d(v, V_i)<\infty$.
Note that every vertex in $V_k$ has finite degree, but it is possible for any vertex in $V_1\cup \dots \cup V_{k-1}$ to have infinite degree.  A good example of a one-way 2-locally finite graph exhibiting this property is the \emph{infinite bipartite half graph}, which is the graph on $\N = A \cup B$, where $A$ is the set of positive odd integers and $B$ is the set of positive even integers and $uv$ is an edge if and only if $u<v$ and $u$ is odd and $v$ is even. Further note that one-way $k$-locally finite graphs have chromatic number at most $k$ and, if $G$ is locally finite (that is every vertex has finite degree) with $\chi(G)<\infty$, then $G$ is one-way $\chi(G)$-locally finite.

\begin{theorem}\label{thm:locally-finite}
Let $k,r\in \N$ and let $G$ be an infinite, one-way $k$-locally finite graph.
\begin{enumerate}
\item If $k=2$, then $\Rd_r(G)\geq 1/r$.
\item If $k\geq 2$, then $\Rd(G)\geq \frac{1}{2(k-1)}$.
\item If $r, k \geq 3$, then
$\Rd_r(G) \geq \frac{1}{\sum_{i=0}^{(k-2)r+1} (r-1)^i} \geq  \frac{1}{r^{(k-2)r + 1}}.$
\end{enumerate}
\end{theorem}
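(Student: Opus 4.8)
The plan is to build the monochromatic copy of $G$ greedily, one vertex at a time, following a fixed enumeration $V(G) = \{g_1, g_2, \dots\}$ that respects the partition $V_1, \dots, V_k$ (say, list the finite classes in full first, then interleave so that each $g_m$ has all its back-neighbors already placed). The key structural input is that in any $r$-coloring of $K_{\mathbb N}$, the ``back-neighborhood'' constraints one must satisfy at each step are \emph{finite}: when embedding $g_m \in V_j$, the vertices of $G$ already embedded that are adjacent to $g_m$ all lie in classes $V_i$ with $i < j$, except possibly those in $V_j$ itself --- but $V_j$ is independent, so there are none. Thus at step $m$ we must choose an image $x_m$ that (i) is joined to each previously-chosen image $x_\ell$ (for $g_\ell \sim g_m$) by an edge of the correct color, and (ii) lies far enough to the right that we make density progress. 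Since only finitely many images have been chosen so far, and each has only finitely many ``wrong-colored'' obstructions relevant to $g_m$ once we fix the target color, condition (i) rules out only finitely many vertices --- \emph{provided} the target colors have been chosen coherently. The real content is in choosing, in advance, a single color for each edge-slot of $G$ so that cofinitely many vertices remain available at every step; this is where the case split on $(k,r)$ enters.

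For part (i), $k=2$: here $G = V_1 \cup V_2$ with $V_2$ the finite-degree class. Apply Rado's theorem (quoted in the excerpt) to partition $\mathbb N$ into $r$ monochromatic infinite paths $Q_1, \dots, Q_r$, and pick the one, say $Q$, of upper density $\ge 1/r$; let $c$ be its color. I would embed all of $V_1$ (which may contain infinite-degree vertices) along $Q$: since $Q$ is a monochromatic path, any finite subset of $Q$ extends to a vertex of $Q$ arbitrarily far right, so the back-neighbor constraints within $V_1$ are met and density $\ge 1/r$ is preserved. For $V_2$: each $v \in V_2$ has finite degree, so its back-neighbors in $V_1$ have been placed at finitely many vertices of $Q$; we need a common $c$-neighbor of those finitely many vertices. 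Since the whole of $K_{\mathbb N}$ is $r$-colored, for each pair there is \emph{some} color, but we need color $c$ --- so instead I would from the start reserve the density to $V_1$ only and note $V_2$ can be absorbed into the gaps of $Q$: more carefully, pick $x_v$ from a cofinite set by a pigeonhole/Ramsey argument on the finitely many relevant vertices. The cleanest route: since $|V_2| \le |V_1|$ and $V_2$-vertices have finite degree, interleave them so each is embedded into a vertex lying among the ``discarded'' $Q_i$'s, using that among infinitely many candidates to the right, infinitely many are joined to the fixed finite back-set all in one color (finite Ramsey). Density is carried entirely by $V_1 \subseteq Q$, giving $\ge 1/r$.

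For part (ii), $k \ge 2$, $r=2$: the bipartite-like structure $V_1 \cup (V_2 \cup \dots \cup V_k)$ suggests a weighting/averaging argument producing $\frac{1}{2(k-1)}$. I would first handle $V_1$ using part (i)'s mechanism to get an infinite set $S$ of density $\ge 1/2$ all of whose finite subsets have common neighbors in a single color (the color that appears ``densely'' on a spanning substructure), then recurse: inside $S$, the induced 2-coloring must again contain a large monochromatic well-connected set for the next class, losing a factor depending on the remaining $k-1$ classes. Iterating $k-1$ times and optimizing the density loss at each stage (each roughly halving, but with the first step free) yields $\frac{1}{2(k-1)}$; the bookkeeping is to show the constraints stay finite so the greedy embedding never gets stuck. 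For part (iii), $r, k \ge 3$: replace the 2-color dichotomy by an $r$-color pigeonhole at each of the $(k-2)r+1$ stages, where the worst case is that the adversary spreads colors to force a geometric decay with ratio $r-1$; summing the geometric series $\sum_{i=0}^{(k-2)r+1}(r-1)^i$ gives the stated bound, and the crude estimate $\ge r^{-((k-2)r+1)}$ follows by bounding the sum.

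The main obstacle --- in all three parts --- is the coherent \emph{a priori} choice of edge-colors on $G$: naively the target color for the edge $g_\ell g_m$ could conflict with the color forced by some later vertex sharing a neighbor with $g_\ell$. I expect to resolve this by a compactness / tree-of-attempts argument (König's lemma on the tree of partial monochromatic embeddings), or equivalently by organizing $G$ into finite "blocks", embedding block-by-block and using finite Ramsey theory within each block to guarantee a cofinite supply of valid extensions; establishing that this supply never shrinks to a finite set --- i.e., that the density target is compatible with the connectivity constraints --- is the crux, and it is precisely here that the one-way local finiteness (finitely many back-neighbors, all in earlier classes) is indispensable.
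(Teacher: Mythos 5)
You have correctly isolated the crux --- the need for a coherent \emph{a priori} assignment of colors so that at every step of the greedy embedding the set of valid extensions stays infinite --- but your proposal does not actually supply a mechanism that achieves it, and the two mechanisms you suggest both fail. The paper's device is an ultrafilter $\SU$ on $\NN$ and the induced vertex-coloring $c_\SU(v)=i \iff N_i(v)\in\SU$: if $A$ is a set of vertices all receiving color $i$ and $A\in\SU$ (resp.\ $A\notin\SU$), then \emph{every} finite $S\subseteq A$ satisfies $\bigcap_{v\in S}N_i(v)\cap A\in\SU$ (resp.\ $\bigcap_{v\in S}N_i(v)\cap A^c\in\SU$), hence is infinite; this single fact is what keeps the supply of extensions infinite forever and is then fed into two embedding propositions. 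Your substitutes do not give this. K\"onig's lemma does not apply to the tree of partial monochromatic embeddings (it is infinitely branching, and even a finitary surrogate would only yield arbitrarily large finite partial embeddings, with no control of density or of extendability to an infinite one). Blockwise finite Ramsey gives you, for each finite back-set, \emph{some} color in which infinitely many candidates are joined to it --- but that color can vary from step to step, whereas you need one fixed color for the whole copy of $G$; nothing prevents the set of common $c$-neighbors of a growing finite set from becoming empty.

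The same problem sinks the concrete part of your argument for (i): a monochromatic path $Q$ of color $c$ and upper density $\ge 1/r$ (from Rado's theorem) only guarantees that \emph{consecutive} vertices of $Q$ are $c$-adjacent. It gives no common $c$-neighbor for a finite subset $S$ of $Q$, so you cannot place a vertex of $V_2$ whose (finitely many) $G$-neighbors have already been sent into $Q$, nor can you handle infinite-degree vertices of $V_1$ if any $V_2$-vertex must attach to several of them. (Note also that the density in the paper's proof is carried by the set $A_1$ of vertices whose color-$1$ neighborhood is ultrafilter-large --- a set defined by the coloring, not by Rado's path partition.) For parts (ii) and (iii) your description is a heuristic for why the constants $\tfrac{1}{2(k-1)}$ and $\sum_{i=0}^{(k-2)r+1}(r-1)^i$ are plausible, but the actual argument requires iterating the ultrafilter construction (on $\NN$, then on the leftover set $W_2$, etc., stopping once some color is ultrafilter-large $k-1$ times, which by pigeonhole happens within $2k-3$ steps for $r=2$), and none of that structure is present. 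In short: the counting is right, but the key lemma that makes the greedy embedding terminate-free --- the ultrafilter coherence --- is missing, and the gap is essential rather than cosmetic.
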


Since graphs with $\Delta(G)=\Delta<\infty$ have $\chi(G)\leq \Delta+1$, we get that $\Rd(G) \geq \frac{1}{2\Delta}$ and $\Rd_r(G) \geq 1/r^{\Delta r}$ for every $r \geq 3$ (which answers a question from \cite{DM}).  However, we are able to prove a slightly stronger result for 2 colors.

\begin{corollary}
If $G$ is an infinite graph with $\Delta(G)=\Delta<\infty$, then $\Rd(G) \geq \frac{1}{2(\Delta-1)}$.
\end{corollary}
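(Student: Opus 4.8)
The idea is to split on the chromatic number of $G$. Since $\Delta(G)=\Delta$ we have $\chi(G)\le\Delta+1$, and we treat the cases $\chi(G)\le\Delta$ and $\chi(G)=\Delta+1$ separately; throughout we may assume $\Delta\ge2$, as the claimed bound is vacuous for $\Delta\le1$. If $\chi:=\chi(G)\le\Delta$ we are done immediately: $G$ has bounded degree, hence is locally finite, hence (as observed before \cref{thm:locally-finite}) is one-way $k$-locally finite for $k:=\max(\chi,2)\le\Delta$, and then \cref{thm:locally-finite}(ii) yields $\Rd(G)\ge\frac1{2(k-1)}\ge\frac1{2(\Delta-1)}$. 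So from now on assume $\chi(G)=\Delta+1$.

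Because the chromatic number of a disjoint union is the supremum of the chromatic numbers of its components, some component $C$ of $G$ has $\chi(C)=\Delta+1>\Delta$; since $C$ is connected with $\Delta(C)\le\Delta$, Brooks' theorem (for possibly infinite graphs, via de Bruijn--Erd\H os) forces $C\cong K_{\Delta+1}$ when $\Delta\ge3$ (for $\Delta=2$ the exceptional $C$ are finite odd cycles, and what follows is identical). Every vertex of such a $C$ already uses all $\Delta$ of its edges inside $C$, so any copy of $K_{\Delta+1}$ in $G$ is automatically a full connected component. Collecting these, we may write $G=\big(\bigsqcup_{i\in I}C_i\big)\sqcup G'$ with each $C_i\cong K_{\Delta+1}$ and $G'$ having no such component; then $\chi(G')\le\Delta$ by the same argument, so $G'$ is empty, finite, or one-way $\Delta$-locally finite, and -- as $G$ is infinite -- at least one of $I$ and $G'$ is infinite.

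Now delete one edge $x_iy_i$ from each $C_i$, producing $G''$. Each $C_i$ becomes $K_{\Delta+1}^{-}$, which is properly $\Delta$-colourable with $\{x_i,y_i\}$ as its only non-singleton colour class; hence $\chi(G'')\le\Delta$ and $G''$ is infinite and one-way $\Delta$-locally finite. Fix any $2$-colouring of $K_\NN$. By the first case there is a monochromatic -- say red -- copy $\psi$ of $G''$ with $\ud(\psi(G''))\ge\frac1{2(\Delta-1)}$, and whenever all the pairs $\{\psi(x_i),\psi(y_i)\}$ happen to be red, $\psi$ already exhibits a red copy of $G$, since then $\psi(C_i)$ together with the red edge $\psi(x_i)\psi(y_i)$ is a red $K_{\Delta+1}$. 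So the whole problem reduces to producing such a copy $\psi$.

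The obstacle -- and the only point that goes beyond \cref{thm:locally-finite} -- is that those pairs need not be red for an arbitrary embedding, so \cref{thm:locally-finite}(ii) cannot be used as a black box. The plan is to use the structure of the monochromatic host graph that the proof of \cref{thm:locally-finite}(ii) actually produces -- a highly connected red subgraph on a set of upper density $\ge\frac1{2(\Delta-1)}$ into which one-way $\Delta$-locally finite graphs embed with a great deal of freedom -- to route each $C_i$ through a genuine red $K_{\Delta+1}$; equivalently, to prove a ``stable'' strengthening of \cref{thm:locally-finite}(ii) with enough flexibility to realise prescribed non-edges in red. When $I$ is finite this is softer: delete one \emph{vertex} from each $C_i$ instead, reduce $G$ to a one-way $\Delta$-locally finite graph $G-X$ with $X$ finite, embed that red with density $\ge\frac1{2(\Delta-1)}$, and then reinstate the $\le\Delta|X|$ edges meeting $X$ one at a time within the unused part, which again needs only mild robustness of the embedding. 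Carrying out this clique-reinstatement -- coordinating the colours of the re-added edges with the colour of the main copy -- is the crux of the argument.
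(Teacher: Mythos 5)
There is a genuine gap: your argument reduces the corollary to a ``stable strengthening'' of \cref{thm:locally-finite}(ii) --- an embedding of the edge-deleted graph $G''$ in which every deleted pair $\{\psi(x_i),\psi(y_i)\}$ is forced to receive the colour of the embedding --- and you explicitly defer exactly this step (``the crux of the argument'') without carrying it out. This is not a routine detail. In the worst case of the proof of \cref{thm:locally-finite}(ii), the host structure is a set $W_{t+1}$ together with sets $A_{i,1}$ such that finite subsets of $W_{t+1}$ have infinitely many common colour-$2$ neighbours in the $A_{i,1}$'s but \emph{not} within $W_{t+1}$ itself; since $x_i$ and $y_i$ lie in the same independent class of the one-way $\Delta$-locally finite partition of $G''$, they may both be embedded into $W_{t+1}$, where nothing controls the colour of the edge between their images. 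The same problem afflicts your finite-$I$ variant: reinstating a deleted vertex of $C_i$ requires a common \emph{red} neighbour of $\Delta$ already-embedded vertices, which the embedding provides only in specific directions and colours. Realising a genuine monochromatic $K_{\Delta+1}$ (rather than $K_{\Delta+1}$ minus an edge) in the colour of the main copy is precisely the obstruction this corollary must overcome, so deferring it leaves the proof incomplete. A smaller point: when $I$ is infinite you are attacking the hardest version of the problem, whereas $G$ then has infinitely many components and \cref{prop:infcomp} already gives $\Rd(G)\ge 1/2$ outright.

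For contrast, the paper avoids the reinstatement problem by case analysis on the \emph{colouring} rather than on $G$. After \cref{prop:infcomp} disposes of infinitely many components, the exceptional part $V_2$ (the finitely many odd cycles or $(\Delta+1)$-cliques) is a finite vertex set. If both colours contain a clique of order $|V_2|$, one reserves one clique of each colour in advance, embeds the one-way $\Delta$-locally finite part $G[V_1]$ monochromatically in their complement via \cref{thm:locally-finite}, and attaches the reserved clique of the matching colour. If some colour, say red, has no clique of order $|V_2|$, then every ultrafilter step in the proof of \cref{thm:locally-finite}(ii) is forced to select blue, the process stops after at most $\chi(G)-1\le\Delta$ steps with a set of upper density at least $1/(\Delta+1)\ge 1/(2(\Delta-1))$, and blue then contains infinite cliques wherever needed, so all of $G$ including $V_2$ embeds in blue (with a separate matching argument for odd cycles when $\Delta=2$). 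If you want to rescue your route you would need to actually prove the stability lemma you allude to; the clique dichotomy is the standard way to avoid having to do so.
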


A graph $G$ is \emph{$d$-degenerate} if there is an ordering of the vertices $v_1, v_2, \dots, v_n$ such that for all $i\geq 1$, $|N(v_i)\cap \{v_1,\dots, v_{i-1}\}|\leq d$.
The \emph{degeneracy} of $G$, denoted $\degen(G)$, is the smallest non-negative integer $d$ such that $G$ is $d$-degenerate; if no such integer exists, say $\degen(G)=\infty$.
Note that if $G$ is $d$-degenerate, then $\chi(G)\leq d+1\leq \Delta(G)+1$.  Also note that a graph can have finite degeneracy, but infinite maximum degree.

\begin{conjecture}[Burr--Erd\H{o}s \cite{BE1}]\label{conj:linRam2}
  For all $d\in \N$, there exists some $c = c(d) >0$ such that every $2$-colored $K_n$ contains a copy of every $d$-degenerate graph on at most $cn$ vertices.
\end{conjecture}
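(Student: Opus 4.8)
The plan is to prove the equivalent statement that the two-color Ramsey number $R(H)$ of every $d$-degenerate graph $H$ on $m$ vertices is linear, $R(H) \le C(d)\,m$; the conjecture then follows with $c(d) = 1/C(d)$. The strategy is \emph{not} to play red against blue directly, but to embed $H$ into a single dense monochromatic host. Given a $2$-coloring of $K_n$, one color class --- say red, with graph $R$ --- spans at least $\binom{n}{2}/2$ edges and so has edge density at least $1/2$ on $[n]$. I would fix this color and aim to embed an arbitrary $d$-degenerate $H$ with $m \le cn$ vertices into $R$; since the roles of the two colors are symmetric, handling one color of density bounded below suffices.

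The embedding proceeds along a degeneracy ordering $v_1, \dots, v_m$ of $H$, so that each $v_i$ has at most $d$ neighbors among $v_1, \dots, v_{i-1}$ (its \emph{back-neighbors}); this is exactly where $\degen(H) \le d$ enters. The crucial structural observation is that high-degree vertices are rare: since $H$ is $d$-degenerate, $|E(H)| \le dm$, so for a threshold $T$ the set $B$ of vertices of degree at least $T$ satisfies $|B| \le 2dm/T$, a small fraction of $m$. The idea is therefore to embed the few high-degree vertices of $B$ first, into a very highly structured ``core'' of the host, and then to fill in the many low-degree vertices greedily in the degeneracy order.

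To build the core I would apply dependent random choice to $R$: because $R$ has density at least $1/2$, one can find a set $U$ with $|U| \ge |B|$ such that every subset of $U$ of size at most $d$ has a large common red neighborhood. Embed $B$ into $U$. Then process $v_1, \dots, v_m$ in order, maintaining for each not-yet-embedded vertex its candidate set, namely the common red neighborhood of the images already chosen for its back-neighbors. Each $v_i$ has at most $d$ back-neighbors; when some of these lie in the core the dependent-random-choice property keeps the relevant common neighborhood large, and when they are all low-degree only a bounded number of constraints accumulate, so a legal image persists. This yields a red copy of $H$.

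The main obstacle --- and the reason this conjecture stood open for roughly four decades --- is controlling the \emph{accumulation} of these constraints over the entire embedding with a constant $C(d)$ that is independent of $m$ and $n$. A naive greedy analysis loses a factor that grows with $m$; making the losses summable requires a delicate quantitative scheme in which the threshold $T$ and the dependent-random-choice parameters are balanced against each other, the vertices are handled in degree-layers, and a potential or weighting argument certifies that candidate sets never drop below a fixed positive fraction of the host. This layered, weighted embedding is the technical heart of the argument; I expect it to be by far the hardest step, whereas the reduction to one dense color and the single core-construction are comparatively routine. (This is in essence the route by which Lee obtained the full resolution of the conjecture.)
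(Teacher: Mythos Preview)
The paper does not prove this statement at all: it is stated as a conjecture of Burr and Erd\H{o}s and then the paper simply records that it ``was recently confirmed by Lee~[L]'' before moving on to the infinite analogue. There is therefore no proof in the paper for me to compare your proposal against.

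As for the proposal itself, what you have written is a plausible high-level roadmap---reduce to a single dense color, use dependent random choice to build a core for the high-degree vertices, then embed along a degeneracy ordering---and you correctly flag that the entire difficulty lies in the step you have \emph{not} carried out: controlling the accumulation of constraints so that candidate sets stay large throughout the embedding with a constant depending only on $d$. But that step is the whole theorem. The earlier approaches of Kostochka--R\"odl, Kostochka--Sudakov, and Fox--Sudakov all had essentially this outline and obtained $R(H) \le m\cdot 2^{O(\sqrt{\log m})}$ or similar near-linear bounds precisely because the naive layered/weighted schemes lose slowly growing factors; Lee's contribution was a genuinely new multi-scale dependent random choice argument that finally made the losses bounded. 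Your sketch does not supply that argument or any substitute for it, so as written it is a description of the problem's landscape rather than a proof.
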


\cref{conj:linRam2} was recently confirmed by Lee \cite{L}.
It would be very interesting to prove an analogue of this for infinite graphs.

\begin{problem}\label{qu:degenerate}
For all $d\in \N$, does there exist some $c=c(d)>0$ such that $\Rd(G) \geq c$ for every infinite graph $G$ with degeneracy at most $d$?  A weaker version of this question is for all infinite graphs $G$ with finite degeneracy, does there exist some $c=c(G)>0$ such that $\Rd(G) \geq c$?
\end{problem}

As we will discuss in the next section, we obtain a positive answer to a weaker version of this question.

\subsection{Ramsey-dense graphs}
We say that an infinite graph $G$ is \emph{$r$-Ramsey-dense} if in every $r$-coloring of $K_\NN$ there is a monochromatic copy of $G$ with positive upper density.  If $r=2$, we drop the prefix and just say $G$ is \emph{Ramsey-dense}. Note that if $G$ is Ramsey-dense, this does not necessarily imply that $\Rd(G)>0$ as there are infinitely many colorings, so the infimum of the upper densities over all colorings can be $0$.
Indeed, we shall see below that the so called Rado graph $\CR$ is an example of an infinite graph which is Ramsey-dense yet $\Rd(\CR) = 0$.
On the other hand, every infinite graph $G$ with $\Rd(G) > 0$ is Ramsey-dense.

Ramsey-dense graphs are another natural analogue of graphs with linear Ramsey number.
We will describe a simple property guaranteeing that a graph is Ramsey-dense and then show that every Ramsey-dense graph is not far from having this property.

A set $X \subset V(G)$ is called \emph{dominating} if every vertex $v \in V(G) \setminus X$ has a neighbor in $X$. We call a set $X \subset V(G)$ \emph{ruling} if $X$ is finite and all but finitely many vertices $v \in V(G) \setminus X$ have a neighbor in $X$.
We say that an infinite graph $G$ is $t$-ruled if there are at most $t$ disjoint minimal ruling sets. The \emph{ruling number} of a graph $G$, denoted by \emph{$\rul (G)$}, is the smallest $t \in \N$ such that $G$ is $t$-ruled; if no such $t$ exists, we say $G$ is infinitely ruled, or $\rul(G)=\infty$.  Equivalently, $\rul(G)$ is the matching number of the hypergraph whose edges are all minimal ruling sets. Note that a graph $G$ is $0$-ruled if and only if there is no finite dominating set and finitely-ruled (i.e.\ $t$-ruled for some $t \in \N$) if and only if there is a finite set $S \subset V(G)$ such that $G[S^c]$ has no finite dominating sets.

\begin{theorem}\label{thm:ruling}
If $G$ is an infinite graph with $\rul(G)<\infty$, then $G$ is $r$-Ramsey-dense for all $r\in \N$.
\end{theorem}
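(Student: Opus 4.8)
The plan is to reduce to the case $\rul(G) = t$ for some finite $t$, and to exploit the fact that a copy of $G$ can be ``grown'' greedily whenever we have access to a ruling set together with an infinite reservoir of vertices attached to it. The key structural observation is this: if $X \subset V(G)$ is a minimal ruling set, then $V(G) \setminus X$ splits into a finite exceptional part and a part $W$ in which every vertex has a neighbor in $X$; and since $G$ is countable, $G$ itself can be built up by a back-and-forth/greedy embedding in which, at each stage, we only ever need to find, for a new vertex $w \in W$, a vertex in our target host that is adjacent to the (already-embedded, finite) image of $N_G(w)$. If the host coloring places, in some fixed color, a complete bipartite-type structure between a fixed finite ``core'' set and an infinite set of positive upper density, this greedy procedure succeeds and the resulting copy of $G$ has positive upper density.

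First I would set up the hypergraph formulation already given in the excerpt: $\rul(G)$ is the matching number of the hypergraph $\mathcal H$ whose edges are the minimal ruling sets of $G$. Since $\rul(G) = t < \infty$, by König-type / covering duality (or simply directly: a maximal matching of size $t$ has a vertex set $S$ of size at most $t \cdot \max$-size, but minimal ruling sets are finite, so $S$ is finite) there is a \emph{finite} set $S \subset V(G)$ meeting every minimal ruling set; equivalently $G[S^c]$ has no finite dominating set, i.e.\ $G[S^c]$ is $0$-ruled. Embedding the finite part $G[S]$ into any $r$-colored $K_\NN$ is trivial (finite Ramsey), using up only finitely many vertices, so it suffices to handle the $0$-ruled graph $G' = G[S^c]$: I will show every $0$-ruled graph is $r$-Ramsey-dense, and moreover that the copy can be found avoiding any prescribed finite set, so the two pieces can be glued.

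Next, the combinatorial heart. Given an $r$-coloring $\varphi$ of $K_\NN$ and the $0$-ruled graph $G'$: enumerate $V(G') = \{w_0, w_1, \dots\}$. I want to find an infinite set $A \subseteq \NN$ of positive upper density and a single vertex (or small finite set) that is monochromatically ``hub-connected'' to all of $A$ — more precisely, I would iterate the following. Pick a vertex $v_0$; among $\NN$, some color class of its edges, say color $c_0$, is infinite; restrict to that infinite set $N_0$. The point of $0$-ruledness is that $G'$ has no finite dominating set, so when embedding $w_{n+1}$ its neighborhood among $\{w_0,\dots,w_n\}$ is a finite set whose image is a fixed finite set of already-placed vertices, and we need a common $c$-colored neighbor of that finite set inside our current reservoir; finiteness of $G'$'s vertex-by-vertex neighborhoods plus an infinite reservoir, pruned by a Ramsey/pigeonhole argument at each step (fixing one color per new vertex, or better, passing to a monochromatic-in-one-color infinite subset via the infinite Ramsey theorem applied to the appropriate coloring of pairs/tuples), lets the greedy embedding continue forever. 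To control density, I would be careful to always keep the reservoir of positive upper density: rather than thinning arbitrarily, at each of the countably many steps I discard only a ``negligible'' (density-$0$, or just finite) portion, or I fix the color class globally at the start using the infinite Ramsey theorem so that the reservoir is a single fixed infinite set $B$ with the property that $\varphi$ is constant on all pairs from $B$ after finitely much work — then any infinite subset of $B$ works and I can take $B$ to have positive upper density because $\NN$ is partitioned into finitely many color classes of pairs in the Ramsey decomposition, one of which must have positive upper density. The embedded copy of $G'$ then lands inside $B$ (plus finitely many hubs), hence has positive upper density.

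The main obstacle, and the step deserving the most care, is \textbf{simultaneously maintaining positive upper density and completing an infinite greedy embedding}: naive pruning at each of infinitely many stages can drive the density to $0$ (indeed the Rado graph example in the excerpt shows density genuinely can collapse to $0$ — so we must not over-prune, and ``positive upper density'' rather than a quantitative bound is the right target). The resolution is to front-load all the pruning into one application of the infinite Ramsey theorem to obtain a homogeneous-enough infinite set $B$ of positive upper density \emph{before} starting the embedding, and then check that $0$-ruledness is exactly the hypothesis that makes the greedy embedding into $B$ never get stuck (each new vertex's back-neighborhood is finite, so a common neighbor in $B$ is guaranteed by homogeneity). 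A secondary technical point is verifying that $\rul(G) < \infty$ really does yield a \emph{finite} transversal $S$ of all minimal ruling sets; this follows because any minimal ruling set is finite and a maximum matching of size $t$ in $\mathcal H$ has finite union, and no minimal ruling set can be disjoint from that union (else the matching extends), so that finite union is the desired $S$.
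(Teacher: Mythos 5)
Your reduction to a $0$-ruled graph $G'=G[S^c]$ via a maximum matching of minimal ruling sets is fine and matches the paper, and your instinct that the embedding should proceed greedily by repeatedly finding common monochromatic neighbors of finite sets is also the right one (this is \cref{prop:embed0rul}). The gap is in the step you yourself flag as the heart of the matter: you propose to ``front-load all the pruning into one application of the infinite Ramsey theorem'' to obtain a homogeneous infinite set $B$ of positive upper density. No such $B$ need exist. Ramsey's theorem partitions the \emph{pairs} into color classes, not the vertices into homogeneous sets, and in the Rado coloring (Section~\ref{sec:rado} of the paper) every monochromatic infinite clique has density $0$: by \eqref{eq:rado}, a clique in color $i$ is contained, up to $n$ vertices, in the common $i$-neighborhood of its first $n$ vertices, which has density $2^{-n}$ for every $n$. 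Your fallback of discarding only a finite or density-zero portion of the reservoir at each of countably many stages also fails, since upper density is not preserved under countable nested intersections (a decreasing sequence of sets of upper density $1$ can have empty intersection). A smaller but related issue: even granted a good reservoir $B$, embedding $G'$ \emph{into} $B$ does not give positive upper density --- the copy must \emph{cover} a positive-density set, which is where $0$-ruledness is used a second time to make the embedding surjective.

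The paper's resolution, which your proposal is missing, is to replace the Ramsey-theoretic homogenization by a \emph{positive ultrafilter} $\SU$ (one all of whose members have positive upper density; this exists by Zorn's lemma applied to the filter of sets of lower density $1$, \cref{prop:positive_ultrafilter}). Coloring each vertex $v$ by the color $i$ with $N_i(v)\in\SU$ partitions $\NN$ into $r$ classes, one of which, say $V_1$, lies in $\SU$ and hence has positive upper density; and every finite $F\subseteq V_1$ has $\Ncap_1(F)\cap V_1\in\SU$, hence infinite. This is far weaker than $V_1$ being a clique, but it is exactly the hypothesis of \cref{prop:embed0rul}, which then embeds $G'$ \emph{surjectively} onto a set in $\SU$, giving positive upper density. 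If you want to repair your argument, you should replace ``homogeneous set from Ramsey's theorem'' by this ultrafilter-induced vertex coloring (or an equivalent finitely-additive device for consistently selecting large sets closed under finite intersection).
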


This has a few interesting corollaries.  Since locally finite graphs have ruling number 0, we immediately get the following.

\begin{corollary}
If $G$ is a locally finite, infinite graph, then $G$ is $r$-Ramsey-dense for all $r\in \N$.
\end{corollary}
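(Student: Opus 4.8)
The plan is to deduce this immediately from \cref{thm:ruling}, so the only work is to verify that a locally finite infinite graph has finite ruling number; in fact $\rul(G)=0$. Recall from the discussion preceding \cref{thm:ruling} that $\rul(G)=0$ holds precisely when $G$ has no finite dominating set (a finite dominating set is a finite ruling set, and conversely, if $X$ is a finite ruling set then adjoining the finitely many vertices outside $X$ with no neighbour in $X$ yields a finite dominating set). So it suffices to check that no finite set dominates $G$.

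First I would fix an arbitrary finite set $X\subseteq V(G)$. Since $G$ is locally finite, each $N(v)$ with $v\in X$ is finite, so $X\cup\bigcup_{v\in X}N(v)$ is a finite union of finite sets and hence finite. As $G$ is infinite, there is a vertex $w\in V(G)$ lying outside this set; by construction $w\notin X$ and $w$ has no neighbour in $X$, so $X$ is not dominating. Since $X$ was arbitrary, $G$ has no finite dominating set, and therefore $\rul(G)=0<\infty$.

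Finally, applying \cref{thm:ruling} to $G$ (whose ruling number $0$ is in particular finite) yields that $G$ is $r$-Ramsey-dense for every $r\in\N$, as claimed. There is no substantive obstacle; the only point requiring a little care is the equivalence between $\rul(G)=0$ and the nonexistence of a finite dominating set, which is exactly what lets us avoid arguing directly about disjoint minimal ruling sets and instead reduce to the elementary finiteness observation above.
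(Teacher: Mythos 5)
Your proposal is correct and matches the paper's approach exactly: the paper derives this corollary from \cref{thm:ruling} via the one-line observation that locally finite graphs have ruling number $0$, and your argument simply fills in the (easy) verification of that fact. Nothing to add.
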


The \emph{Rado graph} is the graph $\CR$ with vertex-set $\NN$ defined by placing an edge between $m < n$ if and only if the $m$th digit in the binary expansion of $n$ is $1$. The Rado graph has many interesting properties, for example it is isomorphic to the infinite random graph (that is the graph on $\NN$ in which every edge is present independently with probability $1/2$) with probability $1$. It is easy to verify that the Rado graph does not have any finite dominating sets and hence $\rul (\CR) =0$.

\begin{corollary}
The Rado graph $\CR$ is $r$-Ramsey-dense for all $r\in \N$.
\end{corollary}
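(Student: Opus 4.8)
The plan is to deduce this immediately from \cref{thm:ruling}. That theorem says any infinite graph with finite ruling number is $r$-Ramsey-dense for every $r \in \N$, so it suffices to verify that $\rul(\CR) < \infty$. In fact I would show the stronger statement $\rul(\CR) = 0$, which (as noted just before \cref{thm:ruling}) is equivalent to saying that $\CR$ has no finite dominating set.

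To establish this, fix an arbitrary finite set $X \subset \NN$ and pick any $k \in \NN$ with $k > \max X$. Set $v = 2^{k}$, whose binary expansion has a $1$ in position $k$ and a $0$ in every other position. For each $m \in X$ we have $m \le \max X < k < 2^{k} = v$; since $m \ne k$, the $m$th binary digit of $v$ is $0$, and therefore $mv \notin E(\CR)$. As $v > \max X$ we also have $v \notin X$ and no element of $X$ exceeds $v$, so $v$ is a vertex outside $X$ with no neighbour in $X$. Hence $X$ is not dominating. Since $X$ was an arbitrary finite set, $\CR$ has no finite dominating set, i.e.\ $\rul(\CR) = 0$, and \cref{thm:ruling} then gives that $\CR$ is $r$-Ramsey-dense for all $r \in \N$. (Alternatively, one may invoke the well-known extension property of the Rado graph applied to the pair $(\emptyset, X)$ to produce such a vertex $v$.)

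There is essentially no obstacle here: the whole content sits in \cref{thm:ruling}, and the only thing left to check is the standard fact that the Rado graph has no finite dominating set, which follows in one line from its digit-based definition (or from its extension property). It is worth stressing that this does not conflict with the earlier observation that $\Rd(\CR) = 0$: Ramsey-density only requires a monochromatic copy of positive upper density in each individual colouring, with no uniform positive lower bound as one ranges over all colourings.
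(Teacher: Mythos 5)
Your proposal is correct and follows exactly the paper's route: the paper also deduces the corollary from \cref{thm:ruling} after noting that $\CR$ has no finite dominating set and hence $\rul(\CR)=0$; you merely spell out the one-line digit argument that the paper leaves as "easy to verify." No issues.
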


On the other hand, we will show that $\Rd(\CR) = 0$ (see \cref{cor:Rado-ud-0}).
Another corollary asserts that graphs with bounded degeneracy are Ramsey-dense.

\begin{corollary}
If $G$ is an infinite graph with bounded degeneracy, then $G$ is $r$-Ramsey-dense for all $r\in \N$.
\end{corollary}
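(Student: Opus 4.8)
The plan is to deduce this from \cref{thm:ruling}: it suffices to show that every infinite graph of bounded degeneracy has finite ruling number. I would prove the sharper statement that $\rul(G)\le \degen(G)$ for every infinite graph $G$; applying \cref{thm:ruling} then finishes the proof, since $\degen(G)<\infty$ forces $\rul(G)<\infty$, and hence $G$ is $r$-Ramsey-dense for all $r\in\N$.

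To see $\rul(G)\le d$ when $G$ is $d$-degenerate, I would argue by contradiction. Suppose $\rul(G)>d$, so there are $d+1$ pairwise disjoint minimal ruling sets $R_1,\dots,R_{d+1}$. Each $R_i$ is finite, and since $R_i$ is ruling, the set $B_i$ of vertices $v\in V(G)\setminus R_i$ with no neighbor in $R_i$ is finite. Let $W=\bigcup_{i=1}^{d+1}(R_i\cup B_i)$, a finite set. For every $v\in V(G)\setminus W$ and every $i$ we have $v\notin R_i$ and $v\notin B_i$, so $v$ has a neighbor in $R_i$; as the $R_i$ are pairwise disjoint, $v$ has at least $d+1$ distinct neighbors lying in $W$. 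Now fix any $N\in\N$, pick distinct vertices $u_1,\dots,u_N\in V(G)\setminus W$ (possible as $V(G)$ is infinite), and set $H=G[W\cup\{u_1,\dots,u_N\}]$. Counting the edges of $H$ incident to $\{u_1,\dots,u_N\}$ gives $e(H)\ge N(d+1)$. On the other hand, $H$ is a finite subgraph of a $d$-degenerate graph, hence itself $d$-degenerate (restrict the ordering), so $e(H)\le d\,|V(H)|=d(|W|+N)$. Combining, $N(d+1)\le d|W|+dN$, i.e.\ $N\le d|W|$, which fails for $N=d|W|+1$ — a contradiction. Hence $\rul(G)\le d$, and the corollary follows from \cref{thm:ruling}.

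The argument is short once \cref{thm:ruling} is available; the only point of substance is recognizing that bounded degeneracy is exactly a uniform sparsity condition strong enough to cap the number of pairwise disjoint (minimal) ruling sets, which is then handled by the elementary edge count above. (The bound $\rul(G)\le\degen(G)$ need not be tight — for instance $K_{d+1,\infty}$ is $(d+1)$-degenerate but only $1$-ruled.)
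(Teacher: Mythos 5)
Your proof is correct and takes essentially the same route as the paper: reduce via \cref{thm:ruling} to showing that every $d$-degenerate graph has $\rul(G)\le d$, the key point being that every vertex outside a finite exceptional set must have at least $d+1$ neighbours in the union of $d+1$ pairwise disjoint minimal ruling sets. The paper gets the contradiction a bit more directly (a single vertex coming after that finite set in a $d$-degenerate ordering has at most $d$ earlier neighbours), whereas you run an edge count $e(H)\le d|V(H)|$ over a large finite induced subgraph; both are fine. One slip in your closing aside: $K_{d+1,\infty}$ is not $1$-ruled --- the $d+1$ singletons on the finite side are pairwise disjoint minimal ruling sets, so $\rul(K_{d+1,\infty})=d+1=\degen(K_{d+1,\infty})$; a correct witness that the bound need not be tight is $T_\infty$, which is $1$-degenerate but $0$-ruled.
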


By \cref{thm:ruling}, it suffices to show that every $d$-degenerate infinite graph $G$ is $d$-ruled.

\begin{fact}\label{degen_rul}
Let $d\in \mathbb{N}$.  If $G$ is $d$-degenerate, then $\rul(G)\leq d$.
\end{fact}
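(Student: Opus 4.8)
The plan is to argue by contradiction, translating ``many disjoint ruling sets'' directly into a violation of a degeneracy ordering. Suppose $\rul(G) \ge d+1$, so that there exist pairwise disjoint minimal ruling sets $X_0, X_1, \dots, X_d \subseteq V(G)$. The first step is to fold all of the relevant exceptional vertices into one finite set: for each $i \in \{0,\dots,d\}$ let $E_i$ be the set of vertices $v \in V(G) \setminus X_i$ having no neighbor in $X_i$, which is finite by the definition of a ruling set, and put $F = \bigcup_{i=0}^d (X_i \cup E_i)$. Since each $X_i$ and each $E_i$ is finite, $F$ is finite.

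The key observation is that every vertex $v \in V(G) \setminus F$ has at least $d+1$ neighbors inside $F$. Indeed, for each $i$ we have $v \notin X_i$ and $v \notin E_i$, so $v$ has a neighbor in $X_i$; as the $X_i$ are pairwise disjoint, this produces $d+1$ distinct neighbors of $v$, all contained in $F$. Now fix a degeneracy ordering $v_1, v_2, \dots$ of $V(G)$ and choose $N$ with $F \subseteq \{v_1, \dots, v_N\}$, which is possible since $F$ is finite. Then $v_{N+1} \notin F$, hence $v_{N+1}$ has at least $d+1$ neighbors lying in $F \subseteq \{v_1,\dots,v_N\}$, so $|N(v_{N+1}) \cap \{v_1, \dots, v_N\}| \ge d+1 > d$, contradicting $d$-degeneracy. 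Therefore $\rul(G) \le d$.

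I do not expect any real obstacle here; the statement is essentially a bookkeeping consequence of the definitions. The only points deserving a line of care are: (a) any ruling set contains a minimal one (take a ruling subset of minimum size), so it in fact suffices to rule out $d+1$ pairwise disjoint ruling sets, minimality playing no role in the argument; and (b) correctly absorbing the ``all but finitely many'' slack from the definition of ruling into the finite set $F$ before invoking the degeneracy ordering. It is also worth noting in passing that this yields the preceding corollary immediately: if $G$ has bounded degeneracy, say degeneracy $d$, then $\rul(G) \le d < \infty$, so \cref{thm:ruling} applies and $G$ is $r$-Ramsey-dense for all $r \in \N$.
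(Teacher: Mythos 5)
Your proof is correct and follows essentially the same argument as the paper: take $d+1$ pairwise disjoint (minimal) ruling sets, absorb the finitely many exceptional vertices into a finite set, and observe that any vertex coming after that set in a degeneracy ordering would have at least $d+1$ earlier neighbors. The only cosmetic difference is that you track the exceptional vertices per ruling set as $E_i$ where the paper lumps them into a single set $S_0$; your side remark that minimality of the ruling sets plays no role is also accurate.
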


\begin{proof}
Suppose for contradiction, there is a $d$-degenerate infinite graph $G$ with $\rul (G) > d$ for some $d \in \N$. Let $S_1, \dots, S_{d+1}$ be disjoint minimal ruling sets and let $S_0 \subset V(G) \setminus (S_1 \cup \ldots \cup S_{d+1})$ be the set of vertices which do not have a neighbor in some $S_i$.
Note that $S := S_0 \cup S_1 \cup \ldots \cup S_{d+1}$ is finite. Therefore, there is a vertex $u \in \N \setminus S $ which comes after all vertices in $S$ in a $d$-degenerate ordering of $V(G)$ and hence $\deg(u,S) \leq d$. However, by construction, $u$ has a neighbor in each of $S_1, \ldots, S_{d+1}$, a contradiction.
\end{proof}

\begin{problem}\label{prob:infruleddense}
Is there a Ramsey-dense graph $G$ with $\rul(G)=\infty$?
\end{problem}

If the answer is no, then together with Theorem \ref{thm:ruling}, this would give a complete characterization of Ramsey-dense graphs.  We will give a partial answer to the question by showing that if $\rul(G)=\infty$ and additionally the sizes of the minimal ruling sets do not grow too fast, then $G$ is not Ramsey-dense (see Theorem \ref{rulinglog}).

\subsection{Trees}
Another famous conjecture of Burr and Erd\H os \cite{BE2} concerns the Ramsey number of trees.  A graph is \emph{acyclic} if it contains no finite cycles, a \emph{forest} is an acyclic graph, and a \emph{tree} is a connected acyclic graph. 

\begin{conjecture}[Burr--Erd\H{o}s \cite{BE2}]\label{conj:BE-trees}
Let $n\in \N$ and let $T$ be a tree on at most $\frac{n}{2}+1$ vertices.  Every $2$-colored $K_n$ contains a monochromatic copy of $T$.
\end{conjecture}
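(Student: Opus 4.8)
The plan is to prove, for a tree $T$ on $k$ vertices, that $R(T,T)\le 2(k-1)$; equivalently, that every $2$-colouring of $K_n$ with $n\ge 2(k-1)$ yields a monochromatic $T$. It suffices to treat the extremal case $n=2(k-1)$ and to find $T$ inside one of the two colour graphs, the red graph $G$ and the blue graph $\overline{G}$. The basic embedding tool is the greedy lemma: \emph{any graph $H$ with minimum degree $\delta(H)\ge k-1$ contains every tree on $k$ vertices}, proved by embedding the vertices of $T$ in a rooted (BFS) order so that when a new vertex is placed at a free neighbour of its already-embedded parent, the degree bound guarantees such a neighbour exists, since at most $k-1$ host vertices have been used. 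The naive attempt---every vertex of $K_n$ has red- or blue-degree at least $k-1$, so a majority colour has $\ge n/2=k-1$ such vertices---fails, because high monochromatic degree at individual vertices does not produce a colour class with large \emph{minimum} degree on a common vertex set. Overcoming exactly this gap is the whole difficulty, and it is the reason the statement is a (notoriously hard) conjecture rather than a one-line exercise.

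I would split on the shape of $T$, measured by its maximum degree. \textbf{Concentrated case} ($\Delta(T)$ comparable to $k$, so $T$ is star- or spider-like): here $T$ has a proper bipartition $(A,B)$ with $|B|$ small, and embedding $T$ amounts to finding a few hub vertices carrying large monochromatic degree into a common large reservoir, together with enough leaves; this is a star-forest embedding that can be controlled by a direct degree count, using that the extremal configuration for stars---two (almost) $(k-2)$-regular colour classes on $2k-3$ vertices---is precisely what forces the bound $2(k-1)$ and no better. \textbf{Spread case} ($\Delta(T)$ small relative to $k$): here I would invoke the regularity method in the connected-matching style used for paths. Apply Szemer\'edi's regularity lemma to the denser colour, pass to the reduced graph, and locate a large monochromatic \emph{connected matching}; a connected matching of the required size blows up, via the regular pairs, to a host structure into which any tree on $k$ vertices of bounded maximum degree embeds, because small $\Delta(T)$ leaves room to place each successive vertex inside the regular pairs. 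Since the spread regime can accommodate monochromatic trees of size up to roughly $2n/3>n/2+1$, the binding constraint is the concentrated case, and the two regimes together should cover every $T$.

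The main obstacle is the \emph{intermediate regime} and, above all, the demand for the \emph{exact} constant $2(k-1)$ rather than $2k-o(k)$. Regularity introduces error terms and only delivers $T$ of size $(1-o(1))k$, so the exact bound has to come from a stability argument: either the colouring is close to an extremal one (such as the nearly balanced $(k-2)$-regular colour classes extremal for stars), in which case $T$ is embedded by hand in the structured part, or it is quantitatively far from extremal, in which case one gains a positive-density surplus that absorbs the regularity errors. Folding in the few moderately-high-degree vertices of $T$ that the regularity embedding cannot handle---parking them at a bounded number of vertices of large monochromatic degree and finishing the remaining leaves greedily, an absorption step---is the technical heart and the place where the two cases must be reconciled.

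Realistically, I would first target the asymptotic statement $R(T,T)\le (2+o(1))k$ uniformly over all trees on $k$ vertices via the dichotomy above, and only then attack the exact bound $2(k-1)$ for all \emph{sufficiently large} $k$ through the stability refinement. The uniform exact statement for every $k$ and every $T$ is exactly the content of the Burr--Erd\H os conjecture, and it should be expected to be genuinely hard, with the trees that are neither star-like (easy by counting) nor of bounded degree (easy by regularity)---those carrying a handful of moderately large degrees---as the critical instances.
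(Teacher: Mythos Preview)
The paper does not contain a proof of this statement: it is presented as a \emph{conjecture} of Burr and Erd\H{o}s, and the paper only records that it ``was solved for large $n$ by Zhao~[Z]''. There is therefore no paper proof to compare against.

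Your submission is not a proof but a strategy outline, and you say as much yourself (``it is the reason the statement is a (notoriously hard) conjecture''). As a plan, your dichotomy---handle star-like trees by direct counting, handle bounded-degree trees via regularity and connected matchings, then run a stability argument to upgrade $(2+o(1))k$ to the exact bound $2(k-1)$ for large $k$---is broadly in line with how the large-$n$ case was actually resolved by Zhao. But nothing in what you wrote constitutes a proof of any case: the greedy lemma is stated but never successfully applied to either colour class; the ``concentrated case'' is asserted to follow from a degree count without the count being given; and the regularity portion is described only at the level of naming the tools. You also correctly identify that your scheme, even if carried out, would deliver the result only for \emph{sufficiently large} $n$, not for all $n$ as the conjecture demands---so the full conjecture would remain open even after a successful execution.

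In short: there is no paper proof here, and your proposal is a credible research plan rather than a proof. If the task is to supply a proof, this does not do so; if the task is to sketch a viable attack, it is reasonable but would need every step made precise, and would still fall short of the full statement for small $n$.
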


\cref{conj:BE-trees} was solved for large $n$ by Zhao \cite{Z}.
The following result provides an analogue of this in infinite graphs and can be seen to be best possible. Note that \cref{thm:locally-finite} already implies that $\Rd(T) \geq 1/2$ for every infinite locally finite forest $T$.

\begin{theorem}\label{thm:trees}
$\Rd(T) \geq 1/2$ for every infinite forest $T$.
\end{theorem}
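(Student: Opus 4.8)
The plan is to handle two qualitatively different kinds of forest separately and then stitch the cases together. First, observe that by \cref{thm:locally-finite}(ii) (with $k=2$) we already have $\Rd(T)\geq 1/2$ whenever $T$ is one-way $2$-locally finite, i.e.\ whenever $T$ is bipartite with the two colour classes arranged so that vertices in the smaller class have finite degree into the larger. So the work is entirely in the case where $T$ has infinitely many vertices of infinite degree that cannot be "hidden" on one side of a locally finite bipartition. The key structural observation is that a forest $T$ that is \emph{not} one-way $2$-locally finite must contain, essentially, a copy of (a finite modification of) $T_\infty$, the tree in which every vertex has infinite degree — more precisely, one can find an infinite independent set $S\subseteq V(T)$ together with an infinite matching from $S$ into $V(T)\setminus S$ such that each matched vertex still has infinite degree within $T$. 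The idea is then to embed this skeleton greedily into whichever colour class is "dense enough", filling in the rest of $T$ (which hangs off the skeleton as locally finite pieces) as we go.

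The main step is the embedding argument. Fix a $2$-colouring $\vp$ of $K_\NN$ with colours red and blue. For each vertex $n$ and each colour $c\in\{\text{red},\text{blue}\}$, say $n$ is \emph{$c$-good} if $n$ sends colour $c$ to infinitely many vertices; every vertex is $c$-good for at least one $c$, so one colour, say red, has the property that the set $R$ of red-good vertices has upper density $\geq 1/2$. (This is the only place the $1/2$ comes from, and it is tight because of the half-graph-type lower bound colouring.) Now I would build a red copy of $T$ with vertex set contained in $R$ together with some auxiliary vertices, processing $V(T)$ in a suitable order: maintain a finite partial embedding $f$, and at each stage, when we need to place a new vertex $v$ of $T$ adjacent to already-placed vertices $u_1,\dots,u_j$, note that since $T$ is a forest at most one of the $u_i$ is already embedded (the others are leaves-to-be), so essentially we need: given one placed vertex $u=f(u_1)$, find an unused vertex $w$ with $\vp(uw)=$ red and, if $v$ itself needs infinite red-degree later, $w$ red-good. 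The set of red neighbours of $u$ is infinite (as $u\in R$), and all but finitely many red-good vertices lie in $R$; a counting/diagonalisation argument — enumerating the vertices of $T$ so that each vertex's neighbours are requested cofinally and using that $R$ has upper density $\geq 1/2$ — lets us simultaneously keep the image inside a set of density $\geq 1/2$. To get the upper density of the image to actually \emph{reach} $1/2$ rather than merely have it as a sup, I would interleave the greedy embedding with a "catch-up" phase: along a sequence of times $t_1<t_2<\cdots$ witnessing $\ud(R)\geq 1/2$, reserve blocks of $R\cap\{0,\dots,t_i\}$ and force the partial embedding to use almost all of each such block before moving on, which is possible because each newly embedded vertex only needs to avoid a finite forbidden set and hit an infinite target inside $R$.

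The hard part will be making the "skeleton plus locally finite pieces" decomposition precise and showing it is exhaustive: one needs a clean combinatorial lemma classifying infinite forests into (a) one-way $2$-locally finite, handled by \cref{thm:locally-finite}, and (b) those containing the required independent-set-plus-matching skeleton with the matched vertices retaining infinite degree, so that the greedy argument above applies. A subtlety is that in case (b) the "rest" of $T$ need not be locally finite globally, but hanging off each skeleton vertex it decomposes into countably many finite or one-way-locally-finite chunks that can be absorbed in finite batches between skeleton steps; verifying that the bookkeeping closes up — every vertex and edge of $T$ eventually placed, image density driven up to $1/2$ — is the technical core. The tightness claim for $T_\infty$ then follows from the Erd\H os–Galvin-style colouring: $2$-colour $K_\NN$ so that long blocks alternate, forcing any monochromatic $T_\infty$ to miss half the vertices in the limit; I would present this as a short explicit construction after the main bound.
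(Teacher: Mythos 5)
There are genuine gaps in both halves of your plan. First, the structural dichotomy is not exhaustive: a forest that fails to be one-way $2$-locally finite need not contain an infinite independent set matched into infinitely many vertices of infinite degree. The infinite star $K_{1,\NN}$ already falls through both of your cases (it is not one-way $2$-locally finite, since the unique bipartition places the infinite-degree centre alone in the smaller class, and it has only one vertex of infinite degree, so no infinite skeleton); the same happens for two adjacent vertices each carrying infinitely many pendant leaves, with or without an infinite path attached. The dichotomy the paper actually uses is on whether $T$ contains an infinite path: if not, $T$ has a vertex of infinite degree and one embeds $T$ so as to cover the dense monochromatic neighbourhood of a single well-chosen vertex (\cref{lem:find-short-tree,lem:embed-short-tree}); if so, $T$ has unbounded radius and is embedded spanning a dense monochromatic \emph{infinitely connected} set (\cref{lem:find-deep-tree,lem:embed-deep-tree}).

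Second, and more seriously, the embedding step does not work as described. Knowing that the set $R$ of vertices with infinite red degree has upper density at least $1/2$ gives no control over \emph{where} those red neighbourhoods lie: the red graph may be a disjoint union of infinite cliques $A_1,A_2,\dots$, each of upper density $0$, in which case every vertex lies in $R$ (indeed is good in both colours) yet every connected red subgraph has upper density $0$. A greedy red embedding of a connected $T$ can then never approach density $1/2$, and your ``catch-up'' phase cannot be executed, because the red neighbourhood of an already embedded vertex need not meet almost all of a block of $R$ --- it is merely infinite, and possibly very sparse and disjoint from the neighbourhoods of other embedded vertices. What is actually needed is a monochromatic set of upper density at least $1/2$ that is infinitely connected (or, in the short-tree case, a vertex whose monochromatic neighbourhood inside a suitable dense set is itself dense), and producing this requires the transfinite peeling arguments of \cref{lem:find-deep-tree,lem:find-short-tree}, not a single pass sorting vertices by which colour they send to infinitely many others.
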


We further show that $\Rd(T_\infty)=1/2$ where $T_\infty$ is the infinite tree in which every vertex has infinite degree and there are also infinite locally finite trees $T$ with $\Rd(T) = 1/2$ (see \cref{ex:trees}).  

Erd\H{o}s, Faudree, Rousseau, and Schelp \cite{EFRC} showed that if $T$ is a tree on more than $\ceiling{3n/4}$ vertices, then there exists a 2-coloring of $K_n$ which contains no monochromatic copy of $T$.  Furthermore they showed that this bound can be acheived by certain trees such as the tree obtained by joining the center of $K_{1,n/4}$ with a path on $n/2-1$ vertices (see also \cite{YL}).  In other words, $3/4$ is the largest proportion of vertices that a single connected graph can cover in an arbitrary 2-coloring of $K_n$.  We now consider an analogous question for infinite graphs.

Say that a graph $G$ is \emph{Ramsey-cofinite} if in every 2-coloring of $K_{\NN}$ there exists a monochromatic copy of $G$ such that $V(G)$ is cofinite.  It is clear that any graph $G$ with infinitely many isolated vertices is Ramsey-cofinite.  Say that a graph $G$ is \emph{Ramsey-lower-dense} if in every $2$-coloring of $K_\NN$ there is a monochromatic copy of $G$ with positive lower density.  As mentioned earlier, Erd\H{o}s and Galvin proved that for any graph $G$ with finitely many isolated vertices and bounded maximum degree then $G$ is not Ramsey-lower-dense, and thus $G$ is not Ramsey-cofinite.

Surprisingly, we show that there exist connected graphs which are Ramsey-cofinite. In fact, we are able to completely characterize all acyclic graphs which are Ramsey-cofinite.  Say that a graph $G$ is \emph{weakly expanding} if for all $k\in \NN$, there exists $\ell\in \NN$ such that for all independent sets $A$ in $G$ with $|A|\geq \ell$ we have $|N(A)|> k$. Say that a graph $G$ is \emph{strongly contracting} if there exists $k\in \NN$ such that for all $\ell\in \N$ there exists an independent set $A$ in $G$ with $|A|\geq \ell$ such that $|N(A)|\leq k$.  Note that every infinite graph is either strongly contracting or weakly expanding.  Finally, let $\CT^*$ be the family of forests $T$ having one vertex $t$ of infinite degree, every other vertex has degree at most $d$ for some $d\in \NN$, $t$ is adjacent to infinitely many leaves and infinitely many non-leaves, and cofinitely many vertices of $T$ have distance at most $2$ to $t$ (in particular, if $T$ is not connected, then $T$ has one infinite component and finitely many finite components).

\begin{theorem}\label{thm:cofinitetree}
Let $T$ be a forest.
\begin{enumerate}
\item If $T$ is strongly contracting, has no finite dominating set, and $T \not \in \CT^{*}$, then $T$ is Ramsey-cofinite.
\item If $T$ is weakly expanding, has a finite dominating set, or $T \in \CT^{*}$, then $T$ is not Ramsey-lower-dense (and thus $T$ is not Ramsey-cofinite).
\end{enumerate}
\end{theorem}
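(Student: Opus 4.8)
The plan is to prove the two directions separately: the ``obstruction'' direction (ii) by constructing explicit $2$-colourings, and the ``cover'' direction (i) by a structural analysis of an arbitrary $2$-colouring combined with a greedy absorption argument.

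For (ii), in all three sub-cases I would fix a partition of $\NN$ into consecutive intervals $I_1, I_2, \dots$ whose lengths grow so fast that $|I_1|+\dots+|I_{n-1}| = o(|I_n|)$; it then suffices to design the colouring so that every monochromatic copy of $T$ contains only $o(|I_n|)$ vertices of $I_n$ for infinitely many $n$, which forces its density to drop to $0$ along the right endpoints of those intervals. The cleanest sub-case is ``$T$ has a finite dominating set $X$'': colour all edges inside an interval red, and colour an edge between $I_i$ and $I_j$ (with $i<j$) red precisely when the $i$-th binary digit of $j$ equals $1$. Since $X$ dominates $T$, the image of $X$ in a $c$-coloured copy of $T$ meets only finitely many intervals, say those indexed by $F$, and every other vertex of the copy is joined by a copy-edge to some vertex of that image; but for any finite $F$ and any colour $c$ there are infinitely many $n$ for which every edge between $\bigcup_{i\in F}I_i$ and $I_n$ has colour $\ne c$, and then the copy avoids $I_n$ entirely. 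For ``$T$ weakly expanding'' and ``$T\in\CT^*$'' the between-interval colouring has to be supplemented by a carefully chosen internal colouring of each $I_n$: if the internal colouring is sufficiently ``one-sided'', the part of a monochromatic copy of $T$ lying in $I_n$ is forced to be nearly an independent set of $T$, whose neighbourhood --- large, by weak expansion --- must then lie in the boundedly many intervals joined to $I_n$ in that colour, which the fast growth rules out once the copy is large inside $I_n$. The case $T\in\CT^*$ is the genuine boundary and needs the most delicate bookkeeping, since one hub of infinite degree is allowed; here the structural restrictions written into the definition of $\CT^*$ (all other vertices of bounded degree, cofinitely much at distance $\le 2$ from the hub) are exactly what makes the argument close.

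For (i), first dispose of the degenerate case: if $T$ has infinitely many isolated vertices, embed the graph $T_0$ obtained from $T$ by deleting its isolated vertices into a monochromatic clique $K_{U'}$ --- which exists by Ramsey's theorem inside a co-infinite $U'\subseteq\NN$ --- and map the isolated vertices of $T$ bijectively onto the (infinite) complement of the image of $T_0$, obtaining a monochromatic copy of $T$ covering all of $\NN$. So assume $T$ has only finitely many isolated vertices; being strongly contracting then gives a fixed $k\ge 1$ so that $T$ contains, for every $\ell$, an independent set of size $\ell$ with at most $k$ neighbours --- a ``brush'' of $\ell$ leaf-slots attached to at most $k$ hubs --- and the remaining hypotheses ``no finite dominating set'' and ``$T\notin\CT^*$'' are what we will actually exploit. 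Given an arbitrary $2$-colouring $\varphi$, I would first use elementary facts about $2$-coloured $K_\NN$ (one colour class is connected and spanning; a short analysis of which vertices have infinite degree in each colour) to choose a colour $c$ and a favourable place to start, then process $0,1,2,\dots$ in order while maintaining a partial monochromatic embedding of $T$ in colour $c$: each new vertex $v$ is absorbed as a leaf of an already-placed hub $h$ with $\varphi(vh)=c$, drawing on the unbounded supply of brush-slots; ``no finite dominating set'' is what permits introducing fresh hubs indefinitely rather than being confined to the finitely many forced by a dominating set, and ``$T\notin\CT^*$'' is what guarantees this hub-creation can always be done inside colour $c$. Bookkeeping then ensures that every vertex of $\NN$ is eventually used --- only the finitely many non-brush ``skeleton'' vertices of $T$ fail to absorb something --- and that all the infinitely many hubs and components demanded by $T$ get completed.

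The hard part will be direction (i): strengthening ``positive upper density'' to ``all but finitely many vertices'' leaves no slack, so one must show that an arbitrary $2$-colouring can never block the absorption step --- this is exactly where the hypothesis $T\notin\CT^*$ has to be used, and where the structural case analysis of $\varphi$ (presumably split by how many vertices have infinite degree in each colour, and by how their neighbourhoods sit inside $\NN$) does its real work --- while at the same time fitting in the finitely many skeleton vertices of $T$ without collisions. A secondary difficulty is the $\CT^*$ sub-case of (ii), where the one permitted infinite-degree hub makes it delicate to drive the density of monochromatic copies of $T$ all the way down to $0$.
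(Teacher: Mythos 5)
Your overall architecture matches the paper's (explicit fast-growing interval colourings for (ii), a structural analysis of the colouring plus a vertex-by-vertex covering embedding for (i)), and your treatment of the ``finite dominating set'' and ``weakly expanding'' sub-cases of (ii) is essentially the paper's backward and forward interval colourings and is sound. But two essential pieces are missing, and they are where the actual content of the theorem lies. First, the $\CT^*$ sub-case of (ii) is not proved: you only say it ``needs the most delicate bookkeeping.'' Note that a forest in $\CT^*$ is strongly contracting \emph{and} has no finite dominating set, so both of the obstructions you do construct fail for it; the paper needs a genuinely different colouring here (\cref{lem:T-star}: four interleaved unions of intervals $V_0,\dots,V_3$ with monochromatic cliques inside, a fixed bipartite pattern between $V_0\cup V_1$ and $V_2\cup V_3$, and half-graph-type colourings across), whose analysis uses precisely the bounded non-hub degree $d$ built into the definition of $\CT^*$.

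Second, and more seriously, the absorption step in (i) is exactly the point at issue and you explicitly defer it. The step ``absorb each new vertex $v$ as a leaf of an already-placed hub $h$ with $\varphi(vh)=c$'' can be blocked: root the embedding at $v_0$, let $B$ and $R$ be its blue and red neighbourhoods, and let $R'\subseteq R$ be the vertices with only finitely many blue neighbours in $B$. If $R'$ is infinite, infinitely many target vertices can never be attached as blue leaves to hubs sitting in $B$, and symmetrically in red, so the naive greedy strategy fails in both colours. The paper resolves this by (a) first splitting off the case where some colour has infinitely many vertices of cofinite degree (\cref{lem:embed-leafy-tree}, which itself needs a nontrivial growth-rate matching between the brush sizes $|A_n|$ and the coloured graph), (b) handling components of unbounded radius or infinitely many components via infinite connectivity or a spanning red $K_{\NN,\NN}$, and (c) for the bounded-radius case proving a structural trichotomy for $T$ (\cref{fact:types}: a hub adjacent to infinitely many non-leaves together with either infinitely many disjoint paths of length $3$, a second infinite-degree vertex, or neighbours of unbounded degree) and using whichever alternative holds to reach the vertices of $R'$ by monochromatic paths of length $2$ or $3$ or by a second hub planted inside $R$ (\cref{lem:embed-all-other}, Cases 2.1--2.3). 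This is where $T\notin\CT^*$ actually does its work; identifying that it must enter here is not the same as making it do so. A smaller error: your claim that only ``finitely many non-brush skeleton vertices'' fail to absorb is false in general --- an infinite path with $n$ leaves attached to its $n$-th vertex is squarely in case (i) and has an infinite skeleton, and indeed such unbounded-radius trees are embedded by the paper through an entirely different mechanism than leaf-absorption.
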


To get a better sense of what Theorem \ref{thm:cofinitetree} says in terms of trees, say that a graph $G$ has  \emph{unbounded leaf degree} if for every $\ell\in \NN$, there exists $v\in V(G)$ such that $v$ is adjacent to at least $\ell$ leaves; otherwise, say that $G$ has \emph{bounded leaf degree}.  A tree is strongly contracting if and only if it has unbounded leaf degree, and a tree is weakly expanding if and only if it has bounded leaf degree.

In light of Theorem~\ref{thm:cofinitetree} it would be natural to ask if there is any connected graph $T$ such that there is a spanning monochromatic copy of $T$ in every 2-coloring of $K_{\NN}$; however, this is not possible.  Clearly if $T$ is an infinite star it does not have this property, so suppose $T$ is not an infinite star and 2-color the edges of $K_{\NN}$ by fixing a vertex $v$, coloring all edges incident with $v$ red, and coloring all other edges blue.  Every monochromatic copy of $T$ must be blue and therefore not be spanning.

Completely characterizing all graphs which are Ramsey-cofinite is still an open question and is discussed in Section \ref{sec:cofingen}.

\subsection{Bipartite Ramsey densities}
Gy\'arf\'as and Lehel \cite{GL} and independently Faudree and Schelp \cite{Faudree1975} proved that every $2$-colored $K_{n,n}$ contains a monochromatic path with at least $2\ceiling{n/2}$ vertices (that is, roughly half the vertices of the graph). They further proved that this is best possible. We will prove an analogue of this for infinite graphs. Here, $K_{\N,\N}$ is the infinite complete bipartite graph with one part being all even positive integers and the other part being all odd positive integers.

\begin{theorem}\label{thm:bip-paths}
Every 2-colored $K_{\NN, \NN}$ contains a monochromatic path of upper density at least $1/2$.
\end{theorem}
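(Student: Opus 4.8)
The plan is to reduce the problem to the finite bipartite Ramsey statement of Gy\'arf\'as--Lehel / Faudree--Schelp by a compactness-style exhaustion argument, in the spirit of how one typically deduces infinite density results from their finite analogues. Fix a $2$-coloring $\vp$ of $K_{\NN,\NN}$ with parts $A$ (odd integers) and $B$ (even integers). For each $n$, consider the finite induced subgraph on the first $n$ vertices of $A$ and the first $n$ vertices of $B$; this is a $2$-colored $K_{n,n}$, so it contains a monochromatic path $P_n$ on at least $2\ceiling{n/2}\ge n$ vertices. The naive hope is that these finite paths converge to an infinite monochromatic path covering density at least $1/2$; the subtlety, exactly as in the path results of \cite{EG,DM,CDLL}, is that the $P_n$ need not be nested and a limit of dense finite paths need not be dense (or even infinite).

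The first real step is therefore to set up a more careful iterative construction rather than a crude limit. I would build an infinite monochromatic path greedily through finite blocks: having already built a monochromatic path $Q$ ending at some vertex $x$ and using vertices only among $\{1,\dots,m\}$, look far ahead at a large window $W=\{m+1,\dots,m+s\}$ (with roughly equal parts in $A$ and $B$), apply the finite bipartite Ramsey theorem inside $W\cup\{x\}$ to get a long monochromatic path in the majority color, and then splice it onto $Q$. The monochromatic path found in $W$ has the same color with density $\ge 1/2$ inside that window; the key bookkeeping is to ensure (i) the splicing preserves monochromaticity — which forces us to either fix one global color in advance or handle the two colors by a pigeonhole/Ramsey argument on the infinitely many windows — and (ii) the windows can be chosen so that the accumulated path attains $\limsup \ge 1/2$. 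Point (i) is handled by noting that among the infinitely many windows, one color class appears in infinitely many of them with a path of density $\ge 1/2$; pass to that subsequence of windows and commit to that color, discarding the rest. But to actually chain paths of a single color across windows we need the endpoints to connect in that color, which is where a small amount of extra work (using that $K_{\NN,\NN}$ is complete bipartite and that we can always find, within a window, many candidate endpoints) is required — this is the main obstacle.

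Concretely, to overcome the connection obstacle I would use a ``reservoir''/absorption-free chaining trick adapted to the bipartite setting: when applying the finite theorem in window $W_j$, I do not just extract one long path but argue that I can extract a long monochromatic path in color $c$ whose final vertex lies in a prescribed side and has a $c$-colored edge to a designated vertex of the next window (or, failing that, the complementary structural information gives a different dense monochromatic path directly). Since $K_{n,n}$ two-colored contains a monochromatic path on $\ge 2\ceiling{n/2}$ vertices, losing a bounded number of vertices at each splice costs nothing in density as the windows grow. Iterating over a sequence of windows with lengths $s_j\to\infty$ chosen so that $\sum_{i<j}(\text{earlier lengths}) = o(s_j)$, the resulting infinite monochromatic path $P$ satisfies $|V(P)\cap\{1,\dots,m+s_j\}| \ge (1-o(1))\cdot\tfrac12\,(m+s_j)$ along the sequence $m+s_j$, giving $\ud(P)\ge 1/2$. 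Finally I would check the easy matching upper-bound direction — a coloring showing $1/2$ is essentially tight (e.g.\ color edges between ``small'' and ``large'' vertices one color and the rest the other, as in the classical finite constructions) — so that the theorem's bound cannot be improved, though since the statement only claims $\ge 1/2$ this last remark can be relegated to a comment. I expect the chaining/endpoint-control argument to be the crux; once it is in place, the density computation is routine.
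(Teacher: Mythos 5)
Your reduction to the finite Gy\'arf\'as--Lehel/Faudree--Schelp bound correctly locates the difficulty, but it does not overcome it: the proposal has a genuine gap at the splicing step, and that step is where essentially all of the content of the theorem lives. Once you commit to a single color $c$ (which you must, since a path cannot change color), you need your partial $c$-colored path, ending at some vertex $x$, to reach into the next designated window and attach to a dense $c$-colored path there while losing only $O(1)$ vertices. Nothing guarantees this: $x$ may have no $c$-colored edges into that window at all, and the finite theorem gives you no control over which vertices serve as endpoints of the long monochromatic path it produces. Your two proposed escapes --- ``many candidate endpoints'' and ``the complementary structural information gives a different dense monochromatic path directly'' --- are exactly the statements that would need to be proved, and making them precise requires a global structural analysis of the coloring rather than window-by-window applications of the finite result. (Note that since the bound $1/2$ is tight by \cref{ex:bipartite}, there is no slack to absorb a failure rate at the splices.) The secondary issues you raise are handled correctly: with window lengths $s_j$ growing so fast that $\sum_{i<j}s_i=o(s_j)$, upper density $1/2$ only requires covering half of a single window along a subsequence, so passing to a subsequence of windows of one color is harmless.

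The paper takes a different route that resolves the connection problem once and for all. It proves the stronger \cref{thm:bip-paths-partition} via \cref{thm:bip}: using the vertex-colorings induced by non-trivial ultrafilters on the two sides, every $2$-colored $K_{\NN,\NN}$ is partitioned, up to a finite set, into two sets each of which is \emph{monochromatic infinitely linked} (any two of its vertices are joined by infinitely many internally disjoint paths of the relevant color). An infinitely linked set in color $c$ can always be spanned by a single $c$-colored path precisely because the linkage lets one connect any new vertex to the path built so far while avoiding the finitely many vertices already used --- this is the global substitute for your ad hoc endpoint control. One of the two sets then has upper density at least $1/2$. If you want to salvage your approach, the missing ingredient is a lemma of exactly this linkage type; without it, the chaining argument does not close.
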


Pokrovskiy \cite{P} proved that the vertices of every $2$-colored complete bipartite graph $K_{n,n}$ can be partitioned into three monochromatic paths.
Soukup \cite{Souk} proved an analogue of this for infinite graphs which holds for multiple colors: The vertices of every $r$-colored $K_{\NN,\NN}$ can be partitioned into $2r-1$ monochromatic paths.
He also presents an example where this is best possible. However, in his example all but finitely many vertices can be covered by $r$ monochromatic paths. Our next result shows that this is always possible for two colors.

\begin{theorem}\label{thm:bip-paths-partition}
The vertices of every $2$-colored $K_{\NN, \NN}$ can be partitioned into a finite set and at most two monochromatic paths.
\end{theorem}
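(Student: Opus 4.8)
The plan is to prove the following sharper statement, which immediately implies the theorem: there are colours $c_1,c_2$ (not necessarily distinct) and a split $A=A_1\sqcup A_2$, $B=B_1\sqcup B_2$ (with $A_2$ or $B_2$ allowed to be empty) such that $K_{A_1,B_1}$ contains a $c_1$-coloured path covering all but finitely many of $A_1\cup B_1$, and $K_{A_2,B_2}$ contains a $c_2$-coloured path covering all but finitely many of $A_2\cup B_2$. The union of these two paths, with the two finite remainders absorbed into the exceptional set, then witnesses the theorem (``at most two monochromatic paths'' permits either piece to be empty or to carry only a finite path). To produce the split I would use two features of the colouring: the classification of each vertex as \emph{red-heavy} — incident to infinitely many red edges — and/or \emph{blue-heavy} — incident to infinitely many blue edges — every vertex being of at least one type; and the partition of $A\cup B$ into red and blue connected components. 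The two model cases worth keeping in mind are the ``half-graph'' colouring $a_ib_j$ red $\iff i\le j$, in which $K_{\NN,\NN}$ already contains a red path through every vertex (the staircase $b_1,a_1,b_2,a_2,\dots$), and the ``blown-up matching'' in which $A=A'\sqcup A''$, $B=B'\sqcup B''$, the pairs $A'\times B'$ and $A''\times B''$ are red and the cross-edges blue, where two red paths (one in each red component) are genuinely needed.

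Given the split, each path is built by its own recursion of length $\omega$. Working in $K_{A_1,B_1}$, and symmetrically for the other piece, I would first shrink $A_1$ and $B_1$ along a diagonal, discarding any vertex that fails to keep infinitely many $c_1$-coloured neighbours \emph{inside the current $B_1$} (respectively $A_1$); this step is forced on us because heaviness toward the whole opposite side need not survive restriction to a sub-side. Then I would enumerate the survivors and grow the $c_1$-coloured path greedily, extending it at an end (growing both ends when needed to catch vertices on either side), directly when that end is joined to the next target by a $c_1$-edge and otherwise by splicing in one or two ``bridge'' vertices drawn from the infinitely many currently-unused $c_1$-neighbours of the end and of the target. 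The invariant to maintain is that each end always retains infinitely many unused $c_1$-neighbours on the appropriate side, and any vertex that cannot be served then or later is thrown into an exceptional set $F$. Fixing the split before the recursion starts is what keeps the two paths disjoint and stops them competing for vertices: $P_1$ stays inside $K_{A_1,B_1}$ and $P_2$ inside $K_{A_2,B_2}$, which are vertex-disjoint pieces.

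The hard part will be choosing the split and proving that everything discarded — the diagonalization losses and the set $F$ — is finite, and I expect this to be most of the argument. There are two distinct obstructions. First, a connected monochromatic component can be ``star-like'', with one of its two sides tiny, and then it contains no long monochromatic path of its colour; the way around this is that the leaves of such a star are joined in the other colour to all but finitely many vertices of the opposite side, so they should be reassigned to the other colour's piece, and one has to check that this reassignment, iterated, terminates leaving only finitely many truly un-serviceable vertices. Second, even inside a piece whose two sides are both large and heavy, the $c_1$-neighbourhoods may drift off to infinity — the half-graph is again the model — so a bridge needed to attach a target may not exist at the moment that target is first reached; the remedy is to process the survivors in their natural order but keep a postponement queue, returning to a stuck target only after the path's ends have advanced past the finitely many obstructions tied to it, and then showing — separately in each case of the split analysis — that all but finitely many postponed targets are eventually served, so that the permanent residents of the queue end up in a finite $F$. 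Isolating a clean finite list of possibilities for the ``heavy vertex / monochromatic component'' structure, and verifying that the postponement terminates in each of them, is where essentially all the work lies.
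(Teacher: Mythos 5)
There is a genuine gap: what you have written is a strategy outline in which the central step is explicitly deferred. You say yourself that ``choosing the split and proving that everything discarded is finite'' and ``isolating a clean finite list of possibilities \dots and verifying that the postponement terminates in each of them'' is ``where essentially all the work lies'' --- but that work is exactly the content of the theorem, and it is not done. Worse, the two tools you propose to do it with are too weak. Classifying a vertex as \emph{red-heavy} (infinitely many red edges) gives you nothing to bridge with: two red-heavy vertices on the same side need not have a single common red neighbour (take $V_2=\bigsqcup_i C_i$ with the $C_i$ infinite and set $N_{\mathrm{red}}(a_i)=C_i$; every $a_i$ is red-heavy but the red graph is a disjoint union of stars), so the greedy extension step of your recursion can fail for \emph{every} pair of targets, and no postponement queue repairs that --- the obstruction is permanent, not temporal. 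The paper's fix is precisely the missing idea: fix a non-trivial ultrafilter $\SU_2$ on $V_2$ and colour $v\in V_1$ red iff $N_{\mathrm{red}}(v)\in\SU_2$ (and symmetrically). This is a strictly stronger classification than heaviness: any finite set of same-coloured vertices on one side then automatically has infinitely many common neighbours of that colour on the other side. With it, a short two-case analysis (are all four classes $R_1,R_2,B_1,B_2$ infinite or not, and are there infinitely many disjoint red paths between $R_1$ and $R_2$ or not) shows that cofinitely many vertices split into two monochromatic \emph{infinitely linked} sets, and an infinitely linked set is swallowed by a single monochromatic path by a trivial back-and-forth, with the two paths built simultaneously and kept disjoint because every linking step has infinitely many choices. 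Your ``clean finite list of possibilities'' collapses to this, and the entire postponement machinery disappears.

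A secondary structural problem: insisting that the split $A=A_1\sqcup A_2$, $B=B_1\sqcup B_2$ be fixed \emph{before} the recursion and that each path live inside its own complete bipartite piece $K_{A_i,B_i}$ over-constrains the construction. In the degenerate case where one colour class is essentially concentrated on one side (the paper's Case 2: $R_1$ finite, so the red set to be covered is $Y=R_2\subseteq V_2$), the red path covering $Y$ must draw all of its internal vertices from $V_1$, i.e.\ from territory that your blue piece also claims, and you cannot know in advance which vertices it will use --- yet your framework requires the red path to cover all but finitely many of whatever $A_2$ you pre-assign to it. The paper avoids this by defining ``infinitely linked'' so that the connecting paths may use internal vertices \emph{outside} the set being covered, and by interleaving the two constructions rather than partitioning the board first. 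So the approach needs both the ultrafilter (or an equivalent coherence device) and a relaxation of the fixed-split requirement before it can be completed.
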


\cref{thm:bip-paths} is an immediate consequence of \cref{thm:bip-paths-partition}.
We will provide an example which demonstrates that \cref{thm:bip-paths,thm:bip-paths-partition} are best possible (see \cref{ex:bipartite}).
We believe that a similar statement is true for multiple colors.

\begin{conjecture}\label{conj:bip-paths}
Let $r\in \N$ with $r\geq 2$.  The vertices of every $(r-1)$-colored $K_{\NN, \NN}$ can be partitioned into a finite set and at most $r-1$ monochromatic paths.
\end{conjecture}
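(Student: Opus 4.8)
The plan is to prove \cref{conj:bip-paths} by induction on $r$, with the base case $r=2$ supplied by \cref{thm:bip-paths-partition}, and to drive the inductive step by merging the rays in Soukup's decomposition \cite{Souk}. Fix an $r$-colouring of $K_{\NN,\NN}$ with parts $A,B$; Soukup's theorem partitions $A\cup B$ into $2r-1$ monochromatic paths. I would maintain a collection of pairwise vertex-disjoint monochromatic rays together with a finite exceptional set $F$, and repeatedly decrease the number of rays by one until exactly $r$ remain, moving only finitely many vertices into $F$ at each step. Since $(2r-1)-r=r-1$ reductions suffice and each discards only finitely much, the final $F$ is finite. Any finite path produced along the way is simply absorbed into $F$ (which only helps), and since a ray in a bipartite graph alternates sides, every ray meets both $A$ and $B$ infinitely. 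While more than $r$ rays remain, pigeonhole gives two rays $P,P'$ sharing a colour $c$, so the whole scheme rests on being able to merge such a pair.

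The positive engine is a \emph{Merging Lemma}: two monochromatic rays $P,P'$ of the same colour $c$ that are joined by infinitely many colour-$c$ cross edges whose endpoints are cofinal in both rays can be merged into a single colour-$c$ ray covering $V(P)\cup V(P')$ up to a finite set. The construction is a zig-zag. One first selects an infinite ``staircase'' of colour-$c$ cross edges $x_{i_1}y_{j_1},x_{i_2}y_{j_2},\dots$ with $i_1<i_2<\cdots$, $j_1<j_2<\cdots$ and matching bipartite parities, and then builds the ray by traversing a finite block of $P$, crossing to $P'$, traversing a finite block of $P'$, crossing back, and so on. Because each block is finite and the staircase is cofinal in both rays, all but finitely many vertices of $V(P)\cup V(P')$ are covered by a single one-way path, and every edge used has colour $c$; the only cost is a bounded number of parity-skipped vertices per crossing, which are placed into $F$.

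The main obstacle is that pure merging can be defeated. An adversary may colour so that every pair of equally coloured rays has only off-colour cross edges (all red internally, but nothing but non-red between any two red rays), in which case \emph{no} same-colour pair is cofinally joined in its own colour and the Merging Lemma never applies. I therefore expect the inductive step to require building the $r$ paths directly and simultaneously rather than distilling them from Soukup's $2r-1$: enumerate $A\cup B$ in increasing order, keep one growing monochromatic ray as a reservoir for each colour, splice each incoming vertex onto a compatible endpoint (correct side, correct colour), invoking the Merging Lemma when a reservoir's endpoint colour must be changed, and relegate a vertex to $F$ only when every colour fails to offer a legal extension. The crux — and the point where the two-colour argument behind \cref{thm:bip-paths-partition} must be upgraded with a genuinely new idea — is to prove that the exceptional set stays finite for general $r$, i.e.\ that only finitely many vertices ever fail to attach. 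I would attack this through an infinite stability/regularity analysis of the $r$-colouring (an infinite analogue of the connected-matching method underlying the finite bipartite $r$-path partition results of \cite{P} and the proof of \cref{thm:bip-paths-partition}), showing that outside a finite core every vertex sees, in some fixed colour, a reservoir endpoint of the required parity; controlling this for all $r$ simultaneously is where I expect the real difficulty to lie.
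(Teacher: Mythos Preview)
The statement you are attempting is a \emph{conjecture} in the paper, not a theorem: the authors prove it only for $r=2$ (this is \cref{thm:bip-paths-partition}, established via \cref{thm:bip} using non-trivial ultrafilters on the two parts to obtain two monochromatic infinitely linked sets plus a finite remainder). For $r\ge 3$ there is no proof in the paper; it is listed as an open problem in the conclusion.

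Your proposal is candid about not being a proof either. You yourself identify the decisive obstacle: the Merging Lemma requires infinitely many cofinal colour-$c$ cross edges between two colour-$c$ rays, and an adversary can arrange that no same-coloured pair from Soukup's decomposition has this property, so the reduction from $2r-1$ rays to $r$ rays by merging need not go through. Your fallback---building $r$ reservoirs directly and attaching vertices one by one---has no argument for why the exceptional set $F$ stays finite; you explicitly flag this as ``the point where \ldots\ a genuinely new idea'' is needed and ``where I expect the real difficulty to lie.'' That is an accurate diagnosis, but it means the proposal terminates exactly where the actual problem begins. The infinite ``stability/regularity analysis'' you gesture at is not carried out, and there is no reason to believe the two-colour ultrafilter argument behind \cref{thm:bip} extends: with $r\ge 3$ colours the induced vertex-colouring on each side has $r$ classes, and the bipartite case analysis that drives \cref{thm:bip} (finitely many red--red paths versus infinitely many) no longer bottoms out in just two infinitely linked sets.

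In short: there is nothing to compare against, and your own write-up correctly locates the gap rather than closing it.
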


\cref{ex:bipartite} also shows that \cref{conj:bip-paths} is best possible, if true.  

The main motivation for the above question had to do with a potential relationship to the problem of determining the value of $\Rd_r(P_\infty)$ for $r\geq 3$.  Very recently, Day and Lo \cite{DL} proved a result which implies that if the above conjecture is true (in fact, if a weaker conjecture is true), then for all $r\geq 3$, $\Rd_r(P_\infty)\geq \frac{1}{r-1}$.  In particular, Theorem \ref{thm:bip-paths} combined with their result implies that $\Rd_3(P_\infty)=\frac{1}{2}$.  They also showed that their weaker conjecture is true for $r=3$ and $r=4$ which additionally gives $\Rd_4(P_\infty)=\frac{1}{3}$.

\subsection{Overview}

We begin by summarizing our main results, and then describe where in the paper these results may be found.  

\begin{enumerate}
\item Let $G$ be a countably infinite, (one-way) locally finite graph with chromatic number $\chi < \infty$ (in particular, the infinite bipartite half-graph has this property).  Every $2$-coloring of $K_\mathbb{N}$ contains a monochromatic copy of $G$ with upper density at least $\frac{1}{2(\chi-1)}$.
\item Let $G$ be a countably infinite graph having the property that there exists a finite set $X\subseteq V(G)$ such that $G-X$ has no finite dominating set (in particular, graphs with bounded degeneracy have this property, as does the infinite random graph). Every finite coloring of $K_\mathbb{N}$ contains a monochromatic copy of $G$ with positive upper density.
\item For every countably infinite tree $T$, every $2$-coloring of $K_\mathbb{N}$ contains a monochromatic copy of $T$ of upper density at least $1/2$, and this is best possible.  This is a perfect analogue of the corresponding result in the finite case which says that every 2-colored $K_n$ contains a monochromatic copy of every tree on at most $\frac{n}{2}+1$ vertices.  
\item  There exists connected graphs $G$ such that every 2-coloring of $K_\mathbb{N}$ contains a monochromatic copy of $G$ which covers all but finitely many vertices of $\mathbb{N}$.  In fact, we classify all trees with this property.  This result is particularly surprising in part because it has no analogue in the finite case (since for every connected graph $G$ on more than $\lceil\frac{3n}{4}\rceil$ vertices, there is a 2-coloring of $K_n$ with no monochromatic copy of $G$).  In the process, we prove two results which may have independent interest:  we give a characterization of graphs which are a spanning subgraph of every infinitely connected graph, and a characterization of graphs which can be cofinitely embedded into every graph with infinitely many vertices of cofinite degree.  
\end{enumerate}

In \cref{sec:example} we collect a variety of examples which are used to for instance obtain upper bounds on the upper Ramsey density of certain graphs.  In \cref{sec:embed} we discuss ultrafilters and a general embedding strategy that we will use to prove our results about one-way locally finite graphs in \cref{sec:chromatic} and graphs of bounded ruling number in \cref{sec:ruling}.  In \cref{sec:degen} we prove some additional results about graphs with bounded degeneracy.  In \cref{sec:bipartite} we prove \cref{thm:bip-paths-partition}.  In \cref{sec:trees} we prove \cref{thm:trees,thm:cofinitetree} together with a variety of supporting results which may be of independent interest.  In \cref{sec:coideals} we discuss a more general extension of the notion of a graph being Ramsey-dense.  Finally we end with some open problems in \cref{sec:end}.

\subsection{Notation}
For a positive integer $n$, we let $[n]=\{1,2,\dots, n\}$.

A subset $X$ of an infinite set $Y$ is called cofinite in $Y$ if $Y \setminus X$ is finite. If $Y$ is clear from context we will call $X$ cofinite and write $X^c = Y \setminus X$.  We write $A \subseteq^* B$ to mean that $A\sm B$ is finite.

Given an edge-colored graph $G$ and a color $c$, we write $G_c$ for the spanning subgraph of $G$ with all edges of color $c$. Given a vertex $v \in V(G)$, we define $N(v)$ to be the set of neighbors of $v$ and, given a color $c$, we define $N_c(v) \subset N(v)$ to be the set of vertices which are adjacent to $v$ via an edge of color $c$.  Given $S\subseteq V(G)$, we write $N(S)=\bigcup_{v\in S}N(v)$ and $\Ncap(S)=\bigcap_{v\in S}N(v)$ and, given a color $c$, we define $N_c(S)=\bigcup_{v\in S}N_c(v)$ and $\Ncap_c(S)=\bigcap_{v\in S}N_c(v)$.

If $f$ is a function, we write $\dom f$ and $\ran f$ for the domain and range of $f$ respectively. (This notation is useful because we are often constructing an embedding of a graph $G$ into a graph $H$ and at each step we have a function from some subset of $V(G)$ to some subset of $V(H)$.)

The following well-known fact follows from the definition of upper- and lower- density.  For disjoint sets $A,B\subseteq\NN$, we have
\begin{equation}\label{fact:density}
\ld(A)+\ld(B) \leq \ld(A\cup B) \leq \ld(A)+\ud(B) \leq \ud(A\cup B) \leq \ud(A)+\ud(B).
\end{equation}

\section{Examples}\label{sec:example}

\subsection{Basics}

First we present some examples to get a better understanding how the different parameters discussed in this paper are related.

The \emph{infinite half graph} is the graph on $\N$ such that $uv$ is an edge if and only if $u<v$ and $v$ is even.  
Given a complete bipartite graph $G$ between two disjoint infinite sets $A$ and $B$, the \emph{half graph coloring} of $G$ is obtained by taking a bijection $f$ from $A$ to the odd integers and a bijection $g$ from $B$ to the even integers and coloring an edge $uv$ with $u\in A$ and $v\in B$ red if $g(u)<f(v)$ and blue otherwise.
Note that in this coloring both the red and the blue graph are isomorphic to the infinite bipartite half graph.

The \emph{bipartite Rado graph} is the graph $\CR_2$ with vertex-set $\NN\setminus \{1\}$ defined by placing an edge between $m < n$ if and only if the $m$th digit in the binary expansion of $n$ is $1$ and $m$ and $n$ differ in the first bit (i.e. $m$ and $n$ have different parity).

\begin{figure}[ht]
\begin{center}
\includegraphics[scale=1]{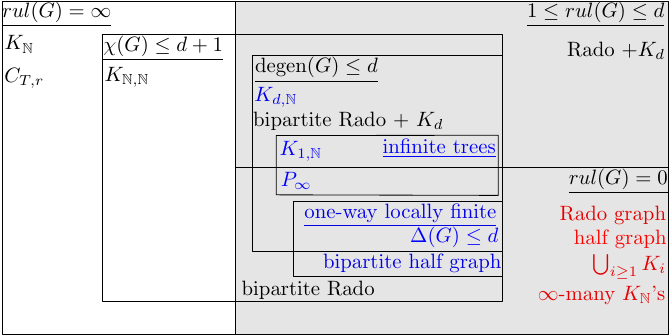}
\end{center}
\caption{The lightly shaded area represents graphs which are Ramsey-dense.  The blue text represents graphs $G$ for which $\Rd(G)>0$.  The red text represents graphs $G$ which are Ramsey-dense, but $\Rd(G)=0$.}
\end{figure}

\begin{example}~
\begin{enumerate}
\item There is a graph $G$ with $\rul (G) = 0$, but $\chi(G)=\infty$ and thus $\degen(G)=\infty$ (half graph, Rado graph, infinitely many disjoint $K_\N$'s).
\item There is a graph $G$ with $\chi(G)=2$, but $\rul(G)=\infty$ and thus $\degen(G)=\infty$ ($K_{\N,\N}$).
\item There is a graph $G$ with $\rul(G)=0$ and $\chi (G)=2$, but $\degen(G)=\infty$ (bipartite Rado graph).
\item There is a one-way 2-locally finite graph $G$ (with $\rul(G)=0$ and $\chi (G)=2$), but $\degen(G)=\infty$ (bipartite half graph).
\item There is a locally finite graph $G$ with $\rul (G) =0$ but $\chi(G)=\infty$ and thus $\degen(G)=\infty$ (infinite collection of disjoint finite cliques of increasing size).
\item There is a graph which is $d$-degenerate (and $d$-ruled) but not one-way $k$-locally-finite for any $k$ ($K_{d, \NN}$, $T_\infty$)
\end{enumerate}
\end{example}

\subsection{Upper bounds on upper densities}

\begin{example}\label{ex:trees}
Let $r\in \NN$.
\begin{enumerate}
\item Let $D\geq 2$.  If $T$ is an infinite $D$-ary tree, then $\Rd_r(T)\leq \frac{1}{r}(1+\frac{1}{D})$.

\item There exists a locally finite, infinite tree $T$ such that $\Rd_r(T)\leq 1/r$.
\end{enumerate}
\end{example}

\begin{proof}
Partition $\N$ by residues mod $r$, that is $\N = A_0 \cup \ldots \cup A_{r-1}$ where $A_i=\{n \in \N:  n \equiv i \pmod r\}$. We define an $r$-coloring as follows: if $m \in A_i$ and $n>m$, color the edge $mn$ with color $i$.  Note that if $n\not\equiv i\ (\bmod\ r)$, then $n$ has at most $\ceiling{(n-1)/r}$ neighbors of color $i$.

(i) Let $T$ be an infinite $D$-ary tree and suppose we have a copy of $T$ of color $i$.  For all $n\in \N$, let $V'_n$ be the set of vertices in $V(T)\cap [n]$ which are not congruent to $i \pmod r$.  Since every vertex $m\in V'_n$ can only have successors (of color $i$) in $A_i \cap [n-1]$, we must have $D \cdot |V_n'| \leq \ceiling{(n-1)/r}$. So we have
\[\frac{|V(T)\cap [n]|}{n}\leq \frac{\ceiling{\frac{n}{r}}+|V_n'|}{n}\leq \frac{\ceiling{\frac{n}{r}}+\frac{1}{D}\ceiling{\frac{n-1}{r}}}{n} \xrightarrow{n \to \infty} \frac{1}{r}(1+\frac{1}{D}),\]
and thus $\Rd_r(T)\leq \frac{1}{r}(1+\frac{1}{D})$.

(ii) Let $0<d_1< d_2< \dots$ be an increasing sequence of integers.  Let $T$ be a tree in which every vertex on level $i$ has degree $d_i$.  We can repeat the argument from case (i), except now we have $|V_n'|/n \to 0$ as $n \to \infty$ and thus $\frac{|V(T)\cap [n]|}{n}\leq \frac{\ceiling{\frac{n}{r}}+|V_n'|}{n} \xrightarrow{n \to \infty} \frac{1}{r}$.
\end{proof}

Note that when $r=2$, there are connected graphs $G$ in which every vertex has infinite degree but $\Rd_2(G)=\Rd(G)\geq 1/2$ (\cref{thm:trees} for instance).  The following example shows that there is an unexpected change in behavior as we go from 2 colors to 3 colors.

\begin{example}\label{ex:3ormore}
Let $r\in \N$ with $r\geq 3$.  If $G$ is a graph with finitely many components and finitely many vertices of finite degree, then $\Rd_r(G)=0$.
\end{example}

\begin{proof}
Let $\ep>0$ be given, let $c$ be the number of components of $G$, and let $k$ be an integer with $k>c/\ep$.  Partition $\N$ by residues mod $k$, that is $\N = A_0 \cup \ldots \cup A_{k-1}$ where $A_i=\{n \in \N:  n \equiv i\ (\bmod\ k)\}$.  For all $0\leq i\leq k-1$, color all edges inside $A_i$ with green and for all $0\leq i<j\leq k-1$, color the edges between $A_i$ and $A_j$ with the half graph coloring where the vertices in $A_i$ have cofinite red degree to $A_j$ and the vertices in $A_j$ have cofinite blue degree to $A_i$.  Note that we have only used three colors, but this can be considered as an $r$-coloring for all $r\geq 3$. 

Note that for all $0\leq i\leq k-1$, every vertex in $A_i$ has finite red degree to $A_0\cup \dots \cup A_i$.  If there is a red copy of $G$, then let $0\leq i\leq k-1$ be maximum such that $V(G)\cap A_i$ is infinite.  But this is a contradiction because every vertex in $V(G)\cap A_i$ has finite red degree.  Similarly, for all $0\leq i\leq k-1$, every vertex in $A_i$ has finite blue degree to $A_i\cup \dots \cup A_{k-1}$.   If there is a blue copy of $G$, then let $0\leq i\leq k-1$ be minimum such that $V(G)\cap A_i$ is infinite.  But this is a contradiction because every vertex in $V(G)\cap A_i$ has finite blue degree. Therefore every monochromatic copy of $G$ is green and thus has upper density at most $c/k<\ep$.
\end{proof}

\begin{example}\label{ex:rd-ub-chi}
For every non-trivial connected graph $G$, $\Rd(G)\leq \frac{1}{\chi(G)-1}$.  In particular, if $\chi(G)=\infty$, then $\Rd(G)=0$.
\end{example}

\begin{proof}
Assume first that $\chi(G) < \infty$ and partition $\NN$ by residues mod $\chi(G)-1$. Color all edges inside the sets red and all edges between the sets blue. There is no blue copy of $G$, so every copy of $G$ lies entirely inside one of the sets, all of which have density $\frac{1}{\chi(G)-1}$.

If $\chi(G) = \infty$, this construction shows that $\Rd(G) \leq 1/(k-1)$ for every $k \geq 2$ and therefore $\Rd(G) = 0$.
\end{proof}

\begin{corollary}\label{cor:Rado-ud-0}~
\begin{enumerate}
\item $\Rd(\CR)=0$ (where $\CR$ is the Rado graph).
\item There exists a locally finite graph $G$ such that $\Rd(G)=0$.
\end{enumerate}
\end{corollary}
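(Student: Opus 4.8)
The plan is to obtain both parts directly from \cref{ex:rd-ub-chi}, which already bounds $\Rd(G)$ in terms of $\chi(G)$ for \emph{connected} $G$; all that remains is to exhibit the relevant graphs and verify that they are connected with infinite chromatic number.

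For (i), I would first recall that the Rado graph $\CR$ is connected, indeed of diameter $2$: given vertices $m,n$, any $N>\max(m,n)$ whose $m$-th and $n$-th binary digits both equal $1$ is adjacent to both $m$ and $n$. Next, $\CR$ contains a copy of $K_t$ for every $t\in\NN$ (by its universality, or as already observed when $\CR$ was listed among the graphs with $\chi=\infty$), so $\chi(\CR)=\infty$. Now \cref{ex:rd-ub-chi} applied to the connected graph $\CR$ gives $\Rd(\CR)=0$.

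For (ii), note that a plain disjoint union of finite cliques of unbounded size is locally finite with $\chi=\infty$ but is \emph{disconnected}, so \cref{ex:rd-ub-chi} does not apply to it; in fact the residue-class colouring used in its proof fails here, since the components of a monochromatic copy may be spread across all residue classes. I would therefore connect the pieces while preserving local finiteness: let $G$ be obtained from disjoint cliques $K_2,K_3,K_4,\dots$ by fixing a vertex $r_n\in V(K_n)$ and adding the edge $r_n r_{n+1}$ for each $n\geq 2$. Then $G$ is connected, every vertex lying in $K_n$ has degree at most $n+1$ (so $G$ is locally finite), and $G\supseteq K_n$ for all $n$, whence $\chi(G)=\infty$. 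Applying \cref{ex:rd-ub-chi} yields $\Rd(G)=0$. This sits in contrast with \cref{thm:locally-finite}, which gives $\Rd(G)\geq \tfrac{1}{2(\chi(G)-1)}>0$ for every locally finite graph with $\chi(G)<\infty$, so infinite chromatic number is exactly what is needed.

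There is no substantial obstacle here; the one point that genuinely requires attention is the connectedness hypothesis in \cref{ex:rd-ub-chi}, which rules out the naive disconnected example in (ii) and forces the small ``path of cliques'' modification, together with the routine checks that $\CR$ and this $G$ are connected and have $\chi=\infty$.
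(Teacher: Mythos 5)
Your proposal is correct and follows essentially the same route as the paper: both parts are deduced from \cref{ex:rd-ub-chi} by exhibiting a connected graph of infinite chromatic number, with (ii) realized as a connected, locally finite chain of growing cliques (the paper glues consecutive cliques at a shared vertex rather than joining designated vertices by edges, an immaterial difference). Your explicit verification that $\CR$ is connected is a detail the paper leaves implicit, but it is needed for \cref{ex:rd-ub-chi} and your argument for it is correct.
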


\begin{proof}
 (i) Since $\CR$ contains an infinite clique, we have $\chi(\CR) = \infty$ and thus the result follows from \cref{ex:rd-ub-chi}.

  (ii) Let $G$ be a graph on vertex set $\NN$ where $[\frac{n(n+1)}{2}, \frac{(n+1)(n+2)}{2}]$ induces a clique for all $n\in \N$.  $G$ is locally finite, connected, and contains a clique of order $n$ for all $n\in \N$.  So $\chi(G)=\infty$ and thus the result follows from \cref{ex:rd-ub-chi}.
\end{proof}

\begin{example}\label{ex:bipartite}
For all $r\in \NN$, there is an $r$-coloring of $K_{\N,\N}$ in which every monochromatic path has upper density at most $1/r$. In particular, it is not possible to cover all but finitely many vertices with less than $r$ monochromatic paths.
\end{example}

\begin{proof}
  Let $A$ and $B$ be the parts of $K_{\N,\N}$ and partition both of them into $r$ parts $A_1, \ldots, A_r$ and $B_1, \ldots, B_r$, each of density $1/(2r)$. For all $i,j \in [r]$, color every edge between $A_i$ and $B_j$ by $(i-j) \mod r$. It is easy to see that every part is incident to exactly one other part of each color and therefore, every monochromatic path can cover at most two parts, finishing the proof.
\end{proof}

\subsection{Lower density}
As mentioned in the introduction, Erd\H{o}s and Galvin proved that for all positive integers $\Delta$, there exists a 2-coloring of
  $K_\NN$ such that if $G$ is a graph with maximum degree at most $\Delta$ and finitely many isolated vertices, then every monochromatic copy of $G$ has lower density 0.  We now show that a broader class of graphs has this property.

Recall that a graph $G$ is weakly expanding if for all $k\in \NN$, there exists $\ell\in \NN$ such that for all independent sets $A$ in $G$ with $|A|\geq \ell$ we have $|N(A)|> k$.  Note that if $G$ is weakly expanding, then there is an increasing function $f:\NN\to \NN$ such that for all $k\in \NN$, if $A$ is an independent set in $G$ with $|A|\geq f(k)$, then $|N(A)|> k$.  Also note that if $G$ is weakly expanding, then $G$ has finitely many isolated vertices.  To better understand this definition, we collect some useful properties which imply that that a graph is weakly expanding. 

\begin{fact}
$G$ is weakly expanding if
\begin{enumerate}
\item $G$ has finite independence number, or
\item $G$ has bounded maximum degree and finitely many isolated vertices, or
\item $G$ is a tree with bounded leaf degree, or
\item for all $n\in \NN$, $G$ has finitely many vertices of degree $n$.
\end{enumerate}
\end{fact}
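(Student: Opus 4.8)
The plan is to treat the four sufficient conditions separately. In each case I fix an arbitrary $k\in\NN$ and must produce some $\ell\in\NN$ witnessing weak expansion. One preliminary observation streamlines all four arguments: it is enough to find $\ell$ such that every \emph{finite} independent set $A$ with $|A|=\ell$ has $|N(A)|>k$. Indeed, any independent set of size at least $\ell$ contains a subset of size exactly $\ell$, which is again independent, and $N(\cdot)$ is monotone under inclusion, so the bound propagates to all independent sets of size $\ge\ell$.

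The first, second, and fourth conditions I would dispose of by short counting arguments. For (i), if the independence number $\alpha(G)$ is finite then $\ell=\alpha(G)+1$ works vacuously. For (iv), note that if $A$ is independent then every vertex of $A$ has all of its neighbors inside $N(A)$; hence $|N(A)|\le k$ forces every vertex of $A$ to have degree at most $k$, and by hypothesis there are only finitely many such vertices, so $\ell=|\{v:\deg v\le k\}|+1$ works. For (ii), write $\Delta=\Delta(G)<\infty$ and let $m<\infty$ be the number of isolated vertices (the case $\Delta=0$ reduces to (i), so assume $\Delta\ge1$). For a finite independent $A$, at least $|A|-m$ vertices of $A$ have a neighbor, necessarily inside $N(A)$ and outside $A$; double-counting the edges between $A$ and $N(A)$ --- at least $|A|-m$ in total, at most $\Delta$ incident to each vertex of $N(A)$ --- gives $|N(A)|\ge(|A|-m)/\Delta$, so $\ell=\Delta k+m+1$ suffices.

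Case (iii), trees of bounded leaf degree, is where an actual idea is needed. Fix $L\ge1$ so that every vertex of $T$ is adjacent to at most $L$ leaves; note that $T$, being connected and infinite, has no isolated vertices. Given a finite independent set $A$, put $B=N(A)$ (disjoint from $A$ by independence) and split $A=A_0\cup A_1$ into the leaves of $T$ lying in $A$ and the non-leaves lying in $A$. Counting incidences between the leaves in $A_0$ and their (unique) neighbors in $B$ gives $|A_0|\le L|B|$. For the non-leaves I would use acyclicity: the bipartite subgraph of $T$ with parts $A$ and $B$ is a forest on $|A|+|B|$ vertices, so it has at most $|A|+|B|-1$ edges, while it has at least $2|A_1|+|A_0|$ edges since each vertex of $A_1$ has $T$-degree (equivalently, degree into $B$) at least $2$ and each leaf has degree $1$; comparing gives $|A_1|\le|B|-1$. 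Adding the two estimates yields $|A|=|A_0|+|A_1|<(L+1)|B|$, i.e.\ $|N(A)|>|A|/(L+1)$, so $\ell=(L+1)k+1$ works.

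I expect case (iii) to be the only genuine obstacle: the leaf-degree hypothesis controls only the ``leafy'' part $A_0$ of $A$, and one really does need the tree structure --- through the linear bound on the number of edges in a forest --- to stop the internal vertices $A_1$ from collapsing $N(A)$ to a small set. The remaining three cases are elementary, and the uniform reduction to finite independent sets of a prescribed size keeps all of them short.
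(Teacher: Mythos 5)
The paper states this Fact without proof, so there is no argument of the authors' to compare yours against; judged on its own, your proof is correct and complete. The reduction to finite independent sets of size exactly $\ell$ is valid (monotonicity of $N(\cdot)$), the counting arguments for (i), (ii), (iv) are all sound, and in (iii) the decisive step --- bounding the non-leaf part $A_1$ via the fact that the bipartite subgraph between $A$ and $N(A)$ is a forest and hence has at most $|A|+|N(A)|-1$ edges while each non-leaf contributes at least two --- is exactly the right use of acyclicity. The only point worth a passing remark is that in case (iii) the set $N(A)$ could a priori be infinite (vertices of a tree may have infinite degree), but then $|N(A)|>k$ holds trivially, so your inequalities may be read as applying to the remaining case where $N(A)$ is finite.
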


The following is a modification of the example used by Erd\H{o}s and Galvin \cite{EG} to prove the result mentioned about about graphs with bounded maximum degree and finitely many isolated vertices.

\begin{example}[Forward interval coloring]\label{ex:forward-half-graph}
If $G$ is a graph which is weakly expanding, then $G$ is not Ramsey-lower-dense.
\end{example}

We note that the forthcoming \cref{lem:embed-leafy-tree} shows that if $G$ is strongly contracting, then there is a confinite monochromatic copy of $G$ in every forward interval coloring.

\begin{proof}
Suppose $G$ is weakly expanding and let $f$ be the function guaranteed by the definition.

Let $a_n$ be an increasing sequence of natural numbers such that $a_0=1$ and for all $k\geq 1$,
\begin{equation}\label{ak}
a_k> k( a_{k-1}+f(a_{k-1})).
\end{equation}
For all $u,v\in \NN$ with $u<v$, color the edge $uv$ red if $u\in [a_{2n-1},a_{2n})$ and blue if $u\in [a_{2n},a_{2n+1})$ for some $n\in \NN$.

Suppose there is a, say, blue copy of $G$ in this 2-coloring with vertex set $U$.  We must have that $A_n:=U\cap [a_{2n-1},a_{2n})$ induces an independent set for all $n\in \mathbb{N}$, and because of the coloring we have $N_B(A_n)\subseteq [0, a_{2n-1})$ and thus $|N_B(A_n)|\leq a_{2n-1}$.  Thus by the definition of weakly expanding we have $|A_n|<f(a_{2n-1})$.
We conclude that for all $n\in \NN$, $$|U\cap [0, a_{2n})|=|U\cap [0,a_{2n-1})|+|A_n|<a_{2n-1}+f(a_{2n-1})\stackrel{\eqref{ak}}{<}\frac{1}{2n}a_{2n}$$ and thus $\ld(U)=0$.
\end{proof}

We conclude with two more examples.

\begin{example}[Backward interval coloring]\label{ex:backward-half-graph}
If $G$ is a graph with a finite dominating set (i.e.\ $\rul(G)>0$), then $G$ is not Ramsey-lower-dense.
\end{example}

We note that the forthcoming \cref{prop:embed0rul} shows that if $G$ has no finite dominating set, then there is a spanning monochromatic copy of $G$ in every backwards interval coloring.

\begin{proof}
Let $a_n$ be an increasing sequence of natural numbers and let $A_i=[a_i, a_{i+1})$ for all $i\in \NN$.  For all $u\in A_i$ and $v\in A_j$ with $u<v$, color the edge $uv$ red if $j$ is odd and blue if $j$ is even.  Let $A^0$ be the union of all even indexed intervals and let $A^1$ be the union of all odd indexed intervals.  We note that every vertex in $A^0$ has finite blue degree to $A^1$ and every vertex in $A^1$ has finite red degree to $A^0$.

Let $D$ be a finite dominating set in $G$ and suppose there is a monochromatic, say, blue copy of $G$ with vertex set $V$.  Since $D$ is finite, there exists an index $t$ such that $D\subseteq A_1\cup A_2\cup \dots \cup A_t$.  Now for all $i$ such that $2i+1>t$, there are no blue edges from $A_{2i+1}$ to $D$ contradicting the fact that $D$ is a dominating set.  So $G$ has finite intersection with say $A^1$ and thus if $a_n$ is increasing fast enough, $G$ has lower density 0.
\end{proof}

\begin{example}\label{ex:KNN}
Let $G$ be a connected graph.  If $\chi(G)\geq 3$, or $G$ is bipartite with one part finite, then $G$ is not Ramsey-lower-dense.
\end{example}

\begin{proof}
Let $\{X, Y\}$ be a partition of $\NN$ into two sets of lower density 0 (for instance, as we did in Example \ref{ex:forward-half-graph} and Example \ref{ex:backward-half-graph}).  Color all edges inside $X$ or inside $Y$ with blue and color all edges between $X$ and $Y$ red.  Note that since $G$ is connected, any blue copy of $G$ is completely contained in $X$ or $Y$ and thus has lower density 0.  

If $\chi(G)\geq 3$, then there is no red copy of $G$ and we are done.  If $G$ is bipartite and one of the parts is finite, then $G$ intersects either $X$ or $Y$ in only finitely many vertices and thus any red copy of $G$ will have lower density 0.  
\end{proof}

\subsection{The Rado graph, \texorpdfstring{$0$}{0}-ruled and \texorpdfstring{$0$}{0}-coruled graphs}\label{sec:rado}

If $G$ and $H$ are two graphs, then we write $G \preceq H$ if $G$ is isomorphic to a spanning subgraph of $H$.  Clearly, $\preceq$ is reflexive and transitive.

  We say that an infinite graph $G$ has the \emph{extension property} if for every pair of disjoint finite sets $F,F'\subseteq V(G)$, there is a vertex $v\in V(G)\sm (F\cup F')$ such that $v$ is adjacent to every $w\in F$ and not adjacent to any $w'\in F'$. The following well-known theorem (see \cite{Cam}) shows why this property is useful.

  \begin{theorem}\label{thm:ext-prop}
    Any two infinite graphs satisfying the extension property are isomorphic.
  \end{theorem}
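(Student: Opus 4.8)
The plan is to run the standard back-and-forth (Cantor--Fra\"iss\'e) argument. Since all graphs here are countably infinite, fix enumerations $V(G) = \{g_0, g_1, \dots\}$ and $V(H) = \{h_0, h_1, \dots\}$. I will build an increasing chain $f_0 \subseteq f_1 \subseteq \cdots$ of \emph{finite partial isomorphisms}, that is, finite injective partial maps $f_n$ from $V(G)$ to $V(H)$ such that for all $a, a' \in \dom(f_n)$ one has $aa' \in E(G)$ if and only if $f_n(a) f_n(a') \in E(H)$. The construction will alternate between a ``forth'' step, which guarantees $g_n \in \dom(f_{2n+1})$, and a ``back'' step, which guarantees $h_n \in \ran(f_{2n+2})$. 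The union $f := \bigcup_n f_n$ is then a bijection $V(G) \to V(H)$ preserving adjacency and non-adjacency, i.e.\ an isomorphism $G \cong H$.

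The heart of the argument is a single extension step, which is exactly where the extension property is invoked. Suppose $f$ is a finite partial isomorphism and $a \in V(G) \sm \dom(f)$, and we wish to extend $f$ to a partial isomorphism whose domain contains $a$. Let $D = \dom(f)$, set $F = \{w \in D : aw \in E(G)\}$ and $F' = D \sm F$. Then $f(F)$ and $f(F')$ are disjoint finite subsets of $V(H)$, so by the extension property of $H$ there is a vertex $b \in V(H) \sm (f(F) \cup f(F'))$ adjacent in $H$ to every vertex of $f(F)$ and to no vertex of $f(F')$. Since $b \notin \ran(f)$, the map $f \cup \{(a,b)\}$ remains injective, and by the choice of $b$ it preserves adjacency and non-adjacency between $a$ and each element of $D$; hence it is again a finite partial isomorphism. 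The ``back'' step is symmetric: given $b \in V(H) \sm \ran(f)$, split $\ran(f)$ according to adjacency to $b$, pull these sets back along $f^{-1}$, and apply the extension property of $G$ to obtain a suitable $a \in V(G) \sm \dom(f)$ so that $f \cup \{(a,b)\}$ is a finite partial isomorphism.

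It remains to assemble the chain. Set $f_0 = \emptyset$. At stage $2n+1$: if $g_n \in \dom(f_{2n})$, put $f_{2n+1} = f_{2n}$; otherwise apply the forth step with $a = g_n$ to obtain $f_{2n+1}$. At stage $2n+2$: if $h_n \in \ran(f_{2n+1})$, put $f_{2n+2} = f_{2n+1}$; otherwise apply the back step with $b = h_n$ to obtain $f_{2n+2}$. Each $f_n$ is a finite partial isomorphism and $f_n \subseteq f_{n+1}$, so $f := \bigcup_n f_n$ is a well-defined partial isomorphism with $\dom(f) \supseteq \{g_n : n \in \N\} = V(G)$ and $\ran(f) \supseteq \{h_n : n \in \N\} = V(H)$; therefore $f$ is an isomorphism from $G$ onto $H$.

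There is essentially no serious obstacle here -- this is a textbook argument. The only points requiring a moment's care are that the extension step must tolerate $F$ or $F'$ being empty (the extension property as stated already allows this, since it quantifies over all disjoint finite pairs without a nonemptiness hypothesis), and that the alternation must be arranged so that every $g_n$ eventually enters the domain and every $h_n$ eventually enters the range.
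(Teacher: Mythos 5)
Your proof is correct and is exactly the standard back-and-forth argument that the paper relies on (the paper itself gives no proof, only a citation to Cameron, where this is the canonical argument). The one point needing care---that the chosen vertex $b$ automatically lies outside $\ran(f)=f(F)\cup f(F')$, so injectivity is preserved---is handled correctly in your extension step.
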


  Furthermore, it is not hard to see that the Rado graph $\CR$ and (with probability $1$) the infinite random graph (every edge is present independently with probability $1/2$) both satisfy the extension property. Hence, with probability $1$, the infinite random graph is isomorphic to the Rado graph.

  Observe that $G$ is $0$-ruled if and only if $G$ satisfies the ``non-adjacency'' half of the extension property above, i.e.\ if for every finite $F'\subseteq V(G)$ there is a vertex $v\in V(G)\sm F'$ such that $v$ is not adjacent to any $w'\in F'$.
  We will call $G$ \emph{$0$-coruled} if $G$ satisfies only the ``adjacency'' half of extension property, i.e.\ for every finite $F\subseteq V(G)$ there is a $v\in V(G)\sm F$ such that $v$ is adjacent to every $w\in F$. Using this, it is easy (and very similar to the proof of \cref{thm:ext-prop}) to prove the following proposition.

  \begin{proposition}\label{prop:0-ruled-coruled}
    $G$ is $0$-ruled if and only if $G\preceq \CR$.  On the other hand, $G$ is
    $0$-coruled if and only if $\CR\preceq G$.
  \end{proposition}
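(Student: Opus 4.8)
The plan is to adapt the classical back-and-forth argument underlying \cref{thm:ext-prop} to each half of the extension property separately. I treat the two equivalences in turn.

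First, suppose $G$ is $0$-ruled; I want to build an embedding of $G$ as a spanning subgraph of $\CR$. Enumerate $V(G) = \{g_0, g_1, \dots\}$ and $V(\CR) = \{r_0, r_1, \dots\}$. I construct an injection $\iota\colon V(G)\to V(\CR)$ in stages so that $g_i g_j \in E(G)$ implies $\iota(g_i)\iota(g_j)\in E(\CR)$ (no condition on non-edges, since we only need a spanning \emph{subgraph}), and so that $\iota$ is surjective. At an ``adjacency'' step, given a new vertex $g_n$ of $G$, let $F = \{\iota(g_i) : i<n,\ g_ig_n\in E(G)\}$; by the full extension property of $\CR$ there is a vertex $v\in V(\CR)\sm\{\iota(g_i):i<n\}$ adjacent to every vertex of $F$ (here $F'=\{\iota(g_i):i<n\}\sm F$), and we set $\iota(g_n)=v$. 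At a ``surjectivity'' step, given the least $r_m$ not yet in the range, we must assign it a preimage. Let $g_n$ be the least vertex of $G$ not yet used; its neighbors among the already-placed $g_i$ must map to neighbors of $r_m$ in $\CR$. This is where $0$-ruledness enters: because $G$ is $0$-ruled, for any finite subset of $V(G)$ there is a vertex of $G$ avoiding \emph{all} of it, i.e.\ nonadjacent to all of a prescribed finite set; so if the ``obvious'' choice of $g_n$ has a neighbor that was already mapped to a non-neighbor of $r_m$, we may instead pick a later vertex of $G$ that is nonadjacent to every already-placed vertex, and safely map it to $r_m$. Alternating these two kinds of steps yields a spanning embedding. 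Conversely, if $G\preceq\CR$, then since $\CR$ has no finite dominating set (any finite $F'\subseteq V(\CR)$ has a vertex avoiding it by the extension property), and a spanning subgraph of a graph with no finite dominating set still has no finite dominating set, $G$ is $0$-ruled.

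For the second equivalence, $G$ is $0$-coruled iff $\CR\preceq G$, the argument is the ``dual'': now I build a spanning embedding $\iota\colon V(\CR)\to V(G)$ preserving edges of $\CR$. The ``adjacency'' extension steps now use the $0$-coruled property of $G$ directly: given a new vertex $r_n$ of $\CR$ with already-placed neighbors mapped to a finite set $F\subseteq V(G)$, the $0$-coruled property supplies a vertex of $G$ adjacent to all of $F$, which we take as $\iota(r_n)$. The ``surjectivity'' steps — ensuring every vertex of $G$ is hit — use the full extension property of $\CR$: to force a given $g\in V(G)$ into the range, pick a fresh vertex $r_n$ of $\CR$ whose adjacency pattern to the already-placed $r_i$ matches the adjacency pattern of $g$ to the already-chosen images; such an $r_n$ exists by the extension property of $\CR$, and mapping it to $g$ respects all required edges. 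Conversely, if $\CR\preceq G$, fix a spanning copy of $\CR$ inside $G$; given any finite $F\subseteq V(G)$, the extension property of that copy of $\CR$ produces a vertex adjacent (in $\CR$, hence in $G$) to all of $F$, so $G$ is $0$-coruled.

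The main obstacle — though it is really just bookkeeping — is the ``surjectivity'' half of each construction, where one must hit a prescribed vertex of the target graph without violating edges already committed to. In the first equivalence this is exactly what $0$-ruledness buys us (freedom to choose a source vertex nonadjacent to everything placed so far); in the second it is what the full extension property of $\CR$ buys us (freedom to realize any adjacency pattern). Once one sees that these are the precise needs, the back-and-forth goes through routinely, just as in the proof of \cref{thm:ext-prop}, so I would present it briskly and refer to that proof for the standard details.
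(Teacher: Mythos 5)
Your proof is correct and follows essentially the route the paper intends: the paper gives no explicit argument, remarking only that the proposition follows by a one-sided back-and-forth as in \cref{thm:ext-prop} (with the forward direction of the first equivalence also being a special case of \cref{prop:embed0rul}), and that is exactly what you carry out. The only point worth making explicit is that in the surjectivity-free ``adjacency'' steps of the second equivalence you should apply $0$-coruledness to the entire finite set of already-used vertices of $G$ (not just to the images of the neighbors) so that the chosen vertex is guaranteed to be fresh.
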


Note that for finite graphs $G\preceq H$ and $H\preceq G$ implies $G\cong H$, and thus $\preceq$ is a partial order (on isomorphism classes of graphs), but this is not the case for infinite graphs.  A simple example is letting $G$ be an infinite clique together with infinitely many disjoint copies of some finite graph $F$ and letting $H$ be two disjoint infinite cliques together with infinitely many disjoint copies of some finite graph $F$). Another example comes from the fact that the infinite half graph is both $0$-ruled and $0$-coruled, but is not isomorphic to $\CR$. We ask the following question out of curiosity\footnote{We asked this question on MathOverflow and received evidence \cite{MO1} suggesting that there may not be a simple answer.}.

\begin{problem}
Under what conditions on $G$ and $H$ does $G\preceq H$ and $H\preceq G$ imply that $G\cong H$?
\end{problem}

%
%
%
%

The \emph{Rado coloring} of $E(K_{\N})$ is the $2$-coloring $\rho$ defined by setting $\rho(\{s,t\})$ to be the $s$th bit in the binary expansion of $t$ for all $s,t\in \N$ with $s<t$.
For instance $\rho(\{2,14\})=1$ since the 2nd bit (reading right to left) in the binary expansion of 14 is 1, and $\rho(\{5,14\})=0$ since the 5th bit (reading right to left, and appending extra 0's to the left as necessary) in the binary expansion of 14 is 0.
Also note that the Rado coloring can be described by coloring all of the edges of the Rado graph with color 1 and coloring all of the edges in the complement of the Rado graph with color 0.

The key property of the Rado coloring is that for any
    $F\subseteq\NN$ and $i\in\{0,1\}$, we have
    \begin{equation}\label{eq:rado}
      d\left(\bigcap_{v\in F} N_i(v)\right) = 2^{-|F|}.
    \end{equation}

%

We first use the Rado coloring to make the following observation about complete multipartite graphs.

\begin{proposition}\label{prop:multi}
Let $K$ be an infinite complete multipartite graph and let $n\in \N$.
\begin{enumerate}
\item If $K$ has at least two infinite parts, or infinitely many vertices in finite parts, then $K$ is not Ramsey-dense.

\item If $K$ has exactly one infinite part and exactly $n$ vertices in finite parts, then $$\frac{1}{2^{2n-1}}\leq \Rd(K)\leq \frac{1}{2^{n}}.$$
\end{enumerate}

\end{proposition}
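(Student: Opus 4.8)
The plan is to use the Rado colouring $\rho$ and its key property \eqref{eq:rado} to prove part (i) and the upper bound in (ii), and a greedy‑plus‑pigeonhole argument to prove the lower bound in (ii). The only genuinely delicate point is the lower bound: a naive greedy construction of a monochromatic clique fails because at each step one is only guaranteed to retain positive density on one of the two colours, and this colour may change from step to step; the fix is to overshoot to $2n-1$ steps and then extract a monochromatic clique of size $n$ by pigeonhole, which is precisely what forces the exponent $2n-1$ rather than $n$.

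For (i), fix a monochromatic copy of $K$ in the Rado colouring, in colour $i$, and write $X'$ for the image of a vertex set $X$ of $K$. If $K$ has infinitely many vertices in finite parts, then $K$ has infinitely many finite parts, so for each $k$ we may pick vertices $v_1,\dots,v_k$ lying in $k$ distinct finite parts $P_1,\dots,P_k$; every vertex of $K$ outside the finite set $P_1\cup\dots\cup P_k$ is joined to all of $v_1,\dots,v_k$, so all but finitely many image vertices lie in $\Ncap_i(\{v'_1,\dots,v'_k\})$, which has density $2^{-k}$ by \eqref{eq:rado}, and letting $k\to\infty$ gives upper density $0$ for the copy. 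Otherwise $K$ has only finitely many finite‑part vertices but at least two infinite parts $A,B$; since all $A'$--$B'$ edges have colour $i$ we get $A'\subseteq\bigcap_{b\in F}N_i(b)$ for every finite $F\subseteq B'$, so \eqref{eq:rado} forces $\ud(A')=0$, and likewise the image of every infinite part has upper density $0$ (pair it against a different infinite part); as the finite parts contribute only finitely many vertices, the copy again has upper density $0$. In either case $K$ is not Ramsey-dense.

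For the upper bound in (ii), use the Rado colouring again: in a colour‑$i$ copy of $K$, let $w_1,\dots,w_n$ be the images of the $n$ finite‑part vertices and let $B'$ be the image of the infinite part $B_0$. Every vertex of $B_0$ lies in a part disjoint from all finite parts, so $B'\subseteq\Ncap_i(\{w_1,\dots,w_n\})$, which has density $2^{-n}$ by \eqref{eq:rado}. Since the copy differs from $B'$ by only $n$ vertices, it has upper density at most $2^{-n}$, so $\Rd(K)\le\Rd_\rho(K)\le 2^{-n}$.

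For the lower bound in (ii), fix an arbitrary $2$-colouring $\vphi$ of $K_\NN$; I will produce a monochromatic copy of $K$ of upper density at least $2^{-(2n-1)}$. Construct greedily $w_1,\dots,w_{2n-1}\in\NN$, colours $i_1,\dots,i_{2n-1}\in\{0,1\}$, and nested sets $\NN=S_0\supseteq S_1\supseteq\dots\supseteq S_{2n-1}$ as follows: given $S_{k-1}$ with $\ud(S_{k-1})\ge 2^{-(k-1)}>0$, choose any $w_k\in S_{k-1}$; since $S_{k-1}\sm\{w_k\}$ is the disjoint union of $S_{k-1}\cap N_0(w_k)$ and $S_{k-1}\cap N_1(w_k)$, \eqref{fact:density} supplies a colour $i_k$ with $\ud(S_{k-1}\cap N_{i_k}(w_k))\ge\tfrac{1}{2}\ud(S_{k-1})\ge 2^{-k}$, and we set $S_k=S_{k-1}\cap N_{i_k}(w_k)$. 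Then $w_k\notin S_k$ (so the $w_k$ are distinct), $\ud(S_{2n-1})\ge 2^{-(2n-1)}$, and for $k<l$ the edge $w_kw_l$ has colour $i_k$ (because $w_l\in S_{l-1}\subseteq S_k\subseteq N_{i_k}(w_k)$), while $S_{2n-1}\subseteq N_{i_k}(w_k)$ for every $k$. By pigeonhole some colour $i$ occurs at least $\lceil (2n-1)/2\rceil=n$ times among the $i_k$, say $i_{j_1}=\dots=i_{j_n}=i$ with $j_1<\dots<j_n$; then $W=\{w_{j_1},\dots,w_{j_n}\}$ is a clique in colour $i$ and $S_{2n-1}\subseteq\Ncap_i(W)$, so $\ud(\Ncap_i(W))\ge 2^{-(2n-1)}$. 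Finally $\Ncap_i(W)$ is infinite and disjoint from $W$ (no loops), so mapping the $n$ finite‑part vertices of $K$ bijectively onto $W$ (every edge of $K$ required to have colour $i$ between two of them then does, since $W$ is a colour‑$i$ clique) and mapping $B_0$ bijectively onto $\Ncap_i(W)$ (each image vertex is colour‑$i$-joined to all of $W$, and $B_0$ carries no further edges) yields a monochromatic copy of $K$ of upper density $\ud(\Ncap_i(W))\ge 2^{-(2n-1)}$. As $\vphi$ was arbitrary, $\Rd(K)\ge 2^{-(2n-1)}$.
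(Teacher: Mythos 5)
Your proof follows the same route as the paper's: part (i) and the upper bound in (ii) via the Rado coloring and \eqref{eq:rado}, and the lower bound in (ii) via greedily halving upper densities for $2n-1$ steps; the paper only sketches that last step, and your pigeonhole extraction of a monochromatic $n$-clique $W$ with $S_{2n-1}\subseteq \Ncap_i(W)$ is exactly the intended completion, so the whole of part (ii) is fine.

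There is, however, one gap in part (i). In the case where $K$ has finitely many finite-part vertices and at least two infinite parts, you deduce that the copy has upper density $0$ from the facts that \emph{each} infinite part's image has upper density $0$ and that the finite parts contribute only finitely many vertices. That inference is valid only when $K$ has finitely many infinite parts: if there are infinitely many infinite parts, a union of infinitely many sets each of upper density $0$ can have upper density $1$, so the conclusion does not follow as stated. The fix is immediate with the tools you already use: fixing two infinite parts $A$ and $B$, the image of $V(K)\setminus A$ is contained in $\Ncap_i$ of the images of any $k$ vertices of $A$ and hence has upper density $0$, and likewise for the image of $V(K)\setminus B$; since these two sets cover $V(K)$, the copy has upper density $0$. (The paper avoids the issue by first observing that in either case of (i) the graph $K$ contains a \emph{spanning} copy of $K_{\NN,\NN}$ and applying \eqref{eq:rado} to each of its two sides.)
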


\begin{proof}
Take the Rado coloring of $K_{\NN}$.

If $K$ has at least two infinite parts, or infinitely many vertices in finite parts, then $K$ contains a spanning copy of $K_{\NN, \NN}$; let $(A,B)$ be such a spanning copy of $K_{\NN, \NN}$.  Let $a_1,a_2,\ldots$ be the elements of $A$.  Then $B$ is contained in the neighborhood of $a_1,\ldots,a_n$, and hence has density $\le 2^{-n}$, for
    each $n$, by \eqref{eq:rado}.  Hence $B$ must have density $0$.  The same goes for $A$.

Now suppose $K$ has exactly one infinite part and exactly $n$ vertices in finite parts. Then by \eqref{eq:rado}, we have $\Rd(K)\leq \frac{1}{2^{n}}$ since the infinite part is the intersection of the neighborhoods of the $n$ vertices in finite parts.

To see $\Rd(K)\geq \frac{1}{2^{2n-1}}$, we are given an arbitrary 2-coloring of $K_{\NN}$ and we choose an arbitrary vertex $v$.  Either $\ud(N_R(v))\geq 1/2$ or $\ud(N_B(v))\geq 1/2$ and we choose the color with largest upper density.  We repeat this process inside the chosen neighborhood for $2n-1$ steps, at which point we have $n$ vertices whose common neighborhood in color, say, red has upper density at least $\frac{1}{2^{2n-1}}$ and the infinite part of $K$ can be embedded in such a way that it spans this set.
\end{proof}

The following result suggests that the question of whether $G$ is Ramsey-dense or not may
depend on the rate of growth of the ruling sets in $G$.

\begin{theorem}\label{rulinglog}
  Let $G$ be an infinite graph.  If $G$ has pairwise-disjoint ruling sets
  $F_n$ ($n\in\NN$) satisfying $|F_n| \le \log_2(n)$ for all sufficiently
  large $n$, then $G$ is not Ramsey-dense.
\end{theorem}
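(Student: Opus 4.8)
The plan is to exhibit a single $2$-coloring $\varphi$ of $K_\NN$ under which every monochromatic copy of $G$ has upper density $0$. The natural candidate is the Rado coloring $\rho$, whose key feature is \eqref{eq:rado}: for any finite $F$ and $i\in\{0,1\}$, the common color-$i$ neighborhood $\bigcap_{v\in F}N_i(v)$ has density exactly $2^{-|F|}$. So if we can embed a copy of some ruling set $F_n$ into a monochromatic copy of $G$, then (choosing $n$ large) the density of the vertices of that copy which lie outside $F_n$ and have the appropriate common neighbor in $F_n$ is at most $2^{-|F_n|}\le 1/n$; since all but finitely many vertices of a ruling set dominate all but finitely many vertices of $G$, this forces the density of the monochromatic copy to be at most (roughly) $1/n$ plus a negligible amount. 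Letting $n\to\infty$ gives upper density $0$.

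The steps I would carry out, in order. First, fix an arbitrary monochromatic copy $G'$ of $G$ in the Rado coloring, say of color $i$, with vertex set $V'\subseteq\NN$. Second, for each $n$, consider the image $F_n'\subseteq V'$ of the ruling set $F_n$ under the embedding; by definition of a ruling set in $G$, all but finitely many vertices $w\in V(G)$ have a neighbor in $F_n$, and hence all but finitely many vertices $w'\in V'$ are color-$i$ adjacent (in the host $K_\NN$) to some vertex of $F_n'$. Thus $V'$ is contained, up to a finite set $E_n$, in $\bigcup_{v\in F_n'}N_i(v)$. Third, I would not apply \eqref{eq:rado} to the whole union (a union of $|F_n|$ neighborhoods can have density close to $1$); instead, the point is that $V'$ itself is a color-$i$ clique or independent set — more carefully, I want to bound $\ud(V')$ using that the vertices of $V'$ pairwise have prescribed color interactions. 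The cleaner route: for every finite subset $S\subseteq V'$, the set $V'\setminus S$ is contained in the common color-$i$ neighborhood $\bigcap_{v\in S}N_i(v)$ only if $V'$ is color-$i$-complete, which it need not be. So instead I use the ruling sets directly: pick $n$ large, and observe $\ud(V')\le \ud(E_n)+\ud\!\left(\bigcup_{v\in F_n'}N_i(v)\cap V'\right)$. To make the union small I instead pass to a \emph{single} vertex: among $v\in F_n'$, the sets $N_i(v)\cap V'$ cover $V'\setminus E_n$, but I can also note each $w'\in V'\setminus E_n$ lies in $N_i(v)$ for some $v\in F_n'$, and there are only $|F_n|\le\log_2 n$ choices, so some $v_n\in F_n'$ has $\ud(N_i(v_n)\cap V')\ge \ud(V'\setminus E_n)/|F_n|$. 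Iterating this over the disjoint ruling sets $F_n, F_{n+1}, \dots$ — each giving a new dominating vertex whose color-$i$ neighborhood we intersect into — produces, after $k$ steps, a set $S_k$ of $k$ vertices with $V'$ contained up to a finite set in $\bigcap_{v\in S_k}N_i(v)$, which by \eqref{eq:rado} has density $2^{-k}$; hence $\ud(V')\le 2^{-k}$ for all $k$, so $\ud(V')=0$.

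The main obstacle, and the step needing the most care, is the bookkeeping in the iteration: I need the successive "dominating vertices" $v_n$ to be \emph{distinct} and to actually be common color-$i$ neighbors of a co-finite portion of $V'$, so that after $k$ steps $V'$ minus a finite set really does sit inside an intersection of $k$ distinct color-$i$ neighborhoods. Using pairwise-disjoint ruling sets $F_n$ is exactly what guarantees we can always find a fresh $v_n$, and the hypothesis $|F_n|\le\log_2 n$ ensures that at step using $F_n$ the surviving density is divided by at most $\log_2 n$, which is summable-to-zero in the right sense (the product $\prod (1/\log_2 n)$ over a suitable sparse subsequence goes to $0$, or more simply one just needs, for each target $2^{-k}$, to find one ruling set large enough). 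A secondary subtlety is that $\ud$ is only subadditive, not additive, so I must be careful to discard the finite "exceptional" sets $E_n$ at each stage (finite sets have density $0$, so this is harmless) and to phrase the neighborhood-covering inequality as $\ud(A)\le \sum_j \ud(A\cap B_j)$ when $A\subseteq^* \bigcup_j B_j$, which is valid. Once these are set up correctly, the conclusion that $G$ is not Ramsey-dense is immediate.
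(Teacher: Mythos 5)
There is a genuine gap, in two places. First, your opening heuristic has the key inequality backwards: the hypothesis $|F_n|\le\log_2 n$ is an \emph{upper} bound on $|F_n|$, so it gives $2^{-|F_n|}\ge 1/n$, not $2^{-|F_n|}\le 1/n$. The point of the theorem is that the ruling sets are \emph{small}, so that each one eliminates a non-negligible proportion (at least $1/n$) of the remaining density; a single ruling set never forces small density on its own, since the vertex set $V'$ is almost contained in the \emph{union} $\bigcup_{v\in F_n'}N_i(v)$, whose density in the Rado coloring is $1-2^{-|F_n|}$, i.e.\ close to $1$.

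Second, and more seriously, the iteration at the heart of your argument does not deliver what you claim. The pigeonhole step gives a vertex $v_n\in F_n'$ with $\ud(N_i(v_n)\cap V')\ge\ud(V')/|F_n|$; it does \emph{not} give $V'\subseteq^* N_i(v_n)$, because a single vertex of a ruling set need not dominate cofinitely many vertices of $G$ (only the whole set $F_n$ does). Consequently, after $k$ steps you do not have $V'\subseteq^*\bigcap_{v\in S_k}N_i(v)$; what the iteration actually yields is
\[
\ud\Bigl(V'\cap\bigcap_{j=1}^{k} N_i(v_{n_j})\Bigr)\;\ge\;\frac{\ud(V')}{\prod_{j=1}^{k}|F_{n_j}|},
\]
and comparing with $d\bigl(\bigcap_{j=1}^{k} N_i(v_{n_j})\bigr)=2^{-k}$ from \eqref{eq:rado} gives only $\ud(V')\le 2^{-k}\prod_{j=1}^{k}|F_{n_j}|$, which does not tend to $0$ once the $|F_{n_j}|$ exceed $2$ (and the hypothesis allows $|F_n|\to\infty$). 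The repair is to work with the unions directly, as the paper does: $V'\subseteq^*\bigcap_{n=1}^{N}\bigcup_{v\in F_n'}N_i(v)$ for every $N$, and since the $F_n$ are pairwise disjoint the independence built into the Rado coloring gives $d\bigl(\bigcap_{n=1}^{N}\bigcup_{v\in F_n'}N_i(v)\bigr)=\prod_{n=1}^{N}(1-2^{-|F_n|})$; the hypothesis then gives $1-2^{-|F_n|}\le 1-1/n$, and the infinite product diverges to $0$ because the harmonic series diverges.
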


\begin{proof}
Consider the Rado coloring of $K_\NN$.
Suppose now that $V$ is the vertex set of a monochromatic copy of $G$, say
with color $i$.  Then for each $N$, we have
\[
  V \subseteq^* \bigcap_{n=1}^N \bigcup_{v\in F_n} N_i(v).
\]

Note that
\[
  d\left(\bigcap_{n=1}^N \bigcup_{v\in F_n} N_i(v)\right) = \prod_{n=1}^N (1
  - 2^{-|F_n|})
\]
Hence
\[
  \ud(V)\le \prod_{n=1}^\infty (1 - 2^{-|F_n|}).
\]
It is well-known that an infinite product $\prod_{n=1}^\infty \alpha_n$,
with $\alpha_n\in (0,1)$, converges to $0$ if and only if
\[
  \sum_{n=1}^\infty \log(\alpha_n) = -\infty.
\]
In our case we have $|F_n| \le \log_2(n)$ for all sufficiently large $n$, so
\[
  \log(1 - 2^{-|F_n|}) \le \log\left(1 - \frac{1}{n}\right) \leq -1/n.
\]
By the limit comparison test and the divergence of the harmonic series, we have $d(V) = 0$.
\end{proof}
\section{Ultrafilters and embedding}\label{sec:embed}
The concept of ultrafilters will play an important role in this paper.

\begin{definition}
  \label{def:ultrafilter}
  Given a set $X$, a set system $\SU \subset 2^X$ is called an \emph{ultrafilter} if
  \begin{enumerate}
    \item $X \in \SU$ and $\emptyset \not \in \SU$,
    \item If $A \in \SU$ and $A \subset B \subset X$, then $B \in \SU$,
    \item If $A,B \in \SU$, then $ A \cap B \in \SU$ and
    \item For all $A \subset X$, either $A \in \SU$ or $X \setminus A \in \SU$, or
    \item[(iv)$'$] $\SU$ is maximal among all families satisfying $(i)$ - $(iii)$.
  \end{enumerate}
\end{definition}

A family satisfying $(i)$-$(iii)$ is called a \emph{filter}. Conditions $(iv)$ and $(iv)'$ are equivalent for filters (see \cite[Chapter 11, Lemma 2.3]{Hrbacek1999}) and we will make use whichever is more convenient for the current application.
Let us list some additional properties of ultrafilters.

\begin{proposition}
  \label{prop:ultrafilter-properties}
 If $\SU$ is an ultrafilter on $X$, we have
  \begin{enumerate}
    \item If $A_1, \ldots, A_n \in \SU$, then $A_1 \cap \ldots \cap A_n \in \SU$.
    \item If $A_1, \dots, A_n$ are pairwise disjoint and $A_1 \cup \ldots \cup A_n \in \SU$, then there is exactly one $i \in [n]$ with $A_i \in \SU$.
  \end{enumerate}
\end{proposition}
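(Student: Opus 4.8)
Both parts follow directly from the filter axioms $(i)$--$(iii)$ together with the ultrafilter condition $(iv)$, so the plan is short and there is no serious obstacle; the only care needed is to invoke exactly the right axioms.

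For part (1), I would argue by induction on $n$. The case $n=1$ is trivial, and the case $n=2$ is precisely axiom $(iii)$. For the inductive step, assuming $A_1\cap\cdots\cap A_{n-1}\in\SU$, apply axiom $(iii)$ to the two sets $A_1\cap\cdots\cap A_{n-1}\in\SU$ and $A_n\in\SU$ to conclude $A_1\cap\cdots\cap A_n\in\SU$.

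For part (2), first I would establish existence. Suppose toward a contradiction that $A_i\notin\SU$ for every $i\in[n]$. By axiom $(iv)$, $X\setminus A_i\in\SU$ for every $i$, so by part (1) we get $\bigcap_{i=1}^n(X\setminus A_i)\in\SU$; but this set equals $X\setminus\bigcup_{i=1}^n A_i$, and since $\bigcup_{i=1}^n A_i\in\SU$ as well, axiom $(iii)$ gives $\emptyset=\big(X\setminus\bigcup A_i\big)\cap\big(\bigcup A_i\big)\in\SU$, contradicting $\emptyset\notin\SU$ from axiom $(i)$. Hence at least one $A_i\in\SU$. For uniqueness, suppose $A_i,A_j\in\SU$ with $i\neq j$; then axiom $(iii)$ forces $A_i\cap A_j\in\SU$, but $A_i\cap A_j=\emptyset$ by pairwise disjointness, again contradicting $\emptyset\notin\SU$. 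Therefore exactly one index $i\in[n]$ satisfies $A_i\in\SU$.

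Since the proof uses only axioms $(i)$--$(iv)$ (not $(iv)'$), it applies verbatim to any ultrafilter on $X$, as claimed. The main point to watch is simply that we never need $\SU$ to be countably complete or otherwise special — finite intersections and the dichotomy of $(iv)$ suffice.
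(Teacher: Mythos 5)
Your proof is correct, and the paper in fact gives no proof of this proposition at all --- it is stated as a routine consequence of the ultrafilter axioms. Your argument (induction on axiom $(iii)$ for part (1); for part (2), deriving a contradiction with $\emptyset\notin\SU$ via complements for existence and via disjointness for uniqueness) is exactly the standard verification the authors are implicitly relying on.
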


Informally, we think of sets $A \in \SU$ as ``large'' sets. A common example of an ultrafilter are the so called trivial ultrafilters $\SU_x := \{A \subset X: x \in A\}$ for $x \in X$. It is not hard to see that an ultrafilter is trivial if and only if it contains a finite set.

We say that an ultrafilter $\SU$ on $\N$ is \emph{positive} if every set $A \in \SU$ has positive upper density in $\NN$. Positive ultrafilters play a crucial role in the proof of Theorem \ref{thm:ruling}.

\begin{proposition}\label{prop:non-trivial-uf}
  If $X \subset \N$ is infinite, then there exists a non-trivial ultrafilter $\SU$ on $X$.
  There exists a positive ultrafilter $\SU$ on $\N$.
  \label{prop:positive_ultrafilter}
\end{proposition}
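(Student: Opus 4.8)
The plan is to prove the two assertions of \cref{prop:non-trivial-uf} separately, both by the usual Zorn's Lemma argument applied to a suitable filter.

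\textbf{First assertion.} Given an infinite $X \subseteq \N$, I would consider the \emph{Fr\'echet filter} $\SF = \set{A \subseteq X}{X \sm A \text{ is finite}}$ of cofinite subsets of $X$. Since $X$ is infinite, $\emptyset \notin \SF$, and it is immediate that $\SF$ is a filter (closed under supersets and finite intersections). By Zorn's Lemma (using the characterization $(iv)'$ of \cref{def:ultrafilter}), $\SF$ extends to an ultrafilter $\SU$ on $X$; here one checks that the union of a chain of filters, each containing $\SF$ and each omitting $\emptyset$, is again such a filter, so maximal elements exist. Finally $\SU$ is non-trivial: if it contained a finite set $F$, then since $X \sm F \in \SF \subseteq \SU$ we would get $\emptyset = F \cap (X \sm F) \in \SU$ by property $(i)$ of \cref{prop:ultrafilter-properties}, a contradiction. (Alternatively, invoke the remark that an ultrafilter is trivial iff it contains a finite set, together with the fact that $\SU$ contains all cofinite sets.)

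\textbf{Second assertion.} For the positive ultrafilter on $\N$, I would instead start from the filter $\SF^+$ generated by the complements of sets of upper density $0$, i.e.\ let $\SF^+$ consist of all $A \subseteq \N$ such that $\N \sm A$ can be covered by finitely many sets of upper density $0$ — equivalently, such that $\ud(\N \sm A) = 0$, using subadditivity of upper density from \eqref{fact:density}. This is a filter: $\N \in \SF^+$; it is closed upward; and if $\ud(\N \sm A) = \ud(\N \sm B) = 0$ then $\ud(\N \sm (A \cap B)) \le \ud(\N \sm A) + \ud(\N \sm B) = 0$, so $A \cap B \in \SF^+$. Crucially $\emptyset \notin \SF^+$ since $\ud(\N) = 1 \neq 0$. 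Extend $\SF^+$ to an ultrafilter $\SU$ by Zorn's Lemma as before. Now if $A \in \SU$ had $\ud(A) = 0$, then $\N \sm A \in \SF^+ \subseteq \SU$, so $\emptyset = A \cap (\N \sm A) \in \SU$, a contradiction; hence every member of $\SU$ has positive upper density, i.e.\ $\SU$ is positive.

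\textbf{Main obstacle.} There is no serious obstacle here — both parts are routine applications of Zorn's Lemma — so the only thing to get right is the verification that $\SF^+$ really is a proper filter, which is exactly where the subadditivity inequality $\ud(A \cup B) \le \ud(A) + \ud(B)$ from \eqref{fact:density} is used, and the observation that $\emptyset \notin \SF^+$, which is where the positivity in the conclusion comes from. One should also note explicitly that a positive ultrafilter is automatically non-trivial, since finite sets have upper density $0$.
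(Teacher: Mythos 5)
Your proposal is correct and follows essentially the same route as the paper: extend the Fr\'echet filter (resp.\ the filter of sets of lower density $1$, which is exactly your $\SF^+$ since $\ld(A)=1-\ud(\N\sm A)$) to an ultrafilter via Zorn's Lemma, then use the complement trick to rule out finite sets (resp.\ sets of upper density $0$). The only cosmetic difference is that you phrase the second filter via complements of density-zero sets rather than via lower density $1$, and you spell out the subadditivity check that the paper leaves implicit.
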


\begin{proof}
  To prove the first part of the theorem apply Zorn's lemma to
\[\{\CF \subset 2^\N: \CF \text{ contains all cofinite sets and  satisfies (i) - (iii) in \cref{def:ultrafilter}}\}\] to get a maximal such family $\SU$, which must be an ultrafilter. Finally, if $A$ is finite, $\SU$ contains the cofinite set $A^c$ and hence $A \not \in \SU$.

To prove the second part, apply Zorn's lemma to
\[\{\CF \subset 2^\N: \CF \text{ contains all sets of lower density } 1 \text{ and satisfies (i) - (iii) in \cref{def:ultrafilter}}\}\] to get a maximal such family $\SU$, which must be an ultrafilter. Furthermore, if $A \subset \N$ has upper density $0$, then $\N \setminus A$ has lower density $1$ (see \eqref{fact:density}) and consequently $A \not \in \SU$.
\end{proof}

\begin{definition}[Vertex-coloring induced by $\SU$]
  Let $r\geq 2$ be an integer and suppose the edges of an infinite graph $G$ are colored with $r$ colors. Let $\SU$ be a non-trivial ultrafilter on $V(G)$.
  Define a coloring $c_\SU: V(G) \to [r]$ where $c_\SU(v)=i$ if and only if $N_i(v)\in \SU$. Since $V(G) \setminus \{v\} \in \SU$ for all $v \in V(G)$, it follows from \cref{prop:ultrafilter-properties} (ii) that $c_\SU$ is well-defined.
  We call $c_\SU$ the vertex-coloring induced by $\SU$.
\end{definition}

The following two propositions allow us to use ultrafilters to embed the desired subgraphs in the proof of \cref{thm:locally-finite} and \cref{thm:ruling}.

\begin{proposition}\label{prop:embed}
Let $k\geq 2$ be an integer, let $G$ be a one-way $k$-locally finite graph and let $H$ be a graph such that $\{U_1,\dots, U_k\}$ is a partition of $V(H)$ with $|U_1| = \dots = |U_k| = \infty$ and for all $i\in [k]$ and any finite subset $W\subseteq U_1\cup \dots \cup U_{i-1}$, the set of common neighbors of $W$ in $U_i$ is infinite.
Then, there is an embedding $f$ of $G$ into $H$ such that $U_1\subseteq \ran f$.
\end{proposition}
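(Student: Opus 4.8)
The plan is to build $f$ by a greedy, back-and-forth construction carried out in $\omega$ many stages, alternating between steps that enlarge the domain (so that every vertex of $G$ is eventually used) and steps that enlarge the range inside $U_1$ (so that $U_1\subseteq\ran f$ in the end). Fix the witnessing partition $V_1,\dots,V_k$ of $V(G)$ into independent sets with $|V_1|\ge\dots\ge|V_k|$ and $d(v,V_i)<\infty$ whenever $v\in V_j$ and $i<j$. For $v\in V_j$ set $\operatorname{down}(v)=N(v)\cap(V_1\cup\dots\cup V_{j-1})$, which is finite by hypothesis, and call a finite set $D\subseteq V(G)$ \emph{closed} if $\operatorname{down}(v)\subseteq D$ for every $v\in D$. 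A short induction on the column index shows that the smallest closed set containing any single vertex is finite: each application of $\operatorname{down}$ strictly decreases the column and branches only finitely, so chains have length at most $k$ and the whole closure is a finite union of finite sets. Note also that $V_1$ is infinite, since otherwise every $V_i$, and hence $V(G)$, would be finite.

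I would maintain through the construction a finite partial map $f_n$ whose domain $D_n$ is closed, with $f_n(D_n\cap V_j)\subseteq U_j$ for every $j$, and such that $f_n$ is injective and sends edges of $G[D_n]$ to edges of $H$. \emph{Domain step:} take the first vertex $v$ of $G$ (in a fixed enumeration of $V(G)$) not in $D_n$, adjoin $v$ together with its (finite) closure, and embed the new vertices one at a time in non-decreasing order of column index. When a new vertex $w\in V_i$ is to be placed, every already-embedded neighbour of $w$ lies in a column strictly below $i$: within the current batch this holds because we process by increasing column and $V_i$ is independent, and for a neighbour $x\in D_n$ it holds because $x\in V_i$ is impossible by independence while $x$ in a higher column would give $w\in\operatorname{down}(x)\subseteq D_n$, contradicting $w\notin D_n$. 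Hence the images of these neighbours form a finite set $W\subseteq U_1\cup\dots\cup U_{i-1}$, and by hypothesis $\Ncap(W)\cap U_i$ is infinite, so we may choose $f(w)$ there, distinct from the finitely many vertices already used. This preserves all the maintained properties (we only ever need to realise edges of $G$, so extra edges of $H$ among the images do no harm).

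\emph{Range step:} let $u$ be the first element of $U_1$ not yet in $\ran f_n$. Each $x\in D_n$ has only finitely many neighbours in $V_1$ — none if $x\in V_1$ by independence, and at most $d(x,V_1)<\infty$ otherwise — so $V_1\setminus(D_n\cup N(D_n))$ is cofinite in $V_1$, hence nonempty; pick $v$ in it and put $f_{n+1}(v)=u$. Since $\operatorname{down}(v)=\emptyset$ the domain stays closed, and since $v$ has no already-embedded neighbour no new edge constraints arise. Alternating domain and range steps produces a nested chain $f_0\subseteq f_1\subseteq\cdots$; its union $f$ is defined on all of $V(G)$ (each vertex is eventually chosen in a domain step) and satisfies $U_1\subseteq\ran f$ (each vertex of $U_1$ is eventually chosen in a range step), which is the desired embedding. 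The main obstacle to guard against is precisely the column-ordering issue in the domain step: realising an edge from a newly placed vertex to an already-placed neighbour in a \emph{higher} column would require a common neighbour the hypothesis does not provide, and the entire bookkeeping — keeping $D_n$ closed, and forcing the range step to pick $V_1$-vertices with no prior neighbours via the finite backward-degree condition — is exactly what rules this out.
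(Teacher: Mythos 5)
Your proof is correct and follows essentially the same strategy as the paper's: the paper's ``left neighborhood cascade'' of a seed set is exactly your downward closure under $\operatorname{down}$, and both arguments embed each finite batch in increasing column order using the common-neighbourhood hypothesis while sweeping up $U_1$ (the paper by always placing the $V_1$-part of each batch on the smallest unused vertices of $U_1$, you by a separate range step). If anything, your explicit ``closed domain'' invariant treats edges from a newly placed vertex back to vertices embedded in \emph{earlier} stages more carefully than the paper's brief observation that the cascade is disjoint from $\dom f$.
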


Given a $k$-partite graph $G$ with parts $V_1, \dots, V_k$ and a set $S\subseteq V(G)$, the \emph{left neighborhood cascade} of $S$ is the tuple $(S_1, \ldots, S_k)$, where $S_{k}=S\cap V_k$, and for all $1\leq i\leq k-1$, $S_i= (S\cup \bigcup_{j=i+1}^k N(S_{j})) \cap V_i$.

\begin{proof}
Let $V_1\cup V_2\cup \dots \cup V_k$ be a partition of $V(G)$ into independent sets which witness the fact that $G$ is one-way $k$-locally-finite (in particular $V_1$ is infinite).
We will construct an embedding $f$ iteratively in finite pieces. Initially, $f$ is the empty embedding. Then, for each $n \in \N$, we will proceed as follows: let
\[S_n=\{\min (V_i \setminus \dom f): i \in [k] \text{ with } V_i \setminus \dom f \not= \emptyset\}.\]
That is, $S_n$ contains the smallest not yet embedded vertex of each $V_i$ which is not completely embedded yet.
Let $(T_{1,n}, \ldots, T_{k,n})$ be left neighborhood cascade of $S_n$ in $G$. We will now extend $f$ to cover $\bigcup_{i \in [k]} T_{i,n}$.
Observe that $T_{i,n}$ is disjoint from $\dom f$ for all $i \in [k]$ since we embedded the whole left neighborhood cascade in every previous step.
Since $V_1$ is infinite, $T_{1,n}$ is non-empty. Let $T'_{1,n} \subset U_1 \setminus \ran f$ be the set of $|T_{1,n}|$ smallest vertices in $U_1 \setminus \ran f$ and extend $f$ by embedding $T_{1,n}$ into $T'_{1,n}$ arbitrarily.
By assumption $T'_{1,n}$ has infinitely many common neighbors in $U_2$. Since $\ran f$ is finite, we can select a set $T_{2,n}' \subset (U_2 \cap N^{\cap}(T'_{1,n}) \setminus \ran f$ of size $|T_{2,n}|$. Extend $f$ by embedding $T_{2,n}$ into $T'_{2,n}$ arbitrarily. Similarly, we can extend $f$ by embedding $T_{i,n}$ into appropriate sets $T'_{i,n}$ for all $i=3, \ldots, k$.

Since we maintain a partial embedding of $G$ into $H$ throughout the process and every vertex of $G$ will eventually be embedded (by choice of $S_n$ which contains the smallest not yet embedded vertex of $V(G)$), the resulting function $f$ defines an embedding of $G$ into $H$. Since we cover the smallest not-yet covered vertex of $U_1$ in each step, we further have $U_1 \subset \ran f$.
\end{proof}

\begin{proposition}\label{prop:embed0rul}
Let $H$ be a graph having the property that for every finite set of vertices $W\subseteq V(H)$, the set of common neighbors of $W$ is infinite.  If $G$ is an infinite $0$-ruled graph, then there is a surjective embedding of $G$ into $H$.
\end{proposition}

\begin{proof}
  Let $v_1,v_2, \dots$ be an enumeration of $V(G)$ and let $u_1, u_2,\dots$ be an enumeration of $V(H)$.  Let $f(v_1)=u_1$.  Now suppose $\dom f=\{v_1,\dots, v_{n}\}$ for some $n \in \NN$.  Let $u_{i_n}$ be the vertex of smallest index in $V(H)\setminus \ran f$.
  Since $G$ is $0$-ruled, there exists a vertex $v_p$ with $p> n$ such that $v_p$ has no neighbors in $\{v_1, \ldots, v_{n}\}$. We set $f(v_p) = u_{i_n}$ and if $p>n+1$, we do the following for all $n+1\leq i\leq p-1$:
  since $\{f(v_1), \dots, f(v_{i-1}), f(v_p)\}$ has infinitely many common neighbors, we may choose a vertex $u\in V(H)\setminus \ran f$ which is adjacent to every vertex in $\{f(v_1), \dots, f(v_{i-1}), f(v_p)\}$ and set $f(v_i)=u$.
  Continuing in this way, we obtain an embedding of $G$ into $H$.  Since on each step, the vertex of smallest index in $V(H)\setminus \ran f$ becomes part of the range of $f$, the embedding is surjective.
\end{proof}
\section{Graphs of bounded chromatic number}\label{sec:chromatic}
In this section we will prove \cref{thm:locally-finite}.  First note that if $G$ is one-way $k$-locally-finite, then $G$ is 0-ruled.

\begin{proof}[Proof of \cref{thm:locally-finite}]
(i) We are given an infinite one-way $2$-locally-finite graph $G$ and an $r$-coloring of the edges of $K_\NN$.  Let $\SU$ be a non-trivial ultrafilter on $\NN$.  Let $c_\SU$ be the vertex-coloring induced by $\SU$ and for all $i\in [r]$, let $A_i$ be the set of vertices receiving color $i$.  We may suppose without loss of generality that $\ud(A_1)\geq 1/r$ (see \eqref{fact:density}).
If $A_1\in \SU$, then the set of common neighbors of $S$ in $A_1$ of color $1$ is infinite for every finite set $S \subset A_1$. Thus, we can apply \cref{prop:embed0rul} to embed $G$ in color $1$ in such a way that $A_1$ is covered. If $A_1 \not\in \SU$, then every finite set $S \subset A_1$ has infinitely many common neighbors of color $1$ in $A_1^c$. Hence, by applying Proposition \ref{prop:embed}, we can find a monochromatic copy of $G$ in color $1$ such that $A_1$ is covered. Either way, we have a monochromatic copy of $G$ of upper density at least $\ud(A_1) \geq 1/r$.

\begin{figure}[ht]
  \centering
  \begin{subfigure}[t]{0.5\linewidth}
    \centering\includegraphics[scale=1]{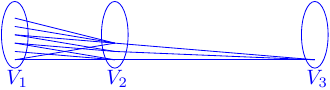}
    \caption{A one-way 3-locally-finite graph $G$\label{fig:fig1}}
  \end{subfigure}%
  \begin{subfigure}[t]{0.5\linewidth}
    \centering\includegraphics[scale=1]{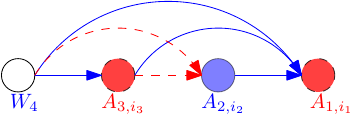}
    \caption{An arrow from $X$ to $Y$ of color $i$ indicates that any finite set of vertices in $X$ has infinitely many common neighbors in $Y$ of color $i$.  If $X$ is filled with color $i$, then any finite set of vertices in $X$ has infinitely many common neighbors in $X$ of color $i$. \label{fig:fig2}}
  \end{subfigure}
  \caption{An example of the proof of \cref{thm:locally-finite}.(ii).  In this example, $G$ will be embedded in blue into $W_4\cup A_{3,i_3}\cup A_{1,i_1}$ such that $W_4\subseteq V(G)$.}
\end{figure}

(ii) We are given an infinite one-way $k$-locally-finite graph $G$ and an $2$-coloring of the edges of $K_\NN$.  Let $\SU_1$ be a non-trivial ultrafilter on $\NN$.  Let $c_{\SU_1}$ be the vertex-coloring induced by $\SU_1$ and for all $i\in [2]$, let $A_{1,i}$ be the set of vertices receiving color $i$. Choose $i_1 \in [2]$ so that $A_{1,i_1}\in \SU_1$ and let $i_1' = 3 - i_1$.
If $A_{1,i_1'}$ is finite, then stop; otherwise, let $\SU_2$ be a non-trivial ultrafilter on $W_2 = A_{1,i_1'}$ and let $c_{\SU_2}$ be the vertex-coloring of $W_2$ induced by $\SU_2$. For all $i\in [2]$, let $A_{2,i}$ be the set of vertices receiving color $i$.
Choose $i_2$ so that $A_{2,i_2}\in \SU_2$ and let $i_2' = 3- i_2$. Let $W_3:= A_{2,i_2'}$ and continue in this manner until the point at which (a) $A_{t,i_t'}$ is finite for some $t$, or (b) there exists $t$ and $j\in [2]$ such that there exists a set $J\subseteq [t]$ where $|J|=k-1$ and $A_{j, i_j}\in \SU_j$ for all $j\in J$.  Note that by pigeonhole, we must have $t\leq 2k-3$ in either case.  

If we are in case (a), then by \eqref{fact:density} one of the sets $A_{1,i_1}, A_{2, i_2}, \dots, A_{t, i_t}$ has upper density at least $\frac{1}{t}$, say $A_{j,i_j}$.  Now by applying \cref{prop:embed0rul} with color $i_j$, we get the desired monochromatic copy of $G$ covering $A_{j,i_j}$ with upper density at least $\frac{1}{t}\geq \frac{1}{2k-3}>\frac{1}{2(k-1)}$.  If we are in case (b), suppose without loss of generality that $j=1$.  Set $W_{t+1}:=W_t\setminus A_{t,1}$.  Since $\{A_{1,1}, A_{2,1}, \dots, A_{t, 1}, W_{t+1}\}$ is a partition of $\NN$, we have by \eqref{fact:density} that one of the sets $A_{1,1}, A_{2,1}, \dots, A_{t, 1}, W_{t+1}$ has upper density at least $\frac{1}{2(k-1)}$.
If, say, $\ud(A_{\ell, i_\ell})\geq \frac{1}{2(k-1)}$ for some $\ell\in [t]$, then applying Proposition \ref{prop:embed0rul} with color $i_\ell$ gives the desired monochromatic copy of $G$ covering $A_{\ell}$;
otherwise $\ud(W_{t+1})\geq \frac{1}{2(k-1)}$, and applying Proposition \ref{prop:embed} with color $2$ gives the desired monochromatic copy of $G$ covering $W_{t+1}$.

\begin{figure}[ht]
\begin{center}
\includegraphics[height=3.5cm]{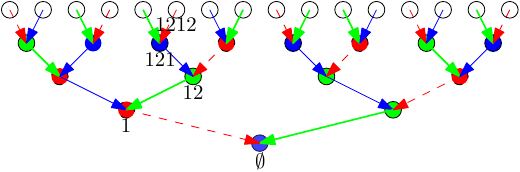}
\end{center}
\caption{An example of the proof of \cref{thm:locally-finite}.(iii) with $r=3$ and $k=3$. Here we have highlighted the sequence $A_{(1,2,1,2)},A_{(1,2,1)},A_{(1,2)},A_{(1)},A_{\emptyset}$ and note that some color, in this case red, must appear at least twice, which means we can embed $G$ into $A_{(1,2,1,2)}\cup A_{(1,2,1)}\cup A_{\emptyset}$ in such a way that $A_{(1,2,1,2)}$ is covered.}\label{fig:iii}
\end{figure}

(iii) The process is very similar to (ii), in that we repeatedly choose ultrafilters until the leftover vertices are finite, or we are guaranteed that some color appears $k-1$ times from every set at the end of the process (see \cref{fig:iii}).  However, the formal proof is a bit more technical.

We will use the following notation. Given $i_1, i_2 \in \NN$, and $L_1 \in \NN^{i_1}$ and $L_2 \in \NN^{i_2}$, we write $L_1 \prec L_2$ if $L_1$ is an initial segment of $L_2$. Furthermore, given $L = (j_1, \ldots, j_i) \in \NN^i$ for some $i \in \NN$, we define $L^- := (j_1, \ldots, j_{i-1})$.

Suppose the edges of $K_\NN$ are colored with $r$ colors and let $q = (k-2)r+1$. We will define sets $A_L$ for $L \in \bigcup_{i=0}^{q} [r-1]^{i}$ and colorings $\chi_1 : \{A_L: L \in \bigcup_{i=0}^{q-1} [r-1]^i\} \to [r]$ and $\chi_2 : \bigcup_{i=1}^{q} [r-1]^i \to [r]$ with the following properties.

\begin{enumerate}
    \item[(a)] The sets $A_L$, $L \in \bigcup_{i=0}^q [r-1]^i$, are pairwise disjoint and their union is cofinite.
    \item[(b)] For every $L \in \bigcup_{i=1}^{q} [r-1]^i$, $A_L$ is empty or every finite set $S \subset A_L$ has infinitely many common neighbors of color $\chi_1(A_L)$ in $A_L$.
    \item[(c)] For every $L \in \bigcup_{i=1}^{q} [r-1]^i$, $A_L$ is empty or every finite set $S \subset \bigcup_{L \prec L'} A_{L'}$ has infinitely many common neighbors of color $\chi_2(L)$ in $A_{L^-}$.
\end{enumerate}

We will construct these sets and colorings recursively. In the process, we will also construct sets $B_L$ and ultrafilters $\SU_{L}$ on $B_L$ for every $L \in \bigcup_{i=0}^q [r-1]^i$.

Let $B_{()} = \NN$ and let $\SU_{()}$ be a non-trivial ultrafilter on $B_{()}$, where $()$ denotes the empty sequence.  Let $c_{\SU_{()}}$ be the vertex-coloring induced by $\SU_{()}$. Let $c$ be the color so that $A_{()}$, the set of vertices of color $c$, is in $\SU_{()}$ and let $\chi_1(A_{()}) = c$. Let $[r] \setminus \{c\} = \{j_1, \ldots, j_{r-1}\}$ and, for $i \in [r-1]$, let $B_{(i)}$ be the set of vertices receiving color $j_i$ and let let $\chi_2((i)) = j_i$.

In the next step, we proceed as follows for every $i_0 \in [r-1]$. If $B_{(i_0)}$ is finite, let $A_{(i_0)} = B_{(i_0,i)} = \emptyset$ for every $i \in [r-1]$. Otherwise, let $\SU_{(i_0)}$ be a non-trivial ultrafilter on $B_{(i_0)}$ and let $c_{\SU_{(i_0)}}$ be the vertex-coloring induced by $\SU_{(i_0)}$. Let $c$ be the color so that $A_{(i_0)}$, the set of vertices of color $c$, is in $\SU_{(i_0)}$ and let $\chi_1(A_{(i_0)}) = c$. Let $[r] \setminus \{c\} = \{j_1, \ldots, j_{r-1}\}$ and, for $i \in [r-1]$, let $B_{(i_0,i)}$ be the set of vertices receiving color $j_i$ and let let $\chi_2((i_0,i)) = j_i$.

We proceed like this until we defined the sets $B_L$ for every $L \in [r-1]^q$ and let $A_L := B_L$ for all $L \in [r-1]^q$. It is easy to see from the ultrafilter properties that the above properties hold.

Therefore, for every $L \in \bigcup_{i=0}^{q-1} [r-1]^i$, $A_L$ is empty or can be covered by a monochromatic copy of $G$ by \cref{prop:embed0rul}.
Furthermore, for every $L \in [r-1]^q$ for which $A_L$ is non-empty, we find $k-1$ sets $L_1 \prec \ldots \prec L_{k-1} \prec L$ of the same color w.r.t.\ $\chi_2$ by the pigeonhole principle. Therefore, applying \cref{prop:embed} to $U_k := A_{L_1^-}, \ldots, U_2 := A_{L_{k-1}^-}, U_1:= A_L$, we find a monochromatic copy of $G$ covering $A_L$.
Since, there are $C := \sum_{i=0}^{q} (r-1)^i$ sets $A_L$, one of them has upper density at least $1/C$.
\end{proof}

Let $G$ be a graph with $\Delta:=\Delta(G)<\infty$.  Since $\chi(G)\leq \Delta(G)+1$, we immediately obtain as a corollary that $\Rd(G)\geq \frac{1}{2\Delta}$.  However, with a bit more work we obtain the following corollary.

\begin{corollary}\label{cor:maxdeg}
Let $G$ be an infinite graph.  If $2\leq \Delta:=\Delta(G)<\infty$, then $\Rd(G)\geq \frac{1}{2(\Delta-1)}$.
\end{corollary}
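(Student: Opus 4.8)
The plan is to deduce this from Theorem~\ref{thm:locally-finite}(ii) together with Brooks' theorem. Since $G$ has bounded degree it is locally finite, so the only obstruction to $G$ being one-way $\Delta$-locally-finite is its chromatic number: if $\chi(G)\le\Delta$ then $G$ is one-way $\Delta$-locally-finite and Theorem~\ref{thm:locally-finite}(ii) gives $\Rd(G)\ge\frac1{2(\Delta-1)}$ directly. Applying Brooks' theorem to each component of $G$, and using that the exceptional graphs $K_{\Delta+1}$ and (when $\Delta=2$) odd cycles are all \emph{finite}, we see that $\chi(G)\le\Delta$ unless some component of $G$ is isomorphic to $K_{\Delta+1}$, or (for $\Delta=2$) to an odd cycle. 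So it only remains to treat a graph $G$ having such ``exceptional'' components.

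Write $G=G_0\sqcup Q_1\sqcup Q_2\sqcup\cdots$ where $\chi(G_0)\le\Delta$ and each $Q_m$ is exceptional (there may be finitely or infinitely many of them). Delete one edge $e_m=\{a_m,b_m\}$ from each $Q_m$; the resulting graph $Q_m':=Q_m-e_m$ satisfies $\chi(Q_m')\le\Delta$ (a clique minus an edge, respectively a path), and $G=G'+\{e_m:m\in\NN\}$ where $G':=G_0\sqcup Q_1'\sqcup Q_2'\sqcup\cdots$ is locally finite with $\chi(G')\le\Delta$. Moreover one may fix a proper $\Delta$-colouring $V_1\cup\cdots\cup V_\Delta$ of $V(G')$ in which $V_1$ is infinite and, for every $m$, both endpoints $a_m,b_m$ lie in $V_2$ (possible since both $K_{\Delta+1}-e$ and a path on an odd number of vertices have a proper $\Delta$-colouring placing the two ends of the deleted edge in a common colour class, and these are local constraints); in this colouring $V_1,V_3,\dots,V_\Delta$ are independent while $G[V_2]$ is a matching plus isolated vertices. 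Now run the proof of Theorem~\ref{thm:locally-finite}(ii) with $k=\Delta$. If it terminates by producing a colour-$i_\ell$ ultrafilter-majority set $A_{\ell,i_\ell}$ of upper density $\ge\frac1{2(\Delta-1)}$, then $A_{\ell,i_\ell}$ is ``colour-$i_\ell$-rich in itself'', so $G_{i_\ell}[A_{\ell,i_\ell}]$ satisfies the hypothesis of Proposition~\ref{prop:embed0rul}; as $G$ is locally finite, hence $0$-ruled, that proposition yields a colour-$i_\ell$ copy of $G$ covering $A_{\ell,i_\ell}$, and we are done. If instead it produces the ``minority leftover'' $W_{t+1}$ of upper density $\ge\frac1{2(\Delta-1)}$ together with colour-rich sets $U_2,\dots,U_\Delta$, one embeds $G$ by running the Proposition~\ref{prop:embed} argument for the partition $V_1,\dots,V_\Delta$ with the single extra requirement that, when the second endpoint $b_m$ of a deleted edge is placed, its image be adjacent (in the embedding colour) to the already-placed image of $a_m$; re-adding the edges $e_m$ then upgrades the copy of $G'$ to a copy of $G$.

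The main obstacle is exactly this last step. In Proposition~\ref{prop:embed} all parts $V_i$ are independent, so one never needs an edge \emph{inside} any $U_i$; once $G[V_2]$ is allowed to be a nonempty matching one needs the set hosting $V_2$ to be rich \emph{within itself} and \emph{in the colour used for the embedding}, and one must reconcile this with the colour used to connect $U_1=W_{t+1}$ to the other $U_i$ — the ``rich directions'' coming out of the leftover set and out of a majority set are carried \emph{a priori} by different colours, so some additional care is needed (for instance choosing which majority set plays the role of $U_2$, or inserting one further ultrafilter step on $W_{t+1}$). When there are only finitely many exceptional components this difficulty disappears, since their union is a fixed finite graph and one can simply carve a monochromatic copy of it out of a colour-rich set before embedding $G_0$ there; the genuinely delicate case is when $G$ has infinitely many exceptional components and every colour-rich set produced by the iteration has small density. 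The rest of the bookkeeping — verifying that the left-neighbourhood cascade still embeds each $Q_m'$ consistently (in particular that $a_m$ is placed before $b_m$), and the odd-cycle sub-case when $\Delta=2$ (a deleted edge turns $C_{2l+1}$ into a path with both ends in $V_2$) — is routine and introduces no new idea.
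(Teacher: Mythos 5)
Your reduction via Brooks' theorem and the branch in which a self-rich majority set $A_{\ell,i_\ell}$ has upper density at least $\frac{1}{2(\Delta-1)}$ are both fine (there \cref{prop:embed0rul} swallows all of $G$, exceptional components included, since $G$ is locally finite and hence $0$-ruled). But the remaining branch --- the leftover $W_{t+1}$ carries the density --- is exactly where you stop and say ``some additional care is needed,'' and neither of your suggested fixes closes the gap. First, the case you call ``genuinely delicate'' (infinitely many exceptional components) never requires a delicate argument: infinitely many exceptional components means infinitely many components, and \cref{prop:infcomp} handles this before anything else --- by Ramsey's theorem $K_\NN$ partitions into monochromatic infinite cliques plus a finite set, one colour's cliques have upper density at least $1/2$, and any graph with infinitely many components embeds surjectively into infinitely many disjoint infinite cliques. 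Second, for finitely many exceptional components, ``carving a monochromatic copy out of a colour-rich set'' does not work as stated: the embedding of $G_0$ covering $W_{t+1}$ is carried by the \emph{minority} colour (the colour of the rich directions from $W_{t+1}$ into the majority sets), whereas each majority set is self-rich only in its own \emph{majority} colour; the copy of $K_{\Delta+1}$ you need must be in the minority colour, and that colour may contain no $K_{\Delta+1}$ anywhere (e.g.\ it could be $K_{\Delta+1}$-free).

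The paper's resolution of this second point is a clique dichotomy that your proposal lacks. Either both colours contain a clique of size $|V_2|$ (where $V_2$ is the union of the finitely many exceptional components): then reserve one clique of each colour at the outset, embed $G[V_1]$ in the complement by \cref{thm:locally-finite}(ii) with density at least $\frac{1}{2(\Delta-1)}$, and attach whichever reserved clique matches the colour of that embedding. Or some colour has no clique of size $|V_2|$, hence no infinite clique, hence \emph{every} ultrafilter-majority set in the iteration is the other colour; the iteration can then be run for $t=\chi(G)-1=\Delta$ steps, producing $\Delta+1$ sets one of which has upper density at least $\frac{1}{\Delta+1}\geq\frac{1}{2(\Delta-1)}$ for $\Delta\geq 3$, and since there are now $\chi(G)$ levels, \cref{prop:embed} applies to $G$ itself with all parts independent --- no intra-part edge is ever needed, so the obstacle you identify simply does not arise. (For $\Delta=2$ a separate dichotomy --- infinite red matching in $A_{1,2}$ versus a cofinite blue clique --- supplies the single edge per odd cycle.) Without the reduction to finitely many components and without this dichotomy, your argument is incomplete at precisely the step you flag.
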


First we note the following fact (this result also appears in \cite[Theorem 1(i)]{L}).

\begin{proposition}\label{prop:infcomp}
Let $r\in \N$. If $G$ is a graph with infinitely many components, then $\Rd_r(G)\geq 1/r$.
\end{proposition}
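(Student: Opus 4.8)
The plan is to combine the vertex-colouring induced by an ultrafilter with the embedding lemma \cref{prop:embed0rul}. Observe first that any graph with infinitely many components is $0$-ruled: a finite set $X\subseteq V(G)$ meets only finitely many components, so some component is disjoint from $X$ and none of its vertices is dominated by $X$; the same observation shows that a disjoint union of infinitely many components of $G$ is again $0$-ruled. Now fix an $r$-colouring of $K_\NN$, let $\SU$ be a non-trivial ultrafilter on $\NN$, and let $A_1,\dots,A_r$ be the colour classes of the induced vertex-colouring $c_\SU$. By \cref{prop:ultrafilter-properties}(ii) exactly one class $A_{c^*}$ lies in $\SU$, and then for every finite $S\subseteq A_{c^*}$ the set $A_{c^*}\cap\bigcap_{v\in S}N_{c^*}(v)$ lies in $\SU$, hence is infinite; that is, in the colour-$c^*$ graph restricted to $A_{c^*}$ every finite vertex set has infinitely many common neighbours. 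If $\ud(A_{c^*})\geq 1/r$, then \cref{prop:embed0rul} gives a colour-$c^*$ copy of $G$ with vertex set exactly $A_{c^*}$, whose upper density is $\ge 1/r$, and we are done.

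The remaining case is $\ud(A_{c^*})<1/r$, which I would handle by iterating. Set $X_1=\NN$; having built pairwise disjoint sets $P_1,\dots,P_{k-1}$ such that inside each $P_j$ every finite vertex set has infinitely many common neighbours in some fixed colour $c_j$, put a non-trivial ultrafilter on $X_k=\NN\sm(P_1\cup\dots\cup P_{k-1})$ and let $P_k$ be its self-dense colour class, of colour $c_k$. If at some stage $\ud(P_k)\ge 1/r$ we finish as above. Otherwise the process runs forever, producing infinitely many pairwise disjoint ``rich'' sets $P_k$; by the pigeonhole principle infinitely many of them, say $P_{k_1},P_{k_2},\dots$, share a common colour $c$. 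Assuming the carving has been organised so that $\ld(\bigcap_k X_k)=0$, the union $\bigcup_k P_k=\NN\sm\bigcap_k X_k$ has upper density $1$, so \eqref{fact:density} forces $\ud\big(\bigcup_\ell P_{k_\ell}\big)\ge 1/r$ for this colour $c$. Finally, split the infinitely many components of $G$ into infinitely many infinite groups: each group spans a graph with infinitely many components, hence $0$-ruled, so \cref{prop:embed0rul} embeds it into one of the sets $P_{k_\ell}$ in colour $c$ covering that set; the union of these embeddings is a colour-$c$ copy of $G$ with vertex set $\bigcup_\ell P_{k_\ell}$, of upper density at least $1/r$.

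The appeals to \cref{prop:embed0rul}, the fact that $G$ and every union of infinitely many of its components is $0$-ruled, and the pigeonhole over the $r$ colours are all routine. The delicate step — where I expect to spend the most care — is making the density accounting in the non-terminating case exact: one must run the carving process so that the leftover set $\bigcap_k X_k$ is genuinely null (rather than merely of lower density somewhere below $1/r$), for instance by always choosing the ultrafilter so that the new rich set $P_k$ captures the least vertex not yet covered, so that after distributing the rich sets among the $r$ colours some colour really does reach upper density $1/r$ and not just $1/r-\varepsilon$.
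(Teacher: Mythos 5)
Your opening case (when the ultrafilter class $A_{c^*}$ itself has upper density at least $1/r$) is correct, as is the observation that a graph with infinitely many components is $0$-ruled. The gap is in the non-terminating case of your iteration, and it sits exactly where you flagged it: you cannot organise the carving so that each new rich set $P_k$ ``captures the least vertex not yet covered''. The set you extract is the colour class of the induced vertex-colouring that happens to lie in $\SU_k$, and which class that is is determined by a self-referential condition (the class $A_{c^*}=\{u: N_{c^*}(u)\cap X_k\in\SU_k\}$ must itself belong to $\SU_k$). You can force $N_i(v)\cap X_k\in\SU_k$ for a colour $i$ of your choosing, so that a prescribed $v$ lands in $A_i$, but then $A_i$ need not lie in $\SU_k$; and if it does not, finite subsets of $A_i$ are only guaranteed infinitely many common neighbours in $X_k$, not in $A_i$, so $A_i$ is unusable as a rich set. (The same obstruction already appears for cliques: Ramsey's theorem gives a monochromatic infinite clique inside every infinite set, but not one through a prescribed vertex.) Without this, $\bigcap_k X_k$ may have positive lower density and the density accounting collapses. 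A secondary slip: you invoke \eqref{fact:density} to conclude that the colour occurring infinitely often among the $P_k$ has rich sets of total upper density at least $1/r$, but the pigeonhole only gives this for \emph{some} colour, which may occur only finitely often; that case is still salvageable (split the components of $G$ into finitely many infinite groups, each again $0$-ruled), but as written the two pigeonholes are conflated.

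For comparison, the paper's proof is much shorter and sidesteps ultrafilters entirely: greedily choose pairwise disjoint monochromatic infinite cliques, at each step taking one whose \emph{smallest vertex is minimal} among all monochromatic infinite cliques disjoint from those already chosen. One does not attempt to cover the least uncovered vertex directly; instead, if infinitely many vertices were missed, Ramsey's theorem would yield a monochromatic infinite clique inside the missed set that was available at every step, forcing the (pairwise distinct) minima of the chosen cliques to be bounded --- a contradiction. Hence the cliques cover a cofinite set, some colour's cliques have upper density at least $1/r$ by \eqref{fact:density}, and $G$ surjects onto them after grouping its components into infinite families. Your scheme could be repaired by importing exactly this greedy/contradiction device (choose at each step a rich pair with minimal least element among all available rich pairs, and note that an ultrafilter on the leftover would produce an available rich pair with too large a minimum), but that repair is the substance of the paper's argument rather than a routine detail.
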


\begin{proof}
Let $G$ be a graph with infinitely many components and note that by merging components if necessary, we may assume that $G$ has infinitely many components each of which has infinitely many vertices.

By Ramsey's theorem, it is possible to partition any $r$-colored $K_{\NN}$ into monochromatic infinite cliques and a finite set.  Indeed, greedily take disjoint monochromatic copies of $K_\N$ in which the smallest vertex is minimal.  Either the process ends with a finite set of uncovered vertices, or the process continues for infinitely many steps and the union misses infinitely many vertices.  However, now there is a monochromatic copy of $K_\N$ whose minimal vertex must be smaller than one of the monochromatic cliques in our collection, a contradiction.

Without loss of generality, suppose the cliques of color 1 have upper density at least $1/r$.  Since $G$ has infinitely many components, $G$ can be surjectively embedded into the cliques of color 1.
\end{proof}

\begin{proof}[Proof of \cref{cor:maxdeg}]
First note that if $G$ has infinitely many components, then we are done by \cref{prop:infcomp}.  If $\chi(G)\leq \Delta$, then we are done by \cref{thm:locally-finite}; so suppose that $G$ has finitely many components and $\chi(G)=\Delta+1$.
Now by Brooks theorem, either $\Delta=2$ and $G$ contains finitely many components which are odd cycles, or $\Delta\geq 3$ and $G$ contains finitely many components which are cliques on $\Delta+1$ vertices.  Note that in either case, every infinite component of $G$ (of which there is at least one), has chromatic number at most $\Delta$. Let $V_2 \subseteq V(G)$ be the vertex-set of the finitely many components which are odd cycles or cliques of size $\Delta+1$, and let $V_1 = V(G) \setminus V_2$.

We are given a 2-coloring of $K_\NN$.  If there is a red clique $R$ and a blue clique $B$ each of size $|V_2|$, we can apply \cref{thm:locally-finite} to $G[V_1]$ (which is one-way $\Delta$-locally finite) and $K_\N[(R \cup B)^c]$ to get a monochromatic copy of $G[V_1]$ of upper density at least $\tfrac{1}{2 (\Delta - 1)}$. Together with either $R$ or $B$, this gives the desired copy of $G$.

So suppose that there is no, say, red clique of order $|V_2|$.  If $\Delta\geq 3$, we repeat the proof of \cref{thm:locally-finite}(ii); however, in each iteration $i_j=1$ (here, blue is $1$ and red is $2$), otherwise there would be an infinite red clique. Thus we can stop when $t = \chi(G)-1 \leq \Delta$ and get a monochromatic copy of $G$ of upper density at least $\tfrac{1}{\Delta+1} \geq \tfrac{1}{2(\Delta - 1)}$.  Finally, if $\Delta=2$, we repeat the proof of \cref{thm:locally-finite}(ii), but after the first step, we have $A_{1,1}\in \SU_1$ and $W_2=A_{1,2}$.  If $\ud(A_{1,1})\geq 1/2$, then we are done as usual.  So suppose $\ud(A_{1,2})\geq 1/2$.  If there is an infinite red matching in $A_{1,2}$, then these edges can be used to make the odd cycles comprising $V_2$ and then $V_1$ can be embedded as usual.  Otherwise $A_{1,2}$ does not contain an infinite red matching and thus there is a cofinite subset of $A_{1,2}$ which induces a blue clique into which we can embed $G$.
\end{proof}

Finally we note the following strengthening of \cref{thm:locally-finite} which generalizes a result of Elekes, D. Soukup, L. Soukup, and Szentmiklóssy~\cite{ESSS} who proved a similar statement for powers of cycles.

\begin{theorem}\label{thm:locally-finite-partition}
Let $k,r\in \N$ and let $G$ be a one-way $k$-locally finite graph. In every $r$-coloring of the edges of $K_{\NN}$, there exists a collection of
\[
f(r,k)=
\begin{cases}
r & \text{ if } k=2\\
\sum_{i=0}^{(k-2)r+1} (r-1)^i & \text{ if } k\geq 3
\end{cases}
\]
vertex-disjoint, monochromatic copies of $G$ whose union covers all but finitely many vertices.
\end{theorem}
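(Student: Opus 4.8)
The plan is to re-run the ultrafilter-tower construction from the proof of \cref{thm:locally-finite} and, rather than extract its densest piece, to cover all of its pieces at once. When $k=2$ the construction of case~(i) partitions $\NN$ into a finite set and $r=f(r,2)$ pieces $A_1,\dots,A_r$, exactly one of which, say $A_i$, lies in the chosen ultrafilter. When $k\ge 3$ the construction of case~(iii) partitions $\NN$ into a finite set and the $f(r,k)=\sum_{i=0}^{q}(r-1)^i$ pieces $A_L$, $L\in\bigcup_{i=0}^q[r-1]^i$ with $q=(k-2)r+1$; there, every non-empty $A_L$ with $|L|<q$ can be covered by a monochromatic copy of $G$ of colour $\chi_1(A_L)$ with range inside $A_L$ via \cref{prop:embed0rul} (recall $G$ is $0$-ruled, being one-way $k$-locally finite), and every non-empty leaf piece $A_L$ comes with $k-1$ proper ancestors $L_1\prec\dots\prec L_{k-1}\prec L$ of a common $\chi_2$-colour $c$, so that with $U_1:=A_L$ and $U_2:=A_{L_{k-1}^-},\dots,U_k:=A_{L_1^-}$ (deepest reservoir ancestor first) property~(c) of that proof lets \cref{prop:embed} produce a copy of $G$ of colour $c$ with $V_1(G)$ mapped onto $U_1$ and $V_j(G)$ into $U_j$ for $j\ge 2$. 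I would \emph{assign} to each non-empty piece such a copy — to a piece $A_L$ with $|L|<q$ its own copy, and to a leaf piece the leaf copy, which additionally occupies parts of its $k-1$ reservoir ancestors. The case $k=2$ is the degenerate instance: the non-empty leaf pieces are the $A_j$ with $j\ne i$, each with sole reservoir $A_i$ and colour $j$.

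The obstacle is that a piece $A_L$ with $|L|<q$ must serve two purposes at once: it carries its own copy of $G$, and it is a reservoir for the $V_{\ge 2}$-parts of the copies assigned to the leaves in the subtree below it. One cannot resolve this by pre-reserving a ``thin'' subset of $A_L$ for reservoir use and covering the remainder by $A_L$'s own copy, because properties~(b) and~(c) only provide ultrafilter-large sets of common neighbours, and removing one ultrafilter-large set from another can leave just a finite set. Instead I would construct all $f(r,k)$ copies in parallel, each by finite extensions, keeping their ranges pairwise disjoint at every stage, subject to one scheduling rule: at each stage, for every non-empty piece $A_L$, the least vertex of $A_L$ not yet in the range of any copy is forced into the range of the copy assigned to $A_L$ (in the leaf case, as a $V_1$-image). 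Each extension step is literally a step of the proof of \cref{prop:embed0rul} or \cref{prop:embed}, with the sole modification that whenever a common neighbour inside a piece is selected we additionally avoid the finitely many vertices already used; since (b) and (c) furnish infinitely many candidates, excluding a finite set never blocks the construction.

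It remains to check three routine points. First, each of the $f(r,k)$ processes yields a genuine copy of $G$: exactly as in \cref{prop:embed0rul} and \cref{prop:embed}, every vertex of $G$ is eventually embedded. Second, the copies are vertex-disjoint, which is a maintained invariant. Third, the union of the ranges is cofinite: up to a finite set every vertex lies in some $A_L$, and every infinite $A_L$ is exhausted, since its least not-yet-covered vertex is removed at each stage in which $A_L$ is still non-empty, so every vertex of $A_L$ is eventually covered — by $A_L$'s own copy, or, when $|L|<q$, possibly already by a reservoir-image of a copy assigned to a leaf below $A_L$. (Finite or empty pieces are absorbed into the finite leftover; to recover the exact count one may let a single non-empty piece, e.g.\ $A_{()}$, carry several disjoint copies of $G$ at once, which is legitimate because a disjoint union of copies of a $0$-ruled graph is again $0$-ruled.)

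The step I expect to demand the most care is re-verifying property~(c) in the parallel setting: when the copy assigned to a leaf $A_L$ extends into a reservoir $U_j=A_{L_{k-j+1}^-}$, the finite set $W$ of already-embedded vertices of $V_1(G)\cup\dots\cup V_{j-1}(G)$ must be contained in the union of the pieces below the node $L_{k-j+1}$, so that~(c) supplies infinitely many common $c$-coloured neighbours in $U_j$; this is precisely where the deepest-reservoir-first ordering $U_2:=A_{L_{k-1}^-},\dots,U_k:=A_{L_1^-}$ is essential, just as in the proof of \cref{thm:locally-finite}(iii).
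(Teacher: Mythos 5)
Your proposal is correct and follows essentially the same route as the paper: the paper likewise reuses the ultrafilter decomposition from the proof of \cref{thm:locally-finite} and obtains disjointness by running the embedding algorithms of \cref{prop:embed,prop:embed0rul} simultaneously, one finite step at a time, always avoiding the finitely many already-embedded vertices (possible since each step offers infinitely many choices). Your treatment is more explicit than the paper's brief remark --- in particular the scheduling rule, the reservoir/own-copy tension, and the bookkeeping needed to reach the exact count $f(r,k)$ --- but it is the same argument.
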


\begin{remark}
The proof of \cref{thm:locally-finite} immediately shows that for every one-way $k$-locally finite graph $G$ and every $r$-colored $K_\N$, there is a collection of at most $f(r,k)$ monochromatic copies of $G$ \emph{covering} a cofinite subset of $\N$, where $f(r,k)$ is as in the statement of \cref{thm:locally-finite-partition}.
In order to obtain a \emph{partition} as required by \cref{thm:locally-finite-partition}, we need to guarantee that these copies can be chosen to be disjoint.  To do so, instead of applying \cref{prop:embed,prop:embed0rul}, we will embed the graphs simultaneously doing one step of the embedding algorithms of \cref{prop:embed,prop:embed0rul} at a time always making sure not to repeat vertices (which is possible since we have infinitely many choices in every step but only finitely many embedded vertices). Otherwise, the proof is exactly the same and therefore we will omit it.
\end{remark}

\section{Graphs of bounded ruling number}\label{sec:ruling}

In this section, we will prove \cref{thm:ruling}.

\begin{proof}[Proof of \cref{thm:ruling}]
  Let $G$ be a finitely ruled graph and suppose $K_\N$ is colored with $r$-colors for some $r\in \NN$. Let $\SU$ be a positive ultrafilter on $\N$ and denote by $V_i$ the set of vertices of color $i$ in the vertex-coloring induced by $\SU$. Suppose without loss of generality that $V_1 \in \SU$.
  Since $G$ is finitely ruled, there is a finite set $S\subseteq V(G)$ such that $G[S^c]$ does not have any finite dominating set and in particular $G[S^c]$ is $0$-ruled.

  We will now construct the embedding $f: V(G) \to \N$.
  First embed $S$ into an arbitrary clique of color $1$ in $V_1$ of size $|S|$ (such a clique can be found be iteratively applying the ultrafilter property). Let $V_1^0=\Ncap_1(f(S))\cap V_1$ and note that $V_1^0 \in \SU$ and hence satisfies the assumptions of \cref{prop:embed0rul}. Therefore, $G[S^c]$ can be surjectively embedded into $V_1^0$, and we can extend $f$ to an embedding of $G$. Since $V_1^0 \subset f(V(G))$ has positive upper density, we are done.
\end{proof}
\section{Graphs of bounded degeneracy}\label{sec:degen}
Given $k\in \N$ and a graph $G$, we say that $X\subseteq V(G)$ is \emph{$k$-wise intersecting} if for all $S\subseteq X$ with $|S|\leq k$, $N^{\cap}(S)$ is infinite.  We say that $X\subseteq V(G)$ is \emph{$k$-wise self-intersecting} if for all $S\subseteq X$ with $|S|\leq k$, $X\cap N^{\cap}(S)$ is infinite.
We say that a graph $G$ is \emph{$k$-wise intersecting} if $V(G)$ is $k$-wise intersecting (and consequently $k$-wise self-intersecting).  Finally, if $G$ is an $r$-colored graph for some $r\in \N$, we say that $X\subseteq V(G)$ is \emph{$k$-wise (self-)intersecting in color $i$} if $X$ is $k$-wise (self-)intersecting in $G_i$.

The following is related to Proposition \ref{prop:embed0rul}.

\begin{proposition}\label{prop:embed0ruldeg}
Let $d\in \N$ and let $G$ be an infinite, 0-ruled, $d$-degenerate graph.  If $H$ is a $(d+1)$-wise intersecting graph, then we can surjectively embed $G$ into $H$.
\end{proposition}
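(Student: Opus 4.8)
The plan is to combine a $d$-degenerate ordering of $G$ with the $0$-ruled property, mimicking the back-and-forth-style argument of \cref{prop:embed0rul} but now only needing to control $d$ previously-embedded neighbors at each step rather than all of them. First I would fix an enumeration $v_1, v_2, \dots$ of $V(G)$ that is simultaneously a $d$-degenerate ordering, so that $|N(v_i) \cap \{v_1, \dots, v_{i-1}\}| \le d$ for every $i$; this is possible since the degeneracy of a countable graph is witnessed by an ordering of order type $\omega$. I would also fix an enumeration $u_1, u_2, \dots$ of $V(H)$, which we use to force surjectivity. The embedding $f$ is built in stages, each stage being a finite partial embedding of an initial segment $\{v_1, \dots, v_n\}$ of $V(G)$.

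At a generic stage, suppose $\dom f = \{v_1, \dots, v_n\}$ and let $u_m$ be the vertex of smallest index in $V(H) \setminus \ran f$. Since $G$ is $0$-ruled, there is some index $p > n$ with $v_p$ having no neighbor in $\{v_1, \dots, v_n\}$; set $f(v_p) := u_m$, which is consistent since $v_p$ has no back-neighbors yet embedded. Then for each $i$ with $n < i < p$ in increasing order, I embed $v_i$: its set of back-neighbors among $\{v_1, \dots, v_{i-1}, v_p\}$ has size at most $d$ (by the degenerate ordering), and their $f$-images together with possibly $f(v_p) = u_m$ form a set of at most $d+1$ vertices of $H$; since $H$ is $(d+1)$-wise intersecting, this set has infinitely many common neighbors in $H$, so I may choose one, say $u$, lying in $V(H) \setminus \ran f$ (possible since $\ran f$ is finite), and set $f(v_i) := u$. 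After processing $v_{n+1}, \dots, v_{p-1}$ and $v_p$ the new $\dom f$ is the initial segment $\{v_1, \dots, v_p\}$, and $u_m$ has entered $\ran f$. Iterating, every $v_j$ is eventually embedded and every $u_m$ is eventually hit, so $f$ is a surjective embedding of $G$ onto $H$.

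The one point requiring a little care — and the main obstacle — is the bookkeeping showing that each newly-embedded vertex $v_i$ (for $n < i < p$) really has at most $d+1$ relevant already-embedded neighbors, namely that its back-neighbors in the ordering, plus the single special vertex $v_p$, suffice to determine the adjacency constraints. This is exactly where the $d$-degenerate ordering is used: when we place $v_i$, the only vertices of $G$ already embedded that could be adjacent to $v_i$ are among $\{v_1, \dots, v_{i-1}\} \cup \{v_p\}$, and of these only the $\le d$ back-neighbors in the ordering plus (possibly) $v_p$ need to map to a common neighborhood in $H$; all of $v_{i+1}, \dots$ are handled in later stages. Everything else is the same Cantor-style interleaving as in \cref{prop:embed0rul}, so I would state it briefly and not belabor the routine verification.
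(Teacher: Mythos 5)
Your proposal is correct and takes essentially the same approach as the paper: the paper's proof is exactly the back-and-forth argument of \cref{prop:embed0rul}, modified so that when embedding each $v_i$ with $n<i<p$ one only requires a common neighbor of the images of its at most $d$ back-neighbors in the degenerate ordering together with (possibly) $f(v_p)$ --- a set of at most $d+1$ vertices, handled by the $(d+1)$-wise intersecting hypothesis. Your write-up is in fact more explicit than the paper's (which states the modification in one sentence), in particular about taking the enumeration of $V(G)$ to be a $d$-degenerate ordering and about why the back-neighbors plus $v_p$ account for all already-embedded neighbors of $v_i$.
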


\begin{proof}
Do the same as in the proof of \cref{prop:embed0rul}, but since $G$ is $d$-degenerate, when we get to the second phase of the embedding step, where we embed all vertices from $\{v_{n+1}, \dots, v_{p-1}\}$ into $H$ one at a time, we note that each vertex $v_i$ is adjacent to at most $d+1$ vertices in $\{v_1, \dots, v_{i-1}\}\cup \{v_p\}$, so it is possible to choose an image for $v_i$ in $H$.
\end{proof}

In the proofs of \cref{thm:locally-finite} and \cref{thm:ruling}, we implicitly proved the following.  However, for completeness, we will give a short proof.  

\begin{proposition}
Let $r\in \N$.  For every $r$-coloring of $K_\NN$, there is a set $X$ with upper density at least $1/r$ and a color $i\in [r]$ such that for every $k\in \N$, $X$ is $k$-wise intersecting in color $i$.  Moreover there is a set $Y$ with positive upper density and a color $i\in [r]$ such that for every $k\in \N$, $Y$ is $k$-wise self-intersecting in color $i$.
\end{proposition}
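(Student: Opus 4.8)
The plan is to observe that this proposition is essentially a restatement of what the ultrafilter arguments in the proofs of \cref{thm:locally-finite} and \cref{thm:ruling} already establish, once we extract the right statement. The main point is that a single ultrafilter gives, for free, sets that are $k$-wise intersecting (or self-intersecting) \emph{for all $k$ simultaneously}, because membership in an ultrafilter is closed under finite intersections.

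For the first assertion, I would fix a $2$-coloring of $K_\NN$ and let $\SU$ be a non-trivial ultrafilter on $\NN$. Let $c_\SU$ be the induced vertex-coloring and let $A_1, A_2$ be the two color classes. Without loss of generality $\ud(A_1) \geq 1/2$ (by \eqref{fact:density}). There are two cases, exactly as in the proof of \cref{thm:locally-finite}(i). If $A_1 \in \SU$, set $X = A_1$ and $i = 1$: for any finite $S \subseteq X$, each $v \in S$ has $N_1(v) \in \SU$ by definition of $c_\SU$, so $N^\cap_1(S) = \bigcap_{v\in S} N_1(v) \in \SU$ by \cref{prop:ultrafilter-properties}, and since $\SU$ is non-trivial this set is infinite; moreover $N^\cap_1(S) \cap X = N^\cap_1(S) \cap A_1 \in \SU$ is also infinite, so $X$ is in fact $k$-wise self-intersecting in color $1$ for every $k$. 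If instead $A_1 \notin \SU$, so $A_2 \in \SU$, then for any finite $S \subseteq A_1$ every $v \in S$ has $N_2(v) \in \SU$ (since $c_\SU(v) \neq 1$ forces $c_\SU(v)=2$), hence $N^\cap_2(S) \in \SU$ is infinite; here $X = A_1$ with $\ud(A_1) \ge 1/2$ is $k$-wise intersecting in color $2$ for every $k$ (though not necessarily self-intersecting, since $X$ need not meet its own color-$2$ neighborhoods — which is why the statement separates the two conclusions). Either way we obtain the desired set $X$ of upper density at least $1/2$ together with a color $i$ such that $X$ is $k$-wise intersecting in color $i$ for all $k$.

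For the second assertion, I would instead take $\SU$ to be a \emph{positive} ultrafilter on $\NN$, which exists by \cref{prop:positive_ultrafilter}. Let $V_1, V_2$ be the induced color classes; since $V_1 \cup V_2 = \NN \in \SU$, exactly one of them, say $V_1$, lies in $\SU$, and being a member of a positive ultrafilter it has positive upper density. Set $Y = V_1$ and $i = 1$. For any finite $S \subseteq Y$, each $v \in S$ has $N_1(v) \in \SU$, so $N^\cap_1(S) \in \SU$, and therefore $N^\cap_1(S) \cap Y = N^\cap_1(S) \cap V_1 \in \SU$, which again has positive upper density and in particular is infinite. Hence $Y$ is $k$-wise self-intersecting in color $1$ for every $k$, as required.

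I do not anticipate a genuine obstacle here; the only thing to be careful about is the distinction between the two conclusions — the upper-density-$1/2$ set is only guaranteed to be $k$-wise intersecting (it may fail to be self-intersecting in the case $A_1 \notin \SU$, since then $A_1$'s color-$2$ neighborhoods live in $A_2$), whereas the positive-density set produced from a positive ultrafilter is genuinely $k$-wise self-intersecting because the relevant neighborhood lies in the ultrafilter, which is concentrated on a color class of positive density. This mirrors exactly the dichotomy between invoking \cref{prop:embed0rul} versus \cref{prop:embed} in the proof of \cref{thm:locally-finite}, and between \cref{thm:locally-finite} and \cref{thm:ruling}.
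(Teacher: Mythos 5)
Your treatment of the second assertion is correct and is exactly the extraction the paper intends from the proof of \cref{thm:ruling}: a positive ultrafilter concentrates on one color class $V_1$ of the induced vertex-coloring, and for any finite $S\subseteq V_1$ the set $\Ncap_1(S)\cap V_1$ is a finite intersection of members of $\SU$, hence in $\SU$ and of positive upper density. (You are also right to read the definition of $k$-wise self-intersecting as ``$X\cap \Ncap(S)$ is infinite''; the ``$S\cap\Ncap(S)$'' in the paper is evidently a typo.)

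The first assertion, however, contains a genuine error in the case $A_1\notin\SU$. You claim that every $v\in A_1$ then has $N_2(v)\in\SU$ ``since $c_\SU(v)\neq 1$''. But $A_1$ is by definition the set of vertices with $c_\SU(v)=1$, i.e.\ with $N_1(v)\in\SU$; whether the \emph{set} $A_1$ belongs to $\SU$ has no bearing on this. Since $N_1(v)$ and $N_2(v)$ partition $\NN\setminus\{v\}$, exactly one of them lies in $\SU$, and for $v\in A_1$ it is $N_1(v)$, so in fact $N_2(v)\notin\SU$ and you cannot conclude $\Ncap_2(S)\in\SU$. The conclusion you draw is false in general: color every edge meeting an even integer with color $1$ and every edge inside the odd integers with color $2$, and let $\SU$ concentrate on the odds; then $A_1$ is the set of evens, $\ud(A_1)\geq 1/2$ and $A_1\notin\SU$, yet $\Ncap_2(S)=\emptyset$ for every nonempty $S\subseteq A_1$, so $A_1$ is not even $1$-wise intersecting in color $2$. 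The fix is immediate and makes the case distinction unnecessary for this half of the statement: for \emph{every} finite $S\subseteq A_1$ one has $\Ncap_1(S)=\bigcap_{v\in S}N_1(v)\in\SU$ by \cref{prop:ultrafilter-properties}, hence infinite, so $X=A_1$ is $k$-wise intersecting in color $1$ for all $k$ in both cases. The dichotomy $A_1\in\SU$ versus $A_1\notin\SU$ only governs whether those common neighborhoods can be found inside $A_1$ (giving self-intersection) or must be sought in $A_1^c$ --- in color $1$ either way --- which is precisely how it is used in the proof of \cref{thm:locally-finite}(i).
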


\begin{proof}
Let $\CU$ be a positive ultrafilter on $\N$ (that is, $\CU$ is an ultrafilter such that every set in $\CU$ has positive upper density).  For all $i\in [r]$ let $V_i$ be the set of vertices $v\in \N$ such that $N_i(v)\in \CU$.  Let $X$ be the set in $\{V_1, \dots, V_r\}$ with the largest upper density and let $Y$ be the set in $\{V_1, \dots, V_r\}$ which is in $\CU$.  
\end{proof}

Note that by Proposition \ref{prop:embed0ruldeg}, for the purposes of embedding $0$-ruled, $d$-degenerate graphs we don't need the set $Y$ described above to be $k$-wise self-intersecting for all $k\in \N$.  Thus we can ask if it is possible to find a set $Y$ which is $(d+1)$-wise self-intersecting and has upper density bounded below by some function of $d$.  While we haven't been able to address this question, we now give an example which provides an upper bound on the upper density of such a set.  This example is due to Chris Lambie-Hanson \cite{CLH}.

\begin{proposition}\label{prop:selfint}
For all $k\in \N$, there exists a $2$-coloring of $K_\NN$ such that every monochromatic $k$-wise
self-intersecting set has upper density at most $1/2k$.
\end{proposition}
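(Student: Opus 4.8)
The plan is to construct a $2$-coloring of $K_\NN$ based on a partition of $\NN$ into consecutive blocks of rapidly growing length, together with a "self-referential" rule that prevents any monochromatic $k$-wise self-intersecting set from being too dense. The basic obstruction we want to exploit: if $X$ is $k$-wise self-intersecting in color $i$, then in particular for every $k$-element subset $S\subseteq X$, the set $S\cap \Ncap_i(S)$ is infinite, so $X$ contains infinitely many vertices each of which sees all of $S$ in color $i$. The idea is to arrange that within any single long block, a vertex can only be "seen in color $i$ by many later vertices of the same color" if it lies in a prescribed sparse sub-block; across blocks we use a half-graph-type coloring (as in \cref{ex:forward-half-graph} and \cref{ex:backward-half-graph}) so that a $k$-wise self-intersecting set cannot spread its mass over infinitely many blocks in a way that escapes the density bound.

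Concretely, I would first fix a fast-growing sequence $a_0 < a_1 < \cdots$ and set $I_n = [a_n, a_{n+1})$, choosing the $a_n$ so that each $I_n$ is much longer than $a_n$ (so that the "tail" $a_n/a_{n+1}$ is negligible, exactly as in \cref{ex:forward-half-graph}). Within each block $I_n$ I would further split $I_n$ into a "small" initial segment $J_n$ of relative size about $1/(2k)$ and the complementary "large" part $L_n = I_n\setminus J_n$, and color the edges inside $I_n$ so that (a) $L_n$ induces a monochromatic clique in some color — but, crucially, a vertex $v\in L_n$ has only finitely many color-$i$ neighbors among vertices of $L_n$ that come later, for the "wrong" color $i$ — while (b) the edges between $J_n$ and $L_n$, and inside $J_n$, are colored to allow self-intersection only up to the threshold $|J_n|$. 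The cross-block edges between $I_m$ and $I_n$ with $m<n$ get a half-graph coloring chosen so that membership of a $k$-wise self-intersecting set $X$ in $I_n$ forces, via the infinitude of $S\cap\Ncap_i(S)$, that $X\cap I_n$ is contained (up to finitely many vertices) in $J_n$. The key counting step is then: $|X\cap[0,a_{n+1})| \le a_n + \sum_{m\le n}|J_m| + O(1)$, and with the block lengths growing fast enough and $|J_m|\approx |I_m|/(2k)$, this gives $\ud(X)\le 1/(2k)$.

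The step I expect to be the main obstacle is engineering the intra-block coloring so that it simultaneously (i) lets a genuinely $k$-wise self-intersecting set exist with density up to $1/(2k)$ inside each block (so the bound is the right one and not accidentally $0$), and (ii) forbids any color-$i$ $k$-wise self-intersecting set from using more than $\approx |I_n|/(2k)$ vertices of $I_n$, for \emph{both} colors $i$ at once. A natural device is to use within $L_n$ a further half-graph-type or "interval" coloring indexed by a parameter tuned to $k$: arrange that any $k$ vertices of $L_n$ already "use up" all but a bounded number of common later neighbors in the same color, so that $L_n$ cannot contribute infinitely many witnesses; this is where the precise constant $1/(2k)$, rather than $1/2$, must come from — essentially one needs $k$ nested half-graphs, or a single coloring of $L_n$ by residues mod $k$ combined with a half-graph, so that a monochromatic self-intersecting family is confined to one residue class of one of the two blocks. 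Once the local structure is pinned down, verifying that a $k$-wise self-intersecting set must be $\subseteq^* \bigcup_n J_n$ and then summing densities is routine, along the lines of the interval-coloring arguments already in \cref{sec:example}.
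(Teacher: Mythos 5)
There is a genuine gap: the heart of the proof --- the actual $2$-coloring --- is never constructed. You explicitly defer ``engineering the intra-block coloring'' as ``the main obstacle,'' and without it there is nothing to verify: the claim that a monochromatic $k$-wise self-intersecting set must satisfy $X\subseteq^*\bigcup_n J_n$ rests entirely on properties of a coloring you have not specified. Moreover, the one concrete step you do give, the density count, fails as stated. If $J_n$ is an \emph{initial segment} of $I_n$ of relative size $1/(2k)$ and the blocks grow rapidly (so that $a_n/a_{n+1}\to 0$), then $\bigcup_n J_n$ has upper density $1$, not $1/(2k)$: evaluating at $t=a_n+|J_n|$ gives a ratio of at least $|J_n|/(a_n+|J_n|)=\bigl((a_{n+1}-a_n)/(2k)\bigr)\big/\bigl(a_n+(a_{n+1}-a_n)/(2k)\bigr)\to 1$. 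The $\limsup$ defining $\ud$ ranges over all $t$, not just the block endpoints $a_{n+1}$, so bounding $|X\cap[0,a_{n+1})|$ alone does not suffice; to salvage a block-based argument you would need $J_n$ spread evenly through $I_n$, at which point the blocks are doing no work. (Also, your requirement (i), that a self-intersecting set of density $1/(2k)$ actually exist, is not part of the statement; only the upper bound is asserted.)

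For comparison, the paper's construction is global and needs no growing blocks: partition $\NN$ into $2k$ classes $A_1,\ldots,A_k,B_1,\ldots,B_k$, each of density $1/(2k)$; color an edge between $a\in A:=A_1\cup\cdots\cup A_k$ and $b\in B:=B_1\cup\cdots\cup B_k$ red iff $a<b$ (a half graph), color edges within $A$ red iff the endpoints lie in the same $A_i$, and color edges within $B$ blue iff the endpoints lie in the same $B_i$. A red $k$-wise self-intersecting set $X$ cannot meet $B$: if it met some $B_i$ it would be forced to meet every $B_\ell$ in turn, and then a transversal $F\subseteq X$ with one vertex in each $B_\ell$ has $\Ncap_R(F)$ finite. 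Once $X\subseteq A$, it is trapped in a single $A_i$ because $\Ncap_R(a)\cap A\subseteq A_i$ for $a\in A_i$, giving $\ud(X)\le 1/(2k)$. Your instinct that a monochromatic self-intersecting family should be ``confined to one residue class'' via a half graph is exactly right, but the confinement comes from this fixed mod-$2k$ partition, not from an interval coloring.
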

\begin{proof}
Let $k\in \NN$ and partition $\NN$ into sets $A_1,\ldots,A_k$ and $B_1,\ldots,B_k$ of
equal asymptotic density $1/2k$.  Let $A = A_1\cup\cdots\cup A_k$ and $B =
B_1\cup \cdots \cup B_k$.  The coloring is as follows.  Given $a\in A$ and
$b\in B$, we color $\{a,b\}$ red if $a < b$ and blue otherwise.  Given
$a,a'\in A$, we color $\{a,a'\}$ red if $a$ and $a'$ are in the same set
$A_i$, and blue otherwise.  Given $b,b'\in B$, we color $\{b,b'\}$ blue if
$b,b'$ are in the same set $B_i$, and red otherwise.

\begin{figure}[ht]
\begin{center}
	\includegraphics[height=5cm]{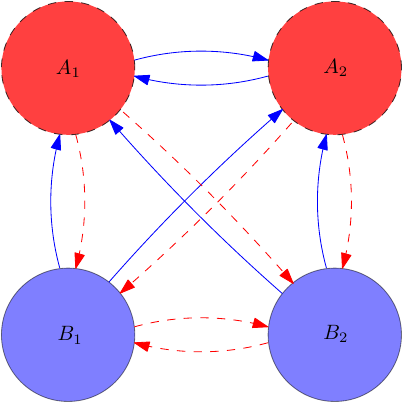}
\end{center}
\caption{An example of the coloring from \cref{prop:selfint} in the case when $k=2$.  The shaded areas denote cliques of the respective colors and a blue/solid (red/dashed) arrow from one part to another indicates that vertices in the first part have cofinitely many blue (red) neighbors in the second part.}
\label{fig:degen}
\end{figure}

The colors are clearly symmetric so it suffices to consider a red $k$-wise
self-intersecting set $X$.  We claim that $X$ is contained in a single
$A_i$.

Note that for all $b\in B_i$, $N_R(b)\cap A$ is finite and $N_R(b)\cap B_i =
\emptyset$.  Thus if $X\cap B_i\neq\emptyset$, $X\cap B_j\neq \emptyset$ for
some $j\neq i$.  Applying the same argument with elements of $B_i$ and
$B_j$, we see that $X\cap B_h\neq\emptyset$ for some $h\neq i,j$, and
continuing we get $X\cap B_\ell\neq\emptyset$ for all $\ell = 1,\ldots,k$.
But then taking $F$ to be a subset of $X$ consisting of one vertex from each
$B_\ell$, we see that $N^{\cap}_R(F)$ is finite, a contradiction.

So we must have $X\subseteq A$.  But note that $N_R(a)\cap A\subseteq A_i$
for all $a\in A_i$.  Hence $X$ must be contained in $A_i$ for some $i$.
\end{proof}

It is not immediately clear that there exists a $d$-wise intersecting graph with bounded degeneracy.  So we now give a construction of a family of $d$-wise intersecting graphs which are $d$-degenerate (and 0-ruled).

\begin{proposition}\label{prop:constr}
  For every $d \in \NN$, there is an infinite graph $H_d$ which is $d$-wise intersecting, $d$-degenerate, and 0-ruled.
\end{proposition}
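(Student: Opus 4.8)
The plan is to build $H_d$ explicitly as a "layered" graph on vertex set $\NN$, partitioned into infinitely many finite levels $L_0, L_1, L_2, \dots$, where each vertex in level $L_{n}$ is joined (backward) to exactly $d$ vertices in the union of earlier levels, chosen so that the $d$-wise intersection property is forced to hold. Concretely, I would index the levels so that $L_n$ contains, for every $d$-element subset $S$ of $L_0 \cup \dots \cup L_{n-1}$, at least one (in fact infinitely many, achieved by making $L_n$ large or by iterating) vertex whose backward neighborhood is exactly $S$. Since only $d$ backward edges are ever added per vertex, the natural level-by-level ordering is a witness to $d$-degeneracy. The graph is $0$-ruled because any finite set $F$ of vertices lives inside $L_0 \cup \dots \cup L_{m}$ for some $m$, and every vertex introduced in levels $> m$ with backward neighborhood disjoint from $F$ (e.g.\ whose $d$ backward neighbors all lie in a single later level, or simply any vertex of a level past the levels meeting $N(F)$, of which there are infinitely many) is non-adjacent to all of $F$; so there is no finite dominating set.

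The key point requiring care is the $d$-wise intersecting property: given any $S \subseteq V(H_d)$ with $|S| \le d$, I must show $N^{\cap}(S)$ is infinite. Fix $m$ with $S \subseteq L_0 \cup \dots \cup L_{m}$. If $|S| = d$, then by construction every level $L_n$ with $n > m$ contributes a vertex whose backward neighborhood equals $S$, hence lies in $N^{\cap}(S)$; doing this for all $n > m$ gives infinitely many common neighbors. If $|S| < d$, pad $S$ to a $d$-set $S'$ by adjoining arbitrary extra vertices; the common neighbors of $S'$ are in particular common neighbors of $S$, and there are infinitely many of them by the previous case. So $N^{\cap}(S)$ is infinite in every case, which is exactly $d$-wise intersecting.

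The only genuine obstacle is making the recursive construction of the levels precise while keeping degeneracy at exactly $d$ and keeping each level finite: at stage $n$ there are finitely many $d$-subsets of $L_0 \cup \dots \cup L_{n-1}$ (since each $L_i$ is finite), so I can let $L_n$ consist of one new vertex per such subset, wired to have that subset as its backward neighborhood; this keeps $L_n$ finite, adds exactly $d$ backward edges per new vertex, and after running through all $n$ guarantees that each fixed $d$-set $S$ picks up a new common neighbor at every level beyond the one containing $S$ — hence infinitely many. I would also remark that an alternative, cleaner description is available: take $H_d$ to be the graph whose vertices are the finite subsets of $\NN$ of size $\le d$, with $u$ adjacent to $v$ iff $u \subsetneq v$ and $|u| = |v| - 1$ together with... but this complicates the degeneracy count, so I would stick with the explicit level construction above. (One can further check directly that $\chi(H_d) \le d+1$, consistent with the paper's examples, though that is not needed here.)
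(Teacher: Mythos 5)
Your construction is essentially identical to the paper's: the paper also builds $H_d$ in stages, adding at stage $i$ one new vertex for each $d$-element subset of the previously constructed vertices $[n_i]$, joined to exactly that subset, and then reads off $d$-degeneracy from the backward ordering, $d$-wise intersection from the fact that every fixed $d$-set acquires a new common neighbor at every later stage, and $0$-ruledness from the existence at each later stage of vertices whose (backward) neighborhood avoids any prescribed finite set. Your proof is correct and matches the paper's argument in all essentials.
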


\begin{proof}
Let $n_0=d$.  For all $i\geq 0$, let $S_1, S_2, \dots, S_{\binom{n_{i}}{d}}$ be an enumeration of all the $d$-element subsets of $[n_{i}]$, let $n_{i+1}=n_i+\binom{n_i}{d}$, and let
\[E_{i+1}=\bigcup_{1\leq j\leq \binom{n_{i}}{d}}\{\{n_i+j, v\}: v\in S_j\}.\]
Let $H_d$ be the graph on vertex set $\NN$ with edge set $\bigcup_{j \in \NN} E_j$.

By the construction it is clear that $H_d$ is $d$-wise intersecting and $d$-degenerate.  To see that $G$ is 0-ruled, note that for any finite set $X\subseteq \NN$ and any $d$-element set $Y\subseteq \NN\setminus X$, there are infinitely many vertices which are adjacent to every vertex in $Y$ and none of the vertices in $X$. Thus $G$ cannot have a finite dominating set.
\end{proof}

Note that, in particular, $H_d$ contains a spanning copy of every $(d-1)$-degenerate $0$-ruled graph (by Proposition \ref{prop:embed0ruldeg}). Denote by $\rho(d)$ the smallest Ramsey upper density of a $d$-degenerate infinite graph and by $\tau(d)$ the largest $\tau \geq 0$ such that every $2$-colored complete graph contains a monochromatic $d$-wise self-intersecting subgraph of density at least $d$. The above propositions imply
\[ \tau(d-1) \geq \rho(d-1) \geq \tau(d) \geq \rho(d)\]
for every $d \geq 2$. In particular, we have $\tau(d)>0$ for every $d \in \NN$ if and only if $\rho(d) > 0$ for every $d \in \NN$.  So in order to answer \cref{qu:degenerate} positively for 0-ruled graphs, it would suffice to answer \cref{qu:degenerate} positively for $H_d$ for all $d$.  Note that $H_1=T_\infty$ and thus \cref{thm:trees} gives a positive answer for the case $d=1$.

We conclude this section with a few comments about \cref{qu:degenerate}.

In light of Theorem \ref{thm:locally-finite}, if there was a function $f:\NN\to \NN$ such that for all $d\in \NN$, every $d$-degenerate graph is one-way $f(d)$-locally finite, then we would have a positive answer to \cref{qu:degenerate}; however, this is not the case as there are $d$-degenerate graphs which are not one-way $k$-locally-finite for any $k$.  For instance, the graph $H_d$ constructed above is $d$-degenerate, but since every vertex has infinite degree, is not one-way $k$-locally-finite for any $k$.  Also $K_{d,\NN}$ is $d$-degenerate but not one-way $k$-locally-finite for any $k$ (although in this case, we know $\Rd(K_{d,\NN})\geq \frac{1}{2^{2d-1}}$).

\cref{qu:degenerate} is about all $d$-degenerate graphs.  However, the discussion in this section is about $0$-ruled, $d$-degenerate graphs.  It seems possible that answering \cref{qu:degenerate} positively for $0$-ruled, $d$-degenerate graphs could imply a positive answer for all $d$-degenerate graphs (c.f.\ the proof of \cref{thm:ruling}).

\begin{problem}\label{q:ruldeg}
If \cref{qu:degenerate} were true for all $0$-ruled $d$-degenerate graphs (in particular,  $H_d$), would this imply that \cref{qu:degenerate} was true for all $d$-degenerate graphs?
\end{problem}

Let $d\geq 2$ and say that a digraph $D$ is \emph{$d$-directed} if every $d$-set in $V(D)$ has a common out-neighbor; that is, for all $S\subseteq V(D)$ with $|S|= d$ there exists $w\in V(D)$ (where it is possible for $w\in S$) such that for all $v\in S$, $(v,w)\in E(D)$.  For example, the digraph $D=(\{a,b,c\}, \{(a,b), (b,c), (c,a), (a,a), (b,b), (c,c)\})$ is 2-directed, but not 3-directed.

In order to get a monochromatic $d$-wise self-intersecting set with upper density at least some fixed amount in an arbitrary 2-coloring of $K_\NN$, we likely have to solve the following problem\footnote{Since we first posted this paper, this problem has essentially been resolved \cite{DG} (although, determining the minimum number of $d$-directed graphs needed to cover $V(K)$ is still open).}.

\begin{problem}\label{prob:2dir}
Given a 2-coloring of the edges of a complete (finite) digraph $K$ (including loops), is it possible to cover $V(K)$ with at most $2d$ monochromatic $d$-directed graphs?  (If not $2d$, some other bound depending only on $d$?)
\end{problem}

The reason is that given any 2-coloring of a complete digraph $K$ (plus loops), we can create a corresponding 2-coloring of $K_{\NN}$ as follows.  Split $\NN$ into infinite sets $A_i$, one for each vertex $i$ of $K$.  Color the edges inside $A_i$ according to the color of the loop on $i$.  Now if both directed
edges $(i,j)$ and $(j,i)$ are the same color, give all edges between $A_i$
and $A_j$ that color; if not, then color the bipartite graph between $A_i$
and $A_j$ with the bipartite half graph coloring (where $(i,j)$ being red means that the vertices in $A_i$ have cofinite red degree to $A_j$).  Then any $d$-wise
self-intersecting set $B$ must be the union of some collection of $A_i$'s
whose corresponding vertices $i$ make up a monochromatic $2d$-directed
set in $K$.
\section{Bipartite Ramsey densities}\label{sec:bipartite}

In this section we prove \cref{thm:bip-paths-partition}.
An infinite graph $G$ is said to be \emph{infinitely connected} if $G$ remains connected after removing any finite set of vertices.
Note that every vertex of an infinitely connected graph has infinite degree.
Given some set of vertices $S \subset V(G)$, we say that $S$ is \emph{infinitely connected} if $G[S]$ is infinitely connected.
Similarly, we call a set $S \subset V(G)$ \emph{infinitely linked} if for all distinct $u,v\in S$, there are infinitely many internally vertex-disjoint paths in $G$ from $u$ to $v$ (note that the internal vertices of these paths need not be contained in the set $S$).
Note that every infinitely connected set is also infinitely linked but the converse is not true (for example, both parts of $K_{\NN,\NN}$ are infinitely linked but not connected).
Further note that if $S_1, \dots, S_k$ are sets, each of which is infinitely linked, then there are disjoint paths $P_1, \dots, P_k$ such that $P_1\cup \dots \cup P_k$ covers $S_1\cup \dots \cup S_k$.

If $G$ is a colored graph and $c$ is a color, we say that $G$ is \emph{infinitely connected in $c$} if $G_c$ (the spanning subgraph of $G$ with all edges of color $c$) is infinitely connected.
A set $S \subseteq V(G)$ is \emph{infinitely connected in color $c$} (\emph{infinitely linked in color $c$}) if $S$ is infinitely connected (infinitely linked) when restricted to $G_c$.
$S$ is called monochromatic infinitely connected (infinitely linked) if it is infinitely connected in some color $c$.

The following proposition directly implies \cref{thm:bip-paths-partition} which implies \cref{thm:bip-paths}.

\begin{proposition}\label{thm:bip}
  Every $2$-colored $K_{\NN,\NN}$ can be partitioned into a finite set and two monochromatic infinitely linked sets $X$ and $Y$.
\end{proposition}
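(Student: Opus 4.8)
The plan is to decompose the $2$-colored $K_{\NN,\NN}$ (with parts $A$ and $B$) into pieces according to a suitable ultrafilter-type coloring, mimicking the vertex-coloring strategy used in \cref{sec:embed}, and then to recognize each piece as either finite or monochromatic infinitely linked. First I would fix a non-trivial ultrafilter $\SU_A$ on $A$ and a non-trivial ultrafilter $\SU_B$ on $B$; using the bipartite edges, each vertex $b \in B$ gets a color $c_A(b) \in \{\text{red},\text{blue}\}$ according to which color class $N_{\text{red}}(b)\cap A$ or $N_{\text{blue}}(b)\cap A$ lies in $\SU_A$, and symmetrically each $a \in A$ gets a color $c_B(a)$ from $\SU_B$. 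This partitions $A$ into $A_{\text{red}} \cup A_{\text{blue}}$ and $B$ into $B_{\text{red}} \cup B_{\text{blue}}$. The key observation is that for a color $c$, the set $B_c' := \{\,b \in B : N_c(b)\cap A \in \SU_A\,\}$ has the property that any finite set of vertices of $A_c$ has infinitely many common $c$-neighbors in $B_c$ (when $A_c \in \SU_A$) — exactly the kind of one-sided expansion needed to thread long paths in color $c$ between that part of $A$ and that part of $B$.

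Next I would argue that (after possibly peeling off a finite set) one can bundle the four pieces $A_{\text{red}}, A_{\text{blue}}, B_{\text{red}}, B_{\text{blue}}$ into (at most) two monochromatic infinitely linked sets. The natural candidates are $X := A_{\text{red}} \cup B_{\text{red}}$ (aiming for red) and $Y := A_{\text{blue}} \cup B_{\text{blue}}$ (aiming for blue): inside $X$, every vertex of $A_{\text{red}}$ has cofinitely/infinitely many red neighbors in the relevant part of $B$ and vice versa, so any two vertices of $X$ can be joined by infinitely many internally disjoint red paths of length $2$ or $3$ routed through common red neighbors, which is precisely infinite linkedness in red. The cases where one of $\SU_A, \SU_B$ assigns its "large" class to a color not matching the other side are handled by a small case analysis: whichever way the ultrafilter colors fall, the vertices whose expansion is in the "wrong" direction for one color still expand correctly for the other, so by swapping which of the four atoms goes into $X$ and which into $Y$ we always land on two monochromatic infinitely linked sets, with only a finite remainder (the vertices failing the relevant largeness condition, which form a non-$\SU$ set but could still be infinite — this is the delicate point).

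The main obstacle I anticipate is exactly that last issue: a vertex $v$ with $N_c(v)\cap A \notin \SU_A$ need not have finite $c$-degree into $A$, so the "leftover" vertices are not automatically finite, and one cannot simply discard them. The fix is to be more careful about which ultrafilter one uses and to iterate: rather than a single $\SU_A$, one chooses $\SU_A$ and then works relative to it on $B$, and — when a would-be leftover set is infinite — one recurses on that set with a fresh ultrafilter, as in the proof of \cref{thm:locally-finite}(ii)–(iii). Since only two colors are present, a pigeonhole/parity argument (a color must repeat after boundedly many rounds) terminates the recursion, leaving a genuinely finite exceptional set and exactly two monochromatic infinitely linked sets. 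I would also invoke the remark following \cref{thm:bip} that infinitely linked sets can be spanned by vertex-disjoint paths, so the "infinitely linked" conclusion is what matters and the path partition of \cref{thm:bip-paths-partition} follows immediately. The bulk of the write-up will be organizing the case analysis in the second paragraph cleanly and checking that the routing paths can be chosen internally disjoint (which is automatic from infinite common-neighborhoods plus finiteness of anything already used).
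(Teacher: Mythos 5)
There is a genuine gap at the central step. Your claim that $X = A_{\mathrm{red}} \cup B_{\mathrm{red}}$ is infinitely linked in red via paths of length $2$ or $3$ ``routed through common red neighbors'' is false. The ultrafilter coloring only tells you that each $a \in A_{\mathrm{red}}$ has $N_{\mathrm{red}}(a) \in \SU_B$ and each $b \in B_{\mathrm{red}}$ has $N_{\mathrm{red}}(b) \in \SU_A$; these are largeness conditions with respect to two different ultrafilters on two different sides, and nothing forces the two red expansions to meet. Concretely, split $A = A_1 \cup A_2$ and $B = B_1 \cup B_2$ into infinite pieces, color all $A_1$--$B_1$ and $A_2$--$B_2$ edges red and all other edges blue, and choose $\SU_A$ with $A_1 \in \SU_A$ and $\SU_B$ with $B_2 \in \SU_B$. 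Then $A_{\mathrm{red}} = A_2$ and $B_{\mathrm{red}} = B_1$, but these lie in different components of the red graph, so there is no red path at all between them and $A_{\mathrm{red}} \cup B_{\mathrm{red}}$ is not infinitely linked in red (it happens to be infinitely linked in blue, which is exactly the kind of outcome your pairing does not detect). Moreover, the ``delicate point'' you flag --- infinite leftover vertices --- does not exist: in the bipartite setting every vertex of $A$ receives a color from $\SU_B$ and vice versa, so the four atoms already partition $A \cup B$ and there is nothing to recurse on; the proposed iterated-ultrafilter fix is aimed at a non-problem and does not repair the cross-linking failure.

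The missing ingredient is a Menger-type dichotomy applied to each same-color pair of atoms, which is how the paper proceeds from the same four atoms. For the pair $(A_{\mathrm{red}}, B_{\mathrm{red}})$ one asks: are there infinitely many pairwise disjoint red paths between them? If yes, any $a \in A_{\mathrm{red}}$ and $b \in B_{\mathrm{red}}$ can be joined by splicing such a path onto common-red-neighbor extensions at both ends (the ultrafilter property supplies infinitely many common red neighbors for two vertices on the same side), which yields infinite linkage in red. If no, there is a finite separator $S$, and every edge between $A_{\mathrm{red}} \setminus S$ and $B_{\mathrm{red}} \setminus S$ must be blue, so after deleting the finite set $S$ this pair induces a complete blue bipartite graph with infinite parts and is infinitely linked in blue. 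Repeating this for the blue pair, together with a short degenerate case when one of the four atoms is finite, produces the two sets and the finite remainder. Your argument needs this dichotomy (or an equivalent substitute) to go through.
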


\begin{proof}
Let $V_1, V_2$ be the parts of the bipartite graph and let $\SU_1, \SU_2$ be non-trivial ultrafilters on $V_1$ and $V_2$. For $ i =1,2$, let $B_i \subset V_i$ be the blue vertices in the induced vertex-coloring and let $R_i = V_i \setminus B_i$ be the red vertices.

\textbf{Case 1 ($|R_1|=|R_2|=|B_1|=|B_2| = \infty$).} If there are infinitely many disjoint red paths between $R_1$ and $R_2$, then $X := R_1 \cup R_2$ is infinitely linked in red. Indeed, if $v_1,v_2 \in R_1$ or $v_1,v_2 \in R_2$, then they have infinitely many common red neighbors (by the properties of the ultrafilter).
If $v_1 \in R_1$ and $v_2 \in R_2$, we will construct infinitely many internally disjoint paths between $x_0:=v_1$ and $x_5:=v_2$ as follows: let $P=x_2\dots x_3$ be a red path so that $x_2 \in R_1$ and $x_3 \in R_2$, and let $x_1$ be a common red neighbor of $x_0$ and $x_2$ (of which we have infinitely many as above) and $x_4$ be a common neighbor of $x_3$ and $x_5$. It is clear that $x_0x_1x_2\dots x_3x_4x_5$ defines a red path and that we can construct infinitely many internally disjoint paths like this.
If there are only finitely many disjoint red paths between $R_1$ and $R_2$, then there is a finite set $S$ so that, in particular, $X := (R_1 \cup R_2) \setminus S$ induces a complete blue bipartite graph with parts of infinite size and hence is infinitely linked in blue.
Similarly, there is a set $Y \subset B_1 \cup B_2$ which is cofinite in $B_1 \cup B_2$ and infinitely linked in red or infinitely linked in blue.

\textbf{Case 2.} Suppose without loss of generality that $R_1$ is finite. It is easy to verify that $X = B_1 \cup B_2$ is infinitely linked in blue and $Y := R_2$ is infinitely linked in red.
\end{proof}

As mentioned in the introduction, the above result combined with a recent result of Day and Lo \cite{DL} implies that $\Rd_3(P_\infty)=\frac{1}{2}$.

\section{Trees}\label{sec:trees}
\subsection{General embedding results}
Given $k\in \N$, we say that a connected graph $T$ has \emph{radius} at most $k$ if there exists $u\in V(T)$ such that for all $v\in V(T)$, there is a path of length at most $k$ from $u$ to $v$; if no such $k$ exists we say that $T$ has \emph{unbounded radius}.

\begin{lemma}\label{lem:embed-deep-tree}
Let $T$ be a graph.  A spanning copy of $T$ can be found in every infinitely connected graph $H$ if and only if $T$ is a forest and (i) $T$ has a component of unbounded radius or (ii) $T$ has infinitely many components.
\end{lemma}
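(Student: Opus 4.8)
The plan is to prove the two implications separately. For \textbf{necessity}, suppose $T$ has a spanning copy in every infinitely connected graph. If $T$ contained a cycle, let $g$ be its girth: for odd $g$, the graph $K_{\NN,\NN}$ is infinitely connected but bipartite, so it contains no copy of $T$; for even $g$ one instead uses an infinitely connected graph of girth larger than $g$ (such graphs exist, e.g.\ as an increasing union $\bigcup_n H_n$ of finite, $n$-connected graphs of girth $>g$, with every vertex of $H_{n-1}$ of large degree in $H_n$). Hence $T$ is a forest. Now suppose for contradiction that $T$ has only finitely many components, each of bounded radius; since a finite graph cannot span an infinite one, at least one component is infinite, of radius $\le k$ for some fixed $k$. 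Let $H^\ast$ be the ``layered'' graph with vertex set $\bigsqcup_{i\ge 0} L_i$, each $L_i$ infinite, with a complete bipartite graph between $L_i$ and $L_{i+1}$ for each $i$ and no other edges: deleting a finite set leaves it connected and every vertex has infinite degree, so $H^\ast$ is infinitely connected, while $d_{H^\ast}(u,v)\ge |i-j|$ for $u\in L_i$, $v\in L_j$. A spanning copy of $T$ in $H^\ast$ would exhibit $V(H^\ast)$ as a union of finitely many balls of radius $k$ (one around the image of the centre of each component), and these meet only finitely many layers — a contradiction. So $T$ must have a component of unbounded radius or infinitely many components.

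For \textbf{sufficiency}, let $T$ be a forest satisfying (i) or (ii) and let $H$ be infinitely connected; fix enumerations of $V(T)$ and of $V(H)$. We build an edge-preserving bijection $\phi\colon V(T)\to V(H)$ as the union of an increasing chain of finite partial embeddings, alternating two kinds of step. The only facts about $H$ used are: each vertex has a neighbour outside any prescribed finite set; and for any finite $A\subseteq V(H)$, the graph $H\setminus A$ is connected, so from any vertex one may grow a simple path of any prescribed finite length through fresh vertices, and between any two vertices of $H\setminus A$ there is a path. In an \emph{embedding step}, take the least not-yet-embedded vertex $t$ of $T$, and embed it — together with a $T$-path joining it to the already-embedded part of its component, if that part is nonempty — onto fresh vertices of $H$, one neighbour at a time. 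In a \emph{covering step}, take the least not-yet-covered vertex $h$ of $H$ and force it into the image, as follows.

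Under (ii), take a component $C$ of $T$ not yet touched: if $C$ is a single vertex, map it to $h$; otherwise pick a vertex of $C$, choose a $C$-path from it, and embed that path onto a fresh path of $H$ passing through $h$. Under (i), let $S$ be the finite currently-embedded part of the unbounded-radius component $T_1$ (a subtree containing its root), let $A$ be the currently used vertex set, and set $D=\max_{s\in S} d_{H\setminus A}\bigl(\phi(s),h\bigr)$. Since $T_1$ has unbounded radius and $S$ is finite, $T_1-S$ contains a vertex at $T_1$-distance $>|S|+D$ from the root of $T_1$; tracing the root-path to that vertex shows that some component of $T_1-S$ contains an entirely-unembedded path $c_0 c_1 \cdots c_L$ with $L>D$ and with $c_0$ adjacent in $T_1$ to some $s\in S$. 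We then route, starting from $\phi(s)$, a fresh simple path of length $L+1$ in $H\setminus A$ that passes through $h$ (first a shortest fresh path from $\phi(s)$ to $h$, of length at most $D<L+1$, then the remaining fresh edges), and embed $c_0,\dots,c_L$ onto it. Interleaving the two kinds of step, every vertex of $T$ is eventually embedded and every vertex of $H$ eventually covered, while injectivity and adjacency are preserved throughout; the limit is the desired spanning copy.

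The \textbf{main obstacle} is precisely the covering step under (i) when the unbounded-radius component has no ray — for instance a root with pendant paths of lengths $1,2,3,\dots$ — so that one cannot simply thread a single infinite path of $T$ through $H$. The resolution is the observation above: unbounded radius forces, at every finite stage, a long \emph{unembedded} tail of $T_1$ attached to the current image, long enough to be routed to any prescribed vertex of $H$ while leaving the interleaving free to finish embedding all of $T$. A secondary nuisance, the parity of path lengths in a possibly bipartite $H$, is handled by routing a path through $h$ rather than one terminating at $h$.
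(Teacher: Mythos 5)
Your proof is correct, and while the necessity direction and the overall back-and-forth skeleton match the paper's, the engine of the sufficiency direction is genuinely different. The paper first merges all components into a single tree of unbounded radius, then invokes a structural dichotomy (\cref{fact:unbound-radius}, proved via the Star--Comb lemma): either some vertex starts an infinite path, or some vertex is the center of an increasing star; the covering steps of its algorithm then draw the needed $T$-paths from that one fixed ray or star. You avoid this global structure entirely with a local, finite-stage observation: if $S$ is the finite embedded subtree of the unbounded-radius component and $w$ is a vertex at distance more than $|S|+D$ from the root, then the root-to-$w$ path meets $S$ in an initial segment of length at most $|S|-1$, so its tail is an entirely unembedded path of length greater than $D$ hanging off $S$ --- exactly what is needed to route through the next uncovered vertex of $H$. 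This buys you a shorter proof with no appeal to the Star--Comb lemma and no case split into ``Type 1'' and ``Type 2'' trees (you also keep case (ii) separate rather than reducing it to case (i), a harmless stylistic difference); what the paper's approach buys in exchange is the reusable structural statement \cref{fact:unbound-radius}, which has independent interest. Two small points of hygiene: your $d_{H\setminus A}(\phi(s),h)$ should be read as the length of a shortest $\phi(s)$--$h$ path internally avoiding $A$ (since $\phi(s)\in A$), and your odd/even girth split in the necessity direction is unnecessary --- the arbitrarily-high-girth infinitely connected graphs handle both parities at once, as in the paper.
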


In order to simplify the proof of Lemma \ref{lem:embed-deep-tree}, we first prove the following structural result about trees with unbounded radius.  An \emph{increasing star} is a tree obtained by taking an infinite collection of disjoint finite paths of unbounded length and joining one endpoint of each of the paths to a new vertex $v$.  Note that an increasing star has unbounded radius, no infinite path, and exactly one vertex of infinite degree (which is called the \emph{center}).  Also note that an increasing star has distinct vertices $v_0, v_1, v_2, \dots$ and internally disjoint paths $P_1, P_2, \dots$ such that for all $i\geq 1$, $P_i$ is a path from $v_0$ to $v_i$ and the length of $P_{i+1}$ is greater than the length of $P_i$.

\begin{fact}\label{fact:unbound-radius}
Let $T$ be a tree of unbounded radius.  Either for all $v\in V(T)$, there is an infinite path in $T$ starting with $v$ or there exists $v_0\in V(T)$ such that $T$ contains an increasing star having $v_0$ as the center.
\end{fact}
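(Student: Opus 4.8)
The plan is to do a standard König-type case analysis on a tree $T$ of unbounded radius. Fix any root vertex $u \in V(T)$; since $T$ has unbounded radius, for every $k$ there is a vertex at distance $\geq k$ from $u$, so the rooted tree has arbitrarily deep branches. The key dichotomy is whether some vertex of $T$ has infinite degree or not.

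First I would handle the locally finite case. Suppose every vertex of $T$ has finite degree. Then I claim $T$ contains an infinite path starting from $u$ (and in fact from any vertex $v$, by re-rooting). This is essentially König's Infinity Lemma: root $T$ at $v$; because $T$ has unbounded radius there are vertices arbitrarily far from $v$, so the set of vertices reachable from $v$ is infinite; each vertex has finitely many children (its degree is finite), so by König's lemma there is an infinite ray from $v$. This gives the first alternative in the statement.

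Next I would handle the case where some vertex has infinite degree. Let $v_0$ be such a vertex. Consider the components of $T - v_0$; there are infinitely many of them, $C_1, C_2, \dots$, each attached to $v_0$ by a single edge (say $C_j$ meets $v_0$ at the neighbor $w_j$). Now there are two subcases. If for infinitely many $j$ the component $C_j$ contains a vertex at distance $\geq j$ from $w_j$ (equivalently, the components have unbounded radius-from-attachment-point), then for each such $j$ pick a path inside $C_j \cup \{v_0\}$ from $v_0$ through $w_j$ to a vertex $v_j$ at distance $\geq j$; passing to a subsequence makes the lengths strictly increasing, and these paths are internally disjoint since the $C_j$ are disjoint — this is exactly an increasing star with center $v_0$. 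Otherwise, all but finitely many components $C_j$ have bounded radius from their attachment point, say radius $\leq M$. But then I would argue that a vertex of infinite degree must still be present among the "small" structure: discard the finitely many large components; the remaining tree $T'$ (containing $v_0$ and cofinitely many of the small components) still has unbounded radius, so either $v_0$ still has infinite degree in $T'$ — in which case the bounded-radius components give an increasing star directly at $v_0$ as above (the path lengths $1 + \mathrm{rad}(C_j)$ can still be taken unbounded if infinitely many $C_j$ are nontrivial; if only finitely many $C_j$ are nontrivial then $T'$ has bounded radius, contradiction) — or else unbounded radius is witnessed deeper in some component, and I recurse.

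The main obstacle I expect is making the "otherwise" subcase genuinely terminate rather than recurse forever: I need to see that if no vertex-of-infinite-degree ever anchors an increasing star, then the locally-finite argument applies. The clean way to close this is: if $T$ has unbounded radius but no vertex anchors an increasing star, then in particular no vertex of infinite degree has infinitely many nontrivial subtrees of unbounded attached-radius. I would instead argue contrapositively from the locally-finite case: either every vertex of $T$ lies on an infinite ray (giving alternative (i) directly, since a ray witnesses "infinite path starting with $v$"), or some vertex $v$ lies on no infinite ray. In the latter case, root $T$ at such a $v$; the subtree below $v$ has no infinite branch, so by König's lemma some vertex $v_0$ below (or equal to) $v$ has infinite degree — but the subtree below $v_0$ has no infinite branch and still has unbounded radius (since $T$ does and $v$ is on no ray, the depth must be attained within a finitely-branching-to-rays-free region, forcing unbounded depth below some infinite-degree vertex). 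Such a $v_0$ has infinitely many finite subtrees hanging off it of unbounded depth, which is precisely an increasing star centered at $v_0$. I would write this second formulation as the actual proof, as it avoids the messy recursion: the whole argument reduces to König's Infinity Lemma plus the observation that an infinite-degree vertex whose subtrees are all finite but of unbounded depth carries an increasing star.
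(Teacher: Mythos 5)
Your overall strategy (a K\H{o}nig-type descent, versus the paper's use of the Star--Comb lemma) is viable, and two of its pieces are fine: the locally finite case, and the observation that a vertex $v_0$ for which infinitely many components of $T-v_0$ reach unbounded distances from $v_0$ centers an increasing star. The gap is exactly where you predicted it would be, and your proposed ``clean'' fix does not close it. In the rayless case you invoke K\H{o}nig's lemma to produce \emph{some} vertex $v_0$ of infinite degree and then assert that ``such a $v_0$ has infinitely many finite subtrees hanging off it of unbounded depth.'' That does not follow, even if you additionally arrange that the subtree rooted at $v_0$ is rayless with unbounded depth: take $v_0$ with infinitely many leaf-children plus one further child $b$ whose subtree is an increasing star. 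Then $v_0$ has infinite degree and its subtree is rayless of unbounded depth, but all the depth is concentrated in the single component containing $b$, so the long paths from $v_0$ all pass through $b$ and are not internally disjoint; the increasing star is centered at $b$, not at $v_0$. The parenthetical ``the depth must be attained within a finitely-branching-to-rays-free region'' is not an argument, and ``unbounded depth below some infinite-degree vertex'' is in any case not the property you need.

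What you actually need, and what makes your recursion terminate, is the following descent. Root $T$ at a vertex $v$ starting no ray, and for each $u$ let $\mathrm{dep}(u)$ be the supremum of $d(u,w)$ over descendants $w$ of $u$; unbounded radius gives $\mathrm{dep}(v)=\infty$. If $\mathrm{dep}(u)=\infty$, then either some child $u'$ of $u$ has $\mathrm{dep}(u')=\infty$, in which case move to $u'$, or every child has finite depth but these depths are unbounded, in which case for every $k$ infinitely many children have depth exceeding $k$, and picking paths into distinct such subtrees yields an increasing star centered at $u$. The descent cannot continue forever, because an infinite chain $v=u_0,u_1,u_2,\dots$ with each $u_{i+1}$ a child of $u_i$ is a ray starting at $v$, contradicting raylessness. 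With this paragraph in place your proof is complete, and it is a genuinely different, more elementary route than the paper's, which instead collects leaves at unbounded distances from $v$, applies the Star--Comb lemma (no ray forces the star outcome), and reads off the increasing star from the resulting subdivided star.
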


\begin{proof}
Let $T$ be a tree, let $v\in V(T)$, and suppose there is no infinite path in $T$ starting with $v$ (since $T$ is connected, this implies that there is no infinite path in $T$ at all).  Since $T$ has unbounded radius, we can do the following: let $Q_1$ be a path from $v$ to a leaf $u_1$, which has some length $k_1$, now there must exist a path $Q_2$ of length $k_2>k_1$ from $v$ to a leaf $u_2$, and so on.  This process gives an infinite set of leaves $U$ and an increasing sequence $k_1, k_2, \dots$ such that there is a path from $v$ to $u_i$ of length $k_i$.  Now we apply the Star-Comb lemma \cite[Lemma 8.2]{Dies} to the set $U$.  Since $T$ has no infinite path, there must exist a subdivision of an infinite star with center $v_0$ such that all the leaves, call them $U'$, are in $U$.  We claim that for all $k$ there exists a path from $v_0$ to $U'$ which has length greater than $k$, which would prove the lemma.  If not, then there exists $k$ such that every path from $v_0$ to $U'$ has length at most $k$.  However, since there is a path from $v$ to $v_0$, this would imply that there exists a $k'$ such that every path from $v$ to $U'$ has length at most $k'$.  But this contradicts the fact that the lengths of the paths from $v$ to $U'$ form an increasing sequence.
\end{proof}

\begin{proof}[Proof of \cref{lem:embed-deep-tree}]
First suppose that a spanning copy of $T$ can be found in every infinitely connected graph $H$.  It is known that there exist infinitely connected graphs with arbitrarily high girth (see \cite[Chapter 8, Exercise 7]{Dies}); for instance, let $H_0$ be a cycle of length $k$, and for all $i\geq 1$, let $H_i$ be the graph obtained by adding a vertex $x_i$ and internally disjoint paths of length $\ceiling{k/2}$ from $x_i$ to every vertex in $H_{i-1}$, then let $H=\cup_{i\geq 0}H_i$.  So $H$ is infinitely connected and has girth $k$. This proves that $T$ is acyclic (i.e. $T$ is a forest) because otherwise there would be an infinitely connected graph in which every cycle is longer than the shortest cycle in $T$.

Let $H$ be the infinite blow-up of a one-way infinite path (i.e.\ replace each vertex with an infinite independent set and each edge with a complete bipartite graph). Clearly $H$ is infinitely connected.  If $T$ is a spanning subgraph of $H$, then $T$ has a component of unbounded radius or $T$ has infinitely many components.  

Thus $T$ must be a forest with a component of unbounded radius or infinitely many components.

Next suppose $T$ is a forest with a component of unbounded radius or infinitely many components.  If $T$ has infinitely many components $T_1, T_2, \dots$, we may select for all $i\geq 1$, $t_i\in V(T_i)$ and add the edge $t_it_{i+1}$ for all $i\geq 1$ to get a tree with unbounded radius which contains $T$ as a spanning subgraph.  If $T$ has finitely many components $T_1, \dots, T_k$, at least one of which has unbounded radius, we may for all $i\in [k-1]$ add an edge from $t_{i}\in V(T_i)$ to $t_{i+1}\in V(T_{i+1})$ to get a tree with unbounded radius which contains $T$ as a spanning subgraph.  Thus it suffices to prove the result when $T$ is a tree with unbounded radius.

  By \cref{fact:unbound-radius}, there exists a vertex $t_0$ such that either there is an infinite path starting with $t_0$ (in which case we say $T$ is of \emph{Type 1}), or an increasing star having $t_0$ as the center (in which case we say $T$ is of \emph{Type 2}).  Now starting with $t_0$, fix an enumeration of $V(T) = \{t_0, t_1, t_2 \ldots\}$ such that for all $i\geq 1$, $T[\{t_0, \dots, t_i\}]$ is connected (in fact, for all $i\geq 1$, $t_i$ has exactly one neighbor in $\{t_0, \dots, t_{i-1}\}$).  Also fix  an enumeration of $V(H) = \{v_0, v_1, v_2, \ldots\}$.   We will build an embedding $f$ of $T$ into $H$ recursively, in finite pieces, at each stage ensuring that we add the first vertices of $V(T) \sm \dom{f}$ and $V(H) \sm \ran{f}$ into the domain and range of $f$ respectively.

Initially, let $f(t_0)=v_0$ (we think of $t_0$ as being the root of the tree and $v_0$ as the embedding of the root in $H$) and let $t_{last}:=t_0$ and $v_{last}:=v_0$.  We now show that \cref{alg:longtree} gives the desired embedding.

\begin{algorithm}[ht]
\caption{}
\label{alg:longtree}
\begin{algorithmic}[1]
  \While{True}
   \If{$V(H)\setminus \ran f\neq \emptyset$}
      \State Let $next$ be the smallest index such that $v_{next}\in V(H) \setminus \ran f$.
      \State Let $P_{next} \subseteq H$ be a finite path from $v_{next}$ to $v_{last}$ which is internally disjoint from $\ran f$.\label{alg1}
      \State Let $V_{next}$ be a set of $|V(P_{next})| - 1$ vertices in $V(T)\setminus \dom f$ such that $\{t_{last}\}\cup V_{next}$ induces a path in $T$.\label{alg2}
      \State Extend $f$ by embedding $V_{next}$ into $V(P_{next}) \sm \{v_{last}\}$.
      \If{$T$ is of Type 1}
        \State Set $t_{last}:=f^{-1}(v_{next})$ and $v_{last}:=v_{next}$.
      \EndIf
    \EndIf
    \If{$V(T)\setminus \dom f\neq \emptyset$}
      \State Let $next$ be the smallest index such that $t_{next}\in V(T)\setminus \dom f$.
      \State Let $back<next$ be the unique index such that $t_{back}$ is adjacent to $t_{next}$.\label{alg3}
      \State Embed $t_{next}$ into an arbitrary vertex in $N_H(f(t_{back})) \setminus \ran f$.\label{alg4}
      \If{$T$ is of Type 1 and $t_{back}=t_{last}$}
        \State Set $t_{last}:=t_{next}$
      \EndIf
    \EndIf
   \EndWhile
\end{algorithmic}
\end{algorithm}
Note that if $T$ is of Type 2, then $t_{last}=t_0$ and $v_{last}=v_0$ throughout the process.

To see that $f$ is a well-defined surjective embedding of $T$ into $H$, first note that we can always follow lines \ref{alg1} and \ref{alg4} of \cref{alg:longtree} since $H$ is infinitely connected and in particular every vertex has infinite degree.  Line \ref{alg2} is always possible since there is either an infinite path starting at $v_0$ or an increasing star having $v_0$ as the center.  Line \ref{alg3} is always possible by the enumeration of $V(T)$.  So $f$ is well-defined.

We alternate between embedding the vertex $t$ of smallest index from $T$ which has not yet been embedded into an available vertex from $H$ in such that way that the parent $t'$ of $t$ has already been embedded and $f(t)$ is adjacent to $f(t')$, and embedding a path $t_0t_1\dots t_\ell$ to a vertex such that $f(t_0)f(t_1)\dots f(t_\ell)$ is a path in $H$ and $f(t_\ell)$ is the vertex of smallest index from $V(H)$ which has yet to be mapped to.  So $f$ will be a surjective embedding of $T$.
\end{proof}

Now we prove another useful lemma.

\begin{lemma}\label{lem:embed-short-tree}
Let $T$ be a tree with at least one vertex of infinite degree. If $H$ is a graph in which every vertex has infinite degree, then for all $v \in V(H)$, $H$ contains a copy of $T$ covering $N_H(v)$.
\end{lemma}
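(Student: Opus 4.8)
The plan is to construct an embedding $f\colon V(T)\to V(H)$ recursively, in finite stages, via a back-and-forth scheme with two alternating tasks: a \emph{covering task} that forces every vertex of $N_H(v)$ into $\ran f$, and an \emph{extension task} that forces every vertex of $T$ into $\dom f$. First I would fix a vertex $t_0$ of $T$ of infinite degree (which exists by hypothesis) and an enumeration $V(T)=\{t_0,t_1,t_2,\dots\}$ for which $T[\{t_0,\dots,t_i\}]$ is connected for every $i$; since $T$ is a tree, each $t_i$ with $i\geq 1$ then has a unique neighbour $t_{p(i)}$ with $p(i)<i$, and these parent--child pairs are exactly the edges of $T$. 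I would also fix an enumeration $N_H(v)=\{u_0,u_1,u_2,\dots\}$ and begin by declaring $f(t_0)=v$.

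At each stage, with the current partial embedding $f$ finite, I would first do a covering step: if $N_H(v)\not\subseteq\ran f$, let $u_i$ be the element of least index in $N_H(v)\setminus\ran f$; since $t_0$ has infinitely many neighbours in $T$ while only finitely many vertices lie in $\dom f$, pick a neighbour $t$ of $t_0$ with $t\notin\dom f$ and set $f(t)=u_i$. This is consistent because $u_i\in N_H(v)=N_H(f(t_0))$ and the only already-embedded neighbour of $t$ is $t_0$. Then I would do an extension step: if $V(T)\not\subseteq\dom f$, let $t_j$ be the vertex of least index in $V(T)\setminus\dom f$; then $t_0,\dots,t_{j-1}\in\dom f$, so its parent $t_{p(j)}$ is already embedded, and since every vertex of $H$ has infinite degree while $\ran f$ is finite, pick $u\in N_H(f(t_{p(j)}))\setminus\ran f$ and set $f(t_j)=u$.

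Iterating through all stages yields a total function $f\colon V(T)\to V(H)$: the extension steps guarantee that each $t_i$ enters $\dom f$ after finitely many stages (the least unembedded index strictly increases with each extension step), the covering steps likewise guarantee $N_H(v)\subseteq\ran f$, and $f$ is injective since each new image is chosen outside the current finite range. Because every stage keeps $f$ an embedding of a subforest of $T$ into $H$ — each step places the image of a child adjacent to the image of its (already placed) parent, and parent--child pairs are all the edges of $T$ — the limit $f$ is an embedding of $T$ into $H$ with $N_H(v)\subseteq f(V(T))$, which is exactly the claim.

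I do not anticipate a genuine obstacle; the only things requiring care are the routine bookkeeping that keeps the alternation well defined — at every covering step there is still an unused $T$-neighbour of $t_0$, which is where $\deg_T(t_0)=\infty$ is used, and at every extension step the parent vertex is already embedded, which is why one must always extend at the least unembedded index — together with the observation that in a tree the connected-prefix enumeration makes every edge a parent--child edge, so verifying adjacency only on such pairs suffices.
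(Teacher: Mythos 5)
Your proof is correct and follows essentially the same strategy as the paper's: root $T$ at the infinite-degree vertex, embed it at $v$, and greedily extend the embedding using the infinite degrees in $H$, while the infinitely many $T$-neighbours of the root are mapped onto the least uncovered elements of $N_H(v)$ to guarantee coverage. The paper organizes the recursion by adding one child of every already-embedded vertex per round rather than by your alternating covering/extension steps, but this is only a difference in bookkeeping, and your check that a newly embedded vertex has exactly one already-embedded neighbour (its parent, or $t_0$) is the same key point the paper relies on.
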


\begin{proof}
Let $t_1 \in V(T)$ be a vertex of infinite degree and let $v_1=v$ from the statement of the theorem (again we think of $t_1$ as being the root of the tree and $v_1$ as the embedding of the root in $H$).
We will build an embedding $f$ of $T$ into $H$ recursively, in finite pieces, at each stage adding one more child of every previously embedded $t \in T$ (unless all children have been embedded already). The embedding strategy is very similar to that in the proof of \cref{lem:embed-deep-tree}.
Initially, let $f(t_1) = v_1$. We will use the following \cref{alg:short-tree}.

\begin{algorithm}[ht]
\caption{}
\label{alg:short-tree}
\begin{algorithmic}[1]
  \While{True}
    \For{$t \in \dom f$}
      \If{$S := N_T(t) \sm \dom f$ is non-empty}
      \State Embed $\min(S)$ into $\min(N_H(f(t)) \setminus \ran f)$.
      \EndIf
    \EndFor
  \EndWhile
\end{algorithmic}
\end{algorithm}

First, note that we can always follow line 4 of \cref{alg:longtree} since every vertex in $H$ has infinite degree. Let $f :V(T) \to V(H)$ be the function produced by \cref{alg:short-tree}. We need to prove that $f$ is well-defined, an embedding of $T$ and that $N_H(v) \subset \dom f$.

Since we always embed the smallest not yet embedded neighbor of every previously embedded $t \in V(T)$ in line 4, every other vertex will be embedded eventually as well. Therefore, $f$ is well-defined.
Furthermore, by construction of $f$, it defines a proper embedding (whenever a new vertex $t \in T$ is embedded, its parent $t'$ is already embedded and we make sure that $f(t)$ is adjacent to $f(t')$).
Finally note that we are infinitely often in line 4 when $t = t_1$ since $N_T(t_1)$ is infinite. Since we always choose the smallest available vertex in $N_H(v) \setminus \ran f$, it follows that $ N_H(v) \subset \ran f$.
\end{proof}
\subsection{Upper density of monochromatic trees}
In this section we will deduce \cref{thm:trees} from Lemma \ref{lem:embed-deep-tree}, Lemma \ref{lem:embed-short-tree}, and the following two lemmas.

\begin{lemma}\label{lem:find-deep-tree}
  For any $2$-coloring of $K_\NN$, there are sets $R$ and $S$ such that
  \begin{enumerate}
    \item  $R \cup S$ is cofinite,
    \item  if $R$ is infinite, then it is infinitely connected in red, and
    \item  if $S$ is infinite, then it is infinitely connected in one of the colors.
  \end{enumerate}
\end{lemma}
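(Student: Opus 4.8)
The plan is to follow the ultrafilter strategy used for the earlier embedding results (compare the proof of \cref{thm:bip}), and then add a structural step that upgrades ``infinitely linked'' to ``infinitely connected''. First I would fix a non-trivial ultrafilter $\SU$ on $\NN$ and pass to the induced vertex-colouring $c_\SU$, writing $\NN = A \cup B$ where $A$ is the set of red vertices and $B$ the set of blue vertices; exactly one of $A,B$ lies in $\SU$. The basic observation is that if $X \in \SU$ and every vertex of $X$ receives colour $c$ under $c_\SU$, then for every finite $W \subseteq X$ the set $\bigcap_{v \in W} N_c(v) \cap X$ is again in $\SU$ and hence infinite; consequently any two vertices of $X$ have infinitely many common colour-$c$ neighbours \emph{inside} $X$, and this persists after deleting any finite set, so $X$ is infinitely connected in colour $c$. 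Thus the $\SU$-class is automatically monochromatically infinitely connected, and if it happens to be the red class we may take it to be $R$.

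This reduces the problem to the complementary class. If $A \in \SU$, set $R := A$ and we must cover a cofinite part of $B$ by a monochromatic infinitely connected $S$; if $B \in \SU$, set $S := B$ and we must cover a cofinite part of $A$ by a \emph{red} infinitely connected $R$ (and, failing that, set $R$ finite and enlarge $S$). In either case we are looking, inside a $2$-coloured complete graph on an infinite set $U$, for a cofinite $U' \subseteq U$ infinitely connected in a prescribed colour (or in some colour). I would handle this by examining whether $G_R[U]$ becomes infinitely connected after deleting a finite set; if so we are done. Otherwise some finite $F$ makes $G_R[U \setminus F]$ disconnected, and I distinguish cases on its components: if two of them are infinite, or one is infinite and infinitely many are finite, then all edges of the host complete graph running between distinct components are blue, so $G_B[U \setminus F]$ contains the spanning complete multipartite graph on those components, and such a graph is infinitely connected --- deleting a finite set leaves either two infinite parts, or one infinite part together with infinitely many parts, which lets one join any two surviving vertices by a blue path of length at most two. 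Hence $U' := U \setminus F$ works (in blue). The only remaining possibility is that $G_R[U \setminus F]$ has exactly one infinite component $D$ plus finitely many finite ones; then $D$ is cofinite in $U$ and one recurses on $D$.

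The main obstacle is this last, ``spine-like'' case, where the recursion $U \supsetneq D \supsetneq D' \supsetneq \cdots$ may fail to terminate. At each step one deletes a nonempty finite cut $F_n$ together with a nonempty union $C_n$ of finite components; the $C_n$ are pairwise joined entirely by blue edges, so $\bigcup_n C_n$ carries an infinitely connected blue complete multipartite structure, but it need not be cofinite in $U$, so the cuts $F_n$ and the limit $\bigcap_n D_n$ still have to be accounted for. I expect the right resolution to be that in this situation the complementary colour is forced to be ``dense'' near infinity --- as happens whenever $G_R$ has bounded degree, or is a caterpillar, since then $G_B$ is itself infinitely connected on a cofinite set --- and making this precise, together with dealing with the asymmetric requirement that $R$ be red (by allowing $R$ to be finite and pushing the relevant colour class into $S$ when it turns out blue, as noted above), is the technical heart of the argument. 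Once the reduction is settled, the sets $R$ (finite or red-infinitely-connected) and $S$ (finite or monochromatically infinitely connected) with $R \cup S$ cofinite are exactly as required by (i)--(iii).
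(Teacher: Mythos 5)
Your opening observation is correct and useful: the $\SU$-class of the induced vertex-colouring is automatically infinitely connected in its own colour, since any finite subset has its common colour-$c$ neighbourhood inside the class again in $\SU$. But the two issues you flag at the end are not technicalities to be smoothed over later --- they are the entire content of the lemma, and your proposed resolutions do not work. First, the ``spine'' recursion $U \supsetneq D \supsetneq D' \supsetneq \cdots$ genuinely need not terminate in finitely many steps, and your suggested fix (``the complementary colour is forced to be dense near infinity'') is not a proof and is not the right statement: one can build colourings in which, at every finite stage, exactly one infinite red component survives, yet no cofinite set is blue-infinitely-connected either. Second, when the $\SU$-class is blue you cannot ``push the relevant colour class into $S$'': if $B \in \SU$ and a cofinite piece $A'$ of $A$ turns out to be blue-infinitely-connected, the union $B \cup A'$ need not be blue-infinitely-connected, because a vertex $u \in A'$ has $N_R(u) \in \SU$ and hence $N_B(u) \notin \SU$, so $u$ and a vertex of $B$ may fail to have any common blue neighbour surviving the deletion of a finite set. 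Since $R$ must be red and $S$ is a single set, this branch is stuck.

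The paper avoids both problems with a different decomposition. Rather than deleting finite cuts and finite components, it peels off, from the current set $S_\alpha$, the set $R_\alpha$ of vertices whose \emph{blue} neighbourhood meets $S_\alpha$ in a finite set, iterating \emph{transfinitely} (taking intersections at limits). Transfiniteness handles non-termination for free: the $R_\alpha$ are pairwise disjoint subsets of a countable set, so the process stabilises at some countable ordinal $\gamma^*$. The colour asymmetry disappears because the union $R = \bigcup_{\alpha<\gamma^*} R_\alpha$ is red-infinitely-connected by construction (each $u \in R_\alpha$ has cofinite red neighbourhood in every infinite $R_\beta$ with $\beta \ge \alpha$, giving red paths of length $2$ that survive finite deletions), while the residue $S$ has every vertex of infinite blue degree into $S$, so it is either blue-infinitely-connected or splits along a finite set into two infinite pieces joined completely in red, which is then red-infinitely-connected. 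I would encourage you to look for this kind of ``peel by finite degree in one colour'' decomposition rather than the component-based recursion; as it stands, your argument has a genuine gap at exactly the step where the work is.
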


\begin{lemma}
  \label{lem:find-short-tree}
  Let $H$ be a $2$-colored $K_{\NN}$. There exists a set $A \subset \NN$, a vertex $v \in A$, and a color $c$ such that every vertex in $F := H_c[A]$ has infinite degree and $\ud(N_F(v)) \geq 1/2$.
\end{lemma}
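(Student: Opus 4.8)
The plan is to isolate one structural fact about $2$-colourings of $K_\NN$ and then finish by a short case analysis. For a colour $c$, call a set $S\subseteq\NN$ \emph{$c$-full} if every vertex of $S$ has infinite $c$-degree inside $S$, and let $C_c$ be the union of all $c$-full sets. Since $u\in S_i$ implies $N_c(u)\cap S_i$ is infinite, hence $N_c(u)\cap\bigcup_j S_j$ is infinite, the set $C_c$ is itself $c$-full; so $C_c$ is the largest $c$-full set (possibly empty) and $H_c[C_c]$ has infinite minimum degree. The key claim is that $C_R\cup C_B$ is cofinite, whence $\ud(C_R)+\ud(C_B)\ge\ud(C_R\cup C_B)=1$, so one of the two cores has upper density at least $1/2$.

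To prove the claim I would show that $C_R\cup C_B$ lies in every non-trivial ultrafilter $\SU$ on $\NN$; since any infinite subset of $\NN$ is contained in some non-trivial ultrafilter (\cref{prop:non-trivial-uf}, extended to $\NN$), a set with this property must be cofinite. So fix a non-trivial $\SU$, let $\NN=V_R\cup V_B$ be the induced vertex-colouring with $V_c=\{v:N_c(v)\in\SU\}$, and say $V_R\in\SU$ (exactly one of them is, by \cref{prop:ultrafilter-properties}). For every $u\in V_R$ we have $N_R(u)\in\SU$ and $V_R\in\SU$, so $N_R(u)\cap V_R\in\SU$ is infinite; hence $V_R$ is $R$-full, so $V_R\subseteq C_R$ and $C_R\cup C_B\supseteq V_R\in\SU$, as claimed.

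For the lemma itself, write $P_c$ for the set of vertices of finite $c$-degree, so $v\in P_c$ exactly when $N_{\bar c}(v)$ is cofinite. If $P_R=P_B=\emptyset$ then every vertex has infinite degree in both colours, so $C_R=C_B=\NN$; picking any vertex $v$ and the colour $c$ with $\ud(N_c(v))\ge 1/2$ (one such exists since $\ud(N_R(v))+\ud(N_B(v))\ge\ud(\NN\setminus\{v\})=1$), the triple $A=\NN$, $F=H_c$, $v$ works. Otherwise, swapping colours if needed, fix $v_0\in P_R$, so $N_B(v_0)$ is cofinite. If $\ud(C_B)\ge 1/2$, take $c=B$, $A=C_B\cup\{v_0\}$, $v=v_0$: each vertex of $C_B$ keeps infinite blue degree in $A$, and so does $v_0$ because $C_B\setminus N_B(v_0)$ is finite while $C_B$ is infinite; moreover $N_F(v_0)=N_B(v_0)\cap C_B$ differs from $C_B$ by a finite set, so $\ud(N_F(v_0))=\ud(C_B)\ge 1/2$. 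If instead $\ud(C_B)<1/2$, then $\ud(C_R)\ge 1/2$ by the claim, and $C_B\ne\NN$ forces $P_B\ne\emptyset$; choosing $w_0\in P_B$ (so $N_R(w_0)$ is cofinite) and running the same argument with the colours reversed, the triple $A=C_R\cup\{w_0\}$, $c=R$, $v=w_0$ works.

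The only real content is the claim that $C_R\cup C_B$ is cofinite; once it is in hand the rest is routine bookkeeping. The ultrafilter argument above is the cleanest route, but I note that the claim can also be proved by hand from the transfinite ``peeling'' that defines the cores: using that no vertex can have finite degree in both colours within any infinite set, one shows that on $W=\NN\setminus(C_R\cup C_B)$ the red and the blue peeling-rank functions are each finite-to-one; passing to an infinite $W'\subseteq W$ on which both ranks have order type $\omega$ produces two $\omega$-orderings of $W'$ in which every vertex has only finitely many red- (respectively blue-) neighbours among its successors, and such a pair of orderings cannot coexist on an infinite set.
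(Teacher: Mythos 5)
Your proof is correct, and it reaches the lemma by a genuinely different route than the paper. The paper constructs the monochromatic infinite-degree sets by a transfinite peeling argument: it builds ordinal-indexed strata $R_\alpha$ (vertices with finitely many blue neighbours in what remains), $B_\alpha$, and a residual set $S$ in which every vertex has infinite degree in both colours, and then shows $R\cup B\cup S$ is cofinite and runs a case analysis on whether $R\cup B$ is empty. You instead define the two colour ``cores'' $C_R,C_B$ abstractly as the largest $c$-full sets and prove the key fact $\ud(C_R)+\ud(C_B)\geq\ud(C_R\cup C_B)=1$ by showing that $C_R\cup C_B$ meets every non-trivial ultrafilter (via the induced vertex-colouring $c_\SU$, whose majority class $V_c\in\SU$ is automatically $c$-full and hence sits inside $C_c$), so that its complement can be contained in no non-trivial ultrafilter and must be finite. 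This is a clean reuse of the machinery from \cref{sec:embed} that the paper deploys for \cref{thm:locally-finite} and \cref{thm:ruling} but, curiously, not for this lemma; it buys a shorter and more conceptual proof of the cofiniteness claim, at the cost of the finer stratification that the transfinite construction provides (which is what the companion \cref{lem:find-deep-tree} exploits to get infinite connectivity rather than just infinite degree). Your closing case analysis — splitting on whether some vertex has finite degree in one colour, and attaching such a vertex $v_0$ to the opposite core so that $N_F(v_0)$ is cofinite in that core — is a correct and routine variant of the paper's use of a vertex $v\in R_0$; all the small checks (that $C_c$ is itself $c$-full, that $\ud(N_R(v))+\ud(N_B(v))\geq 1$, that a cofinite set meets an infinite set infinitely) go through. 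The alternative ``peeling-rank'' sketch in your last paragraph is not developed enough to stand on its own, but it is dispensable since the ultrafilter argument is complete.
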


It is now easy to prove \cref{thm:trees}.

\begin{proof}[Proof of \cref{thm:trees}]
It clearly suffices to prove the result for trees, so let $T$ be an infinite tree and suppose the edges of $K_\N$ are colored with two colors.  If $T$ does not have an infinite path, it must have at least one vertex of infinite degree (by K\"onig's infinity lemma \cite{K}) and therefore the theorem follows immediately from \cref{lem:find-short-tree,lem:embed-short-tree}.
So suppose $T$ has an infinite path. By \cref{lem:find-deep-tree}, there is an infinite set $A$ with $\ud(A) \geq 1/2$ and a color $c$, so that the induced subgraph on $A$ is infinitely connected in $c$. By \cref{lem:embed-deep-tree}, there is a monochromatic copy of $T$ spanning $A$ and we are done.
\end{proof}

It remains to prove the two lemmas.

\begin{proof}[Proof of \cref{lem:find-deep-tree}]
Fix a $2$-coloring of $K_\NN$.  We define a sequence of sets $R_\alpha, S_\alpha$, for all ordinals $\alpha$, as follows.  Let $S_0 = \NN$. For each $\alpha$, let $R_\alpha$ be the set of vertices in $S_\alpha$ whose blue neighborhood has finite intersection with $S_\alpha$, and let $S_{\alpha+1} = S_\alpha\sm R_\alpha$.  If $\lambda$ is a limit ordinal, then we let $S_\lambda$ be the intersection of the sets $S_\alpha$, for $\alpha < \lambda$.

Note that the sets $R_\alpha$ are pairwise disjoint, and hence there is some countable ordinal $\gamma$ such that $R_\alpha = \emptyset$ for all $\alpha\ge\gamma$.  Let $\gamma^*$ be the minimal ordinal such that $R_{\gamma^*}$ is finite; it follows then that $R_\beta = \emptyset$ for all $\beta > \gamma^*$.  Set
\[R = \bigcup\set{R_\alpha}{\alpha < \gamma^*}.\]
(Note that $\gamma^*$ may be $0$, in which case $R = \emptyset$.)

Suppose that $R$ is infinite.  Then $\gamma^* > 0$ and $R_\alpha$ is infinite for all $\alpha < \gamma^*$. Let $u,v\in R$ with $u\in R_\alpha$ and $v\in R_\beta$ for some $\alpha \le \beta < \gamma^*$.
It follows that the red neighborhoods of both $u$ and $v$ are cofinite in $R_\beta$. Since $R_\beta$ is infinite, this implies that there is a red path of length $2$ connecting $u$ and $v$, even after removing a finite set of vertices.  Hence $R$ is infinitely connected in red.

Set $S = S_{\gamma^*+1}$.  Then $R\cup S = \NN\sm R_{\gamma^*}$, so $R\cup S$ is cofinite.  Moreover, since $R_{\gamma^*+1} = \emptyset$, it follows that for every $v\in S$, the blue neighborhood of $v$ has infinite intersection with $S$.  Now suppose that $S$ is not infinitely connected in blue.
Then there is a finite set $F\subseteq S$ and a partition $S\sm F = X\cup Y$ such that $X$ and $Y$ are both nonempty, and every edge between $X$ and $Y$ is red.  Note that $X$ and $Y$ must both be infinite, since if $x_0\in X$ and $y_0\in Y$ then $X\cup F$ and $Y\cup F$ must contain the blue neighborhoods of $x_0$ and $y_0$ (both of which are infinite) respectively.  But then the red graph restricted to $X\cup Y=S\setminus F$ is infinitely connected.
\end{proof}

\begin{proof}[Proof of \cref{lem:find-short-tree}]
Fix a $2$-coloring of $K_\NN$. Similarly as in the proof of \cref{lem:find-deep-tree} we will construct sets $R_\alpha, B_\alpha, S_\alpha$ for all ordinals $\alpha$.  Let $S_0 = \NN$. For each $\alpha$, let $R_\alpha$ be the set of vertices in $S_\alpha$ whose blue neighborhood has finite intersection with $S_\alpha$, let $B_{\alpha}$ be the set of vertices in $S_{\alpha}$ whose red neighborhood has finite intersection with $S_{\alpha}$, and let $S_{\alpha+1} = S_\alpha\sm (R_\alpha\cup B_{\alpha})$.  If $\lambda$ is a limit ordinal, then we let $S_\lambda$ be the intersection of the sets $S_\alpha$, for $\alpha < \lambda$.  As in the proof of \cref{lem:find-deep-tree}, the following properties hold.

\begin{enumerate}
	\item There is a unique ordinal $\gamma^{*}$ such that $R_\alpha \cup B_\alpha$ is infinite for all $\alpha < \gamma^*$, finite for $\alpha = \gamma^*$ and empty for all $\alpha > \gamma^*$. We denote $R = \bigcup_{\alpha<\gamma^*} R_\alpha$ and $B = \bigcup_{\alpha<\gamma^*} B_\alpha$.
	\item $S_\alpha = S_{\alpha'}$ for all ordinals $\alpha, \alpha' > \gamma^*$. We denote $S = S_{\gamma^* + 1}$.
	\item $R, B, S$ are pairwise disjoint and $R \cup B \cup S$ is cofinite.
	\item If $v \in R_\gamma$ for some ordinal $\gamma$, then $v$ has finitely many blue neighbors in $S \cup \bigcup_{\alpha \geq \gamma} R_\alpha$.
	\item If $v \in B_\gamma$ for some ordinal $\gamma$, then $v$ has finitely many red neighbors in $S \cup \bigcup_{\alpha \geq \gamma} B_\alpha$.
	\item Every $v \in S$ has infinitely many neighbors of both colors in $S$.
\end{enumerate}

If $R \cup B$ is empty, then let $A = S$ and choose an arbitrary vertex $v \in S$. Since $A$ is cofinite in $\N$, either the blue or the red neighborhood of $v$ in $A$ has upper density at least $1/2$. Since every vertex in $A$ has infinite degree in both colors, we are done.

If $R \cup B$ is non-empty it must be infinite (by the way $R$ and $B$ are defined). Since $R \cup B \cup S = (R \cup S) \cup (B \cup S)$ is cofinite, we may assume without loss of generality that $R$ is non-empty and $\ud(R \cup S) \geq 1/2$. Let $A = R \cup S$ and let $v \in R_0$ be arbitrary (if $R$ is non-empty, then $R_0$ must be infinite).  Clearly every vertex in $A$ has infinite red degree in $A$ and since $v$ has only finitely many blue neighbors in $A$, we are done.
\end{proof}
\subsection{Ramsey-cofinite forests}
In this section we will prove \cref{thm:cofinitetree}.


We already know from \cref{ex:backward-half-graph,ex:forward-half-graph} that if $T$ is weakly expanding or has a finite dominating set, then $T$ is not Ramsey-lower-dense (and thus $T$ is not Ramsey-confinite).  So all that remains to prove \cref{thm:cofinitetree}(ii) is to show that every forest in $\CT^{*}$ is not Ramsey-lower-dense.  Recall that $\CT^*$ is the family of forests $T$ having one vertex $t$ of infinite degree, every other vertex has degree at most $d$ for some $d\in \NN$, $t$ is adjacent to infinitely many leaves and infinitely many non-leaves, and cofinitely many vertices of $T$ have distance at most $2$ to $t$ (in particular, if $T$ is not connected, then $T$ has one infinite component and finitely many finite components).

\begin{proposition}\label{lem:T-star}
If $T \in \CT^{*}$, then $T$ is not Ramsey-lower-dense.
\end{proposition}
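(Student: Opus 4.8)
**Proof plan for Proposition (forests in $\CT^*$ are not Ramsey-lower-dense).**

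Let me recall the structure of $\CT^*$. A forest $T \in \CT^*$ has a distinguished vertex $t$ of infinite degree, all other vertices have degree $\le d$ for some fixed $d$, $t$ is adjacent to infinitely many leaves and infinitely many non-leaves, and all but finitely many vertices lie within distance $2$ of $t$. (If $T$ is disconnected, the extra components are finite.) The goal is to produce, for each such $T$, a $2$-coloring of $K_\NN$ in which every monochromatic copy of $T$ has lower density $0$.

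The plan is to combine a "backward interval coloring" idea (as in \cref{ex:backward-half-graph}) with a half-graph coloring that controls how large an independent set attached to the image of $t$ can be. The key structural feature of $T \in \CT^*$ is that in any embedded copy, the vertex $t$ has infinite degree, infinitely many of its neighbors are leaves of $T$ (forming an infinite independent set whose only neighbor is the image of $t$), and infinitely many neighbors are non-leaves, each of which has at most $d-1$ further neighbors, almost all of which are themselves leaves within distance $2$ of $t$. So essentially, a copy of $T$ consists of: one special vertex $x$ (the image of $t$), a big "fan" of leaves hanging off $x$, a big collection of non-leaves hanging off $x$ each carrying a bounded "broom" of leaves, plus finitely many exceptional vertices. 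First I would fix an increasing sequence $a_0 < a_1 < a_2 < \cdots$ growing very fast (to be specified below), set $A_i = [a_i, a_{i+1})$, and let $A^0 = \bigcup_i A_{2i}$, $A^1 = \bigcup_i A_{2i+1}$ be the "even-block" and "odd-block" vertices. Within each block $A_i$ I color all edges by the half-graph coloring restricted to $A_i$ (so inside $A_i$, both color classes look like the infinite bipartite half graph, in particular have bounded... no—rather, controlled—independent-neighborhood behavior). Between blocks $A_i$ and $A_j$ with $i < j$: color the edge red if $j$ is odd, blue if $j$ is even (the backward interval coloring). Then every vertex in $A^0$ has finite blue degree into $A^1$, and every vertex in $A^1$ has finite red degree into $A^0$.

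Now suppose there is, say, a blue copy of $T$ with vertex set $V$, and let $x$ be the image of $t$. Since $x$ has infinite blue degree and infinite blue degree can only be realized "within one side," one argues as in \cref{ex:backward-half-graph} that $V$ has finite intersection with $A^1$ (because $A^1$-vertices have finite blue degree outward into $A^0$ and the half-graph colorings inside odd blocks force any infinite-blue-degree structure to fail) — more precisely, $V \cap A^1$ is finite, so essentially all of $V$ lies in $A^0$, and in fact, by an analogous argument applied block-by-block within $A^0$ together with the half-graph structure, $V$ must be "concentrated": all but finitely many vertices of $V$ lie in a single even block $A_{2m}$, because the backward coloring makes blue degree from $A_{2i}$ into $A_{2j}$, $i \ne j$, essentially controlled, and the vertex $t$ of infinite degree can only have its infinite neighborhood inside one block. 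Here is where I'd have to be careful: the neighbors of $x$ that are non-leaves also carry brooms, and those brooms need room; but since those brooms have bounded size $\le d$, and there are infinitely many of them, they together with the infinite leaf-fan of $x$ force the whole copy (minus finitely many vertices) into one block $A_{2m}$. Once $V \subseteq^* A_{2m}$, the copy of $T$ has $|V \cap [0,t]| \le a_{2m+1}$ for all $t$, so $\underline{d}(V) \le \limsup (a_{2m+1}/t)$ — but this is $0$ only if we also know $m$ can be taken... no, $m$ is fixed by the copy, so $\underline{d}(V)=0$ automatically since $V$ is eventually contained in a bounded set? No: $V \subseteq^* A_{2m}$ means $V$ is eventually a finite set, contradiction with $|V| = \infty$. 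So the real claim must be weaker: $V$ concentrates in a *tail* of even blocks, or the leaf-fan uses many blocks. Let me instead argue: the infinitely many leaves adjacent to $x$ form an independent set $L$ all of whose blue neighbors include only $x$ (among $V$), but in $K_\NN$ they have other blue neighbors; the point is $L \subseteq N_B(x)$. Inside a half-graph-colored block, $N_B(x)$ intersected with the independent set... The cleanest route: show $V \cap [0, a_{n+1}) $ is small relative to $a_{n+1}$ for a sparse sequence of $n$, by using that the leaf-fan and brooms attached to $x$ must (by the blue half-graph coloring inside the block containing $x$, plus the backward coloring across blocks) be squeezed. The **main obstacle** is exactly this counting step: proving that the portion of a monochromatic copy of $T$ sitting below $a_{n+1}$ is at most, say, $a_n + (\text{bounded function of } a_{n-1})$, so that choosing $a_n$ growing fast enough (e.g. $a_n > n(a_{n-1} + g(a_{n-1}))$ for an appropriate $g$ coming from the half-graph structure, mirroring inequality \eqref{ak} in \cref{ex:forward-half-graph}) forces $\underline d(V) = 0$.

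To make the counting work I would exploit the half-graph colorings inside blocks as follows: inside $A_i$ with its half-graph coloring, a vertex $v$ with infinite blue degree inside $A_i$ has the property that its blue non-neighbors (inside $A_i$, above... ) are "initial", so an independent set $A$ in blue all lying in $N_B(v) \cap A_i$ can be large, but an independent set $A$ in blue that is *disjoint from* $N_B(v)$ is forced to live in a bounded-below region — this gives a function $g$ such that any blue-independent set of size $\ge g(k)$ hanging off a vertex whose image index is $\le k$ already spills into a later block. Combining: in a blue copy of $T$, since there are infinitely many leaves on $x$ and infinitely many brooms, and the brooms' roots are non-leaves with bounded degree, the set $V \cap [0, a_{n+1})$ can contain $x$ (if $x$'s image is $< a_{n+1}$), all of $A_{2m}$-portion up to... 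This needs the half-graph structure to bound the number of leaves that fit below a given threshold once $x$ is placed. I expect that after setting things up, $|V \cap [0,a_{2n}) | < a_{2n-1} + C \cdot g(a_{2n-1})$ for a constant $C = C(d)$ absorbing the brooms, and then the fast-growth choice of $a_n$ gives $\underline d(V) = 0$, and the symmetric argument handles red copies. I'll note the two degenerate sub-cases ($T$ disconnected with finite extra components — absorbed into the "finitely many exceptional vertices", and the distinction between leaf-neighbors and non-leaf-neighbors of $t$ — both handled by the bounded degree $d$) to complete the proof.
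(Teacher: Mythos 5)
There is a genuine gap here, and in fact the coloring you propose does not work, so the step you yourself flag as ``the main obstacle'' is not merely unfinished but false for your construction. In your coloring (backward interval coloring across blocks, half graphs inside blocks), look at the blue graph: every edge between two \emph{distinct even} blocks is blue (the larger endpoint lies in an even block), and every edge from an even block $A_{2a}$ down to an earlier odd block $A_{2b+1}$ is also blue for the same reason; the half-graph colorings inside blocks never come into play because an embedding can avoid placing adjacent vertices of $T$ in the same block. This blue graph already contains a copy of every $T\in\CT^*$ with positive (indeed near-$1$) lower density: embed $t$ into $A_0$, place its infinitely many leaf-neighbors so as to fill the even blocks (all these edges are blue), and for each $m$ fill the odd block $A_{2m+1}$ with the children of sufficiently many non-leaf neighbors of $t$ placed in $A_{2m+2}$ --- these parent--child edges are blue because the larger endpoint sits in an even block, and $|A_{2m+2}|\gg|A_{2m+1}|$ leaves plenty of room for the parents. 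In other words, the bounded-degree brooms can be parented from \emph{later, larger} blocks, so nothing squeezes $V\cap A_{2m+1}$. This is also why $\CT^*$ is a separate case in \cref{thm:cofinitetree} in the first place: such forests have no finite dominating set, so the plain backward interval coloring of \cref{ex:backward-half-graph} cannot handle them, and adding half graphs inside blocks changes nothing since no within-block edges are needed.

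The paper's proof uses a more elaborate construction precisely to close this escape route: the intervals $A_i$ are grouped into four classes $V_0,V_1,V_2,V_3$ by residue mod $4$, with $V_0,V_1$ red cliques joined to each other in blue, $V_2,V_3$ blue cliques joined to each other in red, and \emph{directional} interval colorings between $\{V_0,V_1\}$ and $\{V_2,V_3\}$ arranged so that, for a blue copy with $f(t)\in V_0$, the copy's vertices in $A_{4q+2}\subseteq V_2$ can only be blue-children of degree-$\le d$ vertices of the copy lying in $V_1\cap[0,a_{4q+2})$. This yields $|V_2'\cap A_{4q+2}|\le d\cdot a_{4q+1}\le a_{4q+2}/(4q+2)$, and since $A_{4q+2}$ dominates $[0,a_{4q+3})$, the lower density is $0$ (symmetrically for red). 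The feature your two-class construction lacks is a part of $\NN$ that is simultaneously (i) of proportion bounded away from $0$ along a cofinal sequence of prefixes, (ii) almost entirely non-adjacent in the relevant colour to the image of $t$, and (iii) reachable in that colour only from earlier, much smaller intervals via the bounded-degree vertices. To salvage your approach you would have to redesign the coloring to manufacture such a part; the mod-$4$ scheme is one way to do it.
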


\begin{proof}
Let $T\in \CT^{*}$, let $t$ be the vertex of infinite degree in $T$, and let $d$ be the maximum degree of $T-\{t\}$ as guaranteed by the definition.

We begin by partitioning $\NN$ into intervals $A_0, A_1, A_2, \dots$ as follows: Let $a_0=1$ and for all $i\geq 1$, let $a_i=i\cdot d\cdot a_{i-1}$.  Then for all $i\geq 0$, let $A_i=[a_i, a_{i+1})$.  Now for all $r\in \{0,1,2,3\}$, let $V_r=A_r\cup A_{4+r}\cup A_{8+r}\cup \dots$.

Now color all edges which are inside $V_0$ or $V_1$ red, and all edges inside $V_2$ or $V_3$ blue. Color all edges between $V_0$ and $V_1$ blue and all edges between $V_2$ and $V_3$ red. Finally, for all $i\in \{0,1\}$, $j\in \{2,3\}$ we color the complete bipartite graphs $K(V_i,V_j)$ according to \cref{fig:T-star} as follows: Suppose first that there is a red arrow from $V_i$ to $V_j$ (and thus a blue arrow from $V_j$ to $V_i$). Let $A_s\subseteq V_i$ and $A_t\subseteq V_j$.  If $s<t$, then color all edges between $A_s$ and  $A_t$ red.  If $t<s$, then color all edges between $A_t$ and $A_s$ blue.  If there is a blue arrow from $V_i$ to $V_j$ (and thus a red arrow from $V_j$ to $V_i$), we do the opposite.

\begin{figure}[ht]
	\begin{center}
		\includegraphics[height=5cm]{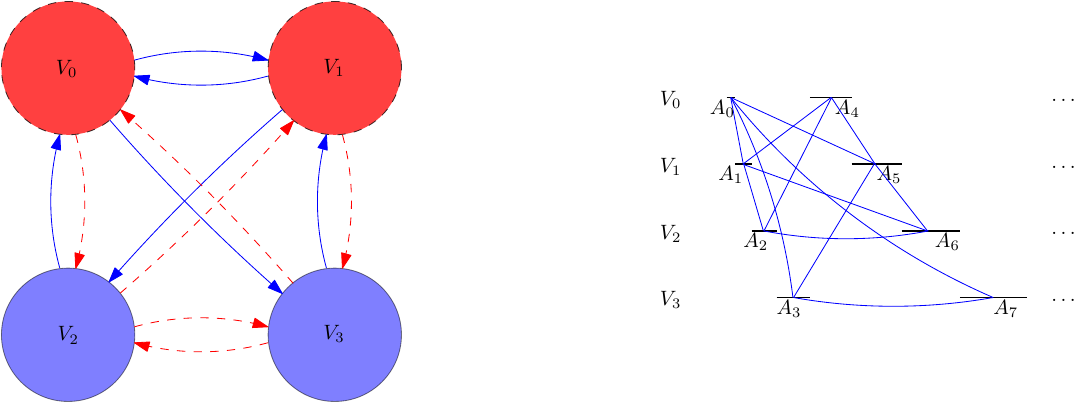}
	\end{center}
	\caption{The shaded areas denote cliques of the respective colors and a blue/solid (red/dashed) arrow from $V_i$ to $V_j$ indicates that vertices in $V_i$ have cofinitely many blue (red) neighbors in $V)j$. On the right we have an example of the relevant edges in the case where we are embedding a blue copy of $T\in \CT^*$ with root $t$ in $V_0$.}
	\label{fig:T-star}
\end{figure}

Assume there is a monochromatic copy $T'$ of $T$, let $f$ be the corresponding embedding, and let $V_i'=f(T)\cap V_i$ for all $i\in \{0,1,2,3\}$. By symmetry, we may assume that $t$ is embedded in $V_0$; and let $v = f(t)$.

Suppose first that $T$ is embedded in the blue subgraph.  Since $v$ has finitely many blue neighbors in $V_2$ and cofinitely many vertices have distance at most 2 to $v$, all but finitely many vertices of $V_2'$ are neighbors of vertices in $V_1'$ in $f(T)$. So for all sufficiently large $q$ (i.e. large enough so that $N_{T'}(v)\cap V_2\subseteq A_2\cup A_6\cup \dots \cup A_{4q-2}$), we have $$|V_2'\cap A_{4q+2}|\leq d|V_1'\cap (A_{1}\cup A_5\cup \dots \cup A_{4q+1})|\leq d\cdot a_{4q+1}\leq \frac{a_{4q+2}}{4q+2},$$
and thus
\begin{align*}
\ld(\ran f)
  &\leq \lim_{q \to \infty} |\ran f \cap (A_1 \cup \ldots \cup A_{4q+2})|/(a_1 + \ldots + a_{4q+2})\\
  &\leq \lim_{q \to \infty} (a_1 + \ldots + a_{4q+1} + \tfrac{a_{4q+2}}{4q+2})/(a_1 + \ldots + a_{4q+2})=0.
\end{align*}

Suppose next that $T$ is embedded in the red subgraph.  Since $v$ has no red neighbors in $V_1$, all but finitely many vertices of $V_1'$ are neighbors of vertices in $V_2'$ in $f(T)$. So for all $q\geq 1$, we have $$|V_1'\cap A_{4q+1}|\leq d|V_2'\cap (A_{2}\cup A_6\cup \dots \cup A_{4q-2})|\leq d\cdot a_{4q-2}\leq \frac{a_{4q+1}}{4q+1},$$
and thus $\ld(f(T))=0$.  Since $f$ was an arbitrary embedding this shows that $T$ is not Ramsey lower dense.
\end{proof}

We now turn to part (i) of \cref{thm:cofinitetree}; that is, if $T$ is a forest which is strongly contracting, has no finite dominating set, and $T\not\in\CT^*$, then $T$ is Ramsey-cofinite.  We begin with a lemma which allows us to embed forests which are strongly contracting into graphs with infinitely many vertices of cofinite degree. Here, it is not important that we are embedding forests and we will state and prove the lemma more generally.  Recall that a graph $F$ is strongly contracting if there exists $k\in \NN$ such that for all $\ell\in \NN$ there exists an independent set $A$ in $F$ with $|A|\geq \ell$ such that $|N(A)|\leq k$ (and in particular, a forest is strongly contracting if and only if it has finitely many components and unbounded leaf degree).  

\begin{lemma}\label{lem:embed-leafy-tree}
	Let $F$ be a graph. A cofinite copy of $F$ can be found in every graph $H$ having infinitely many vertices of cofinite degree if and only if $F$ is strongly contracting.
\end{lemma}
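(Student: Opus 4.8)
**Proof proposal for Lemma 5.2 (embedding strongly contracting graphs into graphs with infinitely many vertices of cofinite degree).**

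The plan is to prove both directions of the equivalence. For the easy direction (if $F$ is not strongly contracting, i.e. $F$ is weakly expanding, then the embedding can fail), I would exhibit a suitable host graph $H$. A natural choice is a half-graph-type construction: take $H$ on vertex set $\N$ with infinitely many vertices of cofinite degree but arranged so that large independent sets are ``forced forward''. Concretely, let $H$ consist of a half graph (each $v$ adjacent to all $u<v$, say), which has every vertex of cofinite degree after relabeling — but then $H$ actually contains everything, so this is too generous. Instead I would use $H = $ an infinite union of finite cliques $K_{n}$ of growing size, one vertex of each clique promoted to be adjacent to cofinitely many vertices. In such an $H$, an independent set of size $\ell$ must hit $\ell$ different cliques, and hence has neighborhood of size $\geq \ell - (\text{small})$, so it cannot have $N(A)$ of bounded size. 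If $F$ is weakly expanding, every cofinite copy would still be weakly expanding, and one checks that a cofinite subgraph of $H$ forces small neighborhoods on large independent sets — contradiction. I expect this direction to require a bit of care in choosing $H$ so that it genuinely has infinitely many vertices of cofinite degree yet refuses all weakly expanding $F$; the cleanest route is probably to mimic \cref{ex:forward-half-graph}/\cref{ex:KNN} style intervals.

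For the main direction, suppose $F$ is strongly contracting, so there is a fixed $k$ and arbitrarily large independent sets $A$ with $|N(A)| \le k$, and $H$ has infinitely many vertices $w_1, w_2, \dots$ of cofinite degree. First I would record structural facts about strongly contracting $F$: there is a finite set $X \subseteq V(F)$ and a constant $k$ such that $F - X$ has the property that cofinitely many vertices lie in independent sets $A$ with $N(A) \subseteq X \cup (\text{small set})$; in the forest case this is exactly ``finitely many components, unbounded leaf degree,'' so cofinitely many vertices are leaves hanging off a bounded set of ``hubs.'' The key point is: after deleting a finite set, $F$ looks like a bounded number of high-degree hub vertices together with a cofinite independent set of leaves attached to them (plus, possibly, bounded-size gadgets, which are finite and absorbed into the finite exceptional set). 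Then I would embed the finitely many non-leaf/hub vertices of $F$ first into vertices $w_i$ of cofinite degree in $H$ — possible because their number is finite and the $w_i$ have cofinite degree, so common-neighborhood arguments let us realize all required adjacencies among them. Having fixed the hubs' images $h_1, \dots, h_m$, the cofinitely many remaining vertices of $F$ are (independent sets of) leaves, each adjacent to a bounded subset of $\{h_1,\dots,h_m\}$; since each $h_j$ has cofinite degree in $H$, every vertex of $H$ outside a finite set is adjacent to all of $h_1, \dots, h_m$, hence can serve as the image of any such leaf. A greedy enumeration embedding then covers all but finitely many vertices of $H$: alternate between embedding the next unembedded vertex of $F$ and covering the next uncovered vertex of $H$, which is legal since that uncovered $H$-vertex is adjacent to all hubs and we can assign it to an unembedded leaf.

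The main obstacle, and where I would spend the most care, is handling the finitely many ``bad'' vertices — the components of $F$ that are not just stars, the hub vertices that are not leaves, and the part of $H$ consisting of the finitely many vertices of non-cofinite degree. Precisely: I must argue that the set $X \subseteq V(F)$ of vertices that are \emph{not} leaves attached to hubs (together with whatever else prevents the clean ``hubs + leaves'' picture) is finite, and then embed $F[X]$ into the cofinite-degree vertices of $H$ while respecting all adjacencies of $X$ both internally and to the leaf-independent-sets. For forests this is clean because $F[X]$ is a finite forest and the $w_i$'s pairwise common neighborhoods are cofinite, so one embeds $F[X]$ by the same recursive argument as in \cref{prop:embed0rul}; but stating the structural dichotomy for general strongly contracting graphs requires isolating exactly ``finitely many vertices whose removal leaves a disjoint union of stars centered at a bounded vertex set.'' I would likely prove a preliminary claim: a graph $F$ is strongly contracting if and only if there is a finite $Y \subseteq V(F)$ such that $F - Y$ is a disjoint union of stars whose centers lie in a finite set. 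With that claim in hand, the embedding is the greedy alternating argument sketched above, and the density/cofiniteness conclusion follows because every vertex of $H$ outside a fixed finite set eventually enters $\ran f$.
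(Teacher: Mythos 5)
Your proof of the forward direction rests on a structural claim that is false: it is not true that a strongly contracting graph becomes, after deleting a finite set, a disjoint union of stars whose centers lie in a \emph{finite} set. Consider the tree in which a root $t$ has children $c_1,c_2,\dots$ and each $c_i$ has exactly $i$ leaf-children. This tree is strongly contracting with $k=1$ (the leaves below $c_i$ form an independent set of size $i$ with neighborhood $\{c_i\}$), yet after deleting any finite set the remaining graph still contains infinitely many stars with pairwise distinct centers. Consequently your embedding scheme breaks down at its key step: you embed ``the finitely many hubs'' $h_1,\dots,h_m$ and then argue that every vertex of $H$ outside a finite set is adjacent to all of $h_1,\dots,h_m$ and can therefore absorb any leaf. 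When the hubs form an infinite set, their images must range over infinitely many cofinite-degree vertices $w_1,w_2,\dots$ of $H$, and a fixed vertex $y$ of $H$ outside the cofinite-degree part may be adjacent to only \emph{finitely} many of the $w_i$ (each $w_i$ misses only finitely many vertices, but a fixed $y$ can be missed by infinitely many $w_i$). So to cover $y$ you must use a vertex of $F$ whose entire ($\le k$-element) neighborhood has already been placed on $w_i$'s that happen to be adjacent to $y$. This matching problem is the actual content of the lemma, and it is what the paper's proof solves: it extracts disjoint independent sets $A_0,A_1,\dots$ of increasing size with $|N(A_i)|\le k$ and $N(A_i)\subseteq B_i$, builds an infinite clique $K'$ among the cofinite-degree vertices of $H$, sorts $K'$ by the threshold $\phi(x)$ beyond which $x$ is adjacent to all later vertices of $Y=V(H)\setminus V(K)$, and then pairs each segment of $Y$ with an $A_{n_i}$ large enough that $B_{n_i}$ fits into the initial block $\{x_1,\dots,x_{ik}\}$ of $K'$, all of whose members are adjacent to that segment. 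The remaining (infinite) part of $F$ is then embedded into the rest of the clique. None of this bookkeeping is replaceable by the greedy ``leaves are adjacent to all hubs'' argument.

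The converse direction is essentially right in spirit but your two candidate constructions do not work: the union of growing cliques with promoted vertices admits a cofinite copy of the weakly expanding graph consisting of disjoint cliques of sizes $2,3,4,\dots$ (independent sets there meet many cliques, so it is weakly expanding, yet it tiles your host). The correct observation, which you gesture at, is the one the paper uses: in the forward interval coloring of \cref{ex:forward-half-graph} each color class has infinitely many vertices of cofinite degree, and the argument there shows every monochromatic copy of a weakly expanding $F$ has lower density $0$, hence is not cofinite.
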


In order to simplify the proof of the lemma, we first prove a structural result regarding strongly contracting graphs.

\begin{proposition}\label{prop:sc-structure}
If $F$ is strongly contracting, then there exists non-negative integers $\ell\leq k$, an infinite independent set $U$, a family of disjoint sets $\{U_i\subseteq U: i\in \NN\}$ with $|U_i|\geq i$ for all $i\in \NN$, an $\ell$-set $V_0$, and a family of disjoint $(k-\ell)$-sets $\{V_i\subseteq V(F)\setminus (U\cup V_0): i\in \NN\}$ such that for all $i\in \NN$, $F[U_i, V_0\cup V_i]$ is a complete bipartite graph (note that the family $\{V_0\cup V_i: i\in \NN\}$ is a $k$-uniform \emph{sunflower} with a core of order $\ell$).
\end{proposition}

\begin{proof}
Since $F$ is strongly contracting, there exists a non-negative integer $k'$ and without loss of generality there exists disjoint independent sets $U_1', U_2', \dots$ such that for all $i\in \NN$, $|U_i'|\geq i2^{k'}$ and $|N(U_i')|\leq k'$.  Now by pigeonhole, there exists a subset $U_i\subseteq U_i'$ with $|U_i|\geq i$ and a non-empty subset $V_i\subseteq N(U_i)$ such that $F[U_i, V_i]$ is a complete bipartite graph.  Indeed, since $|N(U_i')|\leq k$, there are at most $2^{k'}$ different possible neighborhoods for vertices in $U_i'$.  By pigeonhole, there are at least $i$ vertices in $U_i'$ with the same neighborhood.  Now by pigeonhole again, there exists an integer $0\leq k\leq k'$ and an infinite subsequence $\{i_j\}_{j\in \NN}$ such that $|U_{i_j}|\geq i_j$ and $|V_{i_j}|=k$ and $F[U_{i_j}, V_{i_j}]$ is a complete bipartite graph.  Finally, by the infinite version of the sunflower lemma (see \cite[Page 107, Problem 1]{KT}\footnote{The proof is short.  If $k=1$, then it is clear, so let $k\geq 2$ and suppose it is true for $(k-1)$-uniform hypergraphs.  If there is an infinite matching we are done; so suppose not.  Since there is no infinite matching, there is a finite vertex cover and thus a vertex $v$ of infinite degree.  Now apply induction to the $(k-1)$-uniform link graph of $v$.}), there exists a subsequence $\{j_h\}_{h\in \NN}$ such that the family $\{V_{i_{j_h}}:h\in \NN\}$ is a sunflower.
\end{proof}

\begin{proof}[Proof of \cref{lem:embed-leafy-tree}]
Note that in Example \ref{ex:forward-half-graph}, both the red graph and the blue graph have the property that there are infinitely many vertices of cofinite degree.  So if a cofinite copy of $F$ can be found in every graph $H$ having infinitely many vertices of cofinite degree, then $F$ is not weakly expanding; i.e.\ $F$ is strongly contracting.

Now suppose $F$ is strongly contracting.  By \cref{prop:sc-structure}, there exists non-negative integers $k$ and $\ell$, an infinite independent set $U$, a family of disjoint sets $\{U_i\subseteq U: i\in \NN\}$ with $|U_i|\geq i$ for all $i\in \NN$, an $\ell$-set $V_0$, and a disjoint family of $k$-sets $\{V_i\subseteq V(F)\setminus (U\cup V_0): i\in \NN\}$ such that for all $i\in \NN$, $F[U_i, V_0\cup V_i]$ is a complete bipartite graph.

Since there are infinitely many vertices $V'$ in $H$ with cofinite degree, we may choose an infinite clique $K\subseteq H$ such that $V(K)\subseteq V'$. If $V(H)\setminus V(K)$ is finite, then we are done (since it is clear that we can surjectively embed $F$ into the clique $K$), so suppose not.
Let $X\subseteq V(K)$ such that $X$ and $V(K)\setminus X$ are both infinite.  Let $y_1, y_2, \dots$ be an enumeration of $Y:=\NN \setminus V(K)$.  For all $i\in \NN$, let $X_i=\{x\in X: \forall j\geq i, \{x, y_j\}\in E(H)\}$.  Note that if there exists $i\in \NN$ such that $X_i$ is infinite, then we have a copy of $K_{\N, \N}$ which covers all but finitely many vertices in $Y$ and from here we can easily get the desired embedding.  So suppose that $X_i$ is finite for all $i\in \NN$.  For all $x\in X$, there exists $i\in \N$ such that $x\in X_i$, so let $\phi(x)=\min\{i\in \N: x\in X_i\}$.  Let $x_1, x_2, \dots$ be an enumeration of $X$ such that for all $i\leq j$, $\phi(x_i)\leq \phi(x_j)$.  Let $X_0'=\{x_1, \dots, x_\ell\}$ and for all $i\geq 1$, let $X_i'=X_0'\cup \{x_{\ell+(i-1)k+1}, \dots, x_{\ell+ik}\}$ and let $\phi(i)=\phi(x_{\ell+ik})$.  Finally, for all $i\geq 1$, let $Y_i=\{y_{\phi(i)}, \dots, y_{\phi(i+1)-1}\}$ (note that $Y_i=\emptyset$ if and only if $\phi(i)=\phi(i+1)$).  Let $I=\{i\geq 1: Y_i\neq \emptyset\}$ and note that since every $X_i$ is finite, we have that $I$ is infinite.  Also note that for all $i\in I$, $[X_i', Y_i]$ is a complete bipartite graph.

From the properties of $F$ mentioned above, there is an injection $f:I\to \NN$ such that $\NN\setminus f(I)$ is infinite and for all $i\in I$, $|U_{f(i)}|\geq |Y_i|$ and thus there exists $U'_{f(i)}\subseteq U_{f(i)}$ such that $|U'_{f(i)}|=|Y_i|$.  Now for all $i\in I$, we embed $U'_{f(i)}$ to $Y_i$ and embed $V_{f(i)}$ to $X'_{i}$.  Since $\NN\setminus f(I)$ is infinite, there are infinitely many remaining vertices in $F$ and these can be embedded to $K-X$ (since $K$ is a clique and $U$ is an independent set).
\end{proof}

Note that \cref{lem:embed-leafy-tree} allows us to focus on colorings in which all but finitely many vertices have infinite degree in both colors.  Given such a coloring, we will need to separate into a few cases depending on the structure of $T$.

\begin{proposition}\label{fact:types}
	Let $T$ be an infinite tree with unbounded leaf degree, no finite dominating set, and $T\not\in \CT^{*}$.  Then either $T$ has unbounded radius, or there exists a vertex $t \in V(T)$ such that $t$ is adjacent to infinitely many non-leaves and
	\begin{enumerate}
		\item there are infinitely many paths of length $3$ starting at $t$ which are pairwise vertex-disjoint apart from $t$, or
		\item there is a vertex in $T - \{t\}$ of infinite degree, or
		\item the neighbors of $t$ have unbounded degrees.
	\end{enumerate}
\end{proposition}

\begin{proof}
	Suppose that $T$ has unbounded leaf degree, no finite dominating set, bounded radius, and $T \not\in \CT^{*}$.  We first show that there is a vertex $t \in V(T)$ which is adjacent to infinitely many non-leaves.
	Let $s_0 \in V(T)$ and think of it as the root. If $s_0$ is adjacent to infinitely many non-leaves, we are done; so assume it is adjacent to finitely many non-leaves $S_1$. Since $T$ has no finite dominating set, $V(T) \setminus N(s_0)$ is infinite. In particular, some $s_1 \in S_1$ has an infinite subtree. If $s_1$ is adjacent to infinitely many non-leaves, we are done; so assume it is adjacent to finitely many non-leaves $S_2$ other than $s_1$. Since there is no finite dominating set, some $s_2 \in S_2$ has an infinite subtree. We keep iterating until we find a vertex $s_i$ adjacent to infinitely many non-leaves. Since $T$ has bounded radius, this process must finish eventually.

	Let $t \in V(T)$ which is adjacent to infinitely many non-leaves and let $S$ be the non-leaves adjacent to $t$.  Assume that there are only finitely many paths of length $3$ starting at $t$ which are pairwise vertex-disjoint apart from $t$ and that there is no $t' \in V(T) \setminus \{t\}$ of infinite degree (otherwise we are done).
	Let $S' \subset S$ be those vertices whose only children are leaves and let $S'' = S \setminus S'$.  By the assumption, we have that $S''$ is finite.  
	We claim that for each $i \in \N$, there is a vertex $t' \in S'$ of degree at least $i$. Assume for contradiction this is not the case and let $d := \max_{s \in S'} \deg(s)$.	
	Note that, since $T$ has bounded radius and no vertex of infinite degree other than $t$, there are only finitely many vertices which are successors of vertices in $S''$.  
	Thus, since $T$ has unbounded leaf degree, $t$ must be adjacent to infinitely many leaves. Therefore $T \in \CT^{*}$, a contradiction.
\end{proof}

The main difficulty in the proof of \cref{thm:cofinitetree} will be dealing with the trees of bounded radius described in \cref{fact:types}.  The following lemma deals with that case.

\begin{lemma}\label{lem:embed-all-other}
	Let $H$ be a $2$-colored $K_\NN$ in which every vertex has infinite degree in both colors.
	Let $T$ be a tree of bounded radius containing a vertex $t$ such that $t$ is adjacent to infinitely many non-leaves and
	\begin{enumerate}
		\item there are infinitely many paths of length $3$ starting at $t$ which are pairwise vertex-disjoint apart from $t$, or
		\item there is a vertex in $T - \{t\}$ of infinite degree, or
		\item the neighbors of $t$ have unbounded degrees.
	\end{enumerate}
	Then there is a cofinite monochromatic copy of $T$ in $H$.
\end{lemma}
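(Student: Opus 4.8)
The plan is to fix a colour, produce a monochromatic ball of radius two that already exhausts $\NN$, embed $T$ with $t$ placed at the centre of that ball by a back-and-forth argument in the spirit of the proof of \cref{lem:embed-short-tree}, and use exactly one of the hypotheses (1)--(3) to absorb the vertices lying at monochromatic distance two (rather than one) from the centre.

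\textbf{Structural step.} I would first note that in every $2$-colouring of $K_\NN$ and for every vertex $v$, either the red ball of radius two around $v$ or the blue ball of radius two around $v$ is all of $\NN$: if $B'=\{u\in N_B(v):N_R(u)\cap N_R(v)=\emptyset\}$ and $R'=\{u\in N_R(v):N_B(u)\cap N_B(v)=\emptyset\}$ were both nonempty, then picking $u\in B'$ and $w\in R'$ the edge $\{u,w\}$ would be blue (as $u$ is blue-adjacent to all of $N_R(v)\ni w$) and red (as $w$ is red-adjacent to all of $N_B(v)\ni u$), a contradiction; and $B'=\emptyset$ forces the red radius-two ball of $v$ to be $\NN$, while $R'=\emptyset$ does the same in blue. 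So fix $v_0$ and, say, the red ball, write $P=N_R(v_0)$, $Q=N_B(v_0)$, so that $\NN=\{v_0\}\cup P\cup Q$ and every $u\in Q$ has a red neighbour in $P$. To control $Q$ I would then run a transfinite peeling on the red bipartite graph between $P$ and $Q$, exactly as in the proofs of \cref{lem:find-deep-tree,lem:find-short-tree}: a vertex of $Q$ with only finitely many red neighbours in $P$ automatically has infinitely many red neighbours inside $Q$ (its red degree is infinite and all but finitely many of its red neighbours lie in $Q$), so iterating yields, up to a finite set, a split of $Q$ into a part $Q_\infty$ whose vertices have infinitely many red neighbours in $P$ and a part on which the red graph has infinite minimum degree, together with information on which $x\in P$ are the red neighbour of many vertices of $Q$.

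\textbf{Embedding step.} Now embed $T$ into $H_R$ (which has infinite minimum degree by hypothesis on $H$) by interleaving two kinds of moves, as in the proof of \cref{lem:embed-short-tree}: set $f(t)=v_0$, fix an enumeration $t=x_0,x_1,\dots$ of $V(T)$ whose initial segments induce connected subtrees and which visits infinitely many non-leaf children of $t$, and alternately (a) put the least not-yet-covered vertex of $P\cup Q$ into $\ran f$, and (b) extend $f$ by the next $x_i$. To cover a vertex of $P$, drop a fresh (unembedded) child of $t$ on it and embed the finite subtree hanging below that child onto fresh vertices --- always possible, since all red degrees are infinite and only finitely many vertices have been used so far; step (b) and the covering of vertices of $Q_\infty$ are handled the same way, since a vertex of $Q_\infty$ has infinitely many red neighbours in $P$, so a fresh child of $t$ can be planted on one of them before the $Q_\infty$-vertex is hung below it.

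\textbf{Where (1)--(3) enter, and the main obstacle.} The genuine difficulty --- and the reason $T\notin\CT^{*}$ is assumed --- is a vertex $x\in P$ that is the \emph{only} red neighbour in $P$ of infinitely many vertices of $Q$: all of those vertices must be embedded as descendants, at distance $\ge 2$ from $t$, of the single child $s$ of $t$ with $f(s)=x$, and a bounded-radius tree in which every non-central vertex has bounded degree (i.e.\ one lying in $\CT^{*}$) has no room for them. Hypothesis (3) rescues this: $s$ may be taken to be a child of $t$ of arbitrarily large degree, so that $x$ together with the subtree below $s$, embedded greedily inside the (red, infinite-minimum-degree) graph on the private red neighbourhood of $x$, absorbs arbitrarily large finite blocks, and since there are infinitely many children of $t$ of unbounded degree, the finitely many ``bad'' $x$ can each be served in turn. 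Hypothesis (2) does the same with the single infinite-degree vertex $t'\neq t$ in the absorbing role at depth equal to the $T$-distance from $t$ to $t'$; and hypothesis (1) supplies infinitely many internally disjoint length-$3$ arms $t\text{--}a\text{--}b\text{--}c$, one devoted to each bad $x$ (realise $f(a)=x$ and use the arm, followed by the red structure on $x$'s private neighbourhood, to thread in the remaining vertices). In each of the three cases the real work is checking that these absorption moves can be interleaved with moves (a)--(b) so that every vertex of $\NN$ outside a fixed finite set and every vertex of $T$ is eventually handled, and that no resource of $T$ --- its children, its long arms, or its infinite-degree vertex --- is ever exhausted; this bookkeeping, carried out separately for each of (1), (2), (3), is where essentially all of the argument lies, and it is the only place those hypotheses are used.
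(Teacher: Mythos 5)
Your overall plan (fix a centre $v_0$, find a colour whose radius-two ball at $v_0$ is cofinite, embed $T$ with $f(t)=v_0$, and use (1)--(3) to absorb the vertices at distance two) is the right shape, and your radius-two dichotomy is correct. But the case split you base the colour choice on is weaker than the one the paper uses, and this creates a genuine gap in the absorption step, most visibly under hypothesis (3). If (3) holds but (1) and (2) fail, then every vertex of $T-\{t\}$ has finite degree, so by bounded radius every subtree hanging off a child of $t$ is \emph{finite}, and (by the reasoning of \cref{fact:types}) cofinitely many vertices of $T$ lie within distance $2$ of $t$. Now suppose some $x\in P$ is the unique red neighbour in $P$ of an infinite set $U_x\subseteq Q$. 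Every red path of length $2$ from $v_0$ to a vertex of $U_x$ passes through $x$, length-$3$ detours through $Q$ are unavailable in $T$ (that is exactly $\neg$(1)), and the single child $s$ of $t$ with $f(s)=x$ can only accommodate finitely many descendants. So the red embedding centred at $v_0$ simply cannot cover $U_x$, and your proposal has no mechanism to switch colours at this point. Your assertion that there are only ``finitely many bad $x$'' is also unjustified: infinitely many $x\in P$ can each be the unique $P$-neighbour of an infinite private subset of $Q$.

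The paper avoids this by choosing the colour differently: setting $B'$ (resp.\ $R'$) to be the blue (resp.\ red) neighbours of $v$ with only \emph{finitely many} red (resp.\ blue) neighbours on the other side, it either finds one of these finite (Case~1, where the embedding goes into the \emph{other} colour and omits that finite set), or has both infinite (Case~2), in which case $B'$ supplies infinitely many vertices each adjacent in blue to \emph{cofinitely many} vertices of $R$ --- and in Case~2.3 it is precisely these vertices, not arbitrary elements of your $P$, onto which the unboundedly-high-degree children of $t$ are planted, so that each absorbs a whole cofinal block of $R$. Your transfinite peeling of the $P$--$Q$ bipartite graph does not produce vertices with this cofinite-adjacency property, and without it the hypothesis-(3) absorption fails. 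A similar issue affects your hypothesis-(2) sketch: the absorbing vertex $f(t')$ must be reached by a monochromatic path of length exactly $d=\mathrm{dist}_T(t,t')$ whose endpoint has cofinite degree (in the right colour) into the problematic set; proving that such ``$d$-good'' paths exist for every $d$ (with a genuine parity obstruction at $d=2$, where only one of the two colours is guaranteed) is a substantive step of the paper's proof, not bookkeeping. In short, the content you defer to ``bookkeeping, carried out separately for each of (1), (2), (3)'' is where the proof actually lives, and as set up your colour choice does not provide the structural inputs that step needs.
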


Assuming \cref{lem:embed-all-other} (the proof of which we delay for the moment) we now prove \cref{thm:cofinitetree}.

\begin{proof}[Proof of \cref{thm:cofinitetree}]
	Part (ii) follows from \cref{ex:backward-half-graph,ex:forward-half-graph,lem:T-star}.

	So let $T$ be a forest which is strongly contracting, has no finite dominating set, and $T\not \in \CT^{*}$. 

	Let $H$ be a $2$-colored $K_\N$, and assume the colors are red and blue. If there are infinitely many vertices of cofinite red degree or infinitely many vertices of cofinite blue degree, then we are done by Lemma \ref{lem:embed-leafy-tree}; so (by removing finitely many vertices) we may assume every vertex in $H$ has infinite red degree and infinite blue degree.

	First, suppose that $T$ has a component of unbounded radius or infinitely many components. If the blue subgraph $H_B$ is infinitely connected, we can find a monochromatic spanning copy of $T$ in $H_B$ by \cref{lem:embed-deep-tree}. If $H_B$ is not infinitely connected, there exists a finite set $X$ such that $H_B-X$ is not connected. So there exists a partition $\{Y, Z\}$ of $\NN-X$ such that all edges between $Y$ and $Z$ are red. Note that for all $v\in Y$, $N_B(v)\subseteq Y\cup X$ and for all $v\in Z$, $N_B(v)\subseteq Z\cup X$.  Since all vertices have infinite blue degree and $X$ is finite, this implies that both $Y$ and $Z$ are infinite.  So $Y\cup Z$ is cofinite and $H_R[Y,Z]$ induces a red copy of $K_{\NN, \NN}$. Since $T$ has no finite dominating set, both parts of its bipartition are infinite, and we can surjectively embed $T$ into $H_R[Y,Z]$.

	Finally, suppose that $T$ has finitely many components $T_1, \dots, T_k$ all of which have bounded radius.  For all $i\in [k]$, let $t_i\in V(T_i)$ and for all $2\leq i\leq k$, add the edge $t_1t_k$ to get a tree $T'$ which has bounded radius, is strongly contracting, has no finite dominating set, $T'\not\in \CT^{*}$, and $T'$ contains $T$ as a spanning subgraph.  Therefore, by \cref{fact:types}, $T'$ satisfies the hypotheses of \cref{lem:embed-all-other} and thus we can find a monochromatic copy of $T'$ (and consequently $T$) which spans cofinitely many vertices of $H$.
\end{proof}

It remains to prove \cref{lem:embed-all-other}.

\begin{proof}[Proof of \cref{lem:embed-all-other}]
	Let $T$ and $t$ be as in the statement and let $H$ be a $2$-colored $K_{\NN}$ which is as in the statement. Note that $\deg(t) = \infty$ and there are infinitely many paths of length $2$ starting at $t$ which are vertex disjoint apart from $t$ ($\star$).
	Let $v \in \N$ be an arbitrary vertex.
	Let $B$ be the set of blue neighbors of $v$ and let $R$ be the set of red neighbors of $v$. Furthermore, let $B'$ be the set of vertices in $B$ with finitely many red neighbors in $R$ and let $R'$ be the set of vertices in $R$ with finitely many blue neighbors in $B$. Let $R'' = R \setminus R'$ and $B'' = B \setminus B'$.

	\noindent\textbf{Case 1} ($B'$ or $R'$ is finite.)

		Suppose without loss of generality that $R'$ is finite. We will find an embedding $f$ of $T$ into the blue subgraph $H_B$ covering the cofinite set $\{v\}\cup B \cup R''$. We build $f$ iteratively in finite pieces maintaining a partial embedding of $T$ whose domain is connected. Note that, by keeping $\dom f$ connected, we ensure that every not yet embedded vertex is adjacent to at most one vertex in $\dom f$.
		Initially, we set $f(t) = v$. Then we repeatedly follow the following two steps.
		\par\smallskip
		\noindent\underline{Step 1.} Let $s \in V(T) \setminus \dom f$ be the smallest not yet embedded vertex. Since every vertex has infinite blue degree, it is easy to extend $f$ so that $\dom f$ remains connected and $s \in \dom f$ (by adding a path to $s$).
		\par\smallskip
		\noindent\underline{Step 2.} Let $u \in (B \cup R'') \setminus \ran f$ be the smallest not yet covered vertex. If $u \in B$ we can simply choose a not-yet embedded neighbor of $t$ and embed $u$ into it (since $t$ has infinite degree). If $u \in R''$, it has infinitely many blue neighbors in $B$ (by definition of $R'$) and therefore there is a blue path $vxu$ of length $2$ for some $x \in B\setminus \ran f$. By ($\star$) we can extend $f$ to cover $x$ and $u$ so that $\dom f$ remains connected.
		\par\smallskip
		Routinely, this defines an embedding of $T$ into $H_B$ covering $B \cup R'$.

	\noindent\textbf{Case 2} ($B'$ and $R'$ are infinite.)

		We further split into subcases depending on the structure of $T$.
		\par\smallskip
		\textbf{Case 2.1} ($T$ has infinitely many paths of length $3$ starting at $t$ which are pairwise vertex-disjoint apart from $t$):
		We will find an embedding $f$ of $T$ into the blue subgraph $H_B$ covering $\NN$.
		We build $f$ iteratively in finite pieces maintaining a partial embedding of $T$ whose domain is connected.
		Initially, we set $f(t) = v$. Then we repeat the following two steps.
		\par\smallskip
		\noindent\underline{Step 1.} We extend $f$ to include the smallest not yet embedded vertex as above.
		\par\smallskip
		\noindent\underline{Step 2.} Let $u \in \NN \setminus \ran f$ be the smallest not yet covered vertex. If $u \in B \cup R''$ we proceed as above; so suppose $u \in R'$.
		Note that $u$ (as every other vertex) has infinitely many blue neighbors, and by definition of $R'$ only finitely many of those can lie outside $R$. Furthermore, every vertex $ u' \in B'$ has only finitely many red neighbors in $R$ and thus $u$ and $u'$ have infinitely many common blue neighbors in $R$. It follows that there is a blue path $P$ of length $3$ from $v$ to $u$ so that $(P \setminus \{v\}) \cap \ran f = \emptyset$. By the case assumption we can extend $f$ to cover this path.
		\par\smallskip
		Routinely, the resulting function $f$ is an embedding of $T$ into $\NN$. Observe that the only difference to the previous case is how we deal with vertices in $R'$. This will be similar in the following cases and we therefore skip some details.
		\par\smallskip
		\textbf{Case 2.2} ($T$ has a vertex $t' \in V(T) \setminus \{t\}$ of infinite degree):
		Given some integer $d \geq 1$, we say that a path $P$ is $d$-good in blue (red) if it has length $d$, starts at $v$ and ends at some $v' \in \N \setminus \{v\}$, is monochromatic in blue (red) and $v'$ has only finitely many red (blue) neighbors in $R'$ ($B'$).
		We will first show that (i) for every positive integer $d \not = 2$, there is a red $d$-good path and a blue $d$-good path, and (ii) there is a red 2-good path or a blue 2-good path.

		If $d = 1$, any vertex in $B'$ forms a blue $d$-good path with $v$. Since any two vertices in $B'$ have infinitely many common blue neighbors in $R$, we can extend this path to a $3$-good path. We can proceed like this to get a $d$-good path in blue path for any odd $d$. If $d \geq 4$ is even, we start by building a blue path of length $2$ to some $u \in R'$ and take some $v' \in B'$ not yet in the path. Since $u$ has infinitely many blue neighbors in $R$ and every vertex in $B'$ has only finitely many red neighbors in $R$, $u$ and $v'$ have a common blue neighbor not yet in the path, giving us a $d$-good path in blue. We can extend this path now as before to any even length $d \geq 4$. We can proceed similarly for red paths.
		Finally suppose $d=2$.  If there are $u_1 \in R$ and $u_2 \in R'$ such that $u_1u_2$ is red, then $vu_1u_2$ is $2$-good in red and we are done. Otherwise every $u \in R$ has only blue neighbors in $R'$. We can thus form a blue path from $v$ to $R$ of length $2$, which is $2$-good in blue.

		Let now $d$ be the distance from $t$ to $t'$ in $T$ and let $P$ be a $d$-good path (say in blue). Embed $t$ into $v$ and the unique path from $t$ to $t'$ into $P$ (and call this partial embedding $f$). Let $v' = f(t')$ and remove the finite set of vertices in $R'$ which is not in the blue neighborhood of $v'$. We then extend $f$ in finite pieces exactly as in the previous case apart from when $u \in R'$, where we simply embed an available neighbor of $t'$ into $u$.
		\par\smallskip
		\textbf{Case 2.3} (for all $i \in \N$, there is a vertex $t' \in N_T(t)$ of degree at least $i$):
		We may assume we are not in Case 2.2 and thus there is an infinite set $S \subset N_T(t)$ of vertices with distinct degrees.  Furthermore, we may assume we are not in Case 2.1 and thus cofinitely many vertices of $T$ have distance at most 2 to $t$.  Let $T_0$ be the finite subtree rooted at $t$ which consists of all paths from $t$ to leaves of distance at least $3$.

		Let $u_1, u_2, \dots$ be an enumeration of $N_T(t)$.  Let $y_1, y_2, \dots$ be an enumeration of $R$.  Let $B_1\subseteq B'$ such that $B_1$ is infinite and $B'\setminus B_1$ is infinite, then set $B_2=B\setminus B_1$.

		For all $x\in B_1$ there exists $\phi(x) \in \NN$ such that $y_{\phi(x)-1}\not\in N_B(x)$ and $y_j\in N_B(x)$ for all $j\geq \phi(x)$ (if $ Y \subset N_B(x)$, then $\phi(x) = 1$).
For all $i\geq 1$, let $X^i=\{x\in B_1:\phi(x)=i\}$.  If $|X^i|=\infty$ for some $i\in \NN$, then reset $B_1:=X^i$ and $B_2:=B\setminus B_1$, enumerate $B_1$ as $x_1, x_2, \dots$.  Otherwise $X^i$ is finite for all $i$ and thus there is a natural enumeration $x_1, x_2, \dots$ of $B_1$ such that $\phi(x_i) \leq \phi(x_j)$ whenever $i \leq j$.

		Initially we set $f(t)=v$ and we embed $T_0$ in $H_B$ using the fact that every vertex in $H$ has infinite blue degree.  Now every vertex in $V(T)\setminus \dom f$ has distance at most 2 to $r$.  Now we repeat the following two steps.

		\par\smallskip
		\noindent\underline{Step 1.}
		Let $x_i$ and $x_j$ (with $i<j$) be the two smallest vertices in $B_1\setminus \ran f$ and let $u_{n_i}\in N_T(t)\setminus \dom f$ such that $u_{n_i}$ has at least $\phi(x_{j})-\phi(x_i)$ children in $T$ (which is possible by the case). Set $f(u_{n_i})=x_i$ and embed the (finitely many) vertices in $N_T(u_{n_i})\setminus \{t\}$ to the smallest vertices in $\{y_{\phi(x_i)}, y_{\phi(x_i)+1}, \dots\}\setminus \ran f$.

		\par\smallskip
		\noindent\underline{Step 2.}
		Injectively embed all vertices from $\{u_1, u_2, \dots, u_{n_i+1}\}\setminus \dom f$ (which is non-empty since $u_{n_i+1}\not\in \dom f$) to the smallest vertices in $B_2\setminus \ran f$.  Now, using the fact that every vertex in $H$ has infinite blue degree, iteratively embed the children of each vertex $u_{n_\ell}\in \{u_{n_{i-1}+1}, \dots, u_{n_i-1}\}$ anywhere in $N_B(f(u_{n_\ell}))\setminus (\{x_{j}\}\cup \ran f)$.  Now move to Step 1 (and notice that $x_j$ will become the smallest vertex in $B_1\setminus \ran f$).

		\par\smallskip
		The resulting function $f$ is an embedding of $T$ into $H$ covering a cofinite set.
\end{proof}

\subsection{General graphs}\label{sec:cofingen}
In the previous section, we completely characterize forests which are Ramsey-cofinite.  We know that if a graph $F$ is Ramsey-cofinite, then $F$ is bipartite, strongly contracting, and has no finite dominating set (by \cref{ex:forward-half-graph,ex:backward-half-graph,ex:KNN}).  On the other hand, from the proof in the previous section we know that if $G$ is bipartite, strongly contracting, and has no finite dominating set and we are given a 2-coloring of $K_{\NN}$ such that one of the colors is not infinitely connected, then there is a cofinite monochromatic copy of $G$.  So this raises the question of completely characterizing all graphs which are Ramsey-cofinite.  However, given the information from the previous sections, we can narrow this down to a much more specific question.

\begin{problem}\label{q:specific}
Characterize the graphs $G$ which are bipartite, strongly contracting, and have no finite dominating set such that there exists a cofinite monochromatic copy in every 2-coloring of $K_{\NN}$ in which both colors are infinitely connected.
\end{problem}

The following is an easy to state sufficient condition (we are aware of a more general sufficient condition which contains \cref{thm:cofinitetree}(i), but as we don't believe the more general condition is necessary, we go for simplicity instead).

\begin{theorem}
If $G$ is bipartite, strongly contracting, and has arbitrarily long paths whose internal vertices have degree $2$, then $G$ is Ramsey-cofinite.
\end{theorem}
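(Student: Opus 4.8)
The plan is to reduce, for any given $2$-colouring of $K_{\NN}$, to one of three exhaustive cases and to use a different one of the three hypotheses in each. First record some facts about $G$. We may assume $G$ has no isolated vertices (infinitely many make $G$ trivially Ramsey-cofinite, and finitely many may be stripped off and re-attached at the end to finitely many reserved vertices). Because $G$ has arbitrarily long paths whose internal vertices have degree $2$ (call these \emph{bare paths}), $G$ has no finite dominating set: on a bare path $v_0v_1\cdots v_L$ an internal vertex $v_i$ is dominated by a set $D$ only if one of $v_{i-1},v_i,v_{i+1}$ lies in $D$, while a vertex of $D$ off the path dominates no internal vertex of it, so $|D|\ge (L-1)/3$, impossible for all $L$. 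Hence $G$, being bipartite with no isolated vertices and no finite dominating set, has both sides of its bipartition infinite; moreover $G$ has infinitely many pairwise disjoint finite bare paths of unbounded length (extract them greedily), and if $G$ is a forest then it has a component of unbounded radius or infinitely many components (finitely many components of bounded radius would bound every path length). Now fix the colouring. \textbf{Case 1: some colour $c$ has infinitely many vertices of cofinite degree.} Then $H_c$ has infinitely many vertices of cofinite degree, so since $G$ is strongly contracting, \cref{lem:embed-leafy-tree} provides a cofinite copy of $G$ in $H_c$. \textbf{Otherwise} only finitely many vertices have finite degree in either colour (a vertex of finite degree in one colour has cofinite degree in the other); deleting them, every vertex of $K_{\NN}$ now has infinite degree in both colours. \textbf{Case 2: some colour $c$ is not infinitely connected.} Say $H_c-X$ is disconnected for a finite set $X$, and write $\NN\setminus X=Y\cup Z$ as a partition with no $c$-edge between $Y$ and $Z$, so that all $Y$--$Z$ edges have the other colour $c'$. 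Every vertex of $Y$ has all its (infinitely many) $c$-neighbours inside $Y\cup X$, so $Y$ is infinite, and likewise $Z$; hence $H_{c'}$ contains a complete bipartite graph between the infinite sets $Y$ and $Z$ covering the cofinite set $Y\cup Z$, and since $G$ is bipartite with both sides infinite we may embed $G$ onto $Y\cup Z$ by sending its two sides bijectively to $Y$ and $Z$, whereupon every edge of $G$ has colour $c'$.

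\textbf{Case 3: both colours are infinitely connected.} If $G$ is a forest, then by the above \cref{lem:embed-deep-tree} applied to the infinitely connected blue subgraph gives a spanning -- in particular cofinite -- monochromatic copy of $G$. In general, use the infinite Ramsey theorem to fix an infinite monochromatic clique $C$, of some colour $c$; note $H_c$ remains infinitely connected. If $\NN\setminus C$ is finite we are done, since $C$ induces a complete $c$-coloured graph into which $G$ embeds spanning $C$; so assume $\NN\setminus C$ is infinite. Fix pairwise disjoint finite bare paths $Q_1,Q_2,\dots$ of $G$ of unbounded length and let $G^{\circ}$ be $G$ with the interiors of all the $Q_j$ removed, so that $G^{\circ}$ still contains their endpoints. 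We construct an embedding $\varphi$ of $G$ into $H_c$ by a back-and-forth argument. The ``bulk'' $G^{\circ}$ is embedded into $C$, which automatically respects all adjacencies within $G^{\circ}$ because $C$ spans a complete $c$-coloured graph. The bare material $\bigcup_j Q_j$ is then deployed in pieces of lengths we are free to choose: to cover a still-uncovered vertex $h\notin C$ we route through $H_c$ a $c$-coloured path from $\varphi(e)$ through $h$ to the relevant second endpoint (both endpoints already lying in $C$), avoiding the finite set embedded so far, and embed the interior of a fresh bare piece of matching length along it. This is legal because $H_c$ is infinitely connected (so such a path exists, the used set being finite) and because the interior vertices of a bare piece have degree $2$ in $G$ and so impose no adjacency demands beyond lying consecutively on the path. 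Interleaving the moves so that every vertex of $G$ is eventually embedded and every vertex of $\NN$ eventually covered produces a monochromatic copy of $G$ on a cofinite subset of $\NN$.

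The main obstacle is the scheduling in Case 3: the back-and-forth must be organised so that \emph{simultaneously} all of $V(G)$ gets embedded -- equivalently, all the bare material is consumed, after being chopped into pieces of lengths we choose -- and all but finitely many vertices of $\NN$ get covered, while at every stage a genuine partial embedding is maintained. The bareness of the reserved paths is exactly what keeps each routing move legal, and the monochromatic clique (via infinite Ramsey) together with infinite connectivity is what makes both the bulk embedding and the routing possible; the remaining work is to verify that these moves can always be scheduled so that nothing is left over on either side. Cases 1 and 2 are by contrast routine, being immediate from \cref{lem:embed-leafy-tree} and the $K_{\NN,\NN}$-argument already used in the proof of \cref{thm:cofinitetree}.
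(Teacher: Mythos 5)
Your proposal is correct and follows essentially the same route as the paper: the first two cases reproduce the reduction already used in the proof of \cref{thm:cofinitetree} (strongly contracting handles a colour with infinitely many cofinite-degree vertices via \cref{lem:embed-leafy-tree}; bipartiteness plus the absence of a finite dominating set, which you rightly note follows from the arbitrarily long bare paths, handles a non-infinitely-connected colour via the cofinite $K_{\NN,\NN}$), and your Case 3 is precisely the paper's \cref{lem:embed-induced-paths}, i.e.\ an infinite monochromatic clique from Ramsey's theorem combined with infinite connectivity and bare-path routing. The only cosmetic difference is that you fix and chop the bare paths in advance rather than grabbing a fresh bare path of the exact required length at each step, which makes your scheduling slightly more delicate but still sound.
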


This follows because if we are given a 2-coloring of $K_{\NN}$ in which both colors are infinitely connected, then at least one of those colors contains an infinite clique.  So we can use the following lemma.

\begin{lemma}\label{lem:embed-induced-paths}
Let $F$ be a connected graph.  A spanning copy of $F$ can be found in every infinitely connected graph $H$ with an infinite clique if $F$ has arbitrarily long paths whose internal vertices have degree $2$.
\end{lemma}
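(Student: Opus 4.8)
The plan is to build a spanning embedding $f\colon V(F)\to V(H)$ in countably many finite stages, alternating between two tasks: (a) making sure every vertex of $F$ eventually enters $\dom f$, and (b) making sure every vertex of $H$ eventually enters $\ran f$. Fix enumerations $V(F)=\{s_0,s_1,\dots\}$ with every initial segment inducing a connected subgraph of $F$, and $V(H)=\{v_0,v_1,\dots\}$; also fix an infinite clique $K\subseteq H$. For task (a), extending a connected partial embedding to include the next vertex $s_n$ is routine: $s_n$ has a unique neighbour already embedded (once $\dom f$ is connected), and since $H$ is infinitely connected every vertex has infinite degree, so we may pick any unused neighbour of $f(s_n{\text{'s parent}})$. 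The real work is task (b): to absorb a target vertex $v_m\in V(H)\setminus\ran f$ we need a finite path in $H$ from some already-embedded vertex to $v_m$, internally disjoint from $\ran f$, whose length matches the length of an available ``pendant'' induced-degree-$2$ path hanging off the current image in $F$.

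First I would record the structural input from $F$: by hypothesis $F$ contains, for every $\ell$, a path of length $\ge \ell$ all of whose internal vertices have degree $2$ in $F$. I would argue (using connectedness of $F$ and the fact that a single such long path can supply arbitrarily long sub-paths attached at a fixed vertex) that we may maintain the invariant that at each stage the unembedded part of $F$ still contains, attached to the current ``active'' frontier vertex $t_{\mathrm{last}}$, induced-degree-$2$ paths of every length — this is exactly analogous to the Type-1/Type-2 dichotomy and the role of \cref{fact:unbound-radius} in the proof of \cref{lem:embed-deep-tree}. (If $F$ itself is one long induced path this is immediate; in general one uses that $F$ is connected and has such paths of unbounded length, so after embedding a bounded piece we can always still reach and route through a fresh long induced-degree-$2$ path.) Simultaneously, on the $H$ side, since $H$ is infinitely connected it is in particular $k$-connected for every $k$, so between $f(t_{\mathrm{last}})$ and the target $v_m$ there are infinitely many internally disjoint finite paths; deleting the finite set $\ran f$ still leaves infinitely many, hence at least one of \emph{arbitrary} length (one can always lengthen a path by a detour through two fresh clique vertices of $K\setminus\ran f$, which is why we need the infinite clique — it guarantees we can realise any prescribed length, not just some length). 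So we choose a path $P$ in $H$ from $f(t_{\mathrm{last}})$ to $v_m$, internally disjoint from $\ran f$, whose length equals that of an available induced-degree-$2$ pendant path in $F$ starting at $t_{\mathrm{last}}$, and extend $f$ along it; crucially the internal vertices of that $F$-path have degree $2$, so embedding them onto the internal vertices of $P$ imposes no further adjacency constraints. Then update $t_{\mathrm{last}}:=f^{-1}(v_m)$.

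The formal write-up would be an algorithm in the style of \cref{alg:longtree}: a \texttt{While True} loop whose odd stages perform task (a) (absorb the smallest-index unembedded vertex of $F$) and whose even stages perform task (b) (absorb the smallest-index uncovered vertex of $H$ via a length-matched internally-disjoint path, ending at a degree-$2$ chain). One checks: (i) each step is executable — task (a) by infinite degree in $H$, task (b) by infinite connectivity plus the infinite clique for length adjustment, and the matching long induced-degree-$2$ path in $F$ by the maintained structural invariant; (ii) $\dom f$ stays connected, so "unique already-embedded neighbour" is legitimate; (iii) every vertex of $F$ and of $H$ is eventually processed, so $f$ is a bijection onto $V(H)$ and an embedding of $F$. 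I expect the main obstacle to be (i) for task (b): precisely formulating and maintaining the invariant that a suitable long induced-degree-$2$ path is always still available in the unembedded part of $F$ attached at the frontier — i.e., that embedding earlier finite pieces never destroys this resource. Handling this cleanly will likely require, as in \cref{fact:unbound-radius}, isolating a canonical "spine" of $F$ consisting of one infinite sequence of longer and longer induced-degree-$2$ paths emanating from a fixed root, embedding the rest of $F$ greedily off of it, and routing all of the $H$-absorption through those spine paths.
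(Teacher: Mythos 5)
There is a genuine gap, and it is in the step you describe as ``routine.'' The lemma is stated for an arbitrary connected graph $F$, not a forest, yet your task (a) assumes that the next vertex $s_n$ of $F$ ``has a unique neighbour already embedded (once $\dom f$ is connected)'' and can be placed on any unused neighbour of its parent's image. For a general connected $F$, a new vertex may have many already-embedded neighbours, and you would need a common unused neighbour of all of their images. Infinite connectivity of $H$ gives every vertex infinite degree, but it does \emph{not} give two arbitrary vertices a common neighbour (the infinite blow-up of a one-way infinite path, used elsewhere in the paper as an example of an infinitely connected graph, has pairs of vertices with no common neighbour at all). So the frontier/connected-domain strategy borrowed from \cref{lem:embed-deep-tree} breaks down exactly where $F$ stops being a tree. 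The same difficulty also undermines your maintained invariant: the hypothesis only says that $F$ contains arbitrarily long paths with degree-$2$ interior somewhere; it does not let you assume they hang off a single ``active'' frontier vertex, and reaching a distant such path from the frontier forces you to embed intermediate high-degree vertices onto interior vertices of a path in $H$, where their remaining adjacencies cannot be guaranteed.

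The paper's proof resolves both problems with one idea you have not used: \emph{every} vertex of $F$ other than the interior vertices of the chosen degree-$2$ paths is embedded into the infinite clique $K$, so all adjacencies among them are free and no connectivity of $\dom f$ is needed. Concretely, each round first drops the smallest unembedded vertex of $F$ onto an unused vertex of $K$ (its previously embedded neighbours necessarily lie in $K$, since interior vertices of earlier paths already have both their $F$-neighbours embedded), and then, to absorb the smallest uncovered $v\in V(H)$, picks a path $P$ in $H$ from $K$ to $K$ through $v$ avoiding $\ran f$ (infinite connectivity) and matches it with a degree-$2$-interior path $P'$ in $F$ of \emph{exactly} that length avoiding the finite set $\dom f$ (take a much longer such path, delete $\dom f$, and truncate); the endpoints of $P'$ land in $K$ and its interior vertices have all their neighbours on $P'$. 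So the clique's essential role is to host the entire non-path structure of $F$, not merely to pad path lengths in $H$ as in your proposal; and the length-matching goes from $H$ to $F$ rather than the reverse, which removes the need for any invariant about where in $F$ the long paths sit.
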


\begin{proof}
Assume that $F$ has arbitrarily long induced paths and that $H$ is infinitely connected with an infinite clique $K \subseteq H$. We will construct an embedding $f$ of $F$ into $H$ iteratively in finite pieces. For each $i \in \N$, we will do the following two steps: First, let $t \in V(T) \setminus \dom f$ be the smallest not-yet embedded vertex and embed it into an arbitrary vertex $u \in V(K) \setminus \ran f$.
Second, let $v \in V(H) \setminus \ran f$ be the smallest not-yet covered vertex and let $P$ be a finite path in $H$ which starts and ends in $K$, contains $v$ and avoids $\ran f$ (such a path exist since $H$ is infinitely connected). Let $P'$ be an induced path in $T$ of the same length as $P$ which avoids $\dom f$ (such a path exists since $T$ has arbitrary long induced paths). Extend $f$ by embedding $P'$ into $P$. Note that all neighbors of the internal vertices of $P'$ will be embedded, and the endpoints of $P'$ are in $K$.
Therefore, we maintain a partial embedding throughout the process. Since we eventually embed every $t \in V(T)$, the resulting function $f$ is an embedding of $T$ into $H$. Since we eventually cover every $v \in V(H)$, this embedding is surjective.
\end{proof}

\section{Ramseyness of coideals}\label{sec:coideals}

\subsection{Ideals and coideals}

An \emph{ideal} on a set $X$ is a collection $\mathcal{I}$ of subsets of $X$
such that (1) for any $B\in \mathcal{I}$ and $A\subseteq B$, we have
$A\in\mathcal{I}$, and (2) for any $A,B\in\mathcal{I}$, we have $A\cup
B\in\mathcal{I}$.  We call an ideal $\mathcal{I}$ on $X$ \emph{proper} if
$X\not\in\mathcal{I}$.  If $\mathcal{I}$ is an ideal, then we write
$\mathcal{I}^+$ for its complement $\mathcal{P}(X) \setminus \mathcal{I}$, and
we call $\mathcal{I}^+$ a \emph{coideal}.

In this section we will primarily be concerned with ideals on countable sets,
and in particular ideals on $\NN$.  Some commonly used examples of ideals on
$\NN$ are
\begin{enumerate}
  \item $\fin = \set{A\subseteq\NN}{|A| < \infty}$,
  \item $\mathcal{Z}_0 = \set{A\subseteq\NN}{d(A) = 0}$,
  \item $\mathcal{I}_{1/n} = \set{A\subseteq\NN}{\sum_{n\in A} 1/n < \infty}$.
\end{enumerate}

In general, we view an ideal $\mathcal{I}$ on $X$ as a way of measuring which
subsets of $X$ are ``small''.  In this light, we only consider an ideal
$\mathcal{I}$ to be nontrivial if $\mathcal{I}$ is proper and contains the
finite subsets of $X$, since at the very least, the finite subsets of $X$ should
be ``small'', and $X$ itself should not be ``small''.

Let $G$ be a graph and $\mathcal{I}^+$ be a coideal on $\NN$.  We say that $G$ is \emph{$\mathcal{I}^+$-Ramsey} if, for every finite coloring of $K_\NN$, there is a monochromatic copy of $G$ whose vertex set is in $\mathcal{I}^+$.

The first thing to note is that we may reexpress one of the
central notions of this paper using the above terminology; namely, a graph $G$
is Ramsey-dense if and only if $G$ is $\mathcal{Z}_0^+$-Ramsey.  For another example, Ramsey's theorem says that $K_{\N}$ is $\fin^+$-Ramsey.  In the remainder of this section we investigate the relationship between coideals
$\mathcal{I}^+$ and graphs $G$ such that $G$ is $\mathcal{I}^+$-Ramsey.  We hope
that the results to follow will help the reader to better understand some of the
characteristics of Ramsey-dense graphs, while simultaneously establishing a more
general setting for the kind of questions we are interested in, where different
notions of ``small'' other than ``asymptotic density zero'' are considered.
Before continuing we note three easy observations.

\begin{fact}\label{fact:subset}
Let $\mathcal{I}$ and $\mathcal{J}$ be ideals on $\NN$ and suppose $\mathcal{I}\subseteq\mathcal{J}$.  For any graph $G$, if $G$ is $\mathcal{J}^+$-Ramsey, then $G$ is $\mathcal{I}^+$-Ramsey.
\end{fact}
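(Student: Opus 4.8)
The plan is to reduce the statement to the trivial set-theoretic observation that the containment of ideals reverses on passing to coideals. Concretely, I would first note that if $\mathcal{I}\subseteq\mathcal{J}$, then $\mathcal{J}^+\subseteq\mathcal{I}^+$: indeed, if $A\in\mathcal{J}^+$ then $A\notin\mathcal{J}$, and since $\mathcal{I}\subseteq\mathcal{J}$ this forces $A\notin\mathcal{I}$, i.e.\ $A\in\mathcal{I}^+$.

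With this in hand, the rest is immediate. Given an arbitrary finite coloring of $K_\NN$, the hypothesis that $G$ is $\mathcal{J}^+$-Ramsey produces a monochromatic copy of $G$ whose vertex set $V$ lies in $\mathcal{J}^+$. By the observation above, $V\in\mathcal{I}^+$ as well, so this same copy of $G$ witnesses that the coloring admits a monochromatic copy of $G$ with vertex set in $\mathcal{I}^+$. Since the coloring was arbitrary, $G$ is $\mathcal{I}^+$-Ramsey.

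There is no real obstacle here; the only thing to be careful about is getting the direction of the inclusion right (a larger ideal has a smaller coideal), which is exactly what makes the implication go from $\mathcal{J}^+$-Ramsey to $\mathcal{I}^+$-Ramsey rather than the reverse. I would present the argument in two or three sentences, first establishing $\mathcal{J}^+\subseteq\mathcal{I}^+$ and then quoting the definition of $\mathcal{I}^+$-Ramsey directly.
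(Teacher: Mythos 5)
Your proof is correct and is exactly the intended argument; the paper states this as one of three ``easy observations'' and omits the proof entirely, so your two-line reduction via $\mathcal{J}^+\subseteq\mathcal{I}^+$ is precisely what the authors had in mind.
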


\begin{fact}\label{fact:part}
Let $\mathcal{I}$ be an ideal on $\NN$ and let $A\in \mathcal{I}^+$.  For any partition $\{A_1, \dots, A_k\}$ of $A$, there exists $i\in [k]$ such that $A_i\in \mathcal{I}^+$.
\end{fact}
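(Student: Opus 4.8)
The plan is to argue by contraposition using only the defining closure property of an ideal under finite unions. Recall that $\mathcal{I}^+ = \mathcal{P}(\NN)\setminus\mathcal{I}$, so the hypothesis $A\in\mathcal{I}^+$ says precisely that $A\notin\mathcal{I}$, and the conclusion we want is that not all of the $A_i$ lie in $\mathcal{I}$.

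First I would suppose, towards a contradiction, that $A_i\in\mathcal{I}$ for every $i\in[k]$. Applying property (2) in the definition of an ideal (closure under pairwise unions) and inducting on $k$, we get $A_1\cup\dots\cup A_k\in\mathcal{I}$. Since $\{A_1,\dots,A_k\}$ is a partition of $A$, this union equals $A$, so $A\in\mathcal{I}$, contradicting $A\in\mathcal{I}^+$. Hence some $A_i\notin\mathcal{I}$, i.e.\ $A_i\in\mathcal{I}^+$, as desired.

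There is essentially no obstacle here: the statement is an immediate consequence of the fact that ideals are closed under finite unions (which is exactly the finitary shadow of the "$\mathcal{I}$ measures smallness" intuition, since a finite union of small sets should be small). The only thing worth spelling out is the trivial induction turning pairwise closure into closure under $k$-fold unions, and the observation that a partition's pieces union to the whole set.

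\begin{proof}
Since $\mathcal{I}^+=\mathcal{P}(\NN)\setminus\mathcal{I}$, the assumption $A\in\mathcal{I}^+$ means $A\notin\mathcal{I}$. Suppose for contradiction that $A_i\in\mathcal{I}$ for every $i\in[k]$. By property (2) in the definition of an ideal, $\mathcal{I}$ is closed under unions of two sets, and hence (by an easy induction on $k$) under unions of finitely many sets; thus $A_1\cup\dots\cup A_k\in\mathcal{I}$. But $\{A_1,\dots,A_k\}$ is a partition of $A$, so $A_1\cup\dots\cup A_k=A$, giving $A\in\mathcal{I}$, a contradiction. Therefore $A_i\notin\mathcal{I}$, i.e.\ $A_i\in\mathcal{I}^+$, for some $i\in[k]$.
\end{proof}
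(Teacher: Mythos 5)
Your proof is correct and is precisely the standard argument the paper has in mind (the paper states this as one of three "easy observations" and omits the proof entirely): if every $A_i$ were in $\mathcal{I}$, closure of $\mathcal{I}$ under finite unions would put $A=A_1\cup\dots\cup A_k$ in $\mathcal{I}$, contradicting $A\in\mathcal{I}^+$. Nothing is missing.
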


\begin{fact}\label{fact:coideal-contains-ultrafilter}
For every nontrivial ideal $\CI$, there is an ultrafilter $\SU \subset \CI^+$.
\end{fact}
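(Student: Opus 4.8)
The plan is to dualise: pass from the ideal $\CI$ to its dual filter and then extend that filter to an ultrafilter by Zorn's lemma, exactly in the spirit of the proof of \cref{prop:non-trivial-uf}. First I would set
\[
  \CI^{*} = \set{\N \setminus A}{A \in \CI},
\]
the filter dual to $\CI$. Because $\CI$ is closed under subsets and finite unions, $\CI^{*}$ is closed under supersets and finite intersections, so it satisfies conditions $(i)$–$(iii)$ of \cref{def:ultrafilter}; and because $\CI$ is proper, i.e.\ $\N \notin \CI$, we have $\emptyset \notin \CI^{*}$, so $\CI^{*}$ is a (proper) filter. Since $\CI$ is nontrivial it contains every finite set, hence $\CI^{*}$ contains every cofinite set, though this last point is not strictly needed for the conclusion.

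Next I would apply Zorn's lemma to the collection of all filters on $\N$ that contain $\CI^{*}$, ordered by inclusion: the union of a chain of such filters is again such a filter, so there is a maximal element $\SU$, which by condition $(iv)'$ of \cref{def:ultrafilter} is an ultrafilter, and $\CI^{*} \subseteq \SU$. Finally I would check $\SU \subseteq \CI^{+}$: if some $A \in \SU$ belonged to $\CI$, then $\N \setminus A \in \CI^{*} \subseteq \SU$, so by \cref{prop:ultrafilter-properties}\,(i) we would get $A \cap (\N \setminus A) = \emptyset \in \SU$, contradicting $\emptyset \notin \SU$. Hence every member of $\SU$ lies in $\mathcal{P}(\N) \setminus \CI = \CI^{+}$, as required.

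There is essentially no obstacle here: the only thing that genuinely needs the hypothesis is the properness of the dual filter $\CI^{*}$, which is immediate from $\N \notin \CI$ (nontriviality also gives the harmless fact that $\CI^{*}$ extends the cofinite filter), and the remainder is the standard ultrafilter extension argument already used in this section.
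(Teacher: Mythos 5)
Your proof is correct and follows essentially the same route as the paper: both pass to the dual filter $\{A^c : A \in \CI\}$, extend it to an ultrafilter via Zorn's lemma, and then observe that no member of $\CI$ can lie in the resulting ultrafilter since its complement already does. The only difference is that you spell out the final step (via the empty intersection) slightly more explicitly than the paper does.
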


\begin{proof}
Let $\CF = \{I^c: I \in \CI\}$ and observe that $\CF$ satisfies $(i)-(iii)$ in \cref{def:ultrafilter} (such a family is called a \emph{filter}). Hence we can apply Zorn's lemma as in \cref{prop:non-trivial-uf} to get an ultrafilter $\SU$ containing $\CF$. Then, for every $I \in \CI$, we have $I^c \in \CF \subset \SU$ and thus $I \not\in \SU$; hence $\SU \subset \CI^+$.
\end{proof}

\subsection{Finitely-ruled graphs and the ideal \texorpdfstring{$\nwd$}{nwd}}
Recall that one of the motivating problems of this paper is to characterize the
Ramsey-dense graphs, or in other words the graphs $G$ such that $G$ is
$\mathcal{Z}_0^+$-Ramsey.  In Theorem~\ref{thm:nwd} we provide a
characterization of those graphs $G$ for which $G$ is $\CI^+$-Ramsey for \emph{every} nontrivial ideal $\CI$
on $\NN$.  Interestingly, this characterization reduces to one
particular ideal, and one particular $2$-coloring, both of which we will
describe now.

Note that every positive integer $n$ has a binary expansion in which the leftmost digit is a 1, the \emph{truncated binary expansion} of $n$ is what remains after removing the leftmost digit from the binary expansion (for instance, the truncated binary expansion of 19 is 0011).
Given $s,t\in \N$, we say that $t$ \emph{extends} $s$, if $s\leq t$ and the truncated binary expansion of $t$ contains the truncated binary expansion of $s$ as its initial segment (for instance $19$ extends $7$ since $0011$ contains $11$ as its initial segment, reading from right to left).  Given $s\in \N$, we write $\langle s\rangle$ for the set of $t\in \N$ which extend $s$ (for instance, $\langle 1 \rangle=\N$, $\langle 2\rangle$ is the positive even integers, and $\langle 3\rangle$ is the positive odd integers).  The ideal $\nwd$ consists of all sets $A\subseteq \N$ such that for every
$s\in \N$ there exists an extension $t$ of $s$ such that $A\cap \lrang{t} =
\emptyset$.\footnote{The notation $\nwd$ stands for ``nowhere dense'' and typically the ideal $\nwd$ is studied on the set $\QQ$; however, for consistency with the rest of the paper, we state all of the results in terms of the set $\NN$.  That being said, it is possible to show that the set
$\N$, when given the topology generated by the sets $\lrang{s}$, is homeomorphic
to the space $\QQ\cap [0,1)$, and under this homeomorphism the sets in $\nwd$
correspond to those subsets of $\QQ\cap [0,1)$ which are not dense in any
subinterval of $[0,1)$.}  It is straightforward to check that $\nwd$ is a non-trivial ideal (that is, a proper ideal containing all of the finite subsets of $\N$).


Recall that the Rado coloring was defined in \cref{sec:rado}.


\begin{theorem}\label{thm:nwd}
  The following are equivalent for any countably infinite graph $G$.
  \begin{enumerate}[label={(\roman*)}]
    \item \label{nwd1} For every non-trivial ideal $\mathcal{I}$ on $\NN$, $G$ is $\mathcal{I}^+$-Ramsey.
    \item \label{nwd2} In the Rado coloring of $K_{\N}$, there is a monochromatic copy of $G$ such that $V(G)\in \nwd^+$.
    \item \label{nwd3} $G$ is finitely-ruled.
  \end{enumerate}
\end{theorem}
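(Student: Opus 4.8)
The plan is to prove the cycle of implications \ref{nwd3} $\Rightarrow$ \ref{nwd1} $\Rightarrow$ \ref{nwd2} $\Rightarrow$ \ref{nwd3}. For \ref{nwd3} $\Rightarrow$ \ref{nwd1} I would simply re-run the proof of \cref{thm:ruling}, replacing the positive ultrafilter by an ultrafilter $\SU \subseteq \mathcal{I}^+$, which exists by \cref{fact:coideal-contains-ultrafilter} and is automatically non-trivial because a non-trivial ideal contains $\fin$, so every member of $\mathcal{I}^+$ is infinite and $\SU$ contains all cofinite sets. Given an $r$-coloring of $K_\N$, let $V_1,\dots,V_r$ be the colour classes of the vertex-colouring induced by $\SU$, say with $V_1 \in \SU$; since $G$ is finitely ruled, fix a finite $S$ with $G[S^c]$ having no finite dominating set (hence $0$-ruled); embed $S$ into a colour-$1$ clique inside $V_1$ by iterating the ultrafilter property; set $V_1^0 = \Ncap_1(f(S)) \cap V_1 \in \SU$; and extend $f$ via \cref{prop:embed0rul} to a surjection of $G[S^c]$ onto $V_1^0$. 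The resulting monochromatic copy of $G$ has vertex set containing $V_1^0 \in \SU \subseteq \mathcal{I}^+$, hence lying in $\mathcal{I}^+$ since coideals are upward closed.

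The implication \ref{nwd1} $\Rightarrow$ \ref{nwd2} is immediate: $\nwd$ is a non-trivial ideal and the Rado colouring is a finite colouring of $K_\N$, so \ref{nwd1} applied with $\mathcal{I} = \nwd$ produces a monochromatic copy of $G$ whose vertex set lies in $\nwd^+$. For \ref{nwd2} $\Rightarrow$ \ref{nwd3} I would argue by contraposition. First, if $G$ is not finitely ruled then no maximal pairwise-disjoint family of minimal ruling sets can be finite: if $\{F_1,\dots,F_m\}$ were such a family with union $S$, then any finite dominating set of $G[S^c]$ would be a ruling set of $G$ contained in $S^c$, hence would contain a minimal ruling set disjoint from $F_1,\dots,F_m$, contradicting maximality; so $G[S^c]$ would have no finite dominating set and $G$ would be finitely ruled. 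Since a maximal such family exists by Zorn's lemma, we obtain infinitely many pairwise-disjoint minimal ruling sets $F_1, F_2, \dots$.

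Now suppose, for contradiction, that the Rado colouring contains a copy of $G$, monochromatic in colour $i$ with embedding $f$ and vertex set $V$, such that $V \in \nwd^+$; fix $s_0$ witnessing this, i.e.\ $V \cap \langle t\rangle \neq \emptyset$ for every extension $t$ of $s_0$. Since each $F_n$ is ruling in $G$ and the copy is in colour $i$, all but finitely many $w \in V$ have a colour-$i$ neighbour in $f(F_n)$, so $V \subseteq^* \bigcup_{u \in f(F_n)} N_i(u)$. The sets $f(F_n)$ are pairwise disjoint, so all but finitely many of them are disjoint from the (finite) set of bit-positions already decided by $s_0$; fix such an $n$. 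Then $s_0$ extends to a string $t$ that is long enough to exceed $\max f(F_n)$ and that fixes, for each $u \in f(F_n)$, the bit in position $u$ to the value $1-i$. A one-line check with the definition of the Rado colouring (every $w \in \langle t\rangle$ satisfies $w > u$ and agrees with $t$ in position $u$) gives $\langle t\rangle \cap N_i(u) = \emptyset$ for every $u \in f(F_n)$, so $V \cap \langle t\rangle \subseteq V \setminus \bigcup_{u \in f(F_n)} N_i(u)$ is finite. Finally, choosing $t' \in \langle t\rangle$ larger than every element of this finite set yields $V \cap \langle t'\rangle = \emptyset$ while $t'$ still extends $s_0$, contradicting the choice of $s_0$.

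I expect the main obstacle to be the Rado-colouring bookkeeping in the last implication. The two facts that make it work are: (a) within the tree of cones $\langle s\rangle$, each colour class $N_i(u)$ is, apart from $u$ and smaller vertices, exactly a union of cones on which the bit in position $u$ is fixed, so every cone is either contained in $N_i(u)$ or disjoint from it, and one is free to choose which as long as position $u$ has not yet been decided; and (b) the infinitude together with disjointness of the ruling sets is precisely what guarantees one whole set $F_n$ all of whose image vertices sit at positions still undecided below $s_0$, which is what lets us steer a cone out of $\bigcup_{u \in f(F_n)} N_i(u)$ and then, by moving to the right, out of $V$ entirely. With only one ruling set this steering can fail, which is consistent with the theorem.
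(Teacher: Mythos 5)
Your proposal is correct and follows essentially the same route as the paper: the same cycle \ref{nwd3}$\Rightarrow$\ref{nwd1}$\Rightarrow$\ref{nwd2}$\Rightarrow$\ref{nwd3}, with \ref{nwd3}$\Rightarrow$\ref{nwd1} rerunning the proof of \cref{thm:ruling} with an ultrafilter inside $\mathcal{I}^+$ from \cref{fact:coideal-contains-ultrafilter}, and \ref{nwd2}$\Rightarrow$\ref{nwd3} by steering a cone $\langle t\rangle$ out of $\bigcup_{u\in f(F_n)}N_i(u)$ exactly as the paper does. The only substantive difference is that you justify (via maximality/Zorn and the characterization of finitely-ruled graphs) the existence of infinitely many pairwise-disjoint minimal ruling sets, a point the paper's proof takes for granted.
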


\begin{proof}[Proof of \cref{thm:nwd}]
    (\ref{nwd1}$\implies$\ref{nwd2}) $\nwd$ is a non-trivial ideal on $\NN$, so in every $2$-coloring of the edges of $K_{\NN}$ (and in particular, the Rado coloring), there is a monochromatic copy of $G$ with $V(G)\in \nwd^+$.

    (\ref{nwd2}$\implies$\ref{nwd3})
	Suppose $G$ is infinitely-ruled and there exists a monochromatic copy of $G$ in the Rado coloring of $K_{\N}$ with color $i\in \{0,1\}$.  We show that $V(G)\in \nwd$.

	Let $F_n$ ($n\in\NN$) be pairwise-disjoint, finite ruling sets in $G$, and fix $s\in
    \N$.  Then there is some $n$ such that for all $t\in F_n$, $t>s$.  Let $u\in \N$ with $u >
    \max\set{t}{t\in F_n}$ such that $u$ extends $s$ and the $t$th bit of $u$ is $1-i$ for all $t\in F_n$.  This means that no vertex in $\lrang{u}$ is adjacent to any vertex in $F_n$ in color $i$.  Since $F_n$ is a ruling set, this implies that $V(G)\cap \lrang{u}$ is finite. Since $V(G)\cap \lrang{u}$ is finite, there exists $u'>u$ such that $u'$ extends $u$ and $V(G)\cap \lrang{u'}=\emptyset$ and thus $V(G)\in \nwd$.

    (\ref{nwd3}$\implies$\ref{nwd1}) Let $F$ be a finite subset of $V(G)$
    for which $G\sm F$ is $0$-ruled, and let $n = |F|$.  Fix a proper ideal
    $\mathcal{I}$ on $X$ containing the finite subsets of $X$, and let
    $\SU$ be an ultrafilter contained in $\mathcal{I}^+$ (which exists by \cref{fact:coideal-contains-ultrafilter}).
    Now we proceed exactly as in the proof of \cref{thm:ruling}.
\end{proof}

We see that \cref{thm:ruling} is a special case of \cref{thm:nwd} since, in particular, \cref{thm:nwd} shows that every finitely-ruled graph $G$ is $\mathcal{Z}_0^+$-Ramsey.  Problem~\ref{prob:infruleddense} asks whether the converse is true; that is, if $G$ is $\mathcal{Z}_0^+$-Ramsey, is $G$ finitely-ruled?  Theorem~\ref{thm:nwd} might be viewed as evidence towards
this conclusion, since it shows that this is true at least for the coideal
$\nwd^+$ in place of $\mathcal{Z}_0^+$.  On the other hand, we might view
Theorem~\ref{thm:nwd} as evidence in the opposite direction, since one would
expect there to be some infinite graph $G$ which distinguishes the coideals
$\nwd^+$ and $\mathcal{Z}_0^+$ as $\nwd$ and $\mathcal{Z}_0$
have very different properties as ideals.  Of course, this is all just
speculation.

\subsection{Infinitely ruled graphs and relative density zero ideals}

Let $f : \NN\to\NN$ be a function.  The ideal $\mathcal{Z}_f$ is defined to be
the set of all $A\subseteq\NN$ such that $|A\cap \{1,\ldots,n\}| / f(n)\to 0$ as
$n\to \infty$.  The ideal $\mathcal{Z}_0$ is one example, where we take $f$ to
be the identity function.  The reader may check that $\mathcal{Z}_f \subseteq
\mathcal{Z}_g$ whenever $f \le g$, though of course the converse does not hold.

In Theorem \ref{rulinglog} we showed that if $G$ is infinitely ruled and the ruling sets grow slowly enough, then $G$ is not $\mathcal{Z}_0^+$-Ramsey (i.e.\ $G$ is not Ramsey dense).  In this section we will give an example of a family $\mathcal{G}$ of infinitely ruled graphs (where the size of the ruling sets may go to infinity at any prescribed rate, no matter how slowly) such that for all functions $f : \NN\to\NN$ satisfying $f(n) / n\to 0$ and for all $G\in \mathcal{G}$,  $G$ is $\mathcal{Z}_f^+$-Ramsey.

Let $T$ be a tree with a fixed root $r$.  Given vertices $u,v\in V(T)$, we say
that $v$ is an \emph{extension} of $u$ if $u$ lies on the unique path from $r$
to $v$.  We say that two vertices in $T$ are \emph{compatible} if one is an
extension of the other.  The \emph{compatibility graph of $(T,r)$}, $C_{T,r}$,
is the graph with vertex set $V(T)$ and edges $\{u,v\}$ for all compatible
vertices $u$ and $v$.  (This is sometimes called the \emph{downward closure} of $(T,r)$.)

An \emph{antichain} in $T$ is a set of pairwise-incompatible vertices, and we
call an antichain $A$ \emph{maximal} if there is no antichain $B$ such that
$A$ is a proper subset of $B$.  Note that every finite maximal antichain in $T$ is a ruling set in
$C_{T,r}$; in particular, if $T$ is locally finite, then the sets $R_n =
\set{v\in V(T)}{d_T(v,r) = n}$ form finite ruling sets in $C_{T,r}$.

We say that $T$ is \emph{perfect} if every vertex has at least two incompatible
extensions.  If $T$ is locally finite and perfect, then $|R_n|\to\infty$ as
$n\to\infty$, but the growth of this sequence may be arbitrarily slow.

\begin{theorem}\label{thm:ZfRamsey}
  Let $f : \NN\to\NN$ be any function satisfying $f(n) / n\to 0$, and let $T$ be
  any locally finite, perfect tree with fixed root $r$.  Then $C_{T,r}$
   is $\mathcal{Z}_f^+$-Ramsey.
\end{theorem}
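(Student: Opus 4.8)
The plan refines the ultrafilter argument behind \cref{thm:ruling}, with density bookkeeping added. Since $\mathcal{Z}_f=\fin$ when $f$ is bounded (and then the claim is just Ramsey's theorem for $C_{T,r}$, which embeds in $K_{\NN}$) and since $\mathcal{Z}_g^+\subseteq\mathcal{Z}_f^+$ for $g\ge f$, we may replace $f$ by $n\mapsto\max_{k\le n}f(k)$ and assume $f$ is nondecreasing and unbounded; then $\mathcal{Z}_f$ is a nontrivial ideal. Fix an $r$-colouring of $K_{\NN}$. By \cref{fact:coideal-contains-ultrafilter} pick an ultrafilter $\SU\subseteq\mathcal{Z}_f^+$, let $c_{\SU}$ be the induced vertex-colouring, and fix a colour class $V_1\in\SU$, say of colour $1$. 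For $v\in V_1$ we have $N_1(v)\in\SU$, so for every finite colour-$1$ clique $F\subseteq V_1$ the set $\Ncap_1(F)\cap V_1$ lies in $\SU\subseteq\mathcal{Z}_f^+$ and is in particular infinite. We will embed $C_{T,r}$ into the colour-$1$ graph on $V_1$ with range in $\mathcal{Z}_f^+$. That an embedding exists is automatic: enumerate $V(T)$ with parents before children and keep the domain a subtree of $T$, so that a newly-embedded vertex's already-embedded $C_{T,r}$-neighbours are exactly its ancestors, a colour-$1$ clique inside $V_1$, whence a fresh common colour-$1$ neighbour in $V_1$ is available. Everything therefore rests on controlling the range (note that a subset of a $\mathcal{Z}_f^+$ set need not be $\mathcal{Z}_f^+$, so this is not for free).

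Perfectness of $T$ provides the room. Put $\delta:=\tfrac12\limsup_m|V_1\cap[m]|/f(m)>0$. Fix an infinite branch $r=b_0,b_1,\dots$ of $T$ (König); by perfectness infinitely many $b_n$ have a child $c_n$ off this branch, and the subtree of $T$ rooted at such a $c_n$ (written $T_{c_n}$) is again locally finite and perfect, hence has finite antichains of unbounded size. We embed in rounds along the branch, setting $b_n\mapsto v_n\in V_1$ and maintaining $W_n:=\Ncap_1(\{v_0,\dots,v_n\})\cap V_1\in\SU\subseteq\mathcal{Z}_f^+$, the set into which all of $T_{c_n}$ must map. In round $n$ we embed a large finite truncation of $T_{c_n}$ (its first $\ell_n$ levels), but in a non-standard order: its leaf-antichain $A^n$ (the vertices at level $\ell_n$ of $T_{c_n}$), being independent in $C_{T,r}$, may be placed freely and is embedded \emph{first} into an initial segment $W_n\cap[m_n]$; only afterwards are the internal path-vertices embedded — into the ``large'' part of $W_n$, where room is guaranteed since, keeping all images in $V_1$, every common colour-$1$ neighbourhood encountered stays in $\SU$. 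Choosing the $\ell_n$ (hence the sizes $|A^n|$, which can be made as large as we like) and the scales $m_n$ increasing so that $A^0\cup\dots\cup A^n\subseteq[m_n]$ and $|A^0\cup\dots\cup A^n|\ge\delta f(m_n)$ — possible because $f(m)=o(m)$ leaves ample space in $[m_n]$ and because the $W_n\in\mathcal{Z}_f^+$ — gives $|\ran f\cap[m_n]|\ge\delta f(m_n)$ for infinitely many $n$, i.e.\ $\ran f\in\mathcal{Z}_f^+$. The vertices of $C_{T,r}$ not reached this way (the rest of each $T_{c_n}$, the other side subtrees, the tail of the branch) are absorbed by interleaving ordinary back-and-forth steps; the domain stays a subtree and every common-neighbourhood set stays in $\SU$, so nothing gets stuck.

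The heart of the matter — and the main obstacle — is the parameter choice in the rounds: making the ``width'' granted by perfectness of $T$ (the growth of the level sizes, which may be arbitrarily slow) line up with the scale at which a given antichain fits into $W_n$ while keeping the cumulative density at least $\delta$. For an adversarial colouring — the Rado colouring of \cref{sec:rado} being the prototype — one cannot in general insist that $A^n$ alone be $\delta f(m_n)$-dense inside $W_n$, since $W_n$ need not be as relatively dense as $V_1$; the argument must exploit that the $W_n$ are nested and all $\SU$-large, and accumulate the leaf-antichains so that $\ran f$ witnesses density even though individual rounds need not. Carrying out this scheduling carefully, rather than any single clever device, is what the proof amounts to; everything else is the embedding machinery already used for \cref{thm:ruling,thm:locally-finite}.
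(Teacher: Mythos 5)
Your embedding mechanics are sound --- since every vertex of $V_1$ has its colour-$1$ neighbourhood in $\SU$, any finite subset of $V_1$ has common colour-$1$ neighbourhood in $\SU$, so placing an antichain first and filling in its ancestors afterwards never gets stuck. The gap is in the density accounting, and it is exactly the difficulty you flag in your last paragraph but do not resolve; in fact no ``careful scheduling'' can resolve it as long as you work with an ultrafilter inside $\mathcal{Z}_f^+$. The point is that in $C_{T,r}$ every vertex lying below $b_n$ in $T$ is adjacent to all of $b_0,\dots,b_n$ (the levels of $T$ are ruling sets), so everything you embed from round $n$ onwards --- in particular every antichain $A^m$ with $m\ge n$ --- must land inside $W_n$. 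Hence the accumulated set $U=\bigcup_m A^m\text{-images}$ satisfies $U\subseteq^* W_n$ for every $n$, which gives $\limsup_m |U\cap[m]|/f(m)\le \limsup_m|W_n\cap[m]|/f(m)=:\epsilon_n$ for every $n$ (using that $f$ is unbounded). Membership of $W_n$ in $\mathcal{Z}_f^+$ only guarantees $\epsilon_n>0$ for each $n$; it does not prevent $\epsilon_n\to 0$ along the decreasing chain $W_0\supseteq W_1\supseteq\cdots$, and if $\epsilon_n\to 0$ then $U\in\mathcal{Z}_f$ no matter how you choose the truncation depths $\ell_n$ and scales $m_n$. (Your $\delta$ is also ill-defined when $\limsup_m|V_1\cap[m]|/f(m)=\infty$, but that is cosmetic; the real problem is that the relevant quantity for round $n$ is $\epsilon_n$, not $\delta$.) Trying to dodge this by dumping an entire off-branch subtree $T_{c_0}$ into $W_0$ does not help either: that subtree is again locally finite and perfect, so embedding it onto a $\mathcal{Z}_f^+$ subset of $W_0$ is the same problem one level down.

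The paper's proof exists precisely to get around this obstruction, and it does so by changing the ideal rather than the scheduling. It replaces $\mathcal{Z}_f$ by the larger ideal $\mathcal{I}_f=\set{A\subseteq\NN}{\sup_n|A\cap[n]|/f(n)<\infty}$, whose coideal consists of sets of \emph{infinite} $f$-upper-density and is therefore closed under pseudo-intersections of decreasing sequences: given $A_1\supseteq A_2\supseteq\cdots$ in $\mathcal{I}_f^+$ one can pick blocks $A_n\cap[k_n,k_{n+1}]$ with $|A_n\cap[k_n,k_{n+1}]|\ge n\,f(k_{n+1})$ and their union is again in $\mathcal{I}_f^+$. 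This is exactly the accumulation step your scheme needs and $\mathcal{Z}_f^+$ does not supply. (The paper then does not even use an ultrafilter: a fusion argument in $\mathcal{I}_f^+$ produces a set on which one colour has cofinite degrees, from which it extracts a monochromatic complete multipartite graph with finite parts and vertex set in $\mathcal{I}_f^+\subseteq\mathcal{Z}_f^+$, and \cref{thm:multi} embeds $C_{T,r}$ cofinitely into any such multipartite graph.) Your outline could likely be repaired by running it with an ultrafilter $\SU\subseteq\mathcal{I}_f^+$ instead of $\SU\subseteq\mathcal{Z}_f^+$, choosing $m_n$ so that $|W_n\cap(m_{n-1},m_n]|\ge n\,f(m_n)$ and taking $|A^n|$ that large; but as written, with $\mathcal{Z}_f^+$, the density claim is false in general.
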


\cref{thm:ZfRamsey} is immediately implied by the following two results.

\begin{proposition}\label{thm:Zf-multi}
  Suppose $f:\NN\to \NN$ satisfies $f(n) / n\to 0$.  Then for any finite coloring of $K_\NN$,
  there is a monochromatic complete multipartite subgraph, with finite parts,
  whose vertex set is in $\mathcal{Z}_f^+$.
\end{proposition}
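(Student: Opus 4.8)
The plan is a greedy, multi-scale construction of the parts of the desired multipartite graph: each new part will be placed far enough out, and chosen large enough, that the running vertex set keeps pace with $f$ at infinitely many scales, while every new part is joined entirely in one fixed colour to all earlier parts.

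First I would fix a positive ultrafilter $\SU$ on $\NN$ (\cref{prop:positive_ultrafilter}) and pass to the vertex-colouring $c_{\SU}$ it induces on the given $r$-colouring. Since the $r$ colour classes partition $\NN\in\SU$, exactly one of them, say $A:=\{v:c_{\SU}(v)=i\}$, lies in $\SU$ (\cref{prop:ultrafilter-properties}); and since $\SU$ is positive, $\ud(A)>0$. I will then recursively build finite sets $P_1,P_2,\dots$ and ``target sets'' $A_0\supseteq A_1\supseteq\cdots$, with $A_0=A$, $P_{j+1}\subseteq A_j$, and $A_j=A\cap\bigcap_{v\in P_1\cup\cdots\cup P_j}N_i(v)$. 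The crucial observation is that whenever $P_1\cup\cdots\cup P_j\subseteq A$, every $N_i(v)$ with $v$ in this set lies in $\SU$, so $A_j\in\SU$ by closure under finite intersections; hence $\ud(A_j)>0$ for all $j$ (although this density may shrink as $j$ grows). Note also that $A_j$ is disjoint from $P_1\cup\cdots\cup P_j$ (no vertex lies in its own $N_i$), so the parts will automatically be pairwise disjoint.

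To produce $P_{j+1}$: writing $\delta:=\ud(A_j)>0$ and using $f(n)/n\to 0$, there are arbitrarily large integers $N$ with both $|A_j\cap[N]|\ge\delta N/2$ and $f(N)\le\delta N/2$; I pick one such $N^{(j+1)}>N^{(j)}$ and set $P_{j+1}:=A_j\cap[N^{(j+1)}]$. Then $P_{j+1}\subseteq A_j$ is finite and $|P_{j+1}|\ge\delta N^{(j+1)}/2\ge f(N^{(j+1)})$. Having done this for all $j$, put $V:=\bigcup_j P_j$. For $j<j'$ and $u\in P_j$, $w\in P_{j'}$ one has $w\in P_{j'}\subseteq A_{j'-1}\subseteq A_j\subseteq N_i(u)$, so every edge of $K_\NN$ between distinct parts $P_j,P_{j'}$ has colour $i$; thus the graph on $V$ with parts $P_1,P_2,\dots$ and all cross-pairs as edges is a monochromatic (colour $i$) complete multipartite subgraph with finite parts. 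Finally, since $P_{j'}\subseteq[N^{(j')}]\subseteq[N^{(j)}]$ for all $j'\le j$, we get $|V\cap[N^{(j)}]|\ge|P_j|\ge f(N^{(j)})$; as $N^{(j)}\to\infty$ this gives $\limsup_n |V\cap[n]|/f(n)\ge 1>0$, so $V\notin\mathcal{Z}_f$, i.e.\ $V\in\mathcal{Z}_f^+$, as required.

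The main thing to get right is the bookkeeping of the second paragraph: that intersecting with the colour-$i$ neighbourhoods of the vertices already placed does not destroy membership in $\SU$, so the target sets $A_j$ retain positive upper density at every stage. This is exactly what lets the parts be chosen large enough to outrun $f$, and it is here that a \emph{positive} (not merely non-trivial) ultrafilter is essential — a non-trivial one could be concentrated on a density-zero set. The hypothesis $f(n)/n\to 0$ is used only through the inequality $f(N)\le\delta N/2$ for all large $N$, which forces a part drawn from a density-$\delta$ subset of $[N]$ to have at least $f(N)$ vertices.
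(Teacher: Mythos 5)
Your proof is correct, and it takes a genuinely different route from the paper's. The paper works inside the auxiliary ideal $\mathcal{I}_f=\{A:\sup_n|A\cap[n]|/f(n)<\infty\}$: it runs a diagonal construction producing a decreasing chain $A_1\supseteq A_2\supseteq\cdots$ in $\mathcal{I}_f^+$ (at step $n$ keeping the colour class of $n$'s neighbourhood that stays $\mathcal{I}_f$-positive), takes a pseudo-intersection $A'$ with $A'\setminus A_n$ finite for all $n$ (using that $\mathcal{I}_f^+$ admits such pseudo-intersections), restricts to one colour to get a set on which every vertex has cofinite degree in that colour, and then cuts this set into blocks $[b_{2k},b_{2k+1})$ so that alternating unions of blocks form two complete multipartite graphs with finite parts, one of which lands in $\mathcal{I}_f^+\subseteq\mathcal{Z}_f^+$. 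You instead fix a single colour up front via a \emph{positive} ultrafilter and build the parts directly: each $A_j$ stays in $\SU$ by closure under finite intersection, hence retains positive upper density, and the scale selection $|A_j\cap[N]|\ge\delta N/2\ge f(N)$ makes each part alone witness $\limsup_n|V\cap[n]|/f(n)\ge 1$. Your argument is shorter and avoids both the pseudo-intersection step and the block decomposition, at the cost of invoking the positive-ultrafilter machinery (\cref{prop:positive_ultrafilter}); the paper's version yields the nominally stronger conclusion $V\in\mathcal{I}_f^+$ (unbounded ratio rather than $\limsup\ge1$), though you could match this by inflating $N^{(j+1)}$ so that $|P_{j+1}|\ge j\cdot f(N^{(j+1)})$. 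All the delicate points in your write-up check out: the parts are pairwise disjoint because $v\notin N_i(v)$, the induction $P_{j+1}\subseteq A_j\subseteq A$ keeps every $N_i(v)$ in $\SU$, and the two requirements on $N^{(j+1)}$ (one holding for infinitely many $N$ by the definition of upper density, the other for all large $N$ since $f(n)/n\to0$) are simultaneously satisfiable.
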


\begin{proposition}\label{thm:multi}
  Let $T$ be a perfect tree with a fixed root $r$, and let
  $M$ be a complete, infinite multipartite graph, with finite parts.
  Then there is a copy of $C_{T,r}$ in $M$ which spans all but finitely many of the parts of $M$.
\end{proposition}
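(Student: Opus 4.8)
The plan is to reduce Proposition~\ref{thm:multi} to a statement about partitioning $V(T)$ into antichains. Write $V(M)=\bigsqcup_{i\in\NN}P_i$ with each part $P_i$ finite and non-empty (so there are infinitely many parts). Giving a subgraph $C\subseteq M$ isomorphic to $C_{T,r}$ amounts to giving an injection $\phi\colon V(T)\to V(M)$ and declaring $E(C)=\{\{\phi(u),\phi(v)\}: u,v\text{ compatible in }T\}$; for $C$ to be a subgraph of $M$ we need $\phi(u),\phi(v)$ to lie in different parts whenever $u,v$ are compatible, i.e.\ $\phi^{-1}(P_i)$ must be an antichain of $T$ for every $i$. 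Moreover the copy spans $P_i$ exactly when $\phi^{-1}(P_i)$ has size $|P_i|$. So it suffices to produce a partition $V(T)=\bigsqcup_{i\in\NN}A_i$ into antichains with $|A_i|\le|P_i|$ for all $i$ and $|A_i|=|P_i|$ for all but finitely many $i$; then any $\phi$ sending each $A_i$ into $P_i$, bijectively when $|A_i|=|P_i|$, does the job.

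Call a vertex $v\in V(T)$ \emph{universal} if it is compatible with every vertex of $T$. I would first establish two facts that use perfectness. (1) The set $S$ of universal vertices is finite: $S$ is a chain, so if it were infinite it would contain an increasing sequence $u_0\prec u_1\prec\cdots$; the two incomparable extensions $a,b$ of $u_0$ supplied by perfectness cannot both be ancestors of a given $u_n$ (ancestors form a chain), and comparability of the universal vertex $u_n$ with $a$ and with $b$ then forces $u_n\prec a$ and $u_n\prec b$ for every $n$, making the depth of $a$ infinite, which is impossible in a tree. (2) If $v\notin S$, then for every finite $F\subseteq V(T)$ and every $m\ge1$ there is an antichain $A$ with $v\in A$, $|A|=m$ and $A\cap F=\emptyset$: choose $w$ incomparable with $v$; every descendant of $w$ is then incomparable with $v$, the subtree of $T$ rooted at $w$ is again perfect, so a routine recursion (branch using perfectness, descending past $F\cup\{v\}$ whenever necessary) produces an antichain of size $m-1$ inside it disjoint from $F\cup\{v\}$, which we adjoin to $v$.

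Given (1) and (2), I would build the partition by a recursion over stages $n=1,2,\dots$, maintaining the finite set $U$ of already-placed vertices and the set of already-\emph{used} parts. At stage $n$: (a) place the smallest-index vertex of $T$ not yet placed, and (b) if $P_n$ is not yet used, place the smallest-index vertex of $T$ not yet placed into $P_n$. To place a vertex $v$ into a part $P$ (with $P$ the least unused part in task (a), and $P=P_n$ in task (b)): if $v\in S$ take $A=\{v\}$; otherwise apply (2) with $F=U$ and $m=|P|$ to get an antichain $A\ni v$ of size $|P|$ disjoint from $U$. Assign $A$ to $P$, add $A$ to $U$, and mark $P$ used. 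Since the least index of an unplaced vertex strictly increases at every stage, every vertex of $T$ is eventually placed; since $P_n$ is used by the end of stage $n$, every part is used; each $A_i$ is an antichain, of size exactly $|P_i|$ unless it is a universal singleton $\{v\}$ with $v\in S$, of which there are at most $|S|$. This yields the partition demanded in the first paragraph, hence a copy of $C_{T,r}$ in $M$ spanning all but at most $|S|$ parts.

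The crux is fact (2) together with the realization that an antichain cannot always be enlarged through a prescribed vertex — the universal vertices, for instance the root, are genuine obstructions — balanced by fact (1), which guarantees there are only finitely many of them. Once these two facts are in hand the remainder is bookkeeping of the familiar back-and-forth type, and no local finiteness of $T$ is needed.
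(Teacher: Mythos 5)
Your proof is correct and follows essentially the same route as the paper's: both isolate the finitely many universal vertices (your $S$ is exactly the paper's initial path $r_1\cdots r_k$ from the root to the first branching vertex), observe that every other vertex extends to arbitrarily large antichains avoiding any prescribed finite set, and then run a back-and-forth alternating between covering the next part of $M$ and placing the next vertex of $T$. The only cosmetic difference is that you phrase the goal as a partition of $V(T)$ into antichains matching the part sizes, whereas the paper factors the embedding through the intermediate multipartite graph $K_{1,\dots,1,\NN,\NN,\dots}$ with $|S|$ singleton parts.
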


\begin{proof}[Proof of \cref{thm:Zf-multi}]
  Consider the ideal $\mathcal{I}_f$ consisting of all sets $A\subseteq\NN$ satisfying
  \[
    \sup_n |A\cap \{1,\ldots,n\}| / f(n) < \infty
  \]
  (Note that the assumption on $f$ implies that $\mathcal{I}_f$ is proper.)  Clearly
  $\mathcal{Z}_f \subseteq \mathcal{I}_f$.  We moreover note that if
  $A_1,A_2,\ldots$ is a sequence from $\mathcal{I}_f^+$ satisfying $A_1
  \supseteq A_2 \supseteq \cdots$, then there is a set $A\in\mathcal{I}_f^+$
  satisfying $|A\sm A_n| < \infty$ for all $n$. (For instance, $A$ may be
  constructed by letting $A = \bigcup_n A_n \cap [k_n, k_{n+1}]$, where $k_n$ is
  chosen recursively to satisfy $|A_n \cap [k_n, k_{n+1}]| / f(k_{n+1}) \ge n$.)

  Now fix an $r$-coloring $\chi$ of $K_\NN$.  We will recursively construct sets
  $A_1,A_2,\ldots \in\mathcal{I}_f^+$, and an $r$-coloring $\rho$ of $\NN$, such
  that $A_1\supseteq A_2 \supseteq \cdots$ and for all $n$ and $m\in A_n$,
  $\chi(\{n,m\}) = \rho(n)$.  This construction goes as follows.  First we set
  $A_1 = \NN$.  Now, given $A_n$, note that the sets $A_n \cap N_i(n)$, for $i =
  1,\ldots,r$, partition $A_n\sm\{n\}$, and hence at least one must be in
  $\mathcal{I}_f^+$ (by \cref{fact:part}).  We choose one to be $A_{n+1}$ and define $\rho(n)$ to be
  the associated color $i$.

  Now by the above-mentioned property of $\mathcal{I}_f^+$, we may find a single
  set $A'\in\mathcal{I}_f^+$ such that $|A'\sm A_n| < \infty$ for all $n$.  Now let $A=A'\cap \rho^{-1}(i)$ for some $i\in [r]$ such that $A\in \mathcal{I}_f^+$.  Hence, for all $n\in
  A$ there are only finitely-many $m\in A$ for which $\chi(\{n,m\}) \neq i$.  In other words, the graph $K_i[A]$ consisting of edges of color $i$ induced on vertex set $A$ has the property that every vertex has cofinite degree.
  This allows us to construct, recursively, a sequence $b_0 < b_1 < \cdots$ such that for all $n, m\in A$ with $n \le b_k$ and $m \ge b_{k+1}$, $\chi(\{n,m\}) = i$.
  Let
  \[
    A_0^* = A\cap \bigcup_{k=0}^\infty [b_{2k},b_{2k+1})
  \]
  and $A_1^* = A\sm A_0$.  Then each of $A_0^*$ and $A_1^*$ is the vertex set of a monochromatic multipartite graph with finite parts, and moreover at least one is in
  $\mathcal{I}_f^+$, and hence $\mathcal{Z}_f^+$ (by Fact \ref{fact:subset}, since $\mathcal{Z}_f\subseteq \mathcal{I}_f$).
\end{proof}

\begin{proof}[Proof of \cref{thm:multi}]
Let $r_1:=r$ and let $P=r_1r_2\dots r_k$ be the shortest path from $r_1$ to a vertex $r_k$ such that $r_k$ has at least two successors (if $r_1$ itself has two successors, then $r_k=r_1$ and $P=r_1$ is just a trivial path).  Note that every vertex $r_i$ on the path $P$ is a maximal antichain (and thus a ruling set in $C_{T,r}$).  Also every vertex $v\in V(C_{T,r})\setminus V(P)$ is part of a maximal infinite independent set which we denote $I(v)$.

Let $K:=K_{1,\dots,1, \N,\N, \dots}$ be the complete multipartite graph with $k$ parts of order 1 and infinitely many infinite parts.  Clearly $K$ can be embedded into $M$ in such that way that $K$ spans all but finitely many parts of $M$.  We will show that $C_{T,r}$ can be surjectively embedded into $K$ which will then complete the proof.

First we embed the path $P=r_1\dots r_k$ into the parts of order 1.  Let $v_1, v_2, \dots$ be an enumeration of the remaining vertices of $K$.  Note that this ordering induces an ordering $V^1, V^2, \dots$ of the infinite parts themselves (meaning that if $v_n\in V^j$, then $\{v_1, v_2,  \dots, v_n\}\subseteq \bigcup_{i=1}^jV_i$) and an ordering $v_1^i, v_2^i$ of each $V^i$.  Finally, let $u_1, u_2, \dots$ be an enumeration of $V(T)\setminus V(P)$ such that $T[\{r_1, \dots, r_k, u_1, \dots, u_i\}]$ is connected for all $i\geq 1$.

Initially we set $f(u_1)=v_1$ (where $v_1\in V^1$) and then we repeat the following two steps.

		\par\smallskip
		\noindent\underline{Step 1.} Let $v^i_j\in V^i$ be the smallest vertex in $V(K)\setminus \ran f$.  If $j=1$ (i.e.\ $V^i\cap \ran f=\emptyset$, move to Step 2.  Otherwise, let $u\in \dom f$ such that $f(u)\in V^i$.  Now let $u'\in I(u)\setminus \dom f$ and set $f(u')=v^i_j$.

		\par\smallskip
		\noindent\underline{Step 2.}
		Let $m$ be the largest index such that $u_{m-1}\in \dom f$ and let $U'=\{u_1', u_2', \dots, u_\ell'\}:=\{u_1, \dots, u_m\}\setminus \dom f$.  Let $n$ be the largest index such that $V^n\cap \ran f\neq \emptyset$.  Now for all $i\in [\ell]$, set $f(u_i')=v_1^{n+i}$ (where $v_1^{n+i}$ is the first vertex in $V^{n+i}$).  Note that for all $i\geq 1$, $v_1^{n+i}$ is adjacent to every vertex in $\ran f$.

		\par\smallskip

At the end of each instance of Step 1 and Step 2, we have covered the first available vertex in $V(K)\setminus \ran f$ and we have embedded an entire interval $\{u_1, u_2, \dots, u_m\}$ in the ordering of $V(T)$, including the first available vertex in $V(T)\setminus \dom f$.  Thus we have defined a surjective embedding of $C_{T,r}$ into $K$, which completes the proof.
\end{proof}

In the proof of \cref{thm:Zf-multi}, we have a graph with vertex set $A\in \CI_f^+$ in which every vertex has cofinite degree.  We use this to show that $A$ can be partitioned into two infinite complete multipartite graphs with all parts finite, one of which, call it $M$, must have vertex set in $\CI_f^+$.  We then show that if $T$ is perfect tree with fixed root $r$, then $C_{T,r}$ can be embedded into $M$.  This raises the following two questions.

\begin{problem}\label{q:cofin}~
\begin{enumerate}
\item Characterize all graphs which can be cofinitely embedded into every graph in which every vertex has cofinite degree.

\item Characterize all graphs which can be cofinitely embedded into every infinite complete multipartite graph with finite parts.
\end{enumerate}
\end{problem}

\section{Conclusion and open problems}\label{sec:end}

\subsection{Graphs of bounded chromatic number/maximum degree/degeneracy}

Let $G$ be a graph with $\Delta:=\Delta(G)\geq 2$.  We know that $2\leq \chi(G)\leq \Delta(G)+1$ and we proved that $\frac{1}{2(\Delta-1)}\leq \Rd(G)\leq \frac{1}{\chi(G)-1}$.  It would be interesting to know whether these bounds can be improved in general.  From \cref{ex:trees}.(ii), we know that the bound in \cref{thm:locally-finite}.(i) cannot be improved without further restrictions.

\begin{problem}
If possible, improve the bounds in \cref{thm:locally-finite}.(ii),(iii).
\end{problem}

We make the following conjecture which would imply \cref{qu:degenerate} in the case where $G$ has no finite dominating set (see \cref{q:ruldeg} and \cref{prob:2dir}).

\begin{conjecture}\label{con:Hd}
Let $d\in \N$ and let $H_d$ be the graph defined in \cref{prop:constr}.  There exists $c=c(d)>0$ such that $\Rd(H_d)\geq c$.  More weakly, $\Rd(H_d)>0$.
\end{conjecture}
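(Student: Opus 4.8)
The plan is to reduce \cref{con:Hd} to the problem of finding, in an arbitrary $2$-coloring of $K_\NN$, a monochromatic set of \emph{uniformly} positive upper density that is self-intersecting in the sense of \cref{sec:degen}, and then to attack that problem via a structure theorem together with the digraph-covering question of \cref{prob:2dir}. For the first reduction, recall that $H_d$ is $0$-ruled and $d$-degenerate (\cref{prop:constr}), so by \cref{prop:embed0ruldeg} it embeds surjectively into every $(d+1)$-wise intersecting graph. Hence it suffices to prove the following \emph{Density Lemma}: for every $k\in\NN$ there is $c_k>0$ such that every $2$-coloring of $K_\NN$ admits a monochromatic $k$-wise self-intersecting set of upper density at least $c_k$. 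Indeed, applying the Density Lemma with $k=d+1$, the relevant color class restricted to the resulting set is $(d+1)$-wise intersecting, so embedding $H_d$ surjectively into it gives $\Rd(H_d)\ge c_{d+1}$. (With more care in the surjectivity step of \cref{prop:embed0rul} — alternating greedy extensions with covering the next vertex of a fixed density-witnessing subsequence, rather than the least uncovered vertex — one should be able to replace $k=d+1$ by $k=d$, but this is not needed.) In the notation introduced after \cref{prop:constr}, the Density Lemma is exactly the assertion that $\tau(k)>0$ for all $k$; \cref{prop:selfint} shows $\tau(k)\le \tfrac1{2k}$, and the natural sharpening of the conjecture is $\tau(k)=\tfrac1{2k}$, which would yield $\Rd(H_d)\ge \tfrac1{2(d+1)}$.

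The proposed route to the Density Lemma is a dichotomy, in the spirit of a (very coarse) regularity statement for $K_\NN$. Fix $k$ and a $2$-coloring $\varphi$; the aim is to show that one of the following holds.
\begin{enumerate}
\item One finds a monochromatic $k$-wise self-intersecting set of upper density bounded below by an absolute constant directly — for instance by iterating the positive-ultrafilter construction from the proof of \cref{thm:ruling} while tracking densities, or by upgrading a large monochromatic infinitely connected set supplied by \cref{lem:find-deep-tree}.
\item Modulo a set of upper density $0$ and finitely many exceptional vertices, $\varphi$ is the blow-up of a finite $2$-colored complete digraph $D$ with loops, in the precise sense of the construction preceding \cref{prob:2dir}: $\NN$ splits into finitely many positive-density blocks, each monochromatic in one of the two colors, with the coloring between two blocks governed by the two arcs of $D$ between the corresponding vertices — a single color if they agree, a half-graph coloring if they disagree.
\end{enumerate}

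In case (ii) a union of blocks whose index set $W\subseteq V(D)$ induces a monochromatic $k$-directed subdigraph is itself $k$-wise self-intersecting in the corresponding color: the common-neighborhood conditions translate exactly into the arc conditions defining a $k$-directed digraph, generalizing the $k=2$ computation given after \cref{prob:2dir}. Thus, granting (ii), the Density Lemma follows from a positive answer to the quantitative form of \cref{prob:2dir}: there is $N=N(k)$ such that $V(D)$ can be covered by at most $N$ monochromatic $k$-directed sets (conjecturally $N(k)=2k$, matching \cref{prop:selfint} and, for $k=2$, the value $4$ asked about in \cref{prob:2dir}). Pigeonholing over such a cover, and using that the blocks have positive density and together exhaust all but a density-$0$ set of $\NN$, produces a monochromatic $k$-wise self-intersecting set of upper density at least $\tfrac1N-o(1)$; one then takes $c_k$ to be the minimum of this and the constant from case (i), and concludes $\Rd(H_d)\ge c_{d+1}>0$ as above. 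Via \cref{q:ruldeg} this also bears on \cref{qu:degenerate} for graphs with no finite dominating set.

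The main obstacle is the combination of two open points. The digraph-covering problem in Step 3 is exactly \cref{prob:2dir} and is open even for $k=2$; proving it with a bound $N(k)$ depending only on $k$ is the principal combinatorial content, and — being a purely finite statement — is perhaps the more tractable half. The other point, which I expect to be harder, is the dichotomy itself: the naive iterative ultrafilter argument that works for locally finite targets (\cref{thm:locally-finite}) does not terminate for merely degenerate targets, since peeling off an ultrafilter class can lose an arbitrary fraction of the density, so one needs an honest regularity-type structure theorem asserting that the absence of a uniformly large monochromatic $k$-wise self-intersecting set forces the near-blow-up structure in (ii) (or some weaker substitute that still lands in the digraph problem). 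Establishing such a structure theorem for $2$-colorings of $K_\NN$ is where I expect the real difficulty to lie.
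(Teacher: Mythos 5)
There is no proof to compare against here: the statement you are addressing is \cref{con:Hd}, which the paper explicitly leaves as an open conjecture (only the case $d=1$ is known, via $H_1=T_\infty$ and \cref{thm:trees}), and your proposal does not close it. The genuine gap is that both pillars of your plan are themselves unproved. The reduction in your first paragraph is sound and is in fact already in the paper: a monochromatic $(d+1)$-wise self-intersecting set $Y$ of upper density $c$ makes $G_i[Y]$ a $(d+1)$-wise intersecting graph, so \cref{prop:embed0ruldeg} gives a surjective copy of the $0$-ruled, $d$-degenerate graph $H_d$, i.e.\ $\rho(d)\ge\tau(d+1)$ in the paper's notation. But your ``Density Lemma'' ($\tau(k)>0$) is exactly the open question the paper is circling in \cref{sec:degen}, and your route to it requires (a) a positive, quantitative answer to \cref{prob:2dir}, which the paper states is open even for $k=2$, and (b) a dichotomy asserting that failure of case (i) forces the coloring to be (up to a density-zero set) a blow-up of a finite $2$-colored digraph. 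The paper only establishes the \emph{necessity} direction of the digraph connection --- any blow-up of a digraph coloring produces self-intersecting sets only along monochromatic $k$-directed index sets --- not the \emph{sufficiency} you need, and no regularity-type structure theorem of the kind in (b) is proved or even precisely formulated. As you yourself note, the iterated-ultrafilter argument of \cref{thm:locally-finite} does not control density loss here, so case (i) of your dichotomy has no proof either.

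What your write-up does accomplish is an accurate reconstruction of the paper's own analysis of why the conjecture is hard (the chain $\tau(d-1)\ge\rho(d-1)\ge\tau(d)\ge\rho(d)$, the upper bound $\tau(k)\le\tfrac1{2k}$ from \cref{prop:selfint}, and the role of \cref{prob:2dir}), plus one genuinely new proposed ingredient --- the blow-up dichotomy --- which would indeed reduce the infinite problem to the finite digraph-covering problem if it were true. If you want to push this further, the honest next step is to formulate and attack that dichotomy precisely (even a weak substitute landing in any finite covering problem would be progress), or to settle \cref{prob:2dir} for $k=2$; as it stands, no part of \cref{con:Hd} beyond $d=1$ has been established.
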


We know that \cref{con:Hd} is true when $d=1$ because $H_1=T_\infty$ and $\Rd(T_\infty)=1/2$.  Even solving the conjecture when $d=2$ would be a big step forward.

Note that for all $r\geq 3$, \cref{ex:3ormore} shows that $\Rd_r(H_d)=0$ for all $d\in \N$.  This stands in contrast to the finite case where Lee's proof \cite{L} shows that \cref{conj:linRam2} holds for any number of colors.  On the other hand, if $G$ is $d$-degenerate and locally finite, then \cref{thm:locally-finite}.(iii) implies that $\Rd_r(G)\geq 1/r^{dr}$.  So for more than 2 colors, the only interesting case is $d$-degenerate graphs with infinitely many vertices of finite degree and some vertices of infinite degree.

\subsection{Ramsey-dense graphs and graphs with positive upper Ramsey density}

We know that every 0-ruled graph is Ramsey-dense and we know that there exist 0-ruled graphs $G$ with $\Rd(G)=0$, but all such graphs $G$ that we know of have $\chi(G)=\infty$ (see \cref{cor:Rado-ud-0}).  So we ask the following question.

\begin{problem}
Does there exist a graph $G$ which is 0-ruled and $\chi(G)<\infty$, but $\Rd(G)=0$?  For instance, is $\Rd(\CR_2)=0$? (where $\CR_2$ is the bipartite Rado graph)
\end{problem}

Note that for $r\geq 3$, \cref{ex:3ormore} provides examples of 0-ruled graphs $G$ with $\chi(G)<\infty$, but $\Rd_r(G)=0$.

In \cref{prob:infruleddense} we ask if there are Ramsey-dense graphs with $\rul(G)=\infty$.  A more expansive series of questions is the following.

\begin{problem}~
\begin{enumerate}
\item Characterize all Ramsey-dense graphs.

\item Characterize graphs $G$ with $\Rd(G)>0$.

\item Characterize graphs $G$ which are Ramsey-dense, but $\Rd(G)=0$.
\end{enumerate}
\end{problem}

In \cref{thm:ZfRamsey} we proved that every graph in a certain class of graphs with infinite ruling number is $\mathcal{Z}_f^+$-Ramsey. So we raise the following problem.  Also see \cref{q:cofin}.

\begin{problem}
Characterize all graphs $G$ having the property that for all functions $f: \NN\to\NN$ satisfying $f(n) / n\to 0$, $G$ is $\mathcal{Z}_f^+$-Ramsey.
\end{problem}

\subsection{Ramsey-lower-dense and Ramsey-cofinite graphs}

One of the main results in the paper is a characterization of all Ramsey-cofinite forests.  The most interesting open problem here is the following (c.f.\ \cref{q:specific}).

\begin{problem}
Characterize all Ramsey-cofinite graphs.
\end{problem}

In \cref{thm:cofinitetree}, we proved that every forest is either Ramsey-cofinite or is not Ramsey-lower-dense.  We suspect that this is true of every graph (in \cite[Problem 8.10]{DM} we asked the weaker question of whether $\Rd(G)<1$ implies that $G$ has Ramsey-lower-density 0).

\begin{conjecture}
For every graph $G$, if $G$ is not Ramsey-cofinite, then $G$ is not Ramsey-lower-dense.
\end{conjecture}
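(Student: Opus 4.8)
The plan is to prove the contrapositive — if $G$ is Ramsey-lower-dense, then $G$ is Ramsey-cofinite — by extending the strategy behind \cref{thm:cofinitetree} from forests to arbitrary graphs. If $G$ has infinitely many isolated vertices it is trivially Ramsey-cofinite, so assume not. The first step is to use the colorings already built in \cref{sec:example} to constrain $G$: \cref{ex:forward-half-graph} forces $G$ to be strongly contracting, \cref{ex:backward-half-graph} forces $G$ to have no finite dominating set, and a suitable reading of \cref{ex:KNN} (its argument needs only that some component of $G$ is non-bipartite, together with enough vertices concentrated so that the two blue cliques cannot both be covered cofinitely) pushes toward $G$ being essentially bipartite. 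The crucial preliminary task is to pin down the remaining obstruction: build a family of colorings generalizing the one in \cref{lem:T-star} that is not Ramsey-lower-dense, and let $\mathcal{B}$ denote the resulting class of ``bad'' graphs to be excluded. The target structural statement is that $G$ is Ramsey-lower-dense exactly when $G$ is bipartite, strongly contracting, has no finite dominating set, and $G\notin\mathcal{B}$, and that every such $G$ is Ramsey-cofinite.

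For the positive direction, fix such a $G$ and an arbitrary $2$-coloring of $K_\NN$. By (the proof of) \cref{lem:embed-leafy-tree}, which uses only that $G$ is strongly contracting, we may delete finitely many vertices and assume every vertex has infinite degree in both colors. If one colour class, say red, fails to be infinitely connected, then deleting a finite set disconnects it into infinite parts $Y,Z$ with all $Y$--$Z$ edges blue; since $G$ is bipartite without a finite dominating set, both sides of its bipartition are infinite (combining components as in the proof of \cref{thm:cofinitetree} if $G$ is disconnected), so $G$ embeds surjectively into the blue $K_{\NN,\NN}$ on $Y\cup Z$. It therefore remains to handle $2$-colorings in which both color classes are infinitely connected; in that case one color class contains an infinite clique, the hypothesis exploited in \cref{lem:embed-induced-paths}.

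The main obstacle is the general embedding lemma needed here: \emph{every bipartite, strongly contracting, finite-dominating-set-free graph $G$ with $G\notin\mathcal{B}$ has a cofinite copy in every infinitely connected graph that contains an infinite clique and in which every vertex has infinite degree}. When $G$ has arbitrarily long induced paths with degree-$2$ internal vertices this is exactly \cref{lem:embed-induced-paths}; the difficulty is the ``bounded path'' regime, which I would attack by a structural dichotomy generalizing \cref{fact:types} followed by a greedy embedding in the spirit of \cref{lem:embed-all-other}: either $G$ has a vertex of infinite degree adjacent to infinitely many vertices of degree at least $3$ (or to infinitely many short vertex-disjoint paths, or to neighbors of unbounded degree), in which case one embeds $G$ into a single color greedily while routing the finitely many ``problem'' vertices through short monochromatic paths, or $G$ has a very constrained near-star form and should fall into $\mathcal{B}$. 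Making $\mathcal{B}$ coincide exactly with the graphs for which this embedding fails, and proving the matching coloring really is not Ramsey-lower-dense, is the heart of the problem — and it is precisely here that a genuinely new obstruction, absent in the forest case, might surface; locating it, or ruling it out, is the crux.
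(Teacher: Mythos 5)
There is a genuine gap here, and it is worth being explicit about its nature: the statement you are asked to prove is stated in the paper as a \emph{conjecture}, and the paper offers no proof of it. Your proposal is an honest research plan rather than a proof, and by your own admission (``locating it, or ruling it out, is the crux'') the essential content is missing. Concretely, three things are never supplied. First, the class $\mathcal{B}$ of ``bad'' graphs is never defined; in the forest case the analogous class $\CT^{*}$ is pinned down explicitly and \cref{lem:T-star} exhibits a concrete coloring witnessing that each $T\in\CT^{*}$ fails to be Ramsey-lower-dense, whereas here both the definition and the matching negative colorings are deferred. Second, the general embedding lemma you isolate (a cofinite copy of every bipartite, strongly contracting, dominating-set-free $G\notin\mathcal{B}$ in every infinitely connected graph containing an infinite clique) is exactly the content of \cref{q:specific}, which the paper poses as open; the forest proof leans heavily on \cref{fact:types}, a structural trichotomy for trees whose proof uses acyclicity (unique paths, leaves, subtrees rooted at neighbors), and no analogue for general bipartite graphs is given. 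Third, the reduction via \cref{ex:KNN} to the bipartite case is not airtight for disconnected graphs: that example handles connected $G$ with $\chi(G)\ge 3$, and your parenthetical ``suitable reading'' for graphs with a non-bipartite component plus infinitely many bipartite components is an assertion, not an argument.

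A smaller but real logical point: your route proves the contrapositive by first establishing a complete characterization of the Ramsey-lower-dense graphs and then showing each such graph is Ramsey-cofinite. That is strictly more than the conjecture demands --- one could imagine proving ``Ramsey-lower-dense $\Rightarrow$ Ramsey-cofinite'' without ever characterizing either class --- so tying the proof to the full characterization makes the task harder, not easier, and inherits the open problem wholesale. The steps you do carry out (the reductions via \cref{ex:forward-half-graph,ex:backward-half-graph}, the disposal of the non-infinitely-connected case via the blue $K_{\NN,\NN}$, and the appeal to \cref{lem:embed-leafy-tree}) faithfully mirror the proof of \cref{thm:cofinitetree} and are fine as far as they go, but they stop precisely where the paper itself stops.
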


%
\subsection{Ramsey-upper-density of trees}
We showed that $\Rd(T)\geq 1/2$ for every infinite tree $T$ and we showed that this result is tight for some trees such as $T_{\infty}$.
Lamaison \cite{Lamaison2020} obtained sharp results on Ramsey upper densities of locally finite trees.

It would be interesting to extend some of these results to more colors.  By \cref{ex:3ormore}, we know that for all $r\geq 3$, if $T$ is a tree with finitely many vertices of finite degree, then $\Rd_r(T)= 0$.  On the other hand, we know from \cref{thm:locally-finite}, that if $T$ is an infinite, locally finite tree, then $\Rd_r(T)\geq 1/r$ (which is best possible for some trees); or more generally, if $T$ is an infinite, one-way $k$-locally finite tree for some $k\geq 2$, then $\Rd_r(T)\geq 1/r^{(k-2)r+1}$.  So for more than 2 colors one should focus on trees with infinitely many vertices of finite degree which are not one-way $k$-locally finite for any $k\geq 2$.

\subsection{Bipartite graphs}
We conjectured (\cref{conj:bip-paths}) that the vertices of every $r$-colored $K_{\NN, \NN}$ can be partitioned into a finite set and at most $r$ monochromatic paths. We proved this for $r=2$ and mentioned how a result of Day and Lo \cite{DL} implies that if \cref{conj:bip-paths} is true, then for all $r\geq 3$, $\Rd_r(P_\infty)\geq \frac{1}{r-1}$ (which is now an open problem for $r\geq 5$).

\medskip

\noindent
\tbf{Acknowledgments:} We thank Andr\'as Gy\'arf\'as for discussions which led to \cref{ex:3ormore}.  We are grateful to the referees for their careful reading of the paper and many helpful comments.  

\bibliographystyle{amsplain-abbr}
\bibliography{bib}
\end{document}